\documentclass[twoside,10pt]{article}
\usepackage[paperwidth=210mm,
                paperheight=297mm,
                top=3cm,%
                textheight=237mm,
                footnotesep=2\baselineskip,%
                left=2.3cm,%
                right=2.3cm,%
                marginparwidth=1.7cm,
                headsep=30pt,
                dvips=false]{geometry}

\usepackage{empheq}
\usepackage{fancyhdr}
\makeatletter \renewcommand*{\@seccntformat}[1]{\csname the#1\endcsname.\quad} \makeatother

\usepackage[T2A]{fontenc}
\DeclareUnicodeCharacter{00A0}{~}
\usepackage{ stmaryrd }
\usepackage{booktabs}
\usepackage[english]{babel}

\usepackage{amsmath,amscd,amsfonts,amsthm,bbm,mathrsfs,enumerate,mathtools, enumitem}
\usepackage{amssymb}
\usepackage[numbers,sort&compress]{natbib}

\usepackage[dvipsnames]{xcolor}
\usepackage[colorlinks=true, linkcolor=blue, citecolor=blue]{hyperref}
\usepackage{graphicx}
\usepackage{tikz,pgfplots}
\usepackage{svg}
\usepackage{caption}
\usepackage{subcaption}
\usepackage{float}
\usepackage{csquotes}

\usepackage[ruled, linesnumbered]{algorithm2e}
\usepackage{algorithmic}
\usepackage{dsfont}
\usepackage{xspace}

\DeclareMathOperator{\rank}{rank}
\DeclareMathOperator{\truncdist}{\overline{\dist}}
\newcommand{\sqbracket}[2]{\llbracket #1, #2 \rrbracket}

\usepackage{shortcuts}

\newtheorem{theorem}{Theorem}
\newtheorem{lemma}{Lemma}[section]
\newtheorem{proposition}[lemma]{Proposition}
\newtheorem{definition}[lemma]{Definition}
\newtheorem{example}[lemma]{Example}
\newtheorem{remark}[lemma]{Remark}
\newtheorem{assumption}{Assumption}
\newtheorem{corollary}[lemma]{Corollary}

\providecommand{\keywords}[1]{\textbf{\textbf{Keywords. }} #1}

\usepackage[most]{tcolorbox}
\newtcolorbox{nbox}[1][]{
  enhanced,
  fonttitle=\scshape,
  #1
}

\DeclareMathOperator{\Jac}{Jac}

\renewcommand{\c}{\mathsf{c}}
\newcommand{\tc}{\tilde{\c}}
\newcommand{\ttB}{\mathtt{B}}
\usepackage[normalem]{ulem}
\newcommand{\replace}[2]{
\textcolor{red}{#2}}

\begin{document}
\title{On convergence rates of subgradient descent on semialgebraic functions}
\author{Evgenii Chzhen\\Laboratoire de mathématiques d’Orsay\\Université Paris-Saclay, CNRS\\91405, Orsay, France\\\texttt{first.last@cnrs.fr}
\\\and
Sholom Schechtman\\SAMOVAR, Télécom SudParis\\Institut Polytechnique de Paris\\91120, Palaiseau, France\\\texttt{first.last@telecom-sudparis.eu}}
% \date{\phantom{} \\ \ \\ \rule{\linewidth}{.5pt}{}}
\date{}
\pagestyle{fancy}
\renewcommand{\footrulewidth}{0.4pt}
\maketitle
\thispagestyle{empty}
% \maketitle

\begin{abstract}

We analyze the constant step size subgradient method {on} nonsmooth, nonconvex functions. We identify geometric assumptions on the objective function under which \emph{i)} its domain admits a partition (stratification) into smooth manifolds (strata) on which the function is smooth; \emph{ii)} a global projection formula for Clarke subgradients holds; and \emph{iii)} quantitative curvature bounds hold on each stratum.
Under these conditions, we prove that the iterates of the subgradient method locally shadow a Riemannian gradient descent on nearby strata, which we use to measure stationarity.
We introduce a selection rule for the active stratum and develop a mechanism that assembles local descent inequalities across successive strata into explicit convergence rates.
These rates are expressed in terms of the number of dimensions present in the stratification, improve as the number of strata decreases, and recover, up to constants, the classical rates in the smooth case.
We show that the stated assumptions follow from the existence of Lipschitz stratifications of semialgebraic sets, and are therefore automatically satisfied for semialgebraic functions and, more generally, for functions definable in polynomially bounded o-minimal structures---yielding the first known convergence rates in these settings. As intermediate results of independent interest, we establish tubular neighborhood estimates for Lipschitz stratifications and a global projection formula for Clarke subgradients. Finally, we show that our framework extends to decreasing step size and recovers, via an alternative argument, the {recently announced} result of Lai and Song on sequential convergence of the subgradient method with step sizes $1/k$.

\end{abstract}
\begin{figure}[h!]
    \centering
    \includegraphics[width=\linewidth]{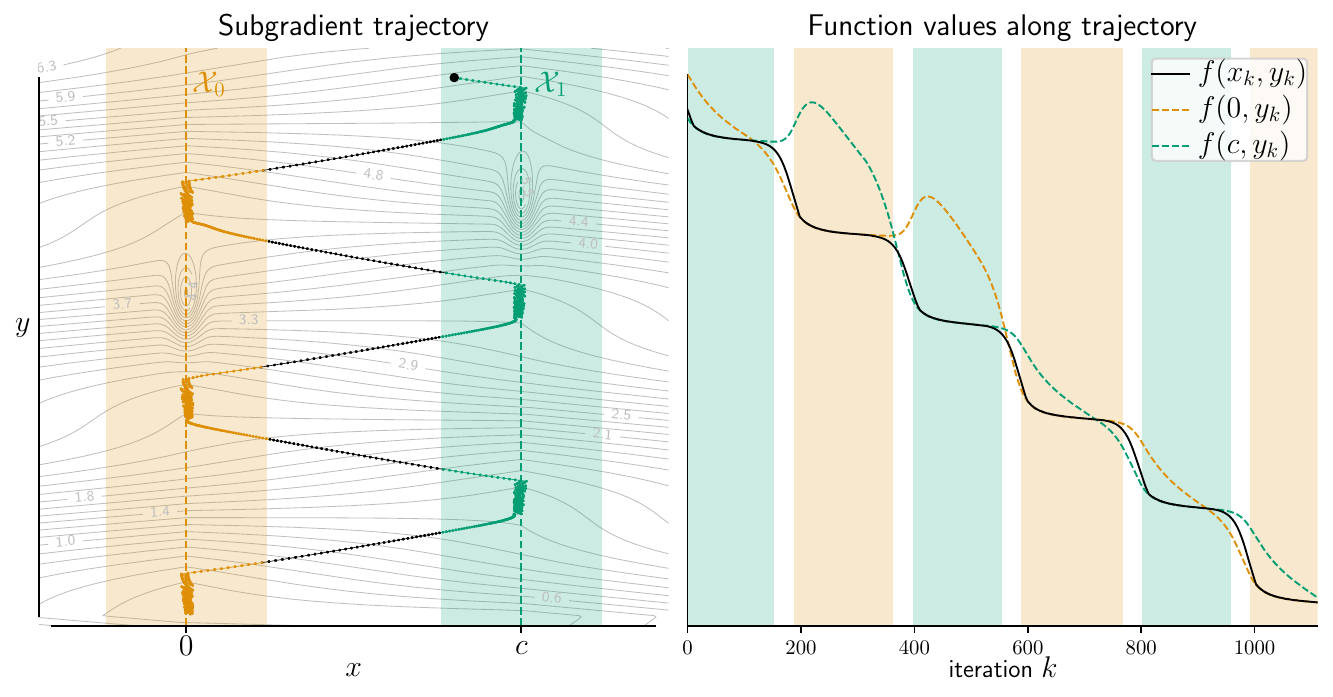}
\end{figure}
\keywords{ Gradient Descent, Clarke Subgradient, Lipschitz Stratifications, Semialgebraic}

\newpage
\tableofcontents
\newpage

\section{Introduction}
The purpose of this work is to analyze the subgradient method \cite{shor1964structure} applied to semialgebraic functions. Given such function $f: \bbR^d \rightarrow \bbR$, possibly nonconvex and nonsmooth, the iterates $(x_k)_{k \geq 1}$ of the subgradient method are defined recursively by
\begin{equation}\label{eqdef:subg_des_intro}
    x_{k+1} = x_k - \gamma v_k \, ,
\end{equation}
where $\gamma > 0$ is the step-size and $v_k \in \partial f(x_k)$ for all $k$.

Despite the lack of convexity and smoothness, substantial empirical evidence
supports the effectiveness of (stochastic) subgradient methods. A prominent example arises in deep learning, where widely used architectures incorporate nonsmooth operations such as ReLU activations, max-pooling, and normalization layers. In these settings, subgradient-based methods are routinely observed to converge (see, e.g., \cite{zohrevand2022empirical}).

From a theoretical standpoint, however, this empirical success is not yet fully understood, as existing guarantees are largely asymptotic.
For sufficiently small step-sizes, the iterates are known to converge to a neighborhood of Clarke-critical points, without explicit bounds on either the size of this neighborhood or the rate of convergence
\cite{dav-dru-kak-lee-19,bianchi2022convergence,benaim2005stochastic,ermol1998stochastic}.

Obtaining convergence rates for subgradient-based algorithms first requires clarifying the notion of optimality with respect to which such rates are measured.
Closely related to this choice is the mechanism used to establish convergence:
typically, the analysis relies on identifying a Lyapunov function, with the amount of decrease serving as a meaningful certificate of optimality.
For instance, in the smooth case, the Lyapunov function is simply the objective,
and convergence rates are expressed in terms of the norm of its gradient.

In the nonsmooth setting, however, the objective need not decrease along the
iterations, and one must instead identify an appropriate surrogate. A notable
example is the work of Davis and Drusvyatskiy \cite{davis2019stochastic}, {which}
showed that subgradient descent applied to a weakly convex objective yields a
decrease in its Moreau envelope, with convergence rates expressed in terms of
the gradient of this regularization. A different line of work approaches the
minimization of nonsmooth and nonconvex objectives via the Goldstein
subdifferential
\cite{goldstein1977optimization,zhang2020complexity,kong2025lipschitz,
jordan2022complexity,davis2022gradient}. Roughly speaking, the key observation
is that the objective decreases when moving in the direction of a minimum-norm
Goldstein subgradient, which is then used as a convergence certificate.
Finally, in convex settings, optimality can also be measured in terms of the
distance to the solution set.

However, two important features are not captured by the aforementioned complexity analyses.
First, nonsmoothness in optimization problems often hides an underlying
\emph{smooth} structure. For instance, the function
$f(x,y) = (|x|-|y|)^2$, while nonsmooth, is smooth when restricted to the sets
$\cX = \bbR \times \{0\}$ and $\cY = \{0\} \times \bbR$, with
$f_{|\cX} = x^2$ and $f_{|\cY} = y^2$. This phenomenon is by no means specific to this example.
Indeed, it is well known that the domain of any semialgebraic function $f$ admits a partition, or more precisely a \emph{stratification}, into smooth manifolds (or \emph{strata}) $(\cX_i)$ such that the restriction $f_{|\cX}$ is smooth on each stratum $\cX$.
Moreover, for any $x \in \cX$, the Clarke subdifferential $\partial f(x)$ satisfies the projection formula: its projection onto the tangent space of $\cX$ coincides with $\nabla_{\cX}f(x)$, the Riemannian gradient of the smooth restriction $f_{|\cX}$
\cite{bolte2007clarke,bolte2021conservative}.

Second, recent analyses {suggests} that, under additional conditions on the stratification, subgradient methods implicitly exploit this structure. For example, the works \cite{bianchi2024stochastic,davis2025active} show that a Kuo--Verdier condition implies that, around any point of a stratum $\cX$, the projections of the iterates onto $\cX$ closely track a Riemannian gradient descent applied to the smooth function $f_{|\cX}$.

These observations suggest a natural route toward a complexity analysis. If the projected iterates approximately follow a Riemannian gradient descent on $f_{|\cX}$, then this function provides a natural Lyapunov candidate, with decrease certified by $\norm{\nabla_{\cX} f(x)}$.
However, a direct approach encounters two main difficulties. First, under existing conditions \cite{davis2025active,bianchi2024stochastic} (with the notable exception of \cite{lai2025diameter}), such shadowing holds only locally, in a neighbourhood of a fixed point on the stratum. Thus, even if the iterates remain close to $\cX$, it is not immediate that $f_{|\cX}$ decreases along the projected trajectory. Second, even if such a decrease can be established, the iterates may eventually approach a different stratum $\cX'$, at which point the relevant Lyapunov function becomes $f_{|\cX'}$. Figure~\ref{fig:switching_traj} illustrates this phenomenon. The subgradient descent trajectory clearly reflects the stratification, which here consists of two vertical lines. The plot on the right of the figure further shows that the active stratum $\cX$ is generally not unique along the trajectory, yet at each step there exists some $\cX$ along which $f_{|\cX}$ decreases.

In this work, we develop a framework that resolves these two difficulties and yields the first finite-time complexity guarantees for the subgradient method in this setting. Our main contributions are threefold.

\begin{itemize}
    \item First, we identify a set of assumptions on the stratification that
    \emph{i)} yields a \emph{global} projection formula for the Clarke
    subdifferential of $f$, and \emph{ii)} provides uniform bounds on the
    curvature of the strata $\cX_i$. The latter allows us to estimate the
    width of tubular neighborhoods around the strata on which the projection
    is well-defined.

    \item Second, under these assumptions, we show that the domain of $f$ can be
    partitioned into sets $\cC(\cX)$, corresponding to suitable neighborhoods
    of the strata, such that on each $\cC(\cX)$ the projected iterates shadow
    a Riemannian gradient descent on $f_{|\cX}$.

    \item
    Third, we develop a mechanism that tracks the successive strata visited by the trajectory and patches the resulting local descent inequalities.
    The resulting convergence rate is then obtained via a control of three effects:
     i) smooth (Riemannian) descent along strata; ii) controlled strata switching term; iii) a geometric error quantifying vicinity to a stratum and its boundary. Careful control of the three terms yields the main non-asymptotic result in Theorem~\ref{thm:main_announced}---first convergence rates for the subgradient method. These rates can be expressed in terms of the norm of Riemannian gradients on nearby strata, or, as we discuss, through the approximate stationarity measure of a conservative set-valued field associated to the stratification.
\end{itemize}
\begin{figure}
      \centering
    \includegraphics[width=\linewidth]{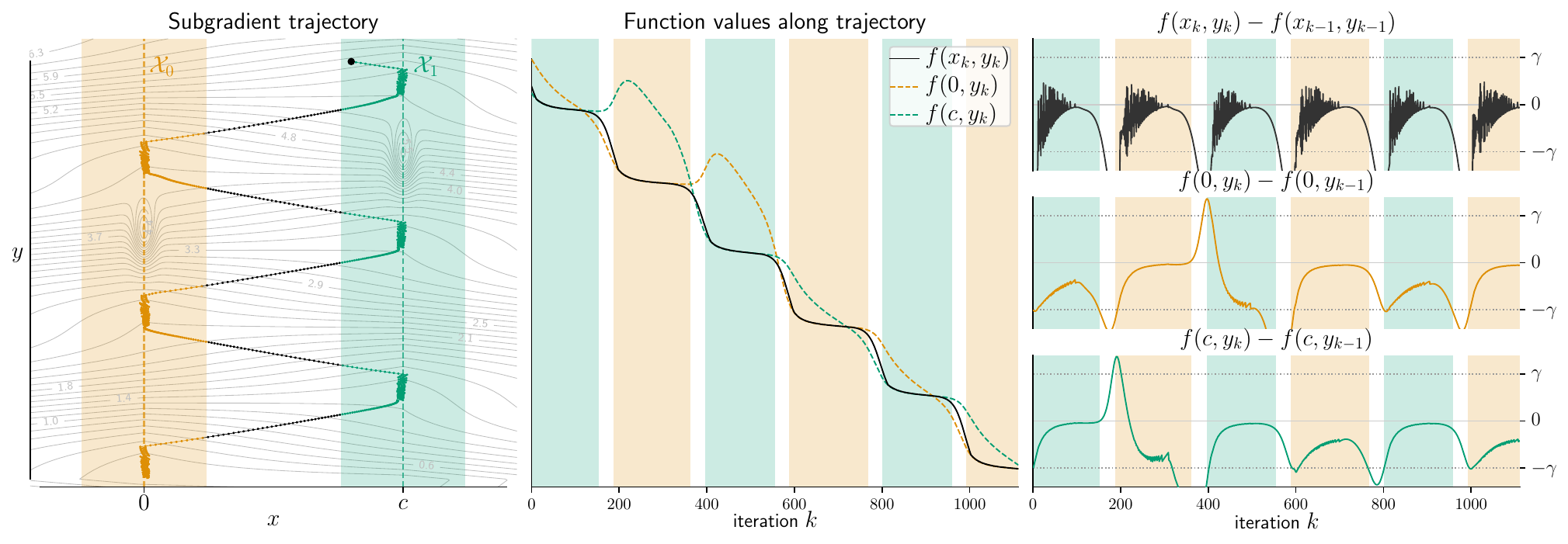}
    \caption{Gradient descent trajectory. Two linear strata: $\cX_0 = \{(x, y) \,:\, x = 0\}$, $\cX_1 = \{(x, y) \,:\, x = c\}$. Exact expression of the function is available in Appendix~\ref{app:switching_traj}. }
    \label{fig:switching_traj}
\end{figure}

Naturally, one may ask whether such an assumption is satisfied by classes of functions relevant to optimization. In Theorem~\ref{thm:good_assumption}, we show that any semialgebraic function, and more generally any function definable in a polynomially bounded o-minimal structure, when restricted to a compact semialgebraic set, satisfies this assumption. This result follows directly from the existence of Lipschitz stratifications of compact semialgebraic sets.

Here, we have to mention the recent work of Lai and Song \cite{lai2025diameter},
which partially inspired the present paper.
Their work is concerned with the
sequential convergence of the subgradient method and is, to our knowledge,
the first to highlight the significance of Lipschitz stratifications in this
context. Their analysis also relies on Lipschitz stratifications, and on the idea of switching between active strata, but uses
{\L}ojasiewicz-type inequalities to obtain the quantitative estimates needed
for the projection formula, curvature bounds, and control of tubular
neighborhoods. By contrast, our approach establishes these estimates directly from the stratification itself, independently of the existence of {\L}ojasiewicz exponents. This also applies to our strata selection mechanism, leading to a finite-time complexity analysis. Nevertheless, in Section~\ref{sec:KL_conv}, we show that the machinery
developed in the previous sections also allows us to recover the following
main result of Lai and Song by an alternative proof:

\begin{itemize}
    \item[] If the iterates $(x_k)$ of a subgradient method with decreasing
    step-sizes $\gamma_k = 1/k$ remain in a compact set, then $(x_k)$
    converges to a unique point.
\end{itemize}

More broadly, both works suggest that Lipschitz stratifications provide a fruitful geometric framework for the analysis of nonsmooth optimization methods.

In the remainder of this section, we fix the notation used throughout the paper. In Section~\ref{sec:assumptions_main}, we present our main results, Theorems~\ref{thm:good_assumption} and~\ref{thm:main_announced}. Section~\ref{sec:proof_main} is devoted to the proof of Theorem~\ref{thm:main_announced}, while Section~\ref{sec:extensions} presents several extensions of our framework. Finally, Section~\ref{sec:rem_proofs} contains the remaining proofs, including that of Theorem~\ref{thm:good_assumption}.

\subsection{Notation}
\label{sec:notation}
Appendix~\ref{app:prelim} provides a more detailed exposition on the required background. Here, we only fix notation and assume the reader is familiar with the relevant concepts.
For a $C^1$ function $G: \bbR^l \rightarrow \bbR^{k}$ and $x \in \bbR^l$ we denote $\Jac G(x) \in \bbR^{k \times l}$ its Jacobian and $\dif G(x): \bbR^l \rightarrow \bbR^k$ its differential at $x\in \bbR^l$, that is, $\dif G(x)[h] = \Jac G(x)h$. We say that $D: \bbR^l \rightrightarrows \bbR^k$ is a set-valued map if for all $x \in \bbR^l$, $D(x) \subset \bbR^k$. For a set $A \subset \bbR^d$ we denote by $\partial A = \overline{A} \backslash A$ the topological frontier of $A$. For non-empty $A \subset \bbR^d$ and $x \in \bbR^d$ define
\begin{align}\label{eqdef:distances}
    \dist(x, A) \eqdef \inf_{y \in A} \|x - y\| \quad \textrm{ and } \quad \truncdist(x, A) \eqdef \min(\dist(x, A),1)\, ,
\end{align}
and, whenever well-defined, $\pi_{A}(x) = \argmin_{y \in A} \norm{x-y}$.
For a finite set $A$, we denote $|A|$ its cardinal. For two integers $k_1, k_2$, we denote by $\sqbracket{k_1}{k_2}$ and $\rrbracket k_1,k_2\llbracket$ the consecutive integers between $k_1$, $k_2$ included and excluded respective. Similarly, for a sequence $(x_k)_{k \geq 1}$ and $A \subset \bbN$, we denote by $x_{A}$, the sub-sequence sequence $(x_k)_{k \in A}$.
The following conventions are used throughout. We fix $p \geq 2$. For $k > m$: $\sum_{i = k}^m = 0$, $\cup_{i = k}^m = \emptyset$, $\dist(x, \emptyset) = 1$.

This work uses a fair amount of notation and introduces many constants, for reader's convenience, in Tables~\ref{tab:not_strat}-\ref{tab:not_strata_select} we gathered notation related to stratifications, constants, and the strata selection mechanism.
\subsubsection{Semialgebraic sets and functions} A set $A \subset \bbR^d$ is semialgebraic if it can be written as a finite union and intersection of sets of the form $\{x: Q(x) \leq 0 \}$, where $Q: \bbR^d \rightarrow \bbR$ is some polynomial. A function $F: \bbR^d \rightarrow \bbR^m$ is said to be semialgebraic if its graph is semialgebraic (as a subset of $\bbR^{d + m}$). We emphasize the fact that all our results hold more generally upon replacing semialgebraic by \emph{definable in a polynomially bounded o-minimals structure} (see Appendix~\ref{app:o-minimal}).
\subsubsection{Submanifolds}
\label{sec:submanif_notation}
The following notation will be used throughout the paper. For $\cX \subset \bbR^d$ a $C^p$ submanifold and $x \in \cX$, the tangent (respectively normal) plane is denoted as $\cT_{x} \cX$ (respectively $\cN_x \cX$). For a $C^p$ function $f: \cX \rightarrow \bbR$, we denote $\nabla_{\cX}f$ the Riemannian gradient induced from the Euclidean metric. We denote by $P_{x}: \bbR^d \rightarrow \cT_{x} \cX$ the orthogonal projection onto $\cT_{x} \cX$. We denote by $\dist_{\cX}$ the inner metric of $\cX$, defined as:
\begin{equation*}
  \dist_{\cX}(x_0,x_1) = \inf\left\{ \int_{0}^1 \norm{\dot{\c}_t} \dif t : \textrm{ where $\c:[0,1] \rightarrow \cX$ is $C^p$, $\c_0 =x_0$ and $\c_1 =x_1$}\right\}\, .
\end{equation*}
Note that $ \norm{x_0 - x_1} \leq \dist_{\cX}(x_0, x_1)$. If, moreover, there is $C>0$ such that $\sup_{x_0, x_1 \in \cX}\tfrac{\dist_{\cX}(x_0, x_1)}{\norm{x_0 - x_1}} \leq C$, then we say that the inner metric of $\cX$ is equivalent to its outer metric.

\subsubsection{Stratifications}
\label{sec:strat_notation}

A stratification of a set is a finite partition of the set into submanifolds satisfying some regularity properties.
Any semialgebraic set can be stratified (see Appendix~\ref{app:o-minimal}).

\begin{definition}[Stratification]\label{def:stratif_main_new}
  Let $\cK \subset \bbR^d$, we say that $\bbX = (\cX_i)$, a finite partition of $\cK$, is a $C^p$ stratification of $\cK$, if the following conditions hold.
  \begin{enumerate}
    \item  Every $\cX \in \bbX$ is a path-connected $C^p$ manifold.
    \item Boundary condition: for $\cX, \cX' \in \bbX$
    \begin{align*}
        \cX \cap \overline{\cX'} \neq \emptyset \quad\implies\quad \cX \subset \partial \cX'\,.
    \end{align*}
  \end{enumerate}
\end{definition}
Following~\cite{parusinski1994lipschitz}, let $\sX_0 \subset \sX_1 \subset \ldots \subset \sX_d$ be filtration of $\bbX$ induced by skeletons $(\sX_j)_{j = 0}^d$, which are defined for $j \in \sqbracket{0}{d}$ as
\begin{align}
  \label{eqdef:closed_skeleton}
  \sX_j \eqdef \bigcup_{\substack{\cX \in \bbX \\ \dim(\cX) \leq j}} \cX\,,
\end{align}
 the convention that $\sX_{-1} = \emptyset$.

\subsubsection{Clarke subgradients and projection formula} Let $f: \bbR^{d} \rightarrow \bbR$ be a locally Lipschitz function.
 The Clarke subgradient \cite{cla-led-ste-wol-livre98} of $f$ at $x$ is defined as
  \begin{equation*}
    \partial f(x) := \conv \{ v \in \bbR^d: \textrm{ there is $x_n \rightarrow x$, with $f$ differentiable at $x_n$ and $\nabla f(x_n) \rightarrow v$}\}\, .
  \end{equation*}

As established in the seminal work of Bolte, Daniilidis, Lewis, and Shiota \cite{bolte2007clarke}, the Clarke subgradient of a semialgebraic function admits a transparent geometric structure.
\begin{proposition}[{Projection formula \cite{bolte2007clarke}}]\label{prop:bolte_projform_new}
    Let $f: \bbR^d \rightarrow \bbR$ be a semialgebraic, locally Lipschitz function and $p \geq 2$. There is $\bbX = (\cX_i)$ a stratification of $\bbR^d$ such that \emph{i)} any $\cX \in\bbX$ is semialgebraic, \emph{ii)} for any $\cX \in\bbX$, $f_{|\cX}$ is $C^p$ and, \emph{iii)} for any $x \in \cX$ and any $v \in \partial f(x)$,
    \begin{equation}\label{eq:projformula_bolte_new}
        P_{x} v = \nabla_{\cX} f(x)\, ,
    \end{equation}
    where $P_{x}$ denotes the orthogonal projection onto $\cT_{x} \cX$.
\end{proposition}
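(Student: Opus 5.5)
We will establish the projection formula by first building a Whitney-type stratification compatible with $f$ and then checking the formula on the dense set of differentiability points, where it is essentially a chain-rule identity. The conclusion for arbitrary Clarke subgradients will follow by taking limits and convex combinations.

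\textbf{Step 1: construct the stratification.} The graph of $f$ is a semialgebraic subset of $\bbR^{d+1}$. By the existence of $C^p$ Whitney (a)-regular stratifications for semialgebraic sets (Appendix~\ref{app:o-minimal}), we choose a $C^p$ Whitney stratification $\bbY$ of $\mathrm{graph}(f)$. We then take $\bbX$ to be the family of images of the strata of $\bbY$ under the projection $(x,t)\mapsto x$. Since $f$ is continuous, this projection restricted to the graph is a homeomorphism. To ensure the images are $C^p$ manifolds on which $f_{|\cX}$ is $C^p$, we refine beforehand so that each stratum of $\bbY$ projects diffeomorphically; the set where the projection has non-maximal rank has lower dimension and can be stratified separately. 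Refining further if necessary gives the path-connectedness and frontier conditions, and yields \emph{i)} and \emph{ii)}. The strata of $\bbY$ are then $\{(y,f(y)):y\in\cX\}$.

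\textbf{Step 2: check the formula at differentiability points.} Fix $x\in\cX$ and let $v=\lim\nabla f(x_n)$ with $x_n\to x$ and $f$ differentiable at $x_n$. The set $D$ where $f$ is differentiable has full measure, since $f$ is locally Lipschitz (Rademacher). Moreover, $D$ differs from the union of top-dimensional strata only by a null set, and $f$ is $C^p$ on each open stratum. We may therefore assume that the $x_n$ lie in open strata $\cX'$, and by passing to a subsequence that they all lie in a single $\cX'$. Since $x\in\overline{\cX'}$, the frontier condition gives $\cX\subset\partial\cX'$.

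At $(x_n,f(x_n))$ the tangent space of the graph stratum is $\{(h,\langle\nabla f(x_n),h\rangle)\}$, so the normal vector $(\nabla f(x_n),-1)$ is orthogonal to it. Pass to a further subsequence along which these tangent spaces converge to a limit plane $\tau$. Whitney (a) gives $\cT_{(x,f(x))}\mathcal{Y}\subset\tau$, where $\mathcal{Y}$ is the graph stratum over $\cX$. Hence $(v,-1)$ is orthogonal to every vector $(u,\langle\nabla_\cX f(x),u\rangle)$ with $u\in\cT_x\cX$. This means $\langle v,u\rangle=\langle\nabla_\cX f(x),u\rangle$ for all $u\in\cT_x\cX$, that is, $P_xv=\nabla_\cX f(x)$.

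\textbf{Step 3: extend to all Clarke subgradients.} The map $v\mapsto P_xv$ is linear and the constraint $P_xv=\nabla_\cX f(x)$ defines an affine set. The formula therefore passes to convex combinations, hence to all of $\partial f(x)$.

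\textbf{Main obstacle.} The delicate point is Step 1. One must ensure that the stratification of the graph is Whitney (a)-regular and that it projects to a genuine $C^p$ stratification of the domain. In particular, limits of gradients may escape to infinity, but local Lipschitz continuity bounds $\nabla f(x_n)$, so the normals $(\nabla f(x_n),-1)$ stay in a compact set. This boundedness is exactly what makes the Whitney limit argument in Step 2 legitimate.
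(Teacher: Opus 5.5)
Your argument is correct and is essentially the proof of \cite{bolte2007clarke} that the paper cites for this statement: a Whitney-(a) stratification of the graph of $f$, projected to the domain as in Lemma~\ref{lem:transversal_proj} and Corollary~\ref{cor:strat_dom_graph}, then Whitney-(a) applied to limits of the graph normals $(\nabla f(x_n),-1)$, then convexification. Only small repairs are needed. The refinement in Step~1 is superfluous, because local Lipschitz continuity already rules out $(0,1)$ as a tangent vector to any graph stratum. The reduction to open strata in Step~2 should either cite Clarke's theorem that $\partial f(x)$ is unchanged when gradients on an arbitrary null set are discarded, or be dropped: for a differentiability point $x_n$ in any stratum $\cX'$, $(\nabla f(x_n),-1)$ is still normal to the graph stratum. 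Finally, the case $\cX'=\cX$ follows from continuity of $P_x$ and $\nabla_{\cX} f$, not from the frontier condition.
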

{Set-valued maps $D: \bbR^d \rightrightarrows \bbR^d$ elements of which satisfy~\eqref{eq:projformula_bolte_new} are called conservative set-valued fields \cite{bolte2021conservative} (see Appendix~\ref{sec:cons_fields}).}
We emphasize that all of our results hold more generally,
when $v_k \in \partial f(x_k)$, is replaced by $v_k \in D(x_k)$, for $D$ some bounded conservative set-valued field of $f$.

\section{Assumptions and summary of main result}
\label{sec:assumptions_main}
In this section and throughout the rest of the paper, we fix a function $f: \bbR^d \rightarrow \bbR$, a set
$\cK \subset \bbR^d$, and a $C^p$ stratification $\bbX$ (see
Definition~\ref{def:stratif_main_new}) of $\cK$. We assume that $\bbX$
satisfies certain regularity properties, both on its own and in relation to
the function $f$.

\begin{assumption}\label{ass:f_lip}
    There exists $G>0$ such that $f$ is $G$-Lipschitz on $\cK$.
\end{assumption}
The following assumption ensures that the function is piecewise smooth relative
to the stratification and strengthens the projection formula of Bolte et al.
(see Proposition~\ref{prop:bolte_projform_new}) in two directions. First, the
projection formula now holds not only for points lying on a stratum $\cX$, but
\emph{globally}, with an error controlled by the distance to the stratum and
its boundary. Second, a curvature bound is imposed on every stratum by
controlling the variation of its tangent planes.

\begin{assumption}
   \label{ass:main}
There exists a $C^p$ stratification $\bbX$ of $\cK$ such that:
   \begin{enumerate}[label=(\roman*)]
        \item\label{ass:main3} For every $\cX \in \bbX$, the restriction $f_{|\cX}$ is $C^p$.
  \item\label{ass:metric} For every $\cX \in \bbX$ its inner metric is equivalent to the outer metric.
        \item\label{ass:main2} There is $L_1 > 0$, so that for any $\cX \in \bbX$ and $x, x' \in \cX \cap \cK$
            \begin{align}\label{eqdef:Lip_tangents}
                \|P_x - P_{x'}\| \leq L_1\cdot \frac{\|x - x'\|}{\truncdist(x, \sX_{\dim(\cX)-1})}\,.
            \end{align}
        \item\label{ass:main4}
        There is $L_2 > 0$ such that for any $\cX \in \bbX$, any $x \in \cK$,
        any $y \in \cX \cap \cK$, and any $v \in \partial f(x)$
        \begin{align}\label{eqdef:proj_form_lip}
            \|P_yv - \nabla_{\cX} f(y)\| \leq L_2 \cdot\frac{\|x - y\|}{\truncdist(y, \sX_{\dim(\cX)-1})}\,.
        \end{align}
   \end{enumerate}
\end{assumption}
Let us comment on the assumptions.
Assumption~\ref{ass:main}-\ref{ass:main3} corresponds to the fact that $f$ is piecewise smooth. This is a standard assumption that ensures convergence of the (stochastic) subgradient method (see \cite{dav-dru-kak-lee-19,bolte2021conservative,benaim2005stochastic}).
Noting that the function $(E_1, E_2) \mapsto \norm{P_{E_1} - P_{E_2}}$ is a distance function on $j$-dimensional vector spaces of $\bbR^d$, Assumption~\ref{ass:main}--\ref{ass:main2} ensures a Lipschitz control on the variations of the tangent planes for a given stratum $\cX$.
As we show below in Lemma~\ref{lem:projection} this guarantees a bound on the curvature of $\cX$ and, in combination with Assumption~~\ref{ass:main}-\ref{ass:metric}, ensures that the projection is well-defined on a conical neighborhood of $\cX$.

Similarly, Assumption~\ref{ass:main}-\ref{ass:main4} provides a Lipschitz control on the behavior of Clarke subgradients projected along any stratum. It is a strengthening of the projection formula of Proposition~\ref{prop:bolte_projform_new} (indeed taking $x=y$ in~\eqref{eqdef:proj_form_lip} we obtain~\eqref{eq:projformula_bolte_new}) that describes the behavior of the subgradients not only \emph{on} a stratum but nearby. Comparing to the previously known Kuo-Verdier assumptions \cite{davis2025active,bianchi2024stochastic} such control holds not only on a neighborhood of a given point $y$ but \emph{globally}, although with a denominator that blows up when boundary is approached. Additionally, note that for $y, y' \in \cX$, and any $v \in\partial f(y)$ and $v \in \partial f(y')$, we obtain
\begin{equation*}
    \norm{\nabla_{\cX} f(y') - \nabla_{\cX} f(y)} \leq \norm{(P_{y'} -P_y)v'} + \norm{P_yv' - \nabla_{\cX}f(y) } \leq (L_1 + L_2)\frac{\|y-y'\|}{\truncdist(y, \sX_{j-1})}\, ,
\end{equation*}
Thus, providing a Lipschitz control on the Riemannian gradients of $f_{|\cX}$ along any stratum $\cX$.

In summary, given a stratification $\bbX$ such that $f$ is smooth when restricted to each stratum, Assumption~\ref{ass:main} provides control over three key aspects:
\begin{enumerate}
    \item the Lipschitz continuity of the Riemannian gradients of $f_{|\cX}$;
    \item the variation of the tangent spaces of the strata $\cX$;
    \item a \emph{global} projection formula relating subgradients of $f$ to the geometry of the stratification.
\end{enumerate}

\subsection{Examples}
In this section we provide some examples of functions, which satisfy the required assumptions.
First, let us give two basic examples, which highlight the necessity of division by the distance to the boundary.
\begin{example}
    For $\beta >0$, consider the function $f: \bbR \rightarrow \bbR$, defined as $f(x) = |x|^{1+\beta}$. It satisfies Assumption~\ref{ass:main} with $\cX_0 =  \{ 0\}$, $\cX_1  =\bbR_{>0}$ and $\cX_2 = \bbR_{<0}$. We note that while $f$ has locally Lipschitz gradient on $\cX_1$, for $x,x'>0$ close to zero we obtain
    \begin{equation*}
        |f'(x) - f'(x')| = (1+\beta)|x^{\beta} - x'^{\beta}| \leq (1+\beta)\frac{|x - x'|}{x}
    \end{equation*}
\end{example}
\begin{example}
    For any $d \geq 2$, the Hessian of $x \mapsto \norm{x}$ is unbounded. Consequently, there is no stratification of $\bbR^d$ for which the gradient of $f$ is Lipschitz on a full-dimensional stratum. The convexity of $x \mapsto \norm{x}$ is not essential here and can be removed by suitable modifications away from the origin.
\end{example}

Finally, the main motivation for the introduced assumptions is the fact that if $f$ and $\cK$ are semialgebraic and $\cK$ is compact, then there is automatically a stratification of $\cK$ such that our assumptions hold. This is a consequence of existence of Lipschitz stratifications for semialgebraic sets.

\begin{theorem}\label{thm:good_assumption}
    Let $f: \bbR^d \rightarrow \bbR$ be locally Lipschitz continuous. Assume that both $f$ and $\cK \subset \bbR^d$ compact. Then, there is a stratification $\bbX$ of $\cK$ such that Assumption~\ref{ass:main} holds and every $\cX \in \bbX$ is semialgebraic.
\end{theorem}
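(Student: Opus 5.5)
We plan to obtain the stratification from Parusiński's theorem on the existence of Lipschitz (Mostowski) stratifications of compact semialgebraic sets \cite{parusinski1994lipschitz}, applied not to $\cK$ but to the graph of $f$. Concretely, set $\Gamma = \{(x,f(x)) : x \in \cK\} \subset \bbR^{d+1}$, a compact semialgebraic set since $f$ is continuous and semialgebraic and $\cK$ is compact. We take a $C^p$ Lipschitz stratification $\bbY$ of $\Gamma$ with semialgebraic strata, refined so that each stratum is a connected $C^p$ cell. The strata of $\bbY$ are graphs of $C^p$ maps over their projections, which we take as $\bbX = (\pi(\cY))_{\cY \in \bbY}$, where $\pi$ is the projection onto the first $d$ coordinates. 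Because $\pi$ restricted to $\Gamma$ is a bi-Lipschitz homeomorphism onto $\cK$ with constant $\sqrt{1+G^2}$, the frontier condition transfers, the skeletons correspond, $\pi$ maps $\sY_j$ to $\sX_j$, and distances to skeletons are comparable up to this constant. Item \ref{ass:main3} holds by construction. For item \ref{ass:metric}, we use the standard fact that every semialgebraic set can be refined into pieces that are $L$-regular (Kurdyka, or Parusiński's $L$-regular decomposition), so their inner metric is equivalent to the outer one. To keep the Lipschitz condition, we will use that Mostowski's construction is compatible with finitely many prescribed semialgebraic subsets, and then refine.

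Next, we derive item \ref{ass:main2} from the Mostowski condition. For a Lipschitz stratification, the tangent projections on a stratum $\cY$ of dimension $j$ satisfy $\|P_y - P_{y'}\| \le C\,\|y-y'\|/\dist(y,\sY_{j-1})$. This is the classical consequence of taking the chain $\{y\}$ in Mostowski's conditions; see Parusiński's remarks following the definition. We will cap the denominator at $1$ using compactness, which produces $\truncdist$. Tangent planes of $\cX=\pi(\cY)$ are images of those of $\cY$ under $\dif\pi$, which is uniformly bi-Lipschitz on $\Gamma$ because $f$ is $G$-Lipschitz. Hence $P^{\cX}_x$ is a Lipschitz function of $P^{\cY}_{(x,f(x))}$ on the relevant Grassmannian, and this yields \eqref{eqdef:Lip_tangents}.

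Item \ref{ass:main4} is the main obstacle, and we would attack it as follows. Fix $y\in\cX$, $x \in \cK$ and a gradient $v=\nabla f(x')$ at a differentiability point $x'$ near $x$. By convexity and closure it suffices to treat such limiting gradients. After refining, we may assume $x'$ lies in a top-dimensional stratum $\cX'$ whose frontier contains $\cX$, or else $x'\in\cX$. The vector $(v,-1)$ is normal to the graph stratum $\cY'$ at $(x',f(x'))$. The key Mostowski-type estimate we need is $\|(I-P^{\cY'}_{z'})P^{\cY}_z\| \le C\|z-z'\|/\dist(z,\sY_{j-1})$ for $z\in\cY$, $z'\in\cY'$, which is exactly the Verdier (w)-type condition with a global constant contained in Mostowski's definition. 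Applying it with $z=(y,f(y))$ and $z'=(x',f(x'))$, any $w\in\cT_z\cY$ satisfies $|\langle w,(v,-1)\rangle|\le C\|w\|\|z-z'\|/\dist(z,\sY_{j-1})$. Writing $w=(u,\langle\nabla_\cX f(y),u\rangle)$ with $u\in\cT_y\cX$ then gives $|\langle u, v-\nabla_\cX f(y)\rangle| \lesssim \|u\|\,\|x'-y\|/\truncdist(y,\sX_{j-1})$. Here we use $\|z-z'\|\le\sqrt{1+G^2}\|x'-y\|$, and when $\dist\ge1$ we use instead the boundedness $\|v\|\le G$. Letting $x'\to x$ and taking convex hulls gives \eqref{eqdef:proj_form_lip}. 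The delicate points are checking that Mostowski's condition indeed yields this (w)-type bound for arbitrary pairs of strata and not only near points, and that the refinements needed for \ref{ass:metric} preserve the Lipschitz property. We expect to handle both by invoking the compatibility of Lipschitz stratifications with finitely many semialgebraic sets, which Parusiński establishes.
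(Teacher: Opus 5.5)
\textbf{Main gap: item~\ref{ass:main4}.} You build $\bbX$ only from a Lipschitz stratification of the graph of $f_{|\cK}$. But Assumption~\ref{ass:main}-\ref{ass:main4} involves the full Clarke subdifferential $\partial f(x)$ in $\bbR^d$, and that depends on $f$ outside $\cK$. Your reduction to $v=\nabla f(x')$, with ``$x'$ in a top-dimensional stratum whose frontier contains $\cX$'', tacitly assumes $x'\in\cK$. At non-interior points of $\cK$ (and everywhere if $\cK$ has empty interior), the relevant differentiability points lie outside $\cK$. There, no stratum and no graph tangent space exist to which a Mostowski estimate could be applied.

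The construction itself can fail. Take $f(x_1,x_2)=\max(0,x_1-|x_2|)$ and $\cK=[-1,0]\times[-1,1]$.
\begin{itemize}
\item $f\equiv 0$ on $\cK$, so the graph of $f_{|\cK}$ is a flat square.
\item Its face stratification (vertices, open edges, open square) has every property you ask for: it is semialgebraic and Lipschitz, its strata are $L$-regular, and $f$ is smooth on each stratum.
\item Take the edge $\cX=\{0\}\times(-1,1)$, $x=y=0$, and $v=(1,1)\in\partial f(0)$. This $v$ is a limit of gradients taken in $\{x_1>|x_2|\}$, outside $\cK$.
\item Then $\|P_yv-\nabla_{\cX}f(y)\|=1$, while the right-hand side of \eqref{eqdef:proj_form_lip} is $0$.
\end{itemize}

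\textbf{The missing step, as in the paper.} First take a stratification $\bbX'$ of $\bbR^d$, compatible with $\cK$, on whose strata $f$ is $C^p$ and the projection formula $P_xv=\nabla_{\cX'}f(x)$ holds for \emph{every} $v\in\partial f(x)$ \cite{bolte2007clarke,lewis2021structure}. Then choose the Lipschitz stratification of the graph compatible with the graphs of $f_{|\cX'\cap\cK}$. Each resulting stratum projects into some $\cX'$. Hence for all $x\in\cK$ and all $v\in\partial f(x)$, the vector $(v,-1)$ is normal to the graph stratum at $(x,f(x))$. The Verdier-type bound from the Lipschitz stratification is then applied at $x$ itself, with no limiting-gradient argument. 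In the example above, this forces the origin to be a stratum. Your derivation of item~\ref{ass:main2}, and your algebra turning the (w)-type bound into \eqref{eqdef:proj_form_lip}, are otherwise as in the paper.

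\textbf{Second, smaller gap: item~\ref{ass:metric}.} Compatibility with an $L$-regular decomposition only places each stratum inside an $L$-regular piece. That does not give equivalence of the stratum's own inner and outer metrics, since the stratification may add lower-dimensional strata (for instance slits) inside a piece. Refining a Lipschitz stratification afterwards does not help either, and this also applies to your refinement into connected cells. The newly created lower-dimensional strata must satisfy their own tangent-Lipschitz and (w)-type estimates against a new skeleton, and these are not inherited. The paper instead uses the existence result that the Lipschitz stratification itself can be chosen with strata whose inner and outer metrics are equivalent \cite{nguyen2016lipschitz,lai2025diameter}.
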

The proof, given in Section~\ref{subsec:lips_strat}, actually establishes a more general statement: $f$ and $\cK$ can be taken definable in an arbitrary polynomially bounded structure, $\partial f$ can be replaced by a definable conservative set-valued field of $f$. Since, as discussed in Appendix~\ref{app:deep_L} most of the deep learning architectures are (restricted to a compact) polynomially bounded \cite{telgarsky2016benefits,bareilles2025deep}, the result is appliable to these cases.
\subsection{Main results}

Before going further let us introduce the notation for a collection of strata of given dimension $j \leq d$:
\begin{align}
    \bbX_j :=\{ \cX \in \bbX\,:\, \dim(\cX) = j\}\,.
\end{align}

    Our non-asymptotic results will depend on a structural parameter of the stratification, {which we name the \emph{rank}}, that naturally appears in the analysis. It corresponds to the number of non-empty sets in $(\bbX_j)_{j=0}^d$, which we call rank of $\bbX$ and introduce below.
    \begin{definition}
        \label{def:rank_stratum}
        Let $\bbX$ be a $C^p$ stratification of $\cK$. Define the set of active dimensions
        \begin{align*}
            \cJ := \{ j \in \{0,\ldots,d\} \,:\, \bbX_j \neq \emptyset \}\,.
        \end{align*}
        Let $(j_r)_{r=0}^R$ be the increasing enumeration of $\cJ$, where $R = |\cJ|-1$.
        For $\cX \in \cK$, define its rank as the unique $r$ such that $\cX \in \bbX_{j_r}$, \emph{i.e.,} $\rank(\cX) = r \, \Longleftrightarrow \, \cX \in \bbX_{j_r}$.
    \end{definition}
    The obtained rates depend on the number of active strata levels $R$, which interpolates between the smooth case\footnote{Although our main interest lies in the case $R \geq 1$, the results can be adapted in a straightforward manner to the case $R = 0$, recovering—up to constants—the standard convergence rates for smooth functions. We do not pursue this refinement and focus on the regime $R \geq 1$.} $(R=0)$ and the fully stratified case $(R=d)$.
    For a first reading, it is helpful to consider the case $R = d$ and $j_r = r$, corresponding to a stratification that contains strata of all dimensions.

Finally, to state our main result, for a given stratum $\cX$, let us {set}
\begin{equation*}
    g_{\cX} \eqdef f\circ {\pi}_{\cX}\,,
\end{equation*}
whenever it is well-defined.
This function is always well-defined on an appropriate neighborhood of $\cX$ (which will be characterized in Lemma~\ref{lem:projection} below) and in the following theorems $g_{\cX}(x_k)$ is evaluated \emph{only} if $x_k$ lies in such neighborhood. We now have all the tools to present our complexity guarantees.
\begin{theorem}
    \label{thm:main_announced}
    Let $f$ and $\bbX$ satisfy Assumptions~\ref{ass:f_lip} and~\ref{ass:main} and $R \geq 1$ be the rank of $\bbX$. Then, for any $\alpha < \beta < 1/(R+1)$, there is $\gamma_0>0$, $C>0$, such that for all $K \geq 1$, if $\gamma < \gamma_0$ and $x_k \in \cK$ for all $k \leq K$, then there is a sequence of strata $(\Psi_k)_{k=1}^K$, with $\Psi_k \in \bbX$, for which
        \begin{equation*}
        \dist(x_k, \Psi_k) \leq \gamma^{\alpha + \rank(\Psi_k) \beta}\,
    \end{equation*}
    and
         \begin{align*}
        \frac{1}{K}\sum_{k = 1}^K\|\nabla g_{\Psi_k}(x_k)\|^2 \leq C\bigg(\underbrace{\frac{g_{\Psi_1}(x_1) - g_{\Psi_K}(x_{K+1})}{\gamma K}}_{\text{descent}} + \underbrace{\gamma^{\beta - \alpha}}_{\text{strata-switching}} + \underbrace{\gamma^{2\alpha}}_{\text{descent's error}}\bigg)\,.
    \end{align*}
\end{theorem}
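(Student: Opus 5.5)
Our plan rests on three ingredients: tubular-neighborhood geometry for each stratum, a one-step descent inequality for $g_{\cX}$ near $\cX$, and a stratum-selection rule whose switches are rare and cheap.

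\textbf{Step 1: tubular neighborhoods.} We would first use Lemma~\ref{lem:projection}, which follows from Assumption~\ref{ass:main}-\ref{ass:metric},\ref{ass:main2}, to obtain a conical tube. Concretely, for $\cX\in\bbX_j$ there should be $c>0$ such that $\pi_{\cX}$ is well defined and $C^{p-1}$ on
\[
U(\cX)=\{x:\dist(x,\cX)\le c\,\truncdist(\pi_{\cX}(x),\sX_{j-1})\}.
\]
On this set we also want $\|\Jac\pi_{\cX}(x)-P_{\pi_{\cX}(x)}\|\lesssim \dist(x,\cX)/\truncdist(\pi_{\cX}(x),\sX_{j-1})$. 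With $y=\pi_{\cX}(x)$, this gives $\nabla g_{\cX}(x)=\Jac\pi_{\cX}(x)^\top\nabla_{\cX}f(y)$. We then combine Assumption~\ref{ass:main}-\ref{ass:main4} with the Lipschitz bound on $\nabla_{\cX}f$ derived after the assumption. The expected outcome is a one-step descent inequality
\[
g_{\cX}(x_{k+1})\le g_{\cX}(x_k)-\tfrac{\gamma}{2}\|\nabla g_{\cX}(x_k)\|^2+C\gamma\,\frac{\dist(x_k,\cX)^2+\gamma^2}{\truncdist(\pi_{\cX}(x_k),\sX_{j-1})^2}.
\]
This holds whenever $x_k,x_{k+1}\in U(\cX)$. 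The key point is that $P_y v_k=\nabla_{\cX}f(y)+O(\dist/\truncdist)$, so the step along $\cX$ is a perturbed Riemannian gradient step.

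\textbf{Step 2: stratum selection.} Define $\Psi_k$ as a stratum of lowest rank $r$ with $\dist(x_k,\cX)\le\gamma^{\alpha+r\beta}$. Such a stratum exists because the top-dimensional strata are dense in $\cK$ up to a distance that we make small, and one can take $r=R$ with $\gamma^{\alpha+R\beta}$ if needed. Minimality of the rank forces
\[
\dist(x_k,\sX_{j_{r}-1})\ge\gamma^{\alpha+(r-1)\beta},
\]
and we expect the same bound with $\truncdist(\pi_{\Psi_k}(x_k),\cdot)$ in place of $\dist(x_k,\cdot)$ up to constants. The error term in Step 1 is then at most
\[
\gamma\cdot\gamma^{2(\alpha+r\beta)}/\gamma^{2(\alpha+(r-1)\beta)}=\gamma^{1+2\beta},
\]
plus $\gamma^3/\gamma^{2(\alpha+(r-1)\beta)}$. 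Since $\alpha+R\beta<1$, the latter is $\le\gamma^{1+2\alpha}$ after adjusting exponents. This produces the descent's-error term $\gamma^{2\alpha}$ after dividing by $\gamma$. The ratio $\gamma^{\beta}$ between consecutive scales also guarantees that $x_k$ lies deep inside the cone $U(\Psi_k)$ for $\gamma<\gamma_0$. A step of length $G\gamma\ll\gamma^{\alpha+R\beta}$ keeps $x_{k+1}$ in the cone as well.

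\textbf{Step 3: patching across switches (main obstacle).} Summing the inequality from Step 1 over $k$ telescopes only while $\Psi_k$ is constant. At a switch $\Psi_k=\cX\neq\cX'=\Psi_{k+1}$ we must bound $g_{\cX'}(x_{k+1})-g_{\cX}(x_{k+1})$, which by $G$-Lipschitzness is at most
\[
G\bigl(\dist(x_{k+1},\cX)+\dist(x_{k+1},\cX')\bigr)\lesssim\gamma^{\alpha}.
\]
We then need the number of switches in $K$ steps to be $O(\gamma^{\beta-2\alpha}\cdot\gamma K)$, or to organize the argument so that each switch costs $\gamma^{\alpha}$ and is separated by at least $\gamma^{\alpha-\beta}\cdot\gamma^{-1}\cdots$ steps.

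The approach we would try first uses a hysteresis version of the rule. Keep $\Psi_k$ until $x_k$ leaves a slightly larger tube of radius $2\gamma^{\alpha+r\beta}$, or enters a smaller tube $\tfrac12\gamma^{\alpha+r'\beta}$ of a lower-rank stratum. Leaving the tube requires travelling distance $\gtrsim\gamma^{\alpha+r\beta}$. Displacement normal to $\cX$ per step should be small, of order $\gamma\cdot\dist/\truncdist$ by Assumption~\ref{ass:main}-\ref{ass:main4}, which yields a minimum dwell time. Entering a lower stratum can happen at most $R$ times in a row before a rank increase, so switches are charged against the path length. Path length is in turn controlled by $\gamma\sum\|v_k\|$ and by the descent terms.

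Balancing the switch cost against the dwell time should produce the $\gamma^{\beta-\alpha}$ term per unit $\gamma K$. Making this counting rigorous is where we expect most of the work, particularly in handling rank increases, where no descent is available, via a potential that decreases with rank.
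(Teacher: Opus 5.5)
Your Steps 1 and 2 are sound: they match Lemmas~\ref{lem:projection} and~\ref{lem:descent_deterministic}, and the descent estimate is fine under your minimal-rank rule. Step 3, however, has a genuine gap, and it is not only a matter of making the counting rigorous: the mechanism you propose gives the wrong order. The cost of a switch between $\cX$ and $\cX'$ is $G$ times the distance to the \emph{lower-dimensional} of the two strata (Lemma~\ref{lem:distance_different_projection_same_point}). Take a hysteresis whose entry and exit radii differ by a constant factor ($\gamma^{\alpha+r\beta}$ versus $2\gamma^{\alpha+r\beta}$). Each upward switch away from a rank-$r$ stratum $\cX$ then occurs when $\dist(x_k,\cX)\approx 2\gamma^{\alpha+r\beta}$, so it costs about $G\gamma^{\alpha+r\beta}$. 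The only dwell-time bound available is that re-entry needs travel $\gtrsim\gamma^{\alpha+r\beta}$ at speed $\|x_{k+1}-x_k\|\le G\gamma$. That allows up to $G\gamma K/\gamma^{\alpha+r\beta}$ switches. The product is of order $G^2\gamma K$, which after dividing by $\gamma K$ is $O(1)$, not $\gamma^{\beta-\alpha}$; downward switches behave the same way. Your hope for a longer dwell time relies on the normal displacement per step being $O(\gamma\,\dist/\truncdist)$ by Assumption~\ref{ass:main}-\ref{ass:main4}. That assumption only controls the tangential part $P_y v$: for $f(x,y)=-|y|$ and $\cX=\{y=0\}$, the iterates leave any tube around $\cX$ at speed $\gamma$ per step. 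For the same reason a uniform cost $\gamma^{\alpha}$ per switch cannot work. Near rank-$R$ strata the number of switches can be of order $\gamma K/\gamma^{\alpha+R\beta}\gg\gamma^{\beta-2\alpha}\gamma K$.

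The missing idea is a genuine two-scale buffer combined with a non-causal choice of switching times. The paper uses two tubes:
inner $\smallC(\cX)$ of radius $\gamma^{\beta}\min\{\gamma^{r\beta},\dist(x,\sX_{j-1})\}$, and outer $\bigC(\cX)$ of radius $\gamma^{\alpha}\min\{\gamma^{r\beta},\dist(x,\sX_{j-1})\}$.
It only requires $x_k\in\bigC(\Psi_k)\setminus\smallC_{<\dim\Psi_k}$, so the descent error becomes $\gamma^{2\alpha}$ rather than your $\gamma^{2\beta}$. Definition~\ref{def:good_switch} then imposes two conditions. Every switch between $\cX$ and a stratum of dimension $\geq\dim\cX$ must occur at an index with $x_k\in\smallC(\cX)$. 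Consecutive such switches must be separated by an exit from $\bigC(\cX)$. Each switch then costs $\lesssim G\gamma^{(r+1)\beta}$, and Lemma~\ref{lem:geom_all}(\ref{:6}) bounds their number by $\lesssim G\gamma K/\gamma^{\alpha+r\beta}$. The ratio of the two scales gives exactly $\gamma^{\beta-\alpha}$. For upward switches this forces the switch to sit at the \emph{last} visit to $\smallC(\cX)$ before the trajectory exits $\bigC(\cX)$, which requires looking ahead. A causal rule that abandons $\cX$ only on leaving the outer tube pays the outer-scale cost again. One that abandons it on leaving the inner tube loses the dwell-time bound. The paper builds such a selection offline, dimension by dimension (Algorithm~\ref{algo:main}), which also replaces your unspecified rank potential. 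The restriction $\beta<1/(R+1)$ enters through Corollary~\ref{cor:distance_to_boundary_is_large}. It ensures a step of length $G\gamma$ is small compared with the thinnest inner-tube radius.
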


Explicit rate of convegence is obtained by setting $3\alpha = \beta$, $\beta < 1/(R+1)$, and $\gamma = K^{-\frac{1}{2\alpha + 1}}$. To avoid cumbersome derivations and introduction of additional parameters, quantifying the gap between $\beta$ and $1 / (R+1)$, we only present the next corollary for $\beta = 1/(R+2)$. More generally, one can use any $\beta = 1 / (R + 1 + \varepsilon)$ for $\varepsilon > 0$.
\begin{corollary}\label{cor:complexity_guarantees}
    Under conditions and notation of Theorem~\ref{thm:main_announced}, there exists $K_0 > 1$  such that for any $K \geq K_0$ and $\gamma = K^{-1 + \frac{2}{3R+8}}$, $(x_k)_{k = 1}^K$ are produced by~\eqref{eqdef:subg_des_intro} satisfy
    \begin{align*}
        \frac{1}{K}\sum_{k = 1}^K {\|\nabla g_{\Psi_k}(x_k)\|^2} \leq C K^{-\frac{2}{3R+8}} \quad\text{and}\quad \dist(x_k, \Psi_k) \leq \gamma^{\frac{3\rank(\Psi_k) + 1}{3R+6}}\,,
    \end{align*}
    where $C$ depends on $K$ only via $g_{\Psi_1}(x_1) - g_{\Psi_K}(x_{K+1})$.
\end{corollary}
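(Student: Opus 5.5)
The corollary follows from Theorem~\ref{thm:main_announced} by fixing $\beta$ and $\alpha$ and then choosing $\gamma$ as a function of $K$. I set $\beta = 1/(R+2)$, which is admissible because $\beta < 1/(R+1)$. I set $\alpha = \beta/3 = 1/(3R+6)$, so that $\alpha<\beta$.

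With these values the strata-switching and descent-error exponents are
\begin{equation*}
\beta-\alpha = \tfrac{2}{3R+6} \quad\text{and}\quad 2\alpha = \tfrac{2}{3R+6}\, .
\end{equation*}
Since these coincide, the two error terms merge into a single $2\gamma^{2\alpha}$. The distance bound follows directly from the theorem:
\begin{equation*}
\alpha + \rank(\Psi_k)\beta = \frac{1 + 3\rank(\Psi_k)}{3R+6}\, .
\end{equation*}
This gives $\dist(x_k,\Psi_k) \le \gamma^{(3\rank(\Psi_k)+1)/(3R+6)}$ as claimed.

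Next I balance the descent term $\Delta/(\gamma K)$ against $\gamma^{2\alpha}$, where $\Delta = g_{\Psi_1}(x_1) - g_{\Psi_K}(x_{K+1})$. Solving $\gamma^{1+2\alpha} = K^{-1}$ gives $\gamma = K^{-1/(1+2\alpha)}$. Since $1+2\alpha = (3R+8)/(3R+6)$, this is
\begin{equation*}
\gamma = K^{-(3R+6)/(3R+8)} = K^{-1+\frac{2}{3R+8}}\, ,
\end{equation*}
which matches the stated choice. Both terms then equal a constant multiple of $K^{-2\alpha/(1+2\alpha)}$. Computing the exponent,
\begin{equation*}
\frac{2\alpha}{1+2\alpha} = \frac{2/(3R+6)}{(3R+8)/(3R+6)} = \frac{2}{3R+8}\, .
\end{equation*}
Indeed, $\Delta/(\gamma K) = \Delta\, K^{-2/(3R+8)}$ and $\gamma^{2\alpha} = K^{-2/(3R+8)}$. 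So the average of $\|\nabla g_{\Psi_k}(x_k)\|^2$ is at most $C(\Delta + 2)K^{-2/(3R+8)}$. The new constant depends on $K$ only through $\Delta$, because the $C$ of the theorem depends only on $\alpha$, $\beta$ and the data.

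The only genuine requirement is the step-size condition $\gamma<\gamma_0$ from Theorem~\ref{thm:main_announced}. Because $\gamma = K^{-(3R+6)/(3R+8)} \to 0$, it suffices to take $K_0$ with $K_0^{-(3R+6)/(3R+8)} < \gamma_0$, and then every $K\ge K_0$ is covered.

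I also need the hypothesis $x_k\in\cK$ for $k\le K$. I read it as implicitly carried over from "under conditions of Theorem~\ref{thm:main_announced}"; this is the one point to state explicitly. No step is a real obstacle: the argument is exponent bookkeeping plus checking that the strict inequalities $\alpha<\beta<1/(R+1)$ hold for these choices.
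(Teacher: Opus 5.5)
Your proposal is correct and follows essentially the paper's own route. The paper obtains the corollary by taking $\beta = 1/(R+2)$ and $3\alpha=\beta$, so that $\beta-\alpha = 2\alpha = 2/(3R+6)$, and $\gamma = K^{-1/(2\alpha+1)}$ in Theorem~\ref{thm:main_announced}, with $K_0$ chosen so that $\gamma<\gamma_0$. Your exponent bookkeeping is accurate, and so is your remark that the hypothesis $x_k\in\cK$ is inherited from the theorem.
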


Note that the guarantee improves for higher-dimensional strata: for instance, when $\rank(\Psi_k) = R-1$, one has $\dist(x_k, \Psi_k) \leq \gamma^{1 - \frac{8}{3R+6}}$, reflecting the fact that iterates remain closer to strata of larger dimension.

While the primary contribution of Theorem~\ref{thm:main_announced} is theoretical---demonstrating that finite-time convergence rates can be rigorously established within the developed framework---it is nonetheless instructive to discuss its practical scope. Even in low-dimensional settings, such results were previously unavailable, and the rates obtained there remain informative. In full generality, however, the worst-case bound exhibits an exponential dependence on the ambient dimension, which limits its utility in high-dimensional regimes.

This worst-case behavior should nonetheless be interpreted with care, as the convergence rate is governed by the rank of the underlying stratification rather than the ambient dimension per se. Furthermore, as is apparent from the proof, the analysis holds uniformly over all sufficiently small step sizes $\gamma$. Since the subgradient descent algorithm does not depend on any particular choice of stratification, the bound may be evaluated with respect to any stratification satisfying Assumption~\ref{ass:main}---in particular, one that minimizes the rank among all admissible choices.

It is also apparent from the proof that the rank $R$ need not be taken over the entire stratification $\bbX$. For instance, strata that remain at a uniformly positive distance from the realized trajectory may be pruned from consideration. This observation suggests that Theorem~\ref{thm:main_announced} could potentially be sharpened so that the rate depends only on the rank of the sub-stratification witnessed along the trajectory. We do not pursue this refinement here.

\paragraph{Further developments.} {We} use the machinery developed for the proof of Theorem~\ref{thm:main_announced} to provide two additional results in Section~\ref{sec:varying}. More precisely, we make two developments:
\begin{enumerate}
    \item Extension of Theorem~\ref{thm:main_announced} to an arbitrary decreasing sequence of step-sizes $(\gamma_k)_{k \geq 1}$;
    \item For a sufficiently fast decreasing sequence of $(\gamma_k)_{k\geq 1}$, we recover the result announced in~\cite{lai2025diameter} as a relatively direct consequence of our main results;
\end{enumerate}

\subsection{Further discussion and related works}\label{sec:discuss}

We provide in this section some additional discussion on Theorems~\ref{thm:good_assumption} and \ref{thm:main_announced}.

\paragraph{On the projection formula.}
The importance of stratifications for the analysis of subgradient methods was first highlighted in
\cite{bolte2006nonsmooth,bolte2007clarke}. In these works, the authors established a projection
formula for Clarke subgradients of semialgebraic (and more generally definable) functions as a
consequence of the existence of a stratification of the graph satisfying a Whitney-(a) condition.
This projection formula was later used in \cite{drusvyatskiy2015curves}, to show that $f$ is a Lyapunov
function for the differential inclusion $\dot{\sx}_t \in -\partial f(\sx_t)$. Combined with
stochastic approximation techniques
\cite{benaim2005stochastic,borkar2008stochastic,duchi2018stochastic}, this led to asymptotic
convergence guarantees for the subgradient method toward the $\partial f$-critical set \cite{dav-dru-kak-lee-19}.

Subsequently, it was observed that replacing the Whitney-(a) condition with stronger regularity
conditions yields refined versions of the projection formula, allowing one to control subgradients
near neighboring strata. In particular,
\cite{bianchi2024stochastic,davis2025active} showed that the Kuo--Verdier condition
implies a Lipschitz-type property for $\partial f$: Equation~\eqref{eqdef:proj_form_lip} holds
locally around any $y \in \cX$ (in particular, on this neighborhood
$\dist(y,\sX_{\dim(\cX)-1})$ can be treated as a constant). This strengthened projection formula
was used in \cite{bianchi2024stochastic,davis2025active} to analyze the stochastic subgradient
method near generic nonsmooth critical points. The Kuo--Verdier condition was also used in
\cite{josz2024sufficient} to study instability of the subgradient method. We also note that
\cite{davis2025active} observed that the Whitney-(b) condition implies a semismoothness property
of $f$ along the strata.

To our knowledge, the recent work of Lai and Song \cite{lai2025diameter} provides the first use of Lipschitz stratifications in the context of subgradient descent.
Using Lipschitz
stratifications, the authors established a weaker form of
Equation~\eqref{eqdef:proj_form_lip} for semialgebraic functions, in which the term
$1/\dist(y,\sX_{\dim(\cX)-1})$ is replaced by $1/\dist(y,\partial\cX)^{\eta}$, where
$\eta>0$ is a {\L}ojasiewicz exponent (see \cite[Proposition 3.2]{lai2025diameter}). Assuming that\footnote{We conjecture that for semialgebraic functions,
Equation~\eqref{eqdef:proj_form_lip} can be established in a form where
$\dist(y,\sX_{\dim(\cX)-1})$ is replaced by $\dist(y,\partial\cX)$, but leave
this question for future work.} locally
$\dist(y,\sX_{\dim(\cX)-1})$ is of the same order as $\dist(y,\partial\cX)$, a key distinction of
Theorem~\ref{thm:good_assumption} is that we obtain $\eta=1$. In particular, when
$\eta > 1$, our condition implies theirs.

Importantly, unlike \cite{lai2025diameter}, our analysis does not rely on
{\L}ojasiewicz inequalities to establish the projection formula. Instead,
the estimate follows directly from the existence of a Lipschitz stratification
of the graph of $f$, showing that the projection formula
is a geometric consequence of Lipschitz stratifications and does not rely on
semialgebraicity. We also note here that the curvature bound~\eqref{eqdef:Lip_tangents}, giving us the key Lemma~\ref{lem:projection}, was not exploited in \cite{lai2025diameter}.

\paragraph{Complexity rates as approximate stationarity relative to a conservative set-valued field.}

As alluded to in the introduction, the key idea underlying
Theorem~\ref{thm:main_announced} is to note that when the iterates $(x_k)$ remain
close to a stratum $\cX$, we observe a decrease of $g_{\cX}$ governed, up to
error terms, by $\norm{\nabla g_{\cX}(x_k)}$. Naturally, one may ask whether
this decrease certificate relates to known optimality measures. We provide
here a viewpoint based on conservative set-valued fields, introduced by Bolte
and Pauwels in \cite{bolte2021conservative}.

A conservative set-valued field of $f$ is a closed-graph set-valued map
$D: \bbR^d \rightrightarrows \bbR^d$ satisfying, for any absolutely continuous curve
$\c:[0,1] \rightarrow \bbR^d$,
\begin{equation*}
   \frac{\dif}{\dif t} (f\circ \c)_t
    = \scalarp{v}{\dot{\c}_t} \qquad \textrm{for a.e. $t \in [0,1]$ and all $v \in D(\c_t)$,} \, .
\end{equation*}
It was shown in \cite{bolte2007clarke} that the Clarke subgradient is a
conservative set-valued field, as well as the output of backpropagation when
applied to neural network loss functions. The key argument relies on the fact
that these operators satisfy a projection formula: there exists a
stratification $\bbX$ of the domain of $f$ such that
Equation~\eqref{eq:projformula_bolte_new} holds for any $v \in D(x)$.
Subsequently, \cite{lewis2021structure} showed that this property is also
necessary: any nonempty-valued, closed-graph operator satisfying a projection
formula is a conservative set-valued field.

Thus, given the stratification of Assumption~\ref{ass:main}, we naturally
associate to it the set-valued field\footnote{Strictly speaking, this operator does not necessarily have a closed
graph; however, it is contained in its closure
$\bar D(x;\bbX) = \cap_{\delta>0} D^\delta(x;\bbX)$, where $D^\delta$ denotes
the $\delta$-enlargement of $D$ (see Equation~\eqref{eqdef:delta_enlargD}).
The operator $\bar D$ is a conservative set-valued field.}
\begin{equation}\label{eqdef:cons_field_lips}
    D(x; \bbX)
    = \{ v \in \bbR^d : v = \nabla_{\cX} f(x) \text{ if } x \in \cX \in \bbX \}\,,
\end{equation}
that is, the minimal operator associated with $\bbX$ satisfying the projection
formula.

As we show in Lemma~\ref{lem:projection} below, for some constant $C>0$, for $x$ close to a stratum $\cX$,
\begin{equation*}
    \norm{\nabla_{\cX}f(y)} \leq C\norm{\Jac \pi_{\cX}(x) \nabla_{\cX}f(y)} = C\norm{\nabla g_{\cX}(x)}\, , \quad \textrm{ where $y = \pi_{\cX}(x)$}\, ,
\end{equation*}
Therefore, the complexity guarantees of
Corollary~\ref{cor:complexity_guarantees} can be expressed in the form
\begin{equation*}
      \min_{k \in \sqbracket{1}{K}}
      \norm{D^{\delta_k}(x_k ; \bbX)}_{0}
      \leq C K^{-\frac{2}{3R+8}} \quad\text{and}\quad \delta_k \leq \gamma^{\frac{3\rank(\Psi_k) + 1}{3R+6}}\, ,
\end{equation*}
where $\norm{\cdot}_{0}$ denotes the minimal norm of the elements of a set,
and $D^{\delta}$ is the $\delta$-enlargement of $D$:
\begin{equation}\label{eqdef:delta_enlargD}
D^{\delta}(x; \bbX)
= \{ v \in \bbR^d : v \in D(x'; \bbX),\, \norm{x-x'} \leq \delta \}\,.
\end{equation}
Thus, our rates can be interpreted as guaranteeing approximate stationarity
with respect to the conservative field induced by the stratification.

We note that conservative set-valued fields have already appeared as
optimality certificates in several contexts. In particular, due to
nonsmoothness, the output of backpropagation typically does not belong to the
Clarke subgradient, but rather to a conservative set-valued field associated
with the computational graph of the neural network. In this setting, asymptotic
convergence is therefore guaranteed toward the critical points of this field \cite{bolte2023subgradient}.
{Conservative set-valued fields also arise independently of backpropagation; for example, \cite{pauwels2023conservative} established the existence of one governing the limit behavior of the Ridge method, while \cite{pmlr-v291-schechtman25a} showed that SGD on homogeneous neural networks converges to the critical points of a conservative field associated with the margin.}

\paragraph{Comparison to other certificates of stationarity.}
Optimality measured through a conservative field differs in several respects
from notions based on the Clarke subdifferential, which we now discuss.
First, for any $x \in \bbR^d$, one has
$\partial f(x) \subset \conv D(x)$~\cite[see][Corollary 1]{bolte2021conservative}, so from this perspective the notion of
$D$-criticality is weaker. However, due to the projection formula, for any
$x \in \cX \in \bbX$, the sets $D(x)$ and $\partial f(x)$ differ only in
directions normal to the stratum, i.e., along $\cN_x \cX$. Thus, conservative
set-valued fields capture the first-order geometry of the objective along the
strata. In particular, for almost every $x$, one
has $D(x)=\partial f(x)=\{\nabla f(x)\}$.

We now turn to approximate stationarity. Given a first-order operator
$D:\bbR^d \rightrightarrows \bbR^d$, which may be a subgradient mapping or a
conservative field, two natural notions arise. The first is
\emph{$(\varepsilon,\delta)$-stationarity}, which formalizes the fact that the
current iterate lies within distance $\delta$ of a point $x$ for which there
exists $v \in D(x)$ with $\|v\| \le \varepsilon$. The second is
\emph{$(\varepsilon,\delta)$-Goldstein stationarity}, which first convexifies
the set of vectors in a $\delta$-neighborhood of $x$ and then requires the
existence of a vector of norm at most $\varepsilon$ in this convex hull.

The latter notion is strictly weaker. For instance, in
\cite[Remark~6]{shamir2020can}, for any $\delta>0$, a function is constructed
for which the origin is $(\varepsilon,\delta)$-Goldstein stationary while it
is not $(\varepsilon,\delta)$-stationary for any sufficiently small
$\varepsilon$. We note that the guarantees obtained in
\cite{goldstein1977optimization,zhang2020complexity,kong2025lipschitz,
jordan2022complexity,davis2022gradient} are expressed in terms of Goldstein
stationarity (based on the Clarke subdifferential), while
\cite{shamir2020can} shows that dimension-free rates for near-stationarity are
impossible to obtain in general.

In contrast, our approach does not rely on such convexification: the operator
$D^\delta$ that we consider is not Goldstein-like. Nevertheless, the resulting
convergence rates depend on the ambient dimension. It would therefore be
interesting to understand whether, under Assumption~\ref{ass:main}, this
dependence is intrinsic to the subgradient method, or whether
dimension-independent rates could still be achieved, possibly by introducing
a Goldstein-type convexification of $D$.

\paragraph{On the iterates remaining in $\cK$.}
{While in some cases one can establish Assumption~\ref{ass:main} for $\cK = \bbR^d$, rendering the assumption $x_k \in \cK$ void, existence of a stratification satisfying Assumption~\ref{ass:main} is guaranteed for semialgebraic sets by Theorem~\ref{thm:good_assumption}, provided that $f$ and $\cK$ are semialgebraic and $\cK$ is compact. Therefore, to apply Theorem~\ref{thm:main_announced} to the subgradient method on semialgebraic functions, one needs a guarantee that the iterates remain in a bounded set.} Such a stability assumption is already required for existing asymptotic results (see \cite{dav-dru-kak-lee-19,bolte2023subgradient}). However, in those works, there is no need to know this set (and thus the corresponding stratification) in advance. In contrast, our analysis requires specifying a compact set $\cK$ beforehand in order to construct the associated stratification.

One ad-hoc way to address this issue is to analyze the projected subgradient method onto a semialgebraic compact $\cK$. We believe that a similar analysis can be carried out for this algorithm, with rates analogous to those of Theorem~\ref{thm:main_announced}. To avoid overloading the exposition, we leave such extensions for future work.

Furthermore, in some cases boundedness can be ensured by structural properties of $f$. For instance, if the objective is semialgebraic and coercive, then for sufficiently small $\gamma$ the iterates remain in a compact set \cite{josz2024global,bolte2025inexact}. A prominent example arises in machine learning, where a nonnegative loss function is regularized by a coercive term, thereby ensuring stability of the iterates.

\section{Proof of Theorem~\ref{thm:main_announced}}
\label{sec:proof_main}

\begin{table}[t!]
\begin{center}
\begin{tabular}{{@{}lll@{}}}
    \toprule
    Notation& Meaning &Defined \\
 \midrule
 $\bbX$ & stratification & Section~\ref{sec:strat_notation}\\
  $\sX_j $ & union of strata of dimension less than $j$ & Equation~\eqref{eqdef:closed_skeleton} \\
  $\rank(\bbX)$ & number of $j$ such that $\bbX_{j}$ is nonempty & Definition~\ref{def:rank_stratum} \\
  $\rank(\cX)$ & $r$ such that $\cX \in \bbX_{r}$ & Definition~\ref{def:rank_stratum}\\
$\cC^{*}(\cX)$ & well-behaved tubular neighborhood of $\cX$ & Equation~\eqref{eqdef:verybig_cone} \\
$\bigC(\cX)$ & outer tubular neighborhood of $\cX$ & Equation~\eqref{eqdef:large_cone} \\
$\smallC(\cX)$ & inner tubular neighborhood of $\cX$ & Equation~\eqref{eqdef:thin_cone}\\
$g_{\cX}$ & equal to $f \circ \pi_{\cX}$, well-defined on $\cC^{*}(\cX)$ & Equation~\eqref{eqdef:f_projX} \\
$\pi_{\cX}$ & is the projection onto $\cX$, well-defined on $\cC^{*}(\cX)$ & Section~\ref{sec:notation} \\
$P_x$ & projection onto $\cT_x \cX$, for $x \in \cX \in \bbX$ & Section~\ref{sec:submanif_notation}\\
 $\bbX_{\square \,j}$ & set of strata of dimension $\square$ $j$ & Equation~\eqref{eqdef:strat_quantifiers} \\
 $\bigC_{\square \,j}$& union of large tubular neighborhoods of dimension $\square \,j$& Equation~\eqref{eqdef:cone_quantifiers}\\
  $\smallC_{\square \,j}$& union of thin tubular neighborhoods of dimension $\square \,j$& Equation~\eqref{eqdef:cone_quantifiers}\\
 \bottomrule
\end{tabular}
\end{center}
\caption{Notation relative to a stratification $\bbX$. Here, $\square  \in \{\leq, <, =\}$}\label{tab:not_strat}
\end{table}

\begin{table}[t!]
\begin{center}
\begin{tabular}{@{}lll@{}}
    \toprule
    Notation& Relations &Defined \\
 \midrule
$G$ & $\norm{v} \leq G$, for all $v \in\partial f(x)$, $x \in \cK$ & Assumption~\ref{ass:f_lip} \\
  $L_0, \mathtt{A}_3, \ulambda, \blambda >0$ & such that Lemma~\ref{lem:projection} holds for all strata $\cX$ & Lemma~\ref{lem:projection}\\
 $\mathtt{A}_1$, $\mathtt{A}_2$, $\mathtt{A}_3$, &  $\mathtt{A}_1 = \ulambda / (16\blambda^2)$, $\mathtt{A}_2 = L_2^2(4\blambda^2/\ulambda + \ulambda/2)$, $4\mathtt{A}_3\leq 1$ & Lemma~\ref{lem:descent_deterministic} \\
  $\alpha,\beta \in (0,1)$, $R>0$ & $\alpha < \beta$, $(R+1)\beta < 1$ and $R= \rank(\bbX)$ & Theorem~\ref{thm:main_announced} \\
    $\gamma < \gamma_0 \leq 1$ & $4\gamma_0^{\beta - \alpha} \leq 1$, $2 G \leq \gamma_{0}^{(R+1)\beta -1}$, $\max\left\{4 \blambda G  , 8  L_2 \tfrac{\blambda^2}{\ulambda}, 2L_1G\bar{\lambda} \right\}\leq \gamma_{0}^{R\beta-1}$ & Theorem~\ref{thm:main_announced}\\
 \bottomrule
\end{tabular}
\end{center}
\caption{Notation relative to various constants.}\label{tab:not_const}
\end{table}

Before delving into technicalities let us describe the main ideas behind the proof of Theorem~\ref{thm:main_announced}.

The main argument is organized into four parts. The first part establishes geometric and analytic consequences of Assumption~\ref{ass:main}. The second part introduces and analyzes inner and outer neighborhoods of each stratum $\cX \in \bbX$. The third part shows how local descent relations with additional vicinity conditions combine along the iterates. The fourth part constructs a suitable rule for selecting the strata that ensures that the additional terms introduced in the second step remain controlled.

\begin{enumerate}
    \item  In the first step, we derive consequences of Assumption~\ref{ass:main}. Mainly, we show that this assumption guarantees two structural properties of the stratification and of the projected objective functions.
    \begin{enumerate}
        \item The projection onto a stratum $\cX \in \bbX$ is well-defined in a neighbourhood of $\cX$ and the spectrum of $\Jac \pi_{\cX}$ is lower and upper bounded in the directions tangent to $\cX$.
        Moreover, the size of this neighbourhood decreases linearly when approaching the boundary of $\cX$. Intuitively, this means that the projection is well-defined as long as the point of interest is closer to the strata itself than to $\sX_{\dim(\cX) - 1}$.
        These two properties of projection are established in Lemma~\ref{lem:projection}.
        \item Suppose that $x^+ = x - \gamma v$ with $v \in \partial f(x)$ and the entire segment $[x, x^+]$ lies in a region where the projection onto $\cX \in \bbX$ is well-defined. Then, as long as $\dist(x, \sX_{\dim(\cX) - 1}) \gtrsim \gamma$ the function $g_{\cX} \eqdef f \circ \pi_{\cX}$ satisfies an approximate descent inequality of the form
        \begin{align*}
            g_{\cX}(x^+) - g_{\cX}(x) \lesssim -\gamma \|\nabla g_{\cX}(x)\|^2 + \gamma \left(\frac{\dist(x, \cX)}{\truncdist(x, \sX_{\dim(\cX) - 1})}\right)^2\,.
        \end{align*}
        The first term corresponds to the usual descent obtained in smooth optimization, while the second term measures the error caused by projecting onto the stratum. This error becomes small when the iterate is much closer to $\cX$ compared to lower-dimensional strata. This result is established in Lemma~\ref{lem:descent_deterministic}.
    \end{enumerate}
    \item To each stratum $\cX \in \bbX$, we associate two nested conic neighbourhoods--outer and iner one--defined via $\alpha < \beta$ appearing in Theorem~\ref{thm:main_announced}. We study their geometric properties in Section~\ref{sec:geom_cones} yielding the condition $\beta \leq 1/(R+1)$. Intuitively, entering the inner neighbourhood of $\cX$ serves as a potential trigger to start using the descent inequality for $g_{\cX}$, while exiting the outer neighbourhood indicates that this descent is no longer reliable and that one should have switched to another stratum in-between the entry and the exit.
    \item We show how the descent relations obtained above accumulate along the sequence of iterates. To do this, we introduce a strata selection function $(\Psi_k)_{k = 1}^K$, where $\Psi_k \in \bbX$ specifies which stratum is used to evaluate the descent at iteration $k \in \sqbracket{1}{K}$.
    For a large class of such sequences (see Definition~\ref{def:valid_switch}), we prove in Lemma~\ref{lem:valid_descent} that
    \begin{align*}
    \sum_{ k = 1}^K \gamma \|\nabla g_{\Psi_k}(x_k)\|^2 \lesssim {\Delta_f} + \sum_{k = 2}^K \left(g_{\Psi_k}(x_k) - g_{\Psi_{k-1}}(x_k)\right) + \sum_{k = 1}^K \gamma\left(\frac{\dist(x, \Psi_k)}{\truncdist(x, \sX_{\dim(\Psi_k) - 1})}\right)^2\,,
    \end{align*}
    where $g_{\cX} \eqdef g \circ \pi_{\cX}$ and $\Delta_f \eqdef g_{\Psi_1}(x_1) - g_{\Psi_K}(x_{K+1})$.
    The three terms on the right-hand side have a natural interpretation. The first term corresponds to the cumulative descent of the projected objectives. The second term appears whenever the selected stratum changes between two consecutive iterations; it can therefore be interpreted as a switching cost between strata. The third term aggregates the projection errors introduced in the approximate descent inequalities of the first step.
    \item In the last step, we show that it is possible to construct a strata selection function that keeps the switching costs under control. This class of strata selection functions is introduced in Definition~\ref{def:good_switch}, and we show that any such sequence yields the desired control in Theorem~\ref{lem:rate_for_good}. Intuitively, Definition~\ref{def:good_switch} enforces the use of the lowest-dimensional stratum $\cX$ for as long as possible, and allows a switch to another stratum only if the iterate will have crossed the buffer region between the small and the large neighbourhoods associated with $\cX$.

    This part of the proof is constructive and purely combinatorial: we introduce Algorithm~\ref{algo:main} that selects $\Psi_k$ based on the position of the iterates relative to the strata. The algorithm ensures that the selected stratum remains stable whenever possible and only switches when necessary. In particular, Theorem~\ref{thm:algo_produce_good} establishes that Algorithm~\ref{algo:main} does indeed produce a selection function with desired properties.
\end{enumerate}
\paragraph{Organization of Section~\ref{sec:proof_main}.} The remainder of this section is organized as follows. In Section~\ref{sec:consequences}, we establish the main consequences of Assumption~\ref{ass:main}. Section~\ref{sec:nest_neighb} introduces outer and inner neighborhoods of strata, whose geometry is studied in Section~\ref{sec:geom_cones}. Section~\ref{sec:main} presents strata selection functions and establishes key inequalities for any such selection rule. In Section~\ref{sec:good_selection}, we impose additional constraints on the selection function and show how the main result of this work follows from the existence of such a function. Finally, Section~\ref{sec:goo_existence} proves the existence of the aforementioned selection function.
\subsection{Consequences of assumptions}
\label{sec:consequences}

As alluded previously, the following lemma shows that Assumption~\ref{ass:main} allows one to construct around any stratum, a tubular neighborhood, on which the projection behaves in a controlled way. This is a key step for writing the dynamic of the projected iterates. To not interrupt the exposition we provide its proof in Section~\ref{sec:rem_proofs}.

\begin{lemma}[Projection regularity]
    \label{lem:projection}
    Let $\bbX$ be a $C^p$ stratification of $\cK \subset \bbR^d$, satisfying Assumption~\ref{ass:main}\ref{ass:metric}--\ref{ass:main2}. There exist constants $L_0, \mathtt{A}_3, \blambda, \ulambda > 0$, such that for any $\cX \in \bbX$ such that $\dim(\cX) = j$
    the projection $\pi_{\cX}$ is well-defined on
    \begin{align}\label{eqdef:verybig_cone}
        \cC^*(\cX) = \left\{x \in \bbR^d \,:\, \dist(x, \cX) \leq \mathtt{A}_3 \cdot \truncdist(x, \sX_{j-1})\right\}\,.
    \end{align}
    Furthermore, for all $x \in \cC^*(\cX)$, denoting $y = \pi_{\cX}(x)$, we have
    \begin{align*}
        \|\Jac \pi_{\cX}(x) - P_{y}\| \leq L_0 \frac{\|x - y\|}{\truncdist(x, \sX_{j-1})}\,.
    \end{align*}
    And denoting $\lambda_{\max}$ the maximal eigenvalue of $\Jac \pi_{\cX}(x)$ and $\lambda_{\min}$ the minimal eigenvalue of $\Jac \pi_{\cX}$ \emph{restricted} to $\cT_{y} \cX$,
    \begin{equation}\label{eqdef:min_max_jac}
     \ulambda \leq \lambda_{\min} \leq \lambda_{\max}\leq \blambda\,.
   \end{equation}
\end{lemma}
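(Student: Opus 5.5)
We treat this as a quantitative tubular neighborhood theorem in which the reach of $\cX$ is controlled locally by $\rho(y) \eqdef \truncdist(y, \sX_{j-1})$. Assumption~\ref{ass:main}\ref{ass:main2} gives a local curvature bound of order $L_1/\rho$. Assumption~\ref{ass:main}\ref{ass:metric} rules out distant sheets of $\cX$ folding back close to $x$.

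We first note that for $x \in \cC^*(\cX)$ and any near-minimizer $y \in \cX$ of $\norm{x - \cdot}$, one has $\norm{x-y} \le 2\mathtt{A}_3 \truncdist(x,\sX_{j-1})$. Since $\sX_{j-1}$ is $1$-Lipschitz-distanced, this gives $\rho(y) \ge (1-2\mathtt{A}_3)\truncdist(x,\sX_{j-1})$, so $\rho(y)$ and $\truncdist(x,\sX_{j-1})$ are comparable for $\mathtt{A}_3 \le 1/4$.

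Step 1 (second fundamental form bound). For $y,y'\in\cX$, join them by a $C^p$ curve in $\cX$ of length at most $C\norm{y-y'}$, using Assumption~\ref{ass:main}\ref{ass:metric}. Along this curve, \eqref{eqdef:Lip_tangents} shows that $P$ varies at rate $L_1/\rho$. Differentiating $P_{\c_t}\dot\c_t = \dot\c_t$ then bounds the second fundamental form: $\norm{\mathrm{II}_y} \le L_1/\rho(y)$. Integrating this along the curve, and using that $\rho$ changes by at most $C\norm{y-y'}$, yields
\[
\norm{(I-P_y)(y'-y)} \le C' L_1 \norm{y-y'}^2/\rho(y)
\]
for $\norm{y-y'} \le c\rho(y)$. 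For $\norm{y-y'} \ge c\rho(y)$ we instead use the trivial bound $\norm{(I-P_y)(y'-y)} \le \norm{y-y'}$.

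Step 2 (uniqueness and existence of the projection). Existence of a minimizer needs $\cX$ to be relatively closed near $x$. This follows because $\overline{\cX}\setminus\cX \subset \sX_{j-1}$ by the boundary condition, and those points are at distance at least $\truncdist(x,\sX_{j-1}) > \dist(x,\cX)$ when $\mathtt{A}_3<1$. Every minimizer $y$ satisfies $x - y \in \cN_y\cX$.

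For uniqueness, suppose $y \ne y'$ are both minimizers. Then
\[
\norm{y'-y}^2 = 2\langle x-y, y'-y\rangle = 2\langle x-y,(I-P_y)(y'-y)\rangle .
\]
Combining with Step 1 in the regime $\norm{y-y'}\le c\rho$ gives
\[
\norm{y'-y}^2 \le 2\mathtt{A}_3\rho \cdot C'L_1\norm{y-y'}^2/\rho ,
\]
which is a contradiction for $\mathtt{A}_3$ small. The far regime cannot occur because both minimizers lie within $2\mathtt{A}_3\truncdist(x,\sX_{j-1}) \lesssim \mathtt{A}_3 \rho$ of $x$, hence within $c\rho$ of each other once $\mathtt{A}_3$ is small. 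The same estimate, applied to nearby points, gives continuity of $\pi_\cX$.

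Step 3 (Jacobian). Around $y$, write $\cX$ locally as a graph and apply the implicit function theorem to the map $F(x,y) = P_y(x-y)$ restricted to $y\in\cX$. Its $y$-derivative on $\cT_y\cX$ is $-(I - S_{x-y})$, where $S_n = \mathrm{II}_y^*(n)$ is the shape operator in the normal direction $n$. Step 1 gives $\norm{S_{x-y}}\le L_1\norm{x-y}/\rho(y) \le C\mathtt{A}_3$. Hence
\[
\Jac\pi_\cX(x) = (I - S_{x-y})^{-1}P_y ,
\]
so that
\[
\norm{\Jac\pi_\cX(x) - P_y} \le 2\norm{S_{x-y}} \le L_0\norm{x-y}/\truncdist(x,\sX_{j-1}) .
\]
The spectral bounds \eqref{eqdef:min_max_jac} then follow with $\ulambda = 1/(1+C\mathtt{A}_3)$ and $\blambda = 1/(1-C\mathtt{A}_3)$.

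We expect Step 1 together with the far-regime part of Step 2 to be the main obstacle. The tangent bound \eqref{eqdef:Lip_tangents} is only stated for $x,x'$ in $\cX\cap\cK$, and the denominator varies along the connecting curve. We will need to restrict to curves staying at distance at least $\rho/2$ from $\sX_{j-1}$, which requires the length bound $C\norm{y-y'}\le \rho/2$. This is exactly what forces the cone shape of $\cC^*(\cX)$ and the smallness of $\mathtt{A}_3$ relative to $C$ and $L_1$.
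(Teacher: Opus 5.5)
Your proof is correct, but it is organized differently from the paper's. Both arguments rest on the same core estimate. Join $y,y'\in\cX$ by a curve $c$ in $\cX$ of length at most $C_0\|y'-y\|$ (Assumption~\ref{ass:main}\ref{ass:metric}) and write $(I-P_y)(y'-y)=\int_0^1(P_{c_t}-P_y)\dot c_t\,\mathrm{d}t$; this gives $\dist(y'-y,\cT_y\cX)\lesssim \|y'-y\|^2/\truncdist(y,\sX_{j-1})$. The paper reads this, via the formula of Lemma~\ref{lm:local_reach_main}, as a lower bound $\varrho_y\gtrsim\truncdist(y,\sX_{j-1})$ on the local radius of curvature. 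It then imports two cited results: \cite{dudek1994nonlinear}, which makes $\pi_\cX$ $C^{p-1}$ on normal segments shorter than $\varrho_y$, and \cite{leobacher2021existence}, which gives $\Jac\pi_\cX(x)=(I-rS_{w,y})^{-1}P_y$ with the spectrum of $S_{\pm w,y}$ bounded by $1/\varrho_y$. You instead prove each ingredient by hand. Uniqueness comes from the equidistance identity $\|y'-y\|^2=2\langle x-y,(I-P_y)(y'-y)\rangle$; since two minimizers are automatically close, only the near regime is needed. Regularity and the Jacobian formula come from the implicit function theorem. The curvature bound $\|S_n\|\le L_1\|n\|/\truncdist(y,\sX_{j-1})$ comes pointwise from differentiating $P_{c_t}\dot c_t=\dot c_t$. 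Your route is self-contained and keeps the inner-metric constant out of the shape-operator bound, where it enters only through uniqueness. It also makes the $L_0$ estimate explicit, which the paper leaves implicit in the Jacobian formula. The paper's route is shorter given the cited results.

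The obstacle you anticipate is not real. The left-hand side of Assumption~\ref{ass:main}\ref{ass:main2} is symmetric, so you may always put the denominator at $y$ and get $\|P_{c_t}-P_y\|\le L_1\|c_t-y\|/\truncdist(y,\sX_{j-1})$ wherever $c_t$ lies. Also $\cX\subset\cK$, since $\bbX$ partitions $\cK$. This is exactly what the paper does, and it gives the quadratic bound for every $y'\in\cX$. For that step you need neither to keep the curve away from $\sX_{j-1}$ nor to pass through the second fundamental form.

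Finally, your existence step uses $\overline{\cX}\setminus\cX\subset\sX_{j-1}$. That requires $\cK$ closed and a dimension drop at frontiers, neither of which is part of the abstract stratification definition; both do hold in the compact definable setting of Theorem~\ref{thm:good_assumption}. The paper simply asserts that a nearest point exists, so this is a shared tacit assumption rather than a flaw specific to your approach.
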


The next lemma establishes an approximate descent with an extra error related to the projection. To keep exposition more compact, we (re)-introduce the following notation that will be used throughout the paper: for $\cX \in \bbX$ and $x \in \cC^*(\cX)$, define
\begin{align}\label{eqdef:f_projX}
    \boxed{g_{\cX}(x) \eqdef (f \circ \pi_{\cX})(x)\,.}
\end{align}
More concretely, the result below is established for $g_{\cX}$.
\begin{lemma}[Stratified descent lemma]
    \label{lem:descent_deterministic}
Under the assumptions of Lemma~\ref{lem:projection}, let $x \in \cK$, $\gamma > 0$ and
\begin{align*}
    x^+ = x - \gamma v\,,
\end{align*}
where $v \in \partial f(x)$. Assume that for some $\cX \in \bbX$, $[x, x^+] \in \cC^*(\cX)$, with $\dim(\cX) = j$ and $4\mathtt{A}_3 \leq 1$
, and that
\begin{equation*}
    \gamma \max\left(4 \blambda G , 8  L_2 \tfrac{\blambda^2}{\ulambda}, 2L_1G\bar{\lambda} \right) \leq \dist(x, \sX_{j-1})
\end{equation*}
\begin{align*}
    g_{\cX}(x^+) - g_{\cX}(x) \leq -\gamma \mathtt{A}_1 \|\nabla g_{\cX}(x)\|^2 + \gamma \mathtt{A}_2\left(\frac{\dist(x, \cX)}{\truncdist(x, \sX_{j-1})}\right)^2
\end{align*}
where $\mathtt{A}_1 = \ulambda / (16\blambda^2)$ and $\mathtt{A}_2 = L_2^2(\tfrac{4\blambda^2}{\ulambda} + \tfrac{\ulambda}{2})$
\end{lemma}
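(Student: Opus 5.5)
The plan is the classical descent-lemma argument along the segment $[x,x^+]$, applied to $g_{\cX} = f\circ\pi_{\cX}$, which is $C^1$ on $\cC^*(\cX)$ by Lemma~\ref{lem:projection}. Write $y=\pi_{\cX}(x)$ and $x_t = x - t\gamma v$ for $t\in[0,1]$. Since $f_{|\cX}$ is $C^p$, the chain rule gives
\begin{equation*}
\nabla g_{\cX}(z) = \Jac\pi_{\cX}(z)^\top \nabla_{\cX} f(\pi_{\cX}(z)).
\end{equation*}
The fundamental theorem of calculus then yields
\begin{equation*}
g_{\cX}(x^+)-g_{\cX}(x) = -\gamma\int_0^1 \langle \nabla_{\cX} f(\pi_{\cX}(x_t)), \Jac\pi_{\cX}(x_t) v\rangle \dif t .
\end{equation*}
I split this into a main term at $t=0$ and a remainder. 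The remainder is
\begin{equation*}
\int_0^1\langle \nabla g_{\cX}(x_t)-\nabla g_{\cX}(x), \gamma v\rangle \dif t .
\end{equation*}

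\textbf{Main term.} I use the global projection formula, Assumption~\ref{ass:main}\ref{ass:main4}, at the point $y$: $P_y v = \nabla_{\cX}f(y) + e$ with $\|e\|\le L_2\|x-y\|/\truncdist(y,\sX_{j-1})$. Since $4\mathtt{A}_3\le1$, the points $x$ and $y$ are close compared with $\sX_{j-1}$, so $\truncdist(y,\sX_{j-1})\ge \tfrac12\truncdist(x,\sX_{j-1})$; the same comparison holds for $\pi_{\cX}(x_t)$.

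Next I write $\Jac\pi_{\cX}(x) v = \Jac\pi_{\cX}(x)P_y v + \Jac\pi_{\cX}(x)(I-P_y)v$. I would like to argue that $\Jac\pi_{\cX}(x)$ vanishes on $\cN_y\cX$ up to a small error. This holds exactly at $x=y$, and for $x$ off the stratum the matrix is self-adjoint with range $\cT_y\cX$. In fact, for the Euclidean projection, $\Jac\pi_{\cX}(x)$ has range in $\cT_y\cX$ and kernel containing $\cN_y\cX$, so $\Jac\pi_{\cX}(x)=\Jac\pi_{\cX}(x)P_y$. Hence
\begin{equation*}
\langle\nabla_{\cX}f(y),\Jac\pi(x)v\rangle = \langle \nabla_{\cX}f(y), \Jac\pi(x)\nabla_{\cX} f(y)\rangle + \langle \nabla_{\cX}f(y),\Jac\pi(x)e\rangle .
\end{equation*}
Using \eqref{eqdef:min_max_jac}, the first term is at least $\ulambda\|\nabla_{\cX}f(y)\|^2 \ge (\ulambda/\blambda^2)\|\nabla g_{\cX}(x)\|^2$, since $\|\nabla g_{\cX}(x)\|\le\blambda\|\nabla_{\cX}f(y)\|$. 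I bound the cross term by Young's inequality,
\begin{equation*}
\blambda\|\nabla_{\cX} f(y)\|\|e\| \le \tfrac{\ulambda}{4}\|\nabla_{\cX}f(y)\|^2 + \tfrac{\blambda^2}{\ulambda}\|e\|^2 .
\end{equation*}
This produces the $\mathtt A_2$-type error term $(\dist(x,\cX)/\truncdist)^2$.

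\textbf{Remainder.} I control the variation of $\nabla g_{\cX}$ along the segment, at scale $\gamma\|v\|\le\gamma G$. This combines three bounds: the Lipschitz-type bound on the Riemannian gradient derived after Assumption~\ref{ass:main}, with constant $(L_1+L_2)/\truncdist$; the Lipschitz bound on tangent projections, Assumption~\ref{ass:main}\ref{ass:main2}; and $\|\pi(x_t)-\pi(x)\|\le\blambda\gamma G$. The variation of $\Jac\pi_{\cX}$ itself is handled through the estimate $\|\Jac\pi_{\cX}-P\|\le L_0\|x-y\|/\truncdist$ of Lemma~\ref{lem:projection} together with the $P_y$ variation. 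This gives a remainder of order $\gamma^2 G\,(\|\nabla_{\cX}f(y)\|+\ldots)/\dist(x,\sX_{j-1})$.

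The step-size condition $\gamma\max(\cdots)\le\dist(x,\sX_{j-1})$ is exactly what makes $\gamma G\blambda/\dist\le 1/4$ and $\gamma L_2\blambda^2/(\ulambda\dist)\le1/8$. These ratios let the remainder be absorbed into a fraction of $\ulambda\|\nabla_{\cX}f(y)\|^2$, plus a multiple of the error term. The delicate point is that the remainder contains $\langle\nabla_{\cX} f(\pi(x_t))-\nabla_{\cX}f(y),\Jac\pi\, v\rangle$, where $\|v\|$ is only bounded by $G$ and not by $\|\nabla_{\cX}f\|$.

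\textbf{Main obstacle.} I expect this to be the hardest step. The plan is to apply the projection formula again at $\pi(x_t)$, writing $P_{\pi(x_t)}v = \nabla_{\cX}f(\pi(x_t)) + e_t$ with $\|e_t\|\lesssim L_2(\|x-y\|+\gamma G)/\truncdist$. The ``$v$'' in the remainder is then replaced by a gradient, which is quadratic in gradient norms, plus an error. The $\gamma G$ part of $e_t$ is absorbed using the step condition with $8L_2\blambda^2/\ulambda$. Collecting the terms gives the constants $\mathtt A_1=\ulambda/(16\blambda^2)$ and $\mathtt A_2 = L_2^2(4\blambda^2/\ulambda+\ulambda/2)$.
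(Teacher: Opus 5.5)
Your main-term computation at $t=0$ is correct, but the remainder step has a genuine gap.

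\textbf{Why the remainder does not close.} You control every displacement at scale $\gamma G$: $\|\pi_{\cX}(x_t)-y\|\le\blambda\gamma G$, $\|v\|\le G$, and $\|e_t\|\lesssim L_2(\|x-y\|+\gamma G)/\truncdist(x,\sX_{j-1})$. With these bounds, the best you can get for $\gamma\int_0^1\langle\nabla g_{\cX}(x_t)-\nabla g_{\cX}(x),v\rangle\,\dif t$ is, up to constants, terms like $\gamma\cdot\frac{\gamma G}{\dist(x,\sX_{j-1})}\,G\|\nabla_{\cX}f(y)\|$ or $\gamma\big(\frac{\gamma G}{\dist(x,\sX_{j-1})}\big)^2$. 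The step-size condition only makes $\gamma G/\dist(x,\sX_{j-1})$ bounded by a constant. So these terms are linear, or of order zero, in the gradient with an $O(1)$ coefficient. They cannot be absorbed into $-\gamma\ulambda\|\nabla_{\cX}f(y)\|^2$ when the gradient is small. They are also not of the form $\gamma\mathtt{A}_2(\dist(x,\cX)/\truncdist(x,\sX_{j-1}))^2$. The lemma leaves no slack for them: if $x\in\cX$ and $\nabla_{\cX}f(x)=0$, its right-hand side is exactly $0$.

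Your ``main obstacle'' fix does not repair this. It only turns the pairing factor $v$ into a gradient. The other factor, $\nabla_{\cX}f(\pi_{\cX}(x_t))-\nabla_{\cX}f(y)$, is still bounded through $\|\pi_{\cX}(x_t)-y\|\le\blambda\gamma G$, and $e_t$ still carries a $\gamma G$ piece, so the stray $G$ survives.

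Expanding around $t=0$ also forces you to control $\Jac\pi_{\cX}(x_t)-\Jac\pi_{\cX}(x)$. Lemma~\ref{lem:projection} gives no modulus of continuity for this at scale $\gamma$. Its bound $\|\Jac\pi_{\cX}(z)-P_{\pi_{\cX}(z)}\|\le L_0\|z-\pi_{\cX}(z)\|/\truncdist$ is of order $\mathtt{A}_3$, and for $p=2$ the Jacobian is merely continuous. The resulting errors involve $L_0$, $L_1$ and $G$, so the constants $\mathtt{A}_1,\mathtt{A}_2$ you announce cannot come out of this route.

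\textbf{The missing idea.} Never compare with $t=0$, and measure all displacements by the tangential part of $v$. Write $y_t=\pi_{\cX}(x_t)$. Since $\Jac\pi_{\cX}(x_t)=\Jac\pi_{\cX}(x_t)P_{y_t}$ is self-adjoint with range $\cT_{y_t}\cX$, you get three facts: (a) $\|y_t-y\|\le\blambda\gamma\int_0^1\|P_{y_s}v\|\,\dif s$; (b) $\langle P_{y_t}v,\Jac\pi_{\cX}(x_t)v\rangle\ge\ulambda\|P_{y_t}v\|^2$; (c) $\|\Jac\pi_{\cX}(x_t)v\|\le\blambda\|P_{y_t}v\|$.

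Then apply Assumption~\ref{ass:main}\ref{ass:main4} at $y_t\in\cX$ with the fixed base point $x$. This is legitimate because $v\in\partial f(x)$. It gives the error $L_2\|y_t-x\|/\truncdist(y_t,\sX_{j-1})$, where $\|y_t-x\|\le\dist(x,\cX)+\blambda\gamma\int_0^1\|P_{y_s}v\|\,\dif s$. Now every $\gamma$-dependent error is quadratic in $\|P_{y_t}v\|$. It is absorbed using $\dist(x,\sX_{j-1})\ge 8L_2\blambda^2\gamma/\ulambda$, which leaves
$-\tfrac{\gamma\ulambda}{2}\int_0^1\|P_{y_t}v\|^2\,\dif t+\tfrac{4\gamma L_2^2\blambda^2}{\ulambda}\big(\dist(x,\cX)/\truncdist(x,\sX_{j-1})\big)^2$.

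Only at the end do you compare with $y$. Using Assumptions~\ref{ass:main}\ref{ass:main2} and~\ref{ass:main}\ref{ass:main4}, show $\int_0^1\|P_{y_t}v\|^2\,\dif t\ge\tfrac18\|\nabla_{\cX}f(y)\|^2-L_2^2\big(\dist(x,\cX)/\truncdist(x,\sX_{j-1})\big)^2$. Here the factor $G$ in $\|(P_y-P_{y_t})v\|$ is harmless. It multiplies $\|y_t-y\|\lesssim\gamma\big(\int_0^1\|P_{y_s}v\|^2\,\dif s\big)^{1/2}$, so it is only a relative error, removed by $2L_1G\blambda\gamma\le\dist(x,\sX_{j-1})$. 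Combining this with $\|\nabla g_{\cX}(x)\|\le\blambda\|\nabla_{\cX}f(y)\|$ yields exactly $\mathtt{A}_1$ and $\mathtt{A}_2$.
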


\begin{proof}
Set $\c_t = x + t(x^+ - x)$ and $\tc_t = \pi_{\cX}(\c_t)$, recall that $P_{\tc_t}$ is the orthogonal projection onto $\cT_{\tc_t} \cX$.
    The proof is split into three parts.

    \paragraph{Part I: distance between projected points.}
    For the first part, observing that $\dot{\tc}_t = \Jac \pi_{\cX}(\c_t)(x^+ - x)$ and using Lemma~\ref{lem:projection}, we can write
    \begin{align}
        \label{eq:descent0}
        \|\tc_1 - \tc_0\| \leq \sup_{t \in [0, 1]}\|\tc_t- \tc_0\| = \sup_{t \in [0, 1]}\bigg\|\int_{0}^t\Jac \pi_{\cX}(\c_s)(x^+ - x) \dif s\bigg\| \leq \blambda \gamma\int_{0}^1\| P_{\tc_s}v\| \dif s\,.
    \end{align}

    \paragraph{Part II: first version of descent.}
    For the second part, using Assumption~\ref{ass:main}-\ref{ass:main4} and the fact that $\nabla g(\c_t) = (\Jac \pi_{\cX}(\c_t))^{\top} \nabla_{\cX}f(\tc_t)$, we obtain
    \begin{equation}\label{eq:desc_interm}
        \begin{split}
        g(x^+) - g(x)
        &=
        -\gamma\int_{0}^1\scalarp{\nabla g(\c_t)}{v} \dif t\\
        &= - \gamma \int_{0}^1 \scalarp{\nabla_{\cX} f(\tc_t)}{\Jac \pi_{\cX}(\c_t)v} \dif t\\
        &= - \gamma \int_{0}^1 \scalarp{P_{\tc_t}v}{\Jac \pi_{\cX}(\c_t)v} \dif t- \gamma \int_{0}^1 \scalarp{\nabla_{\cX} f(\tc_t) - P_{\tc_t}v}{\Jac \pi_{\cX}(\c_t)v} \dif t \\
        &\stackrel{(a)}{\leq}  - \gamma \int_{0}^1 \scalarp{P_{\tc_t}v}{\Jac \pi_{\cX}(\c_t)v} \dif t + \gamma L_2 \int_{0}^1 \frac{\|\tc_t - \c_0\|}{\truncdist(\tc_t, \sX_{j-1})}  \norm{\Jac \pi_{\cX}(\c_t)v} \dif t\\
        &\stackrel{(b)}{\leq} -\gamma \ulambda \int_{0}^1 \|P_{\tc_t}v\|^2 \dif t +  \gamma L_2\blambda \left(\int_{0}^1 \frac{\|\tc_t - \c_0\| \cdot \|P_{\tc_t}v\|}{\truncdist(\tc_t, \sX_{j-1})}\right) \dif t\,,
        \end{split}
    \end{equation}
    where $(a)$ follows from Assumption~\ref{ass:main}\ref{ass:main4} and $(b)$ follows from the bound on the Jacobian, established in Lemma~\ref{lem:projection}.
    The goal now is to upper bound $\tfrac{\|\tc_t - \c_0\|}{\truncdist(\tc_t, \sX_{j-1})}$. We consider the numerator and denominator separately.

  For the numerator, Equation~\eqref{eq:descent0} and triangle's inequality further yield
    \begin{align*}
        \|\tc_t- \c_0\|
        &\leq \dist(x, \cX) + \|\tc_t - \tc_0\|
        \leq \dist(x, \cX) + \blambda \gamma \int_{0}^1 \|P_{\tc_t}v\| \dif t\,.
    \end{align*}
   For the denominator, since $\norm{v} \leq G$, we obtain that
    \begin{equation}
    \label{eq:descent1}
    \begin{aligned}
        \dist(\tc_t, \sX_{j-1})
        &\geq
        \dist(\c_0, \sX_{j-1}) - \|\c_0- \tc_t\|\\
        &\geq
        \dist(\c_0, \sX_{j-1}) - \dist(x, \cX) - \blambda G \gamma \\
        &\geq
        \frac{1}{2} \dist(\c_0, \sX_{j-1}) = \frac{1}{2} \dist(x, \sX_{j-1})\,,
    \end{aligned}
    \end{equation}
    where we have used the fact that both $\dist(x, \cX)$ and $G \blambda \gamma$ are less than $\dist(x, \sX_{j-1})/4$, by assumption of the lemma.

    Thus, combining the two above displays with Equation~\eqref{eq:desc_interm}, for $\Delta = g(x^+) - g(x)$, we can write
    \begin{equation}
        \label{eq:descent3}
    \begin{aligned}
        \Delta
        &\leq
        -\gamma \ulambda \int_{0}^1 \|P_{\tc_t}v\|^2 \dif t + 2\gamma L_2\blambda  \frac{\dist(x, \cX)}{\truncdist(x, \sX_{j-1})} \sqrt{\int_{0}^1 \|P_{\tc_t}v\|^2 \dif t}
        + 2 L_2 \gamma^2\blambda^2 \frac{\int_{0}^1 \|P_{\tc_t}v\|^2 \dif t}{\truncdist(x, \sX_{j-1})}\\
        &\leq
        - \frac{3\gamma\ulambda}{4} \int_{0}^1 \|P_{\tc_t}v\|^2 \dif t + \frac{4 \gamma L_2^2 \blambda^2}{\ulambda} \left(\frac{\dist(x, \cX)}{\truncdist(x, \sX_{j-1})}\right)^2
        + 2 L_2 \gamma^2\blambda^2 \frac{\int_{0}^1 \|P_{\tc_t}v\|^2 \dif t}{\truncdist(x, \sX_{j-1})}\\
        &\leq
        - \frac{\gamma\ulambda}{2} \int_{0}^1 \|P_{\tc_t}v\|^2 \dif t + \frac{4 \gamma L_2^2 \blambda^2}{\ulambda} \left(\frac{\dist(x, \cX)}{\truncdist(x, \sX_{j-1})}\right)^2\,,
    \end{aligned}
    \end{equation}
    where in the second inequality we used the relation $2ab\leq \varepsilon a^2 + b^2/\varepsilon$, for $a,b,\varepsilon >0$, and in last one the assumption that $\dist(x, \sX_{j-1}) \geq 8  L_2 \tfrac{\blambda^2}{\ulambda} \gamma$.

    \paragraph{Part III: deriving the final claim.} To derive the final claim, we further bound~\eqref{eq:descent3}. Let $y = \tc_0 = \pi_{\cX}(x)$. To bound the first term, we use triangle's inequality to obtain
    \begin{align}
        \label{eq:descent2}
        \sqrt{\int_{0}^1 \|P_{\tc_t}v\|^2 \dif t} \geq \|\nabla_{\cX} f(y)\| - \sqrt{\int_{0}^1 \|P_{\tc_t}v - \nabla_{\cX} f(y)\|^2 \dif t} \,.
    \end{align}
    Let us upper bound the second term on the right-hand-side of the above inequality. For any $t \in (0, 1)$, it holds thanks to Assumptions~\ref{ass:main}\ref{ass:main2} and~\ref{ass:main}\ref{ass:main4} that
    \begin{align*}
        \|P_{\tc_t}v - \nabla_{\cX} f(y)\| \leq \|P_{y}v - \nabla_{\cX} f(y)\| + \|(P_{y} - P_{\tc_t})v\| \leq L_2 \frac{\dist(x, \cX)}{\truncdist(y, \sX_{j-1})} + L_1G\frac{\|\tc_0 - \tc_t\|}{\truncdist(\tc_t, \sX_{j-1})}\,.
    \end{align*}
    By Equations~\eqref{eq:descent0} and \eqref{eq:descent1}, we can bound the second term on the right hand side above as
    \begin{align*}
        \frac{\|\tc_0 - \tc_t\|}{\truncdist(\tc_t, \sX_{j-1})} \leq 2\bar{\lambda}\gamma\frac{\int_{0}^1 \|P_{\tc_t}v\| \dif t}{\truncdist(x, \sX_{j-1})}\,.
    \end{align*}
    Similarly, by~\eqref{eq:descent1}, $\truncdist(x, \sX_{j-1}) \leq 2\truncdist(y, \sX_{j-1})$.
    Thus, the above two displays, in combination with Equation~\eqref{eq:descent2} and Jensen's inequality, yield
    \begin{align*}
        \sqrt{\int_{0}^1 \|P_{\tc_t}v\|^2 \dif t} \geq \|\nabla_{\cX} f(y)\| - {2 L_2} \frac{\dist(x, \cX)}{\truncdist(x, \sX_{j-1})} - 2L_1G\bar{\lambda}\gamma\frac{\sqrt{\int_{0}^1 \|P_{\tc_t}v\|^2 \dif t}}{\truncdist(x, \sX_{j-1})}\,.
    \end{align*}
    Since {$\tfrac{2L_1G\bar{\lambda}\gamma}{\truncdist(x, \sX_{j-1})} \leq 1$}, we deduce from the above
    \begin{align*}
        \sqrt{\int_{0}^1 \|P_{\tc_t}v\|^2 \dif t} \geq \frac{\|\nabla_{\cX} f(y)\|}{2} - {L_2} \frac{\dist(x, \cX)}{\truncdist(x, \sX_{j-1})}\,.
    \end{align*}

    A simple fact: $a \geq b - c \implies a^2 \geq \frac{b^2}{2} - c^2$, implies
    \begin{align*}
        {\int_{0}^1 \|P_{\tc_t}v\|^2 \dif t} \geq \frac{\|\nabla_{\cX} f(y)\|^2}{8} - {L_2^2} \left(\frac{\dist(x, \cX)}{\truncdist(x, \sX_{j-1})}\right)^2\,.
    \end{align*}
    Substitution of the above into Equation~\eqref{eq:descent3} in combination with Lemma~\ref{lem:projection} concludes the proof.
\end{proof}

\begin{remark}
    Note that the proof of Lemma~\ref{lem:descent_deterministic} also establishes
    \begin{align*}
    g_{\cX}(x^+) - g_{\cX}(x) \leq -\gamma \tilde{\mathtt{A}}_1 \|\nabla_{\cX}f(\pi_{\cX}(x))\|^2 + \gamma \mathtt{A}_2\left(\frac{\dist(x, \cX)}{\truncdist(x, \sX_{j-1})}\right)^2\,,
\end{align*}
where $\tilde{\mathtt{A}}_1 = \underline{\lambda} / 16$ and $\mathtt{A}_2 = L_2^2(\tfrac{4\blambda^2}{\ulambda} + \tfrac{\ulambda}{{2}})$
\end{remark}
    {We note that} if $\dim(\cX) = 0$, (that is $\cX = \{ z\}$ for some $z \in \bbR^d$) then the statement of Lemma~\ref{lem:descent_deterministic} is vacuous.
    Indeed, in our analysis, in such case $z$ is a critical point of the conservative field $D(\cdot; \bbX)$. If $0 \in \partial f(z)$, then our approach establishes approximate Clarke-stationarity: $\norm{x_k - z} \leq \gamma^{\alpha}$. Otherwise, it might be considered as a spurious point. More generally, the set of $D$-critical points can be different from the Clarke ones. While we do not pursue the question of avoidance of such spurious point, we note here that the set of the introduced zero-dimensional spurious points is finite. Furthermore, in Appendix~\ref{sec:zaplatki} we show how to adapt our analysis to show that, for a well-chosen $\gamma_0$, it is impossible to converge to such zero-dimensional spurious points.

\subsection{Nested neighbourhoods}\label{sec:nest_neighb}
\begin{figure}[t]
  \centering
  \includegraphics[width=\linewidth]{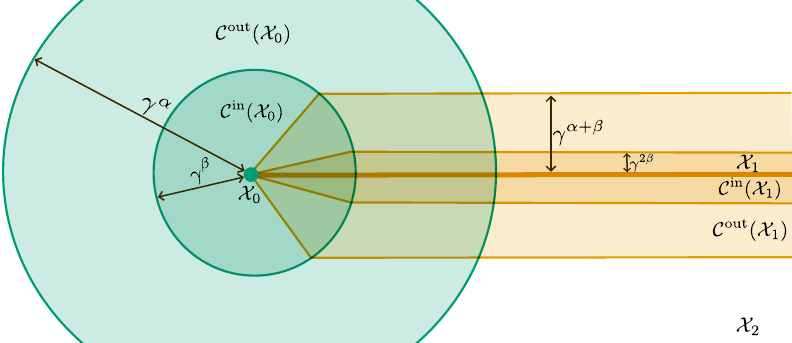}
  \caption{Stratification with associated inner and outer neighbourhoods. The stratification is $\bbX = \{\cX_0, \cX_1, \cX_2\}$ with $\cX_0 = \{(0,0)\}$, $\cX_1 = \{(x, 0) \,:\, x > 0\}$, $\cX_2 = \bbR^2 \setminus (\cX_0 \cup \cX_1)$. For this example we have $R = 2$, $\dim(\cX_j) = j$.}
  \label{fig:cones}
\end{figure}

Since this section introduces a significant amount of new notation, we collect,
for the reader's convenience, all notation related to the stratification in
Table~\ref{tab:not_strat}.

As established in the previous section, when the iterates are close to a stratum
$\cX$, subgradient descent yields an approximate decrease of $g_{\cX}$.
Consequently, if the entire trajectory remained close to a single stratum
$\cX$, convergence rates would follow directly from
Lemma~\ref{lem:descent_deterministic}. In general, however, no such guarantee
is available: the iterates may be close to one stratum at some iteration and
later move sufficiently far to approach another. This motivates the
introduction of ``buffer'' regions, which the iterates must traverse before
leaving the neighborhood of a given stratum $\cX$.

To this end, for each $\cX$, we introduce the \emph{outer} and \emph{inner} conical neighborhoods $\smallC(\cX) \subset \bigC(\cX) \subset \cC^{*}(\cX)$. The buffer region then corresponds to $\bigC(\cX) \backslash \smallC(\cX)$.

We fix parameters
$\alpha < \beta \in (0,1)$ and $\gamma \in (0,\gamma_0)$, which appear in
Theorem~\ref{thm:main_announced}.
For every $\cX \in \bbX$ with $\dim(\cX) < d$,
define \emph{outer} and \emph{inner} neighbourhoods respectively as

\begin{empheq}[box=\fbox]{align}
    \bigC(\cX) &=  \left\{ x \in \bbR^d: \dist(x, \cX) \leq \gamma^{\alpha} \cdot
    \min\left\{\gamma^{\rank(\cX)\beta},\, \dist(x, \sX_{\dim(\cX)-1})\right\}
    \right\}\, ,\label{eqdef:large_cone}\\
    \smallC(\cX) &=  \left\{ x \in \bbR^d: \dist(x, \cX) \leq \gamma^{\beta} \cdot \min\left\{\gamma^{\rank(\cX)\beta}\,,\dist(x, \sX_{\dim(\cX)-1}) \right\}\right\}\, .\label{eqdef:thin_cone}
\end{empheq}

Additionally, for every $\cX \in \bbX$ with $\dim(\cX) = d$, we set $\bigC(\cX) = \smallC(\cX) = \cX$.
We note that, as long as $\gamma^\alpha \leq \mathtt{A}_3$, for any $\cX \in \bbX$, we have $ \smallC(\cX) \subset \bigC(\cX) \subset \cC^*(\cX)$. In particular, from Lemma~\ref{lem:projection}, the projection onto $\cX$ is well-defined in outer (and inner) neighbourhood of $\cX$.  Figure~\ref{fig:cones} displays an example of stratification and corresponding neighbourhoods.

Finally, before presenting geometric properties of the introduced neighbourhoods, let us define additional notation that will be used throughout. For $\square \in \{\leq, <, =\}$, we define
\begin{equation}\label{eqdef:strat_quantifiers}
    \bbX_{\square \,j} = \{\cX \in \bbX: \dim(\cX) \square \,j \}\, ,
\end{equation}
and
\begin{align}\label{eqdef:cone_quantifiers}
    &\smallC_{\square \,j} \eqdef \bigcup_{\substack{\cX \in \bbX_{\square \,j}}} \smallC(\cX) \quad\text{ and }\quad \bigC_{\square \,j} \eqdef \bigcup_{\substack{\cX \in \bbX_{\square \,j}}} \bigC(\cX)\, ,
\end{align}
 with the agreement that $ \bbX_{< 0} = \smallC_{< 0} = \emptyset$.

\subsubsection{Geometric properties of nested neighbourhoods}\label{sec:geom_cones}

In this section, we collect several statements describing the geometry of the neighborhoods $\smallC$ and $\bigC$. Since their proofs are simple and mostly rely on the $1$-Lipschitz continuity of $x \mapsto \dist(x, \cX)$, they are deferred to Appendix~\ref{sec:proof_for_cones}. For intuition, the reader may keep Figure~\ref{fig:cones} in mind. Although it is a toy example, it captures the essential features underlying the results.

We first state a lemma that describes various metric relations between the inner/outer neighbourhoods.
\begin{lemma}
    \label{lem:geom_all}
    Let $\bbX$ satisfy Assumption~\ref{ass:main}, let $\gamma_0 \in (0, 1)$ be such that $4 \gamma_{0}^{\beta} \leq \gamma_{0}^{\alpha}$. Let $\gamma < \gamma_0$ and $\cX \in \bbX$, then
    \begin{enumerate}
        \item\label{:1} $x \in \smallC(\cX) \quad\implies\quad \dist(x, \cX) \leq \gamma^{(1 + \rank(\cX))\beta}$;
        \item\label{:2} $x \in \bigC(\cX) \quad\implies\quad \dist(x, \cX) \leq \gamma^{\alpha + \rank(\cX)\beta}$;
        \item\label{:3} $x \notin \smallC_{< \dim(\cX)} \quad\implies\quad \dist(x, \sX_{\dim(\cX) - 1}) \geq \gamma^{\rank(\cX)\beta}$;
        \item\label{:6} $x \in \smallC(\cX) \setminus \smallC_{ < \dim(\cX)} \text{ and } x' \notin \bigC(\cX) \quad\implies\quad \|x - x'\| \geq \gamma^{\alpha + \rank(\cX)\beta} / 4$;
    \end{enumerate}
    Moreover, if $(R+1)\beta < 1$ and $2G \leq \gamma_{0}^{(R+1) \beta - 1}$, then
    \begin{enumerate}
        \setcounter{enumi}{4}
        \item\label{:4} $x \in \smallC(\cX) \setminus \smallC_{<\dim(\cX)} \text{ and } \|x' - x\| \leq G \gamma \quad\implies\quad \dist(x', \cX) \leq 3\gamma^{\beta}\min\left\{\gamma^{\rank(\cX)\beta},\,\dist(x', \sX_{\dim(\cX)-1})\right\}$;
        \item\label{:5} $x \in \bigC(\cX) \setminus \smallC_{<\dim(\cX)} \text{ and } \|x' - x\| \leq G \gamma \quad\implies\quad \dist(x', \cX) \leq 3\gamma^{\alpha}\min\left\{\gamma^{\rank(\cX)\beta},\,\dist(x', \sX_{\dim(\cX)-1})\right\}$;
    \end{enumerate}
\end{lemma}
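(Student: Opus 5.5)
The six items follow directly from the definitions \eqref{eqdef:large_cone}--\eqref{eqdef:thin_cone}, the $1$-Lipschitz continuity of $x \mapsto \dist(x,A)$, and the convention $\dist(x,\emptyset)=1$. Throughout, write $j=\dim(\cX)$, $r=\rank(\cX)$ and $\sX=\sX_{j-1}$. When $j=d$ the neighbourhoods reduce to $\cX$ itself, so those cases need a separate (short) check against the conventions and are not treated below.

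\textbf{Items 1 and 2.} These are immediate: the minimum in the definitions is at most $\gamma^{r\beta}$, which gives $\dist(x,\cX)\le \gamma^{\beta+r\beta}$ in item 1 and $\dist(x,\cX)\le\gamma^{\alpha+r\beta}$ in item 2.

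\textbf{Item 3.} This is the key step, and I would prove it by induction on the rank $r$.

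For $r=0$, the set $\sX_{j-1}$ is empty, so its distance equals $1=\gamma^{0}$ and there is nothing to show.

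For $r\ge1$, suppose by contradiction that some $z\in\sX_{j-1}$ satisfies $\|x-z\|<\gamma^{r\beta}$. Let $z\in\cX'$ with $\dim(\cX')=j'<j$ and $r'=\rank(\cX')\le r-1$. Since $x\notin\smallC_{<j}$, we also have $x\notin\smallC_{<j'}$. The induction hypothesis then gives $\dist(x,\sX_{j'-1})\ge\gamma^{r'\beta}$. Moreover $x\notin\smallC(\cX')$, which yields
\[
\dist(x,\cX')>\gamma^{\beta}\min\{\gamma^{r'\beta},\dist(x,\sX_{j'-1})\}\ge\gamma^{(r'+1)\beta}\ge\gamma^{r\beta}.
\]
This contradicts $\dist(x,\cX')\le\|x-z\|<\gamma^{r\beta}$.

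\textbf{Item 4.} By item 3, $\dist(x,\sX)\ge\gamma^{r\beta}$, and hence $\dist(x,\cX)\le\gamma^{(r+1)\beta}$. Suppose $\|x-x'\|<\gamma^{\alpha+r\beta}/4$.

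\begin{itemize}
\item By the Lipschitz property, $\dist(x',\sX)\ge\gamma^{r\beta}(1-\gamma^\alpha/4)\ge\gamma^{r\beta}/2$.
\item Since $x'\notin\bigC(\cX)$, this gives $\dist(x',\cX)>\gamma^{\alpha+r\beta}/2$.
\item On the other hand, $\dist(x',\cX)\le\gamma^{(r+1)\beta}+\gamma^{\alpha+r\beta}/4\le\gamma^{\alpha+r\beta}/2$, using $4\gamma^\beta\le\gamma^\alpha$. The latter follows from $4\gamma_0^{\beta}\le\gamma_0^{\alpha}$ and $\gamma<\gamma_0$, since $\beta>\alpha$.
\end{itemize}

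These two bounds contradict each other, which proves item 4.

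\textbf{Items 5 and 6.} First, since $1-(R+1)\beta>0$ and $\gamma<\gamma_0$, we have
\[
G\gamma=\gamma^{(R+1)\beta}\,G\gamma^{1-(R+1)\beta}\le\gamma^{(R+1)\beta}/2\le\gamma^{(r+1)\beta}/2.
\]
Item 3 gives $\dist(x,\sX)\ge\gamma^{r\beta}$. The Lipschitz property then gives $\dist(x',\sX)\ge\gamma^{r\beta}-\gamma^{(r+1)\beta}/2\ge\gamma^{r\beta}/2$. So the minimum on the right-hand side of both items is at least $\gamma^{r\beta}/2$.

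\begin{itemize}
\item Item 5: $\dist(x',\cX)\le\gamma^{(r+1)\beta}+\gamma^{(r+1)\beta}/2=\tfrac32\gamma^{(r+1)\beta}\le3\gamma^\beta\cdot\gamma^{r\beta}/2$.
\item Item 6: by item 2, $\dist(x',\cX)\le\gamma^{\alpha+r\beta}+\gamma^{(r+1)\beta}/2\le\tfrac32\gamma^{\alpha+r\beta}$, using $\gamma^\beta\le\gamma^\alpha$.
\end{itemize}

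Both bounds are at most $3\gamma^{\beta}$, respectively $3\gamma^{\alpha}$, times the minimum, as required.

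The main obstacle is item 3. The naive argument loses a factor $\gamma^{-\beta}$ at each step down in dimension, so the induction on rank must be set up so that the hypothesis applies to the lower-dimensional stratum $\cX'$.
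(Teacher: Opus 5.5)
Your proof is correct and follows essentially the paper's route. Items 1--2 come from the definitions, item 3 is proved by induction on the rank, and the remaining items follow from $1$-Lipschitz continuity of the distance functions combined with item 3 and $G\gamma\le\gamma^{(R+1)\beta}/2$. Your contradiction argument for item 4 is the paper's two-case split; for items 5--6 you lower-bound the truncated minimum at $x'$ by $\gamma^{r\beta}/2$ instead of propagating it through the Lipschitz truncation. Your induction step for item 3 applies the hypothesis to an arbitrary stratum of dimension $<j$, which is if anything slightly more careful than the paper's version, since the paper takes the minimum only over strata of dimension exactly $j_{r-1}$.
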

We comment on the importance of each item of Lemma~\ref{lem:geom_all}.

Items~\ref{:1} and~\ref{:2} of Lemma~\ref{lem:geom_all} are direct consequences of definitions in Equations~\eqref{eqdef:large_cone},~\eqref{eqdef:thin_cone}. They form the first constraint in the definition of the eventual strata selection sequence. If, at iteration $k$, we decide to use the stratum $\cX$ and the corresponding descent on $g_{\cX}$, we require that $x_k \in \bigC(\cX)$. This ensures that the iterate remains sufficiently close to the stratum $\cX$ (see Definition~\ref{def:valid_switch}).

Item~\ref{:3} of Lemma~\ref{lem:geom_all} states that if $x \not \in \smallC_{<j}$, then the distance to $\sX_{j-1}$ can be bounded from below. In particular, such an $x$ cannot approach any stratum of dimension strictly smaller than $j$.

Before discussing Item~\ref{:6} of Lemma~\ref{lem:geom_all}, let if state the key consequence of Item~\ref{:3} of Lemma~\ref{lem:geom_all} and discuss Items~\ref{:4} and~\ref{:5}. The proof of the following corollary is immediate.
\begin{corollary}
    \label{cor:distance_to_boundary_is_large}
    In the context of Lemma~\ref{lem:geom_all}, for all $\gamma \in (0, \gamma_0)$, and $\forall \cX \in \bbX$, $\forall x \notin \smallC_{< \dim(\cX)}$
    \begin{align*}
     \min\left\{\gamma^{\rank(\cX)\beta},\,\dist(x, \sX_{\dim(\cX)-1})\right\}
     \geq \frac{\gamma^{1-\beta}}{\gamma_0^{1 - (R+1)\beta }}\,.
    \end{align*}
\end{corollary}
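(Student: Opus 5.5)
Both terms in the minimum will be bounded below by $\gamma^{1-\beta}/\gamma_0^{1-(R+1)\beta}$ separately. Throughout, write $r=\rank(\cX)\le R$. The only hypotheses used are $\gamma<\gamma_0<1$ and $(R+1)\beta<1$.

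For the second term, $x\notin\smallC_{<\dim(\cX)}$, so Item~\ref{:3} of Lemma~\ref{lem:geom_all} gives $\dist(x,\sX_{\dim(\cX)-1})\ge\gamma^{r\beta}$. The minimum is therefore at least $\gamma^{r\beta}$, and it remains to show that
\[
\gamma^{r\beta}\ \ge\ \frac{\gamma^{1-\beta}}{\gamma_0^{1-(R+1)\beta}},
\qquad\text{i.e.}\qquad
\gamma_0^{1-(R+1)\beta}\ \ge\ \gamma^{1-(r+1)\beta}.
\]

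This reduces to an elementary comparison of exponents. Since $r\le R$, we have $1-(r+1)\beta\ge 1-(R+1)\beta>0$. Because $\gamma<1$, this gives $\gamma^{1-(r+1)\beta}\le \gamma^{1-(R+1)\beta}$. Because $\gamma<\gamma_0$ and the exponent $1-(R+1)\beta$ is positive, we get $\gamma^{1-(R+1)\beta}\le\gamma_0^{1-(R+1)\beta}$. Chaining the two inequalities gives the claim.

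The only point needing care is the degenerate case $r=0$, where $\sX_{\dim(\cX)-1}=\emptyset$. By the paper's convention $\dist(x,\emptyset)=1$, so the minimum equals $\gamma^{0}=1$, and the same exponent comparison applies verbatim. I do not expect any real obstacle; the statement is a direct consequence of Item~\ref{:3} together with monotonicity of $t\mapsto t^{s}$ for $s>0$ and $t\in(0,1)$.
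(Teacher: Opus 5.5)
Your proof is correct and follows the same route as the paper, which calls the corollary immediate: Item~\ref{:3} of Lemma~\ref{lem:geom_all} reduces the minimum to $\gamma^{\rank(\cX)\beta}$. The claim then follows from $\gamma^{1-(\rank(\cX)+1)\beta}\le\gamma^{1-(R+1)\beta}\le\gamma_0^{1-(R+1)\beta}$, using $(R+1)\beta<1$ and $\gamma<\gamma_0<1$ as you say, and your handling of the rank-zero case via $\dist(x,\emptyset)=1$ is consistent with the paper's conventions.
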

This corollary has long-standing consequences and is the main bottleneck of the proof. Indeed, the condition of $\beta < 1 / (R+1)$ in the main Theorem~\ref{thm:main_announced} comes only from the above lemma. The yet not discussed items of Lemma~\ref{lem:geom_all} are all consequences of Corollary~\ref{cor:distance_to_boundary_is_large}.

Items~\ref{:4} and~\ref{:5} of Lemma~\ref{lem:geom_all} guarantee a certain notion of stability for two consecutive iterations. The key consequence is: if $x \in \smallC(\cX) \setminus \smallC_{< \dim(\cX)}$ and $x'$ is any other point that is at most $G\gamma$ away from $x$, then
\begin{itemize}
    \item  The segment $[x, x'] \subset \cC^*(\cX)$, that is, projection onto $\cX$ is well-defined on along the entire segment;
    \item Although it may happen that $x' \notin \smallC(\cX)$, the point $x'$ remains in a slightly inflated version of $\smallC(\cX)$
    \item The same statement holds for $x \in \bigC(\cX) \setminus \smallC_{< \dim(\cX)}$;
\end{itemize}
These properties match exactly the conditions required in Lemma~\ref{lem:descent_deterministic}, which constitutes the first step in the proof of Theorem~\ref{thm:main_announced}. Consequentely, this result highlights the second constraint in constructing strata selection functions. When, at some iteration $k$, we decide to use a stratum $\cX$ and the corresponding descent on $g_{\cX}$, we must ensure that $x_k \notin \smallC_{< \dim(\cX)}$. It will be reflected in Definition~\ref{def:valid_switch}. This condition guarantees that the iterate remains sufficiently separated from lower-dimensional strata, which ensures that the stability properties hold from one iteration to the next.

Finally, Item~\ref{:6} of Lemma~\ref{lem:geom_all} highlights the role of introducing both $\smallC$ and $\bigC$ for each stratum. It shows that if one starts from a point in $\smallC(\cX) \setminus \smallC_{ < \dim(\cX)}$ and moves to a point outside $\bigC(\cX)$, then the displacement must be relatively large compared to the step-size. This, in turn, guarantees that a non-negligible number of iterations needs to be spent in order to move from $\smallC(\cX) \setminus \smallC_{ < \dim(\cX)}$ to the complement of $\bigC(\cX)$.
This result leads to the third property required for a strata selection function, which will be formalized in Definition~\ref{def:good_switch}. Informally, the idea is the following. Suppose that at iteration $k$ the descent associated with a stratum $\cX$ is used, while at iteration $k+1$ the procedure switches to a stratum $\cX'$ with $\dim(\cX') \geq \dim(\cX)$. We require that $x_k \in \smallC(\cX) \setminus \smallC_{< \dim(\cX)}$ and that there exists a future time $\hat{k}$ such that $x_{\hat{k}} \notin \bigC(\cX)$. Moreover, between $k$ and $\hat{k}$ the stratum $\cX$ is not used.

Combined with Item~\ref{:6} of Lemma~\ref{lem:geom_all}, this condition ensures that once $\cX$ is selected, it must remain active for a sufficiently long period before a switch occurs. As a consequence, the number of switches from $\cX$ in the strata selection function can be properly controlled.

\subsection{Main result: strata selection functions}
\label{sec:main}

\begin{table}[t!]
\begin{center}
\begin{tabular}{@{}lll@{}}
    \toprule
    Notation& Meaning &Defined \\
 \midrule
 $K$ & number of iterates & \\
 $\bPsi:\bbR^{d \times K} \rightarrow \bbX^{K}$ & a strata selection function & Equation~\eqref{eqdef:strata_selection}\\
$\lswitch(\cX)$ & iterates $k$ so that $\Psi_{k} = \cX \neq \Psi_{k-1}$ and $\dim(\Psi_{k-1}) \geq \dim(\Psi_k)$ & Definition~\ref{def:individual_switches}\\
$\rswitch(\cX)$ & iterates $k$ so that $\Psi_{k} = \cX \neq \Psi_{k+1}$ and $\dim(\Psi_{k+1}) \geq \dim(\Psi_k)$ & Definition~\ref{def:individual_switches}\\
$k_s^{\mathrm L}(\cX)$, $k_s^{\mathrm R}(\cX)$ & re-indexing of $\lswitch(\cX)$ and $\rswitch(\cX)$ respectively & Definition~\ref{def:individual_switches}\\
$\mathtt{P}_{\mathrm{L}}$, $\mathtt{P}_{\mathrm{R}}$ & payments associated to respectively left and right switching & Equation~\eqref{eq:PL_PR_def}\\
$\hat{k}_{s}^{\mathrm L}(\cX)$ & $k$ between two left switches such that $x_k \not \in \bigC(\cX)$ & Definition~\ref{def:good_switch}\\
$\hat{k}_{s}^{\mathrm R}(\cX)$  & $k$ between two right switches such that $x_k \not \in \bigC(\cX)$ & Definition~\ref{def:good_switch}\\
$k^{{\mathrm{L}}}(\cX, \llbracket \ell, r\rrbracket)$& candidates for a left switch & Equation~\eqref{eq:left_time}\\
$k^{{\mathrm{R}}}(\cX, \llbracket \ell, r\rrbracket)$ & candidates for a right switch & Equation~\eqref{eq:right_time}\\
 \bottomrule
\end{tabular}
\end{center}
\caption{Notation relative to strata selection.}\label{tab:not_strata_select}
\end{table}

In this section, we fix $K \geq 1$ the number of gradient descent steps, $\alpha < \beta < (R+1)^{-1}$, $\gamma \leq \gamma_0$ such that conditions of Lemma~\ref{lem:geom_all} hold and, moreover,
   \begin{equation}\label{eq:final_gamma0_fix}
    \begin{split}
        \boxed{\frac{1}{\gamma_{0}^{1 - (R+1)\beta}}\geq \max\left\{4 \blambda G  , 8  L_2 \tfrac{\blambda^2}{\ulambda}, 2L_1G\bar{\lambda} \right\}\, , \quad \textrm{ and }\quad 3\gamma_0^{\alpha} \leq \mathtt{A}_3 \leq \frac{1}{4} \, .}
    \end{split}
   \end{equation}
   For reader's convenience we have collected all constants and relations between them in Table~\ref{tab:not_const}.

    A \emph{strata selection function} is a mapping
    \begin{equation}\label{eqdef:strata_selection}
        \bPsi = (\Psi_1, \ldots, \Psi_k) : \bbR^{d \times K} \rightarrow \bbX^K\, .
    \end{equation}
    We will occasionally omit the word strata and write selection function instead.
    \begin{definition}[Valid strata selection function]
    \label{def:valid_switch}
    For a given $K\in \bbN$, we say that a selection function $\bPsi$ is valid, if for any $k \in \llbracket 1, K\rrbracket$ it holds that $x_k \in \bigC(\Psi_k) \setminus \smallC_{< \dim(\Psi_k)}$.
\end{definition}

\makeatletter
\newcommand{\subalign}[1]{%
  \vcenter{%
    \Let@ \restore@math@cr \default@tag
    \baselineskip\fontdimen10 \scriptfont\tw@
    \advance\baselineskip\fontdimen12 \scriptfont\tw@
    \lineskip\thr@@\fontdimen8 \scriptfont\thr@@
    \lineskiplimit\lineskip
    \ialign{\hfil$\m@th\scriptstyle##$&$\m@th\scriptstyle{}##$\hfil\crcr
      #1\crcr
    }%
  }%
}
\makeatother

Let us also define the moments of switches from any $\cX$ to a higher (or equal) dimensional strata, these are precisely the times when an additional price, associated to strata changes, is incurred.
\begin{definition}[Individual switchings]
    \label{def:individual_switches}
    Let $\bPsi$ be a valid strata selection function. Let $\cX \in \bbX$, we define
    \begin{align*}
        \lswitch(\cX) &= \{k \in \llbracket 2, K \rrbracket \,:\, {\Psi_{k}} = \cX,\, \Psi_{k-1} \neq \Psi_k \text{ and } \dim(\Psi_k) \leq \dim(\Psi_{k-1})\}\,,\\
        \rswitch(\cX) &= \{k \in \llbracket 1, K-1 \rrbracket \,:\, {\Psi_{k}} = \cX,\, \Psi_{k+1} \neq \Psi_k \text{ and } \dim(\Psi_k) \leq \dim(\Psi_{k+1})\}\,,
    \end{align*}
    the sets of left and right upward switches from $\cX$ respectively.
    We will use the following short-hand notation
    \begin{align*}
        \lswitch(\cX) = \{k_1^{\mathrm L}(\cX) < \ldots < k_{|\lswitch(\cX)|}^{\mathrm L}(\cX)\} \quad\text{and}\quad \rswitch(\cX) = \{k_1^{\mathrm R}(\cX) < \ldots < k_{|\rswitch(\cX)|}^{\mathrm R}(\cX)\}\,.
    \end{align*}
    We will also set $k_0^{\mathrm L}(\cX) = 0$ and  $k_{|\rswitch(\cX)| + 1}^{\mathrm R}(\cX) = K+1$.
\end{definition}

\begin{remark}
    Let us remark some important observations from the above definition, applied to a valid strata-selection function $\bPsi$.
    \begin{enumerate}
        \item Assume that for some $k \in \sqbracket{1}{K}$, we have $\Psi_k = \cX$ and $\Psi_{k-1} = \cX'$ with $\cX \neq \cX'$, then
        \begin{enumerate}
            \item if $\dim(\Psi_{k-1}) > \dim(\Psi_k)$, then $k \in \lswitch(\cX)$ and $k-1 \notin \rswitch(\cX')$;
            \item if $\dim(\Psi_{k-1}) < \dim(\Psi_k)$, then $k \notin \lswitch(\cX)$ and $k-1 \in \rswitch(\cX')$;
            \item if $\dim(\Psi_{k-1}) = \dim(\Psi_k)$, then $k \in \lswitch(\cX)$ and $k-1 \in \rswitch(\cX')$;
        \end{enumerate}
        \item For any $\cX \in \bbX_d$, as long as $\gamma \in (0, \gamma_0)$, it holds that $ \lswitch(\cX), \rswitch(\cX) = \emptyset$.
    \end{enumerate}
\end{remark}

The rationale behind this definition is clear from the following lemma, which states that any valid strata selection function yields some convergence results.
\begin{lemma}
    \label{lem:valid_descent}
    Under the assumptions of Theorem~\ref{thm:main_announced},
     let $(x_1, \ldots, x_K)$ be a gradient descent trajectory for $\gamma \in (0, \gamma_0)$. Assume that $\bPsi$ is a valid selection function. Then,
    \begin{align*}
        \mathtt{A}_1\sum_{k = 1}^K \gamma\|\nabla g_{\Psi_k}(x_k)\|^2 \leq \left({g_{\Psi_1}(x_1) - g_{\Psi_K}(x_{K+1})}\right)+ \sum_{k = 2}^K(g_{\Psi_{k}}(x_{k}) - g_{\Psi_{k-1}}(x_{k})) + {\mathtt{A}_2}K\gamma^{2\alpha + 1}\,.
    \end{align*}
    Moreover,
    \begin{align*}
        \sum_{k = 2}^K(g_{\Psi_{k}}(x_{k}) - g_{\Psi_{k-1}}(x_{k})) \leq 4G\cdot(\mathtt{P}_{\mathrm{L}} + \mathtt{P}_{\mathrm{R}})\,,
    \end{align*}
    with
    \begin{align}\label{eq:PL_PR_def}
        \mathtt{P}_{\mathrm{L}} &\eqdef \sum_{\cX \in \bbX}\sum_{k=2}^{K-1} \dist(x_k, \cX)\ind{k \in \lswitch(\cX)}\quad\text{and}\quad
        \mathtt{P}_{\mathrm{R}} \eqdef \sum_{\cX \in \bbX}\sum_{k=2}^{K-1}\dist(x_k, \cX)\ind{k-1 \in \rswitch(\cX)}\,.
    \end{align}
\end{lemma}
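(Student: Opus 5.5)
The plan is to apply Lemma~\ref{lem:descent_deterministic} at every iteration with $\cX = \Psi_k$, and then telescope. Fix $k \in \sqbracket{1}{K}$ and set $\cX = \Psi_k$, $j = \dim(\cX)$.

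\textbf{Checking the hypotheses of the descent lemma.} Validity gives $x_k \in \bigC(\cX)\setminus\smallC_{<j}$. Since $\norm{v_k}\le G$, we have $\norm{x_{k+1}-x_k}\le G\gamma$. Item~\ref{:5} of Lemma~\ref{lem:geom_all} then shows that every point $x'$ of the segment $[x_k,x_{k+1}]$ satisfies $\dist(x',\cX)\le 3\gamma^\alpha\truncdist(x',\sX_{j-1})$. Using $3\gamma_0^\alpha\le \mathtt{A}_3$ from~\eqref{eq:final_gamma0_fix}, this places the whole segment in $\cC^*(\cX)$. (The case $j=d$ is trivial, since $\pi_\cX$ is the identity near $x_k$.) For the step-size condition, Corollary~\ref{cor:distance_to_boundary_is_large} gives $\dist(x_k,\sX_{j-1})\ge \gamma^{1-\beta}\gamma_0^{(R+1)\beta-1}\ge \gamma\,\gamma_0^{(R+1)\beta-1}$. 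By the boxed condition~\eqref{eq:final_gamma0_fix}, this dominates $\gamma\max(4\blambda G,\dots)$.

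\textbf{Bounding the error term.} The descent lemma then gives
\[
g_{\Psi_k}(x_{k+1})-g_{\Psi_k}(x_k)\le -\gamma\mathtt A_1\norm{\nabla g_{\Psi_k}(x_k)}^2+\gamma\mathtt A_2\bigl(\dist(x_k,\cX)/\truncdist(x_k,\sX_{j-1})\bigr)^2 .
\]
The definition of $\bigC(\cX)$ bounds the ratio by $\gamma^\alpha$. The only subtlety is the truncation at $1$, which is harmless because $\gamma^{\rank\beta}\le 1$. So the error term is at most $\mathtt A_2\gamma^{2\alpha+1}$.

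\textbf{Telescoping.} Summing over $k$ and writing $g_{\Psi_k}(x_k)=g_{\Psi_{k-1}}(x_k)+\bigl(g_{\Psi_k}(x_k)-g_{\Psi_{k-1}}(x_k)\bigr)$ for $k\ge2$ yields the first inequality. Note that $g_{\Psi_{k-1}}(x_k)$ is well defined, since $x_k$ lies on the segment handled at step $k-1$.

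\textbf{Switching terms.} Consider a term $g_{\Psi_k}(x_k)-g_{\Psi_{k-1}}(x_k)$, which vanishes if $\Psi_k=\Psi_{k-1}$. Otherwise it equals $f(\pi_{\Psi_k}(x_k))-f(\pi_{\Psi_{k-1}}(x_k))$. By Assumption~\ref{ass:f_lip} and the triangle inequality through $x_k$, it is at most
\[
G\bigl(\dist(x_k,\Psi_k)+\dist(x_k,\Psi_{k-1})\bigr).
\]
Lipschitz continuity requires the projected points to lie in $\cK$. I would handle this by noting that $\pi_\cX(x)\in\cX\subset\cK$.

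Next split by dimensions, using the Remark after Definition~\ref{def:individual_switches}:
\begin{itemize}
\item if $\dim\Psi_{k-1}>\dim\Psi_k$, then $k\in\lswitch(\Psi_k)$;
\item if $\dim\Psi_{k-1}<\dim\Psi_k$, then $k-1\in\rswitch(\Psi_{k-1})$;
\item if the dimensions are equal, both memberships hold.
\end{itemize}

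The main obstacle is that the triangle bound produces two distances, but each switch case charges only one of the sums $\mathtt P_{\mathrm L}$ or $\mathtt P_{\mathrm R}$. So I must show that the distance to the lower-dimensional stratum dominates the other. Validity gives $x_k\notin\smallC_{<\dim\Psi_k}$. Hence, for example when $\dim\Psi_{k-1}<\dim\Psi_k$, we have $\dist(x_k,\Psi_k)\le\gamma^\alpha\dist(x_k,\sX_{\dim\Psi_k-1})\le\gamma^\alpha\dist(x_k,\Psi_{k-1})$, so that distance is controlled by $\dist(x_k,\Psi_{k-1})$. The other unequal case is symmetric. In the equal-dimension case both terms appear in the payments anyway.

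The remaining concern is that $\mathtt P_{\mathrm R}$ is indexed by $\dist(x_k,\cX)$ with $\cX=\Psi_{k-1}$, evaluated at $x_k$; this is consistent with the above. The factor $4G$ absorbs $(1+\gamma^\alpha)$ and the double counting. The boundary index $k=K$ excluded from the payment sums should be handled by bounding it similarly, or by a convention on $K$.
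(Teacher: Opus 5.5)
Your proposal is correct and follows the paper's proof step for step. Like the paper, you apply Lemma~\ref{lem:descent_deterministic} at each iteration, verify its hypotheses through Lemma~\ref{lem:geom_all}(\ref{:5}) and Corollary~\ref{cor:distance_to_boundary_is_large}, bound the error ratio by $\gamma^{\alpha}$ using $x_k \in \bigC(\Psi_k)$, telescope, and compare the two distances via the boundary condition (the step the paper isolates as Lemma~\ref{lem:distance_different_projection_same_point}).

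Two small precisions:
\begin{itemize}
\item The case $\dim(\Psi_{k-1}) > \dim(\Psi_k)$ is not literally symmetric. Here $x_k$ need not lie in $\bigC(\Psi_{k-1})$, so you must use the item-(\ref{:5}) bound carried over from $x_{k-1}$, which you already derived; the constant becomes $3\gamma^{\alpha}$, still absorbed by $4G$.
\item You are right that the $k=K$ switching term is not covered by $\mathtt{P}_{\mathrm{L}}, \mathtt{P}_{\mathrm{R}}$ as written. The fix is to let both sums run up to $k=K$. The paper's proof tacitly does this, and it costs nothing in Theorem~\ref{lem:rate_for_good}, where only $|\lswitch(\cX)|$ and $|\rswitch(\cX)|$ are counted.
\end{itemize}
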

\begin{proof}
    Note that due to our choice of constants, all the claims of Lemma~\ref{lem:geom_all} hold.

    First, note that the validity of the strata selection function implies that $x_k \in \bigC({\Psi_k}) \setminus \smallC_{< \dim(\Psi_k)}$. Since $\|x_{k+1} - x_k\| \leq G\gamma$, and $3 \gamma^{\alpha} \leq \mathtt{A}_3$, Lemma~\ref{lem:geom_all}(\ref{:4}) gives $[x_k, x_{k+1}] \in \cC^*(\Psi_k)$. Thus, $g_{\cX}$ is well-defined on $[x_k, x_{k+1}]$.

   Second, by Corollary~\ref{cor:distance_to_boundary_is_large} and our choice of $\gamma_0$ in~\eqref{eq:final_gamma0_fix}, we have
   \begin{equation*}
   \dist(x_k, \sX_{\dim(\Psi_k)-1}) \geq \frac{\gamma^{1-\beta}}{\gamma_{0}^{1- (R+1)\beta}} \geq \frac{\gamma}{\gamma_{0}^{1- (R+1)\beta}} \geq \gamma \max\left\{4 \blambda G  , 8  L_2 \tfrac{\blambda^2}{\ulambda}, 2L_1G\bar{\lambda} \right\}\, .
   \end{equation*}
   Thus, we are in position to use Lemma~\ref{lem:descent_deterministic}, which yields after summation and thanks to the fact that $x_k \in \bigC({\Psi_k})$
    \begin{align*}
       \mathtt{A}_1\sum_{k = 1}^K\gamma \|\nabla g_{\Psi_k}(x_k)\|^2\leq \sum_{k = 1}^K(g_{\Psi_k}(x_k) - g_{\Psi_{k}}(x_{k+1})) + \mathtt{A}_2 K \gamma^{2\alpha + 1}\,,
    \end{align*}
    The first part of the statement follows after re-indexing.

    To establish the moreover part, we apply Lemma~\ref{lem:distance_different_projection_same_point} below, which establishes that
        \begin{align*}
            g_{\Psi_{k}}(x_{k}) - g_{\Psi_{k-1}}(x_{k})
            &\leq G\|\pi_{\Psi_k}(x_k) - \pi_{\Psi_{k-1}}(x_k)\|\\
            &\leq 4G \big[ \dist(x_k, {\Psi_k})\ind{\dim(\Psi_k) \leq \dim(\Psi_{k-1})} + \dist(x_{k}, {\Psi_{k-1}})\ind{\dim(\Psi_k) \geq \dim(\Psi_{k-1})}\big]\,.
        \end{align*}
        The statement then follows from definition of $\lswitch(\cX)$ and $\rswitch(\cX)$ used to obtain $\mathtt{P}_{\mathrm{L}}$ and $\mathtt{P}_{\mathrm{R}}$ respectively.
\end{proof}

As alluded to earlier in the text, the bound reflects the interplay of three distinct phenomena.
\begin{enumerate}
    \item \textbf{First term} (descent / smooth optimization): Due to per-strata smoothness, this term is the standard one from smooth optimization. It captures the decrease of the objective along the iterates and corresponds to the usual descent argument within a fixed stratum;
    \item 	\textbf{Second term} (strata switching): This term arises from switches between different strata. It measures the discrepancy induced when the iterate $x_k$ is evaluated under two consecutive models $g_{\Psi_{k-1}} $ and $g_{\Psi_k}$, and thus quantifies the cost of moving across strata. Thanks to the second part of the above lemma, this term has a metric nature and is effectively controlled by the proximity to lower-dimensional strata at the moment of change.
    \item \textbf{Third term} (proximity to strata): This term quantifies the effect of being near a stratum rather than exactly on it. It reflects the approximation error due to operating in a neighborhood of a stratum.
\end{enumerate}
\begin{lemma}
    \label{lem:distance_different_projection_same_point}
    Under assumptions of Theorem~\ref{thm:main_announced}, let $(x_1, \ldots, x_K)$ be a gradient descent trajectory for $\gamma \in (0, \gamma_0)$. Assume that $\bPsi$ is a valid selection function. Then, for all $k = 2, \ldots, K$ if $\Psi_k \neq \Psi_{k-1}$
    \begin{align*}
        g_{\Psi_{k}}(x_{k}) - g_{\Psi_{k-1}}(x_{k})
        &\leq G\|\pi_{\Psi_k}(x_k) - \pi_{\Psi_{k-1}}(x_k)\|\\
        &\leq 4G \big[ \dist(x_k, {\Psi_k})\ind{\dim(\Psi_k) \leq \dim(\Psi_{k-1})} + \dist(x_{k}, {\Psi_{k-1}})\ind{\dim(\Psi_k) \geq \dim(\Psi_{k-1})}\big]
    \end{align*}
\end{lemma}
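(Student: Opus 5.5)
The plan is to prove the two inequalities separately: the first from the Lipschitz continuity of $f$, the second from the triangle inequality through $x_k$ combined with the definitions of the outer and inner neighbourhoods and Lemma~\ref{lem:geom_all}.

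\emph{First inequality.} By validity, $x_k \in \bigC(\Psi_k)$ and $x_{k-1}\in\bigC(\Psi_{k-1})$. Since $\gamma^\alpha\le \mathtt{A}_3$, we have $\bigC\subset\cC^*$, and by Lemma~\ref{lem:geom_all}(\ref{:5}) applied at $x_{k-1}$ with $x'=x_k$ (using $\|x_k-x_{k-1}\|\le G\gamma$ and $3\gamma^\alpha\le\mathtt{A}_3$), also $x_k\in\cC^*(\Psi_{k-1})$. Hence both projections $\pi_{\Psi_k}(x_k)$ and $\pi_{\Psi_{k-1}}(x_k)$ are well defined by Lemma~\ref{lem:projection}, and both lie in strata of $\bbX$, hence in $\cK$. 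Assumption~\ref{ass:f_lip} then gives the first inequality directly.

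\emph{Reduction for the second inequality.} Since the projections are nearest points, the triangle inequality through $x_k$ gives
\[
\|\pi_{\Psi_k}(x_k)-\pi_{\Psi_{k-1}}(x_k)\|\le \dist(x_k,\Psi_k)+\dist(x_k,\Psi_{k-1}).
\]
It then remains to bound the distance to the higher-dimensional stratum by a multiple of the distance to the lower-dimensional one. I split according to the dimensions.

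\emph{Case $\dim\Psi_k=\dim\Psi_{k-1}$.} Both indicators equal $1$, so the right-hand side above is trivially at most $4[\dist(x_k,\Psi_k)+\dist(x_k,\Psi_{k-1})]$.

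\emph{Case $\dim\Psi_k<\dim\Psi_{k-1}=:j$.} Validity at time $k-1$ gives $x_{k-1}\in\bigC(\Psi_{k-1})\setminus\smallC_{<j}$, and $\|x_k-x_{k-1}\|\le G\gamma$. Lemma~\ref{lem:geom_all}(\ref{:5}) then yields
\[
\dist(x_k,\Psi_{k-1})\le 3\gamma^\alpha\dist(x_k,\sX_{j-1}).
\]
Since $\Psi_k\subset\sX_{j-1}$, we have $\dist(x_k,\sX_{j-1})\le\dist(x_k,\Psi_k)$. Together with $\gamma<1$ this gives $\dist(x_k,\Psi_{k-1})\le 3\dist(x_k,\Psi_k)$, and the total is at most $4\dist(x_k,\Psi_k)$.

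\emph{Case $\dim\Psi_k>\dim\Psi_{k-1}$.} We use $x_k\in\bigC(\Psi_k)$ directly from the definition~\eqref{eqdef:large_cone}. Since $\Psi_{k-1}\subset\sX_{\dim\Psi_k-1}$,
\[
\dist(x_k,\Psi_k)\le\gamma^\alpha\dist(x_k,\sX_{\dim\Psi_k-1})\le\dist(x_k,\Psi_{k-1}),
\]
so the total is at most $2\dist(x_k,\Psi_{k-1})$. One detail here: if $\dim\Psi_k=d$, then $\bigC(\Psi_k)=\Psi_k$, so $\dist(x_k,\Psi_k)=0$ and the bound is immediate.

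\emph{Main obstacle.} The only genuinely delicate step is the case $\dim\Psi_k<\dim\Psi_{k-1}$. There $x_k$ itself is not known to be in $\bigC(\Psi_{k-1})$, so one must transport the cone condition from $x_{k-1}$ to $x_k$ via the one-step stability estimate of Lemma~\ref{lem:geom_all}(\ref{:5}). That item in turn relies on $x_{k-1}\notin\smallC_{<j}$, which is exactly what validity of $\bPsi$ supplies.
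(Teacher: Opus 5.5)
Your proposal is correct and follows the paper's proof essentially step for step. Well-definedness of $g_{\Psi_{k-1}}(x_k)$ comes from Lemma~\ref{lem:geom_all}(\ref{:5}), then the Lipschitz and triangle-inequality bound, then the two unequal-dimension cases. The upward case uses validity at $x_k$ directly; the downward case transports the cone condition from $x_{k-1}$ via item~(\ref{:5}); both then use that the lower-dimensional stratum lies in $\sX_{j-1}$. Your explicit handling of the full-dimensional case, where $\bigC(\Psi_k)=\Psi_k$ and so $\dist(x_k,\Psi_k)=0$, is a small detail the paper leaves implicit.
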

\begin{proof}
    First, note that since $x_{k-1} \in \bigC_{\Psi_{k-1}} \backslash \smallC_{< \dim(\Psi_{k-1})}$ and since $3 \gamma^{\alpha} < \mathtt{A}_3$, by Lemma~\ref{lem:geom_all}(\ref{:5}), $x_{k} \in \cC^{*}(\Psi_{k-1})$. Thus, $g_{\Psi_{k-1}}(x_k)$ is well-defined.

    Since $f$ is $G$-Lipschitz, using triangle's inequality, we obtain
    \begin{align*}
        g_{\Psi_{k}}(x_{k}) - g_{\Psi_{k-1}}(x_{k}) \leq G\|\pi_{\Psi_{k}}(x_{k}) - \pi_{\Psi_{k-1}}(x_k)\| \leq G\left(\dist(x_k, {\Psi_k}) + \dist(x_k, {\Psi_{k-1}})\right)\,.
    \end{align*}
    If $\dim(\Psi_k) = \dim(\Psi_{k-1})$ the statement follows.

   \paragraph{Case 1.} If $j = \dim(\Psi_k) >  \dim(\Psi_{k-1})$ (note that then $r = \rank(\Psi_k) > \rank(\Psi_{k-1})$), then, by definition of a valid selection function, $x_{k} \in \bigC({\Psi_{k}})$
    \begin{align*}
            \dist(x_k, {\Psi_k}) \leq \gamma^{\alpha}
                \min\left\{\gamma^{r\beta},\,\dist(x_k, \sX_{j-1})\right\}
                \,.
    \end{align*}
    Boundary condition on stratification implies \[\dist(x_k, \sX_{j-1}) \leq \dist(x_k, \sX_{\dim(\Psi_{k-1})}) \leq \dist(x_k, {\Psi_{k-1}})\,,\]
    where we used the fact that for any $j' < j$,  $\sX_{j'} \subset \sX_{j-1}$ and that $\cX \subset \sX_{\dim(\cX)}$.

    The combination of the above gives
    \begin{align*}
        \dist(x_k, {\Psi_k}) \leq \gamma^{\alpha} \min\left\{\gamma^{r\beta},\, \dist(x_{k}, {\Psi_{k-1}})\right\} \leq \dist(x_{k}, {\Psi_{k-1}})\,.
    \end{align*}
    \paragraph{Case 2.} If $\dim(\Psi_k) <  \dim(\Psi_{k-1}) = j$, then let $r = \rank(\Psi_{k-1})$. By definition of valid selection function, $x_{k-1} \in \bigC({\Psi_{k-1}}) \setminus \smallC_{<\dim(\Psi_{k-1})}$ and $\|x_k - x_{k-1}\| \leq \gamma G$, thus Lemma~\ref{lem:geom_all}(\ref{:5}) gives
    \begin{align*}
        \dist(x_k, {\Psi_{k-1}})
        &\leq
        3\gamma^\alpha\min\left\{\gamma^{r\beta},\,\dist(x_{k}, \sX_{j-1})\right\}\,.
    \end{align*}
    Boundary condition again gives $\dist(x_{k}, \sX_{j-1}) \leq \dist(x_{k}, \sX_{\dim(\Psi_k)}) \leq \dist(x_{k}, {\Psi_k})$. We conclude identically to the previous case.\end{proof}

\subsubsection{Good strata selection function}
\label{sec:good_selection}
Note that if one could choose a strata selection function that is constant everywhere, the desired rate of convergence would follow immediately from Lemma~\ref{lem:valid_descent}, since the second term vanishes. This is, however, not possible in general: the trajectory may wander away from any a priori fixed $\cX \in \bbX$. A natural approach is therefore to control and ultimately minimize the number of switches.

Now as we have introduced the main objects, let us explain the rationale behind additional properties that will be imposed in Definition~\ref{def:good_switch} below.
As shown in Lemma~\ref{lem:distance_different_projection_same_point}, the cost of a switch is governed by the distance to lower-dimensional strata. This motivates imposing additional structure on the strata selection function:
\begin{enumerate}
    \item \textbf{Controlled upward switches:} a transition from a lower- to a higher-dimensional stratum may only occur upon exiting the thin neighbourhood $\smallC$ of the lower-dimensional stratum.
    \item \textbf{Buffer traversal before reuse:} before reusing the same stratum, the trajectory must exit its large neighbourhood $\bigC$, that is, traverse the buffer zone. This ensures that Lemma~\ref{lem:geom_all}(\ref{:6}) applies, allowing us to bound the total number of switches from any $\cX$ to strata of higher (or equal) dimension.
\end{enumerate}

These requirements are formalized in the next definition.

\begin{definition}[Good strata selection function]\label{def:good_switch}
    We say that a valid selection function is good if for any $\cX \in \bbX$ we have
    \begin{enumerate}
        \item $\forall k \in \lswitch(\cX) \cup \rswitch(\cX)$ it holds that $x_{k} \in \smallC(\cX)$.
        \item $s \in \llbracket 1,  |\lswitch(\cX)|\rrbracket$ there exists $\hat{k}_s^{\mathrm  L }(\cX) \in\,\, \rrbracket k_{s-1}^{\mathrm L}(\cX), k_{s}^{\mathrm L}(\cX) \llbracket$ such that $x_{\hat{k}_s^{\mathrm  L }(\cX)} \notin \bigC(\cX)$.
        \item $s \in \llbracket 1,  |\rswitch(\cX)|\rrbracket$ there exists $\hat{k}_s^{\mathrm  R }(\cX) \in\,\, \rrbracket k_{s}^{\mathrm R}(\cX), k_{s+1}^{\mathrm R}(\cX) \llbracket$ such that $x_{\hat{k}_s^{\mathrm  R }(\cX)} \notin \bigC(\cX)$.
    \end{enumerate}
\end{definition}
The main result of this work follows from Theorem below, that controls the second term in Lemma~\ref{lem:valid_descent} which is associated to strata switching.
\begin{theorem}
    \label{lem:rate_for_good}
    Under assumptions of Theorem~\ref{thm:main_announced}, let $(x_1, \ldots, x_K)$ be a gradient descent trajectory for $\gamma \in (0, \gamma_0)$. Assume that $\bPsi$ is a good selection function. Then,
        \begin{align*}
        {\mathtt{A}_1}\sum_{k = 1}^K \gamma\|\nabla g_{\Psi_k}(x_k)\|^2 \leq {g_{\Psi_1}(x_1) - g_{\Psi_K}(x_{K+1})} + 64|\bbX|G^2K\gamma^{1 + \beta - \alpha} + {\mathtt{A}_2}K\gamma^{1 + 2\alpha}\,.
    \end{align*}
\end{theorem}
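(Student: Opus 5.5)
Starting from Lemma~\ref{lem:valid_descent}, a good selection function is in particular valid, so it suffices to show
\begin{equation*}
4G(\mathtt{P}_{\mathrm L} + \mathtt{P}_{\mathrm R}) \leq 64|\bbX| G^2 K \gamma^{1+\beta-\alpha}\, ,
\end{equation*}
that is, $\mathtt{P}_{\mathrm L} + \mathtt{P}_{\mathrm R} \leq 16|\bbX| G K\gamma^{1+\beta-\alpha}$. I will bound the contribution of each fixed stratum $\cX$ to $\mathtt{P}_{\mathrm L}$ and to $\mathtt{P}_{\mathrm R}$ separately, by $8GK\gamma^{1+\beta-\alpha}$ each, and then sum over $\cX\in\bbX$. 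The two cases are symmetric, so I describe the left one. Strata of dimension $d$ contribute nothing by the remark after Definition~\ref{def:individual_switches}.

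\textbf{Size of each payment.} Fix $\cX$ with $r=\rank(\cX)$ and $k = k_s^{\mathrm L}(\cX)\in\lswitch(\cX)$. By item 1 of Definition~\ref{def:good_switch}, $x_k\in\smallC(\cX)$, so Lemma~\ref{lem:geom_all}(\ref{:1}) gives $\dist(x_k,\cX)\leq \gamma^{(1+r)\beta}$. For the right payments the summand is $\dist(x_k,\cX)$ with $k-1\in\rswitch(\cX)$. Here $x_{k-1}\in\smallC(\cX)\setminus\smallC_{<\dim\cX}$ (validity plus item 1), and $\|x_k-x_{k-1}\|\leq G\gamma$. Lemma~\ref{lem:geom_all}(\ref{:4}) then gives $\dist(x_k,\cX)\leq 3\gamma^{(1+r)\beta}$. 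So each payment is at most $3\gamma^{(1+r)\beta}$.

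\textbf{Number of payments.} Consider consecutive left switches $k_{s-1}^{\mathrm L}<\hat k_s^{\mathrm L}<k_s^{\mathrm L}$, with $s\ge 2$. At $k_{s-1}^{\mathrm L}$ the iterate lies in $\smallC(\cX)\setminus\smallC_{<\dim\cX}$, by validity together with item 1. At $\hat k_s^{\mathrm L}$ it lies outside $\bigC(\cX)$. Lemma~\ref{lem:geom_all}(\ref{:6}) therefore gives $\|x_{\hat k}-x_{k_{s-1}}\|\ge \gamma^{\alpha+r\beta}/4$. Since each step moves by at most $G\gamma$, this forces
\begin{equation*}
k_s^{\mathrm L}-k_{s-1}^{\mathrm L}\ \ge\ \frac{\gamma^{\alpha+r\beta-1}}{4G}\, .
\end{equation*}
Hence
\begin{equation*}
|\lswitch(\cX)| \leq 1 + 4GK\gamma^{1-\alpha-r\beta}\, .
\end{equation*}
For the right switches, item 3 places the exit time after $k_s^{\mathrm R}$ and before $k_{s+1}^{\mathrm R}$, so the same argument applies.

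\textbf{Combining.} Multiplying the count by the payment size gives a per-stratum contribution of at most
\begin{equation*}
3\gamma^{(1+r)\beta}\bigl(1+4GK\gamma^{1-\alpha-r\beta}\bigr) \leq 3\gamma^{(1+r)\beta} + 12GK\gamma^{1+\beta-\alpha}\, .
\end{equation*}

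\textbf{Main obstacle.} The main difficulty is the bookkeeping of constants, in particular absorbing the stray ``$+1$'' term $3\gamma^{(1+r)\beta}$. I will handle it using $K\ge1$, $\alpha<\beta$, and the condition $2G\le\gamma_0^{(R+1)\beta-1}$. These give $\gamma^{(1+r)\beta}\le\gamma^{(R+1)\beta}\cdot\gamma^{-(R-r)\beta}$, which can be compared with $G\gamma^{1+\beta-\alpha}$. If that comparison is too tight, I will instead count the first switch using the exit before it: for left switches, $k_0^{\mathrm L}=0$ still gives an exit time $\hat k_1^{\mathrm L}$. Alternatively, I will count intervals rather than switches, so that the bound becomes a clean $4GK\gamma^{1-\alpha-r\beta}$, after adjusting the constant $64$ accordingly.
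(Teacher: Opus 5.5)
Your strategy is the paper's: reduce via Lemma~\ref{lem:valid_descent} to bounding $\mathtt{P}_{\mathrm L}$ and $\mathtt{P}_{\mathrm R}$, bound each payment by Lemma~\ref{lem:geom_all}(\ref{:1}) and (\ref{:4}), and count switches per stratum using Lemma~\ref{lem:geom_all}(\ref{:6}) together with $\|x_{k+1}-x_k\|\le G\gamma$. As written, however, the counting step leaves a term $3\gamma^{(1+r)\beta}$ that the statement does not contain. Your first remedy cannot remove it. The inequality $3\gamma^{(1+r)\beta}\le cGK\gamma^{1+\beta-\alpha}$ is equivalent to $K\ge \tfrac{3}{cG}\gamma^{\alpha+r\beta-1}$. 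This lower bound on $K$ blows up as $\gamma\to 0$, because $1-\alpha-r\beta>1-(R+1)\beta>0$, so it does not follow from $K\ge 1$ and $\alpha<\beta$. The condition $2G\le\gamma_0^{(R+1)\beta-1}$ is an upper bound on $G$ and points the wrong way. The missing information is exactly that a left switch can only occur after an exit from $\bigC(\cX)$.

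The fix is your second alternative, which is what the paper does: pair each left switch $k_s^{\mathrm L}$ with the exit that precedes it, not with the previous switch. Item 2 of Definition~\ref{def:good_switch} gives $\hat k_s^{\mathrm L}\in\,\rrbracket k_{s-1}^{\mathrm L},k_s^{\mathrm L}\llbracket$ for every $s\ge 1$, including $s=1$ because $k_0^{\mathrm L}=0$. Since $x_{k_s^{\mathrm L}}\in\smallC(\cX)\setminus\smallC_{<\dim(\cX)}$, item (\ref{:6}) forces $k_s^{\mathrm L}-\hat k_s^{\mathrm L}\ge\gamma^{\alpha+r\beta-1}/(4G)$. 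The intervals $\llbracket\hat k_s^{\mathrm L},k_s^{\mathrm L}\llbracket$ are pairwise disjoint, so $|\lswitch(\cX)|\le 4GK\gamma^{1-\alpha-r\beta}$ with no $+1$. For right switches, pair $k_s^{\mathrm R}$ with the subsequent exit $\hat k_s^{\mathrm R}$. It exists for every $s$, including the last, since $k^{\mathrm R}_{|\rswitch(\cX)|+1}=K+1$, so again there is no boundary term.

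Finally, to obtain the constant $64$, keep the sharper left payment bound $\gamma^{(1+r)\beta}$ and use $3\gamma^{(1+r)\beta}$ only for right payments. This gives $4GK\gamma^{1+\beta-\alpha}+12GK\gamma^{1+\beta-\alpha}=16GK\gamma^{1+\beta-\alpha}$ per stratum, and $4G\cdot16|\bbX|GK\gamma^{1+\beta-\alpha}=64|\bbX|G^2K\gamma^{1+\beta-\alpha}$. Your ``Combining'' line uses the factor $3$ on both sides, which would give $96$; the ``$8$ each'' split you announced is not what that computation produces.
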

\begin{proof}
    By Lemma~\ref{lem:valid_descent} it is sufficient to bound the sum of
    \begin{align*}
        \mathtt{P}_{\mathrm{L}}  {= {\sum_{\cX \in \bbX}}\sum_{k=2}^{K-1} \dist(x_k, \cX)\ind{k \in \lswitch(\cX)}}\quad\text{and}\quad
        \mathtt{P}_{\mathrm R} {={\sum_{\cX \in \bbX}}\sum_{k=2}^{K-1}\dist(x_k, \cX)\ind{k-1 \in \rswitch(\cX)}}\,.
    \end{align*}
    We bound the two terms separately.
        \paragraph{Reduction to number of switches}
        By definition of a good strata selection function $\Psi_k \neq \Psi_{k-1}$, $\dim(\Psi_k) \leq \dim(\Psi_{k-1})$ implies $x_{k} \in \smallC(\Psi_k)$. By Lemma~\ref{lem:geom_all}(\ref{:1}), $\dist(x_k, \Psi_k) \leq \gamma^{(\rank(\Psi_k)+1)\beta}$.
        Hence, pealing the sum over $\bbX$ in $\mathtt{P}_{\mathrm L}$ by corresponding ranks, we get
        \begin{align}
            \label{eq:rate_0}
            \mathtt{P}_{\mathrm L} \leq \sum_{r = 0}^{R-1}\sum_{\cX \in \bbX_{j_r}} \gamma^{(r+1)\beta} |\lswitch(\cX)|\,.
        \end{align}
        Similarly, since $\|x_{k-1} - x_k\| \leq G\gamma$, by Lemma~\ref{lem:geom_all}(\ref{:4}) if $\Psi_k \neq \Psi_{k-1}$, $\dim(\Psi_k) \geq \dim(\Psi_{k-1})$, then $\dist(x_{k-1}, {\Psi_{k-1}}) \leq 3\gamma^{(\rank(\cX)+1)\beta}$, thus
        \begin{align}
            \label{eq:rate_1}
            \mathtt{P}_{\mathrm R} \leq 3\sum_{r = 0}^{R-1}\sum_{\cX \in \bbX_{j_r}} \gamma^{(r+1)\beta} |\rswitch(\cX)|\,.
        \end{align}
        \paragraph{Bounding number of switches for good strata selection function.}
        From the previous paragraph we see that it is sufficient to bound the number of left and right switches for each stratum $\cX \in \bbX$. We establish the bound on $|\lswitch(\cX)|$ and the bound on $|\rswitch(\cX)|$ follows the same lines.

        Fix some $\cX \in \bbX$ with $\rank(\cX) = r$. Let $N = |\lswitch(\cX)|$ and $\lswitch(\cX) = \{k_1^{\mathrm{L}}(\cX), \ldots, k_N^{\mathrm{L}}(\cX)\}$ with $k_0(\cX) = 1$. By Definition~\ref{def:good_switch} of good strata selection function, for every $s = 1, \ldots, N + 1$ there exist $\hat{k}_{s}^{\mathrm{L}}(\cX)$, satisfying
        \begin{align}
            \label{eq:order_times}
           k_{s-1}^{\mathrm{L}}(\cX) < \hat{k}_{s}^{\mathrm{L}}(\cX) < k_{s}^{\mathrm{L}}(\cX)\,,
        \end{align}
        $x_{\hat{k}_{s}^{\mathrm{L}}(\cX)} \notin \bigC(\cX)$ and $x_{k_{s}^{\mathrm{L}}(\cX)} \in \smallC(\cX)$, and we recall the agreement $k_0^{\mathrm{L}}(\cX) = 0$ and $k_{N+1}^{\mathrm{L}}(\cX) = K+1$. Thus, by Lemma~\ref{lem:geom_all}(\ref{:6}), it holds that
        \begin{align*}
            \|x_{\hat{k}_{s}^{\mathrm{L}}(\cX)} - x_{k_{s}^{\mathrm{L}}(\cX)}\| \geq \gamma^{\alpha + r\beta} / 4\,.
        \end{align*}
        Hence, we can write for $\gamma_k = \gamma$ that
        \begin{align}
            \label{eq:rate_2}
            \frac{N\gamma^{\alpha + r\beta}}{4} \leq \sum_{s = 1}^N \|x_{\hat{k}_{s}^{\mathrm{L}}(\cX)} - x_{k_{s}^{\mathrm{L}}(\cX)}\| \leq G\sum_{s = 1}^N \sum^{k_{s}^{\mathrm{L}}(\cX)-1}_{k = \hat{k}_{s}^{\mathrm{L}}(\cX)} \gamma_k \stackrel{(a)}{\leq} G\sum_{s = 1}^N \sum^{k_{s}^{\mathrm{L}}(\cX)-1}_{k = {k}_{s-1}^{\mathrm{L}}(\cX)} \gamma_k \leq G\sum_{k = 1}^{K-1}\gamma_k \leq G\gamma K\,,
        \end{align}
        where $(a)$ follows from Equation~\eqref{eq:order_times}, which is a key requirement in Definition~\ref{def:good_switch}.

        Same argument goes for $|\rswitch(\cX)|$. Thus,
        \begin{equation*}
        4 G ( \mathtt{P}_{\mathrm L} + \mathtt{P}_{\mathrm R}) \leq 4 G (4 G \gamma K + 12 G \gamma K) \sum_{r=0}^{R-1}|\bbX_{j_{r}}|\gamma^{\beta - \alpha}\, ,
        \end{equation*}
        which with Lemma~\ref{lem:valid_descent} completes the proof.
\end{proof}
      It only remains to show that good strata selection function exists. This is the purpose of the next section.

\newcommand\mycommfont[1]{\small\ttfamily\textcolor{Cerulean}{#1}}
\SetCommentSty{mycommfont}
\SetKwRepeat{Do}{do}{while}%
\begin{algorithm}[th]

  \DontPrintSemicolon
  \KwIn{Trajectory $x_{\llbracket 1, K\rrbracket}$}
  \KwOut{Strata assignment $\bPsi$}

  At the beginning $(\Psi_k)_{k = 1}^K$ is not assigned anywhere;

  \For(\tcp*[f]{Assignment starts from the smallest dimension; if $|\bbX_j| = \emptyset$, this dimension will be skipped}){$j = 0, \ldots, d-1$}{
        Let $\{\llbracket \ell_i^j, r_i^j \rrbracket \}_{i=1}^{m_j}$ be the decomposition of indices $k$ for which $\Psi_k$ is not yet defined into maximal consecutive integer intervals, ordered so that $r_i^j < \ell_{i+1}^j$.\label{algo:connected_components1}\;

        \For{$i = 1, \ldots, m$\label{algo:for_loop_entry}}{
                $\llbracket \ell, r\rrbracket \gets \texttt{CheckLeft}({\llbracket \ell_i^j, r_i^j \rrbracket}, j)$\tcp*[r]{Making sure left buffers are crossed}
                \While{$\llbracket \ell, r\rrbracket \neq \emptyset$\label{algo:while_loop_entry}}{
                        $\llbracket \ell, r\rrbracket \gets \texttt{BuildInside}({\llbracket \ell, r\rrbracket}, j)$\tcp*[r]{Build selection function}

                }\label{algo:while_loop_exit}
        }\label{algo:for_loop_exit}

  }
 Every index $k$ that is not yet assigned is set as $\Psi_k = \argmin_{\cX \in \bbX_{d}} \dist(x_k, \cX)$\label{algo:for_last_assignment}\;

  \caption{\texttt{Building strata selection function}}
  \label{algo:main}
\end{algorithm}

\SetKwInput{KwProm}{Promise}
\SetKwInput{KwAss}{Assignment}
\SetKw{KwAssign}{Assign}

\begin{algorithm}[th]
  \DontPrintSemicolon
  \KwIn{Non-empty consecutive interval ${\llbracket \ell, r \rrbracket}$ with $\ell \leq r$, target dimension $j \in \llbracket 0, d-1 \rrbracket$}
\KwAss{Indices $\sqbracket{\ell}{k} \subset \sqbracket{l}{r}$ were assigned to some $j$-dimensional stratum}
  \KwOut{Indices $\sqbracket{k+1}{r}$, that could be still potentially assigned to a $j$-dimensional stratum}

  \If(\tcp*[f]{If no boundary condition, continue}){$\max_{\cX \in \bbX_j}k^{{\mathrm{R}}}(\cX, \llbracket \ell, r\rrbracket) = -1$ from Equation~\eqref{eq:right_time}\label{algline:check_left_kR}} {\Return {\color{YellowOrange}$\llbracket \ell, r \rrbracket$}}
  \Else(\tcp*[f]{If there is boundary condition, take the latest inner cone exit}){
  Set $\cX^* \in \argmax_{\cX \in \bbX_j}k^{{\mathrm{R}}}(\cX, \llbracket \ell, r\rrbracket)$ \tcp*[f]{Break ties arbitrarily}
  }

   \If(\tcp*[f]{check right corner case: do we remain in the outer cone?}){$x_{\llbracket \ell, r\rrbracket} \subset \bigC(\cX^*)$\label{algline:rcorner_l1}}{
   \KwAssign $\Psi_{\llbracket \ell, r\rrbracket} = \cX^*$\;\label{algline:rcorner_l1_2}
   \Return ${\color{YellowOrange}\emptyset}$\label{algline:rcorner_l2}\;
   }
   \Else(\tcp*[f]{If buffer zone was traversed, continue with sub-trajectory}){
    Set $k^{{\mathrm{R}}} \gets \max_{\cX \in \bbX_j}k^{{\mathrm{R}}}(\cX, \llbracket \ell, r\rrbracket)$\label{algo:check_left_rightswitch} \tcp*[r]{Record right-switch}
    \KwAssign $\Psi_{\llbracket \ell, k^{{\mathrm{R}}}\rrbracket} = \cX^*$\;\label{algo:check_left_rightswitch_1}
    \Return ${\color{YellowOrange}\llbracket k^{{\mathrm{R}}} + 1, r \rrbracket}$\;\label{algo:check_left_rightswitch_2}
   }
  \caption{\texttt{Sub-routine}: \texttt{CheckLeft}}
  \label{algo:check_left}
\end{algorithm}

\newpage
\begin{algorithm}[th]
  \DontPrintSemicolon
  \KwIn{Non-empty consecutive interval ${\llbracket \ell, r \rrbracket}$ with $\ell \leq r$, target dimension $j \in \llbracket 0, d-1\rrbracket$}

  \KwAss{Indices $\sqbracket{k_1}{k_2} \subset \sqbracket{l}{r}$ were assigned to some $j$-dimensional stratum}
  \KwOut{Indices $\sqbracket{k_2+1}{r}$ that could be still potentially assigned to a $j$-dimensional stratum}
  \KwProm{$x_{\sqbracket{l}{k_1-1}} \cap \smallC_{=j} = \emptyset$}\label{algo:prom2}

    \If(\tcp*[f]{No entrance in an inner cone? stop!}){$\min_{\cX \in \bbX_j}k^{{\mathrm{L}}}(\cX, \llbracket \ell, r\rrbracket) = +\infty$ from Eq.~\eqref{eq:left_time}}{
        \Return $\color{YellowOrange}{\emptyset}$\;
    }
    \Else(\tcp*[f]{If there is an inner cone entry on $\llbracket \ell, r \rrbracket$, take the earliest}){
    Set $\cX^* \in \argmin_{\cX \in \bbX_j}k^{{\mathrm{L}}}(\cX, \llbracket \ell, r\rrbracket)$\tcp*[r]{Break ties arbitrarily}
    Set $k^{{\mathrm{L}}} \gets \min_{\cX \in \bbX_j}k^{{\mathrm{L}}}(\cX, \llbracket \ell, r\rrbracket)$\tcp*[r]{record left-switch}\label{algo:build_in_leftswitch}
    }

   \If(\tcp*[f]{check right corner case: do we remain in the outer cone?}){$x_{\llbracket k^{{\mathrm{L}}}, r\rrbracket} \in \bigC(\cX^*)$\label{algline:rcorner_l3}}{
   \KwAssign $\Psi_{\llbracket k^{{\mathrm{L}}}, r\rrbracket} = \cX^*$\;\label{algline:rcorner_l4_3}
   \Return $\color{YellowOrange}{\emptyset}$\label{algline:rcorner_l4}\;
   }
   \Else(\tcp*[f]{If buffer zone was traversed, continue with sub-trajectory}){
     Set $k^{{\mathrm{R}}} \gets k^{{\mathrm{R}}}(\cX^*, \llbracket k^{{\mathrm{L}}}, r\rrbracket)$ by Eq.~\eqref{eq:right_time}\label{algline:right_norm1} \tcp*[r]{record right-switch}
    \KwAssign $\Psi_{\llbracket k^{{\mathrm{L}}}, k^{{\mathrm{R}}}\rrbracket} = \cX^*$\;\label{algline:right_corner_assign}
    \Return ${\color{YellowOrange}\llbracket k^{{\mathrm{R}}} + 1, r \rrbracket}$\;\label{algline:right_norm2}
   }

  \caption{\texttt{Sub-routine}: \texttt{BuildInside}}
  \label{algo:build_inside}
\end{algorithm}

\subsection{Existence of a good strata selection function}
\label{sec:goo_existence}

In this section, we present Algorithm~\ref{algo:main}, which given a trajectory constructs a good strata selection function. We emphasize that our approach in this section is entirely combinatorial and deliberately avoids using any specific properties of the underlying dynamics or additional geometric structure. In fact, in this section, the reader may ignore the dynamical and geometric aspects altogether and focus only on the nestedness of the neighborhoods $\smallC(\cX) \subset \bigC(\cX)$.

We use the following notation for the construction of Algorithm~\ref{algo:main}. For a stratum $\cX \in\bbX$ and a consecutive interval $\llbracket \ell, r\rrbracket \subset \llbracket 1, K\rrbracket$,
\begin{align}
    &k^{{\mathrm{L}}}(\cX, \llbracket \ell, r\rrbracket) := \min\{k \in \llbracket \ell, r\rrbracket\,:\, x_k \in \smallC(\cX)\}\label{eq:left_time}\,,\\
    &k^{{\mathrm{R}}}(\cX, \llbracket \ell, r\rrbracket) := \max\{k \in \llbracket \ell, r\rrbracket\,:\,x_{\llbracket \ell, k\rrbracket}\subset \bigC(\cX),\, x_k \in \smallC(\cX)\}\label{eq:right_time}\,,
\end{align}
with agreements that $\min\{\emptyset\} = +\infty$ and $\max\{\emptyset\} = - 1$. As we'll see next, $k^{{\mathrm{L}}}(\cX, \llbracket \ell, r\rrbracket)$ and $k^{{\mathrm{R}}}(\cX, \llbracket \ell, r\rrbracket)$ correspond, respectively, to candidate left and right switching times associated with the stratum $\cX$. It is also convenient to set $\dim(\Psi_0) = \dim(\Psi_{K+1}) = \rank(\Psi_0) = \rank(\Psi_{K+1}) =  -1$.

Before stating the main results related to Algorithm~\ref{algo:main}, we summarize the key principles developed so far, which will guide its construction. Recall that our goal is to establish the existence of a strata selection function satisfying Definition~\ref{def:good_switch}, and hence, in particular, Definition~\ref{def:valid_switch}.
\begin{remark}[Rules of the game]
Validity, as specified in Definition~\ref{def:valid_switch}, imposes the following basic constraint:
\begin{itemize}
    \item If an iterate satisfies $x_k \in \smallC_{=j} \setminus \smallC_{<j}$, then k \emph{must} be assigned to a $j$-dimensional stratum. Conversely, if $x_k \notin \bigC(\cX)$, then $k$ \emph{cannot} be assigned to $\cX$.
\end{itemize}
The additional “goodness” conditions from Definition~\ref{def:good_switch} can be reformulated as follows. Let $\llbracket k_1, k_2 \rrbracket$ be the maximal consecutive interval so that $\Psi_{\llbracket k_1, k_2 \rrbracket} = \cX$ for some $\cX \in \bbX_j$. Then one of the following situations must occur:
\begin{enumerate}
    \item If $\max\{\dim(\Psi_{k_1 - 1}), \dim(\Psi_{k_2 + 1})\} < j$, then, $k_1 \notin \lswitch(\cX), k_2 \notin \rswitch(\cX)$, and we require $x_{k_1}, x_{k_2} \in \bigC(\cX)$;
    \item If $\dim(\Psi_{k_1 - 1}) \geq j > \dim(\Psi_{k_2 + 1})$, then, $k_1 \in \lswitch(\cX), k_2 \notin \rswitch(\cX)$, and we require
    \begin{align*}
         x_{k_1} \in \smallC(\cX), x_{k_2} \in \bigC(\cX)\text{ and }\exists \hat{k}_1 < k_1 \text{ s.t. } x_{\hat{k}_1} \notin \bigC(\cX) \text{ and } \llbracket \hat{k}_1, k_1 \llbracket\, \cap \lswitch(\cX) = \emptyset\,;
    \end{align*}
    \item If $\dim(\Psi_{k_2 + 1}) \geq j > \dim(\Psi_{k_1 - 1})$, then, $k_1 \notin \lswitch(\cX), k_2 \in \rswitch(\cX)$, and we require
    \begin{align*}
         x_{k_1} \in \bigC(\cX), x_{k_2} \in \smallC(\cX)\text{ and }\exists \hat{k}_2 > k_2 \text{ s.t. } x_{\hat{k}_2} \notin \bigC(\cX) \text{ and } \rrbracket k_2, \hat{k}_2 \rrbracket \cap \rswitch(\cX) = \emptyset\,;
    \end{align*}
    \item If $\min\{\dim(\Psi_{k_1 - 1}), \dim(\Psi_{k_2 + 1})\} \geq j$, then, $k_1 \in \lswitch(\cX), k_2 \in \rswitch(\cX)$, and we require
    \begin{align*}
         x_{k_1} \in \smallC(\cX) \text{ and }\exists \hat{k}_1 < k_1 \text{ s.t. } x_{\hat{k}_1} \notin \bigC(\cX) \text{ and } \llbracket \hat{k}_1, k_1 \llbracket\, \cap \lswitch(\cX) = \emptyset\,,\\
         x_{k_2} \in \smallC(\cX)\text{ and }\exists \hat{k}_2 > k_2 \text{ s.t. } x_{\hat{k}_2} \notin \bigC(\cX) \text{ and } \rrbracket k_2, \hat{k}_2 \rrbracket \cap \rswitch(\cX) = \emptyset\,;
    \end{align*}
\end{enumerate}
\end{remark}

The idea behind Algorithm~\ref{algo:main} is to construct the assignment recursively with respect to the dimension of the strata. Fix a dimension $j$, and consider a maximal consecutive interval $\sqbracket{l}{r} \subset \sqbracket{1}{K}$ such that:
\begin{itemize}
    \item no stratum has yet been assigned for $k \in \sqbracket{l}{r}$,
    \item $x_k \notin \smallC_{<j}$ for all $k \in \sqbracket{l}{r}$, and
    \item the boundary indices $l-1$ and $r+1$ have already been assigned to strata of strictly smaller dimension.
\end{itemize}

The construction at level $j$ is performed locally on such intervals $\sqbracket{l}{r}$. Ideally, one would like the following property to hold: for every $\cX \in \bbX_j$, each entrance of the trajectory into $\smallC(\cX)$ within $\sqbracket{l}{r}$ is preceded by an exit from $\bigC(\cX)$, and each exit from $\smallC(\cX)$ is followed by an exit from $\bigC(\cX)$, all within the same interval. Under this assumption, it suffices to run Algorithm~\ref{algo:build_inside}, which constitutes the main inner loop of Algorithm~\ref{algo:main}. In words, this procedure detects the first entrance of $x_k$ into $\smallC_{=j}$, selects the corresponding stratum, and continues assigning it for as long as the admissibility conditions are satisfied.

However, since the interval $\sqbracket{l}{r}$ is arbitrary, the above matching between entrances into $\smallC(\cX)$ and exits from $\bigC(\cX)$ may fail near the boundaries. To handle this, we introduce two corner-case mechanisms:
\begin{enumerate}
    \item \textbf{Left boundary:} since $l-1$ is assigned to a lower-dimensional stratum, we may start an assignment at $l$ even if $x_l \in \bigC_{=j}$, without requiring $x_l \in \smallC_{=j}$. This case is handled by Algorithm~\ref{algo:check_left}.
    \item \textbf{Right boundary:} similarly, since $r+1$ is assigned to a lower-dimensional stratum, we may terminate an assignment at $r$ even if $x_r \notin \smallC(\cX)$. This situation is handled by Lines~\ref{algline:rcorner_l1}--\ref{algline:rcorner_l2} in Algorithm~\ref{algo:check_left} and Lines~\ref{algline:rcorner_l3}--\ref{algline:rcorner_l4} in Algorithm~\ref{algo:build_inside}. At each step, these checks determine whether the current assignment can be extended up to the boundary of the interval.
\end{enumerate}

As we state in the next theorem, Algorithm~\ref{algo:main} produces a strata selection function that satisfies Definition~\ref{def:good_switch}.

\begin{theorem}\label{thm:algo_produce_good}
    Under assumptions of Theorem~\ref{thm:main_announced}, let $(x_1, \ldots, x_K)$ be a gradient descent trajectory for $\gamma \in (0, \gamma_0)$. A good selection function $\bPsi$ exists and can be obtained by Algorithm~\ref{algo:main}.
\end{theorem}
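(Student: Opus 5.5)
The proof verifies that Algorithm~\ref{algo:main} terminates and that its output $\bPsi$ satisfies the two requirements: validity (Definition~\ref{def:valid_switch}) and goodness (Definition~\ref{def:good_switch}). It is an induction on the dimension level $j$ of the outer loop. The invariant to maintain is the following. At the start of level $j$, every still-unassigned maximal interval $\sqbracket{\ell_i^j}{r_i^j}$ has three properties:
\begin{itemize}
\item no index $k$ in it has $x_k \in \smallC_{<j}$;
\item its neighbours $\ell_i^j-1$ and $r_i^j+1$ are either outside $\sqbracket{1}{K}$ or assigned to strata of dimension $<j$;
\item all assignments made so far are valid and satisfy the goodness conditions, with the switches across interval endpoints already accounted for.
\end{itemize}
The base case $j=0$ is immediate. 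For termination, note that each call to \texttt{CheckLeft}/\texttt{BuildInside} either returns $\emptyset$ or returns a strictly shorter interval $\sqbracket{k^{\mathrm R}+1}{r}$, since $k^{\mathrm R}\ge k^{\mathrm L}\ge \ell$. Hence the while loop ends.

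Next I would check validity of each assignment made at level $j$. An index assigned to $\cX^*$ lies either in a range $\sqbracket{\ell}{k^{\mathrm R}}$ or $\sqbracket{k^{\mathrm L}}{k^{\mathrm R}}$, where by \eqref{eq:right_time} all iterates are in $\bigC(\cX^*)$, or in a corner-case range explicitly checked to lie in $\bigC(\cX^*)$. By the invariant, none of these iterates is in $\smallC_{<j}$.

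For the invariant at level $j+1$, I would show that after level $j$ no unassigned index $k$ has $x_k\in\smallC_{=j}$. \texttt{BuildInside} always picks the earliest inner-cone entry. The ``Promise'' of Algorithm~\ref{algo:build_inside} then guarantees that the skipped indices $\sqbracket{\ell}{k^{\mathrm L}-1}$ avoid $\smallC_{=j}$, and this must be proved by induction along the while loop. The delicate case is the output of \texttt{CheckLeft}. There I would use that $k^{\mathrm R}$ is the maximum over all $\cX\in\bbX_j$. Hence, before the returned interval, any entry into $\smallC(\cX)$ for another $\cX$ with $x$ still in $\bigC(\cX)$ from $\ell$ would contradict maximality. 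So the returned interval begins right after the last inner-cone index still connected to $\ell$ inside an outer cone. I expect this step to need the most care.

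The unassigned indices at level $j+1$ then avoid $\smallC_{\le j}$, and the endpoints of the new maximal gaps are assigned to dimension $\le j$. The final line assigns leftover indices to $d$-dimensional strata. Validity there is automatic, since $\bigC(\cX)=\cX$ for $\dim\cX=d$ and the leftover iterates are outside $\smallC_{<d}\supseteq$ lower skeleton neighbourhoods, so they lie in some open stratum.

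For goodness, I would classify the switches exactly as in the ``Rules of the game'' remark. For a left switch $k\in\lswitch(\cX)$ with $\dim\cX=j$, the predecessor $\Psi_{k-1}$ has dimension $\ge j$, so it was assigned at level $j$ or later. Such a switch therefore occurs only at some $k^{\mathrm L}$ produced by \texttt{BuildInside}, which gives $x_k\in\smallC(\cX)$. Left-boundary starts from \texttt{CheckLeft} follow a lower-dimensional index, so they are not left switches. Right switches occur only at some $k^{\mathrm R}$, where $x_k\in\smallC(\cX)$ by \eqref{eq:right_time}; corner-case ends precede lower-dimensional indices.

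For the buffer conditions, consider a right switch at $k^{\mathrm R}$ in the non-corner branch. Maximality in \eqref{eq:right_time}, combined with the failure of the corner check, forces some later $\hat k\le r$ with $x_{\hat k}\notin\bigC(\cX)$ before any return to $\smallC(\cX)$ is used. The next use of $\cX$ within the interval comes from a later $k^{\mathrm L}$, so $\hat k$ lies strictly between consecutive right switches. For left switches the argument is symmetric: the previous use of $\cX$ ended at a right switch followed by an exit from $\bigC(\cX)$, or ended at an interval boundary adjacent to a lower-dimensional index. In the second case, the Promise together with the choice of earliest entry supplies the required exit. This bookkeeping, especially ensuring that $\hat k$ is not separated from the switch by another switch of the same stratum across different intervals, is the main obstacle. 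I would handle it by showing that a stratum's uses in distinct level-$j$ intervals are always separated by a lower-dimensional index, and hence, via $\smallC_{<j}$ membership and Lemma~\ref{lem:geom_all}(\ref{:3}), by an exit from $\bigC(\cX)$.
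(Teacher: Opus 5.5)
Your termination and validity arguments are fine. So are the right-buffer argument (maximality in \eqref{eq:right_time} plus failure of the corner check) and the same-interval left-buffer argument. The gap is in item 2 of Definition~\ref{def:good_switch} for a left switch $k^{\mathrm L}$ that is the first \texttt{BuildInside} use of $\cX$ in its level-$j$ interval $\sqbracket{\ell}{r}$. This is exactly the case where the previous left switch of $\cX$ lies in an earlier interval. It also covers the very first left switch ($k_0^{\mathrm L}=0$), which your ``previous use'' framing does not address.

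Neither tool you invoke there works. The Promise only says $x_{\sqbracket{\ell}{k^{\mathrm L}-1}}$ avoids $\smallC_{=j}$. The iterates could sit in the buffer $\bigC(\cX)\setminus\smallC(\cX)$ throughout without ever leaving $\bigC(\cX)$. Your proposed lemma is false: you claim that uses of $\cX$ in distinct level-$j$ intervals are separated by an exit from $\bigC(\cX)$ ``via $\smallC_{<j}$ membership and Lemma~\ref{lem:geom_all}(\ref{:3})''. Indices assigned to lower strata are only known to lie in $\bigC$ of those strata. Even a point of $\smallC_{<j}$ can lie in $\bigC(\cX)$, since every point of $\cX$ does, including points near $\partial\cX$. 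In the example of Figure~\ref{fig:cones}, $(\varepsilon,0)$ with small $\varepsilon$ is in $\smallC(\cX_0)$ and in $\cX_1\subset\bigC(\cX_1)$. Iterates that travel along $\cX_1$ into $\smallC(\cX_0)$ and back out give two level-$1$ intervals, both using $\cX_1$. They are separated by an $\cX_0$-run, yet there is no exit from $\bigC(\cX_1)$ in between. Item~\ref{:3} of Lemma~\ref{lem:geom_all} only lower-bounds $\dist(x,\sX_{j-1})$ for $x\notin\smallC_{<j}$ and says nothing about leaving $\bigC(\cX)$.

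The missing idea is the purpose of \texttt{CheckLeft}. You actually wrote the relevant maximality observation, but attached it to the wrong claim. In the invariant ``no unassigned index lies in $\smallC_{=j}$'' nothing is delicate about \texttt{CheckLeft}: it assigns all of $\sqbracket{\ell}{k^{\mathrm R}}$ and skips nothing. Use it for the left buffer instead:
\begin{enumerate}
\item Suppose $k^{\mathrm L}$ is a \texttt{BuildInside} entry into $\smallC(\cX)$ and $x_{\sqbracket{\ell}{k^{\mathrm L}}}\subset\bigC(\cX)$. Then $k^{\mathrm R}(\cX,\sqbracket{\ell}{r})\ge k^{\mathrm L}$.
\item So the maximum computed by \texttt{CheckLeft} on $\sqbracket{\ell}{r}$ is at least $k^{\mathrm L}$, and \texttt{CheckLeft} would already have assigned $k^{\mathrm L}$, either in its corner branch or on $\sqbracket{\ell}{k^{\mathrm R}}$. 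This is a contradiction.
\item Hence some $\hat k\in\sqbracket{\ell}{k^{\mathrm L}-1}$ has $x_{\hat k}\notin\bigC(\cX)$.
\item Since $\ell$ exceeds every index of earlier level-$j$ intervals, $\hat k$ lies strictly between $k^{\mathrm L}$ and the previous left switch of $\cX$ (or $0$).
\end{enumerate}
Together with your same-interval argument, this closes the left-buffer property, and it is the route the paper takes.
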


\begin{figure}[t]
  \centering

  \begin{subfigure}{0.48\textwidth}
    \centering
    \includegraphics[width=\linewidth]{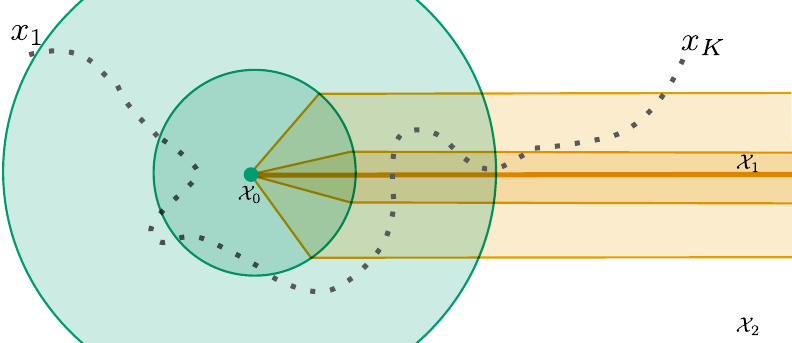}
    \caption{}
    \label{fig:ddd2}
  \end{subfigure}\hfill
  \begin{subfigure}{0.48\textwidth}
    \centering
    \includegraphics[width=\linewidth]{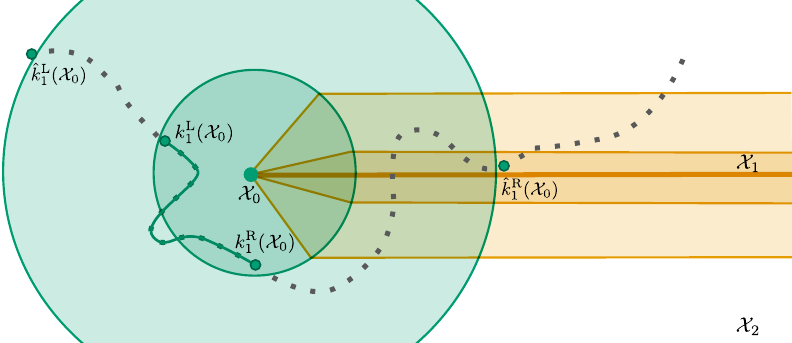}
    \caption{}
    \label{fig:ddd3}
  \end{subfigure}

  \vspace{0.5em}

  \begin{subfigure}{0.48\textwidth}
    \centering
    \includegraphics[width=\linewidth]{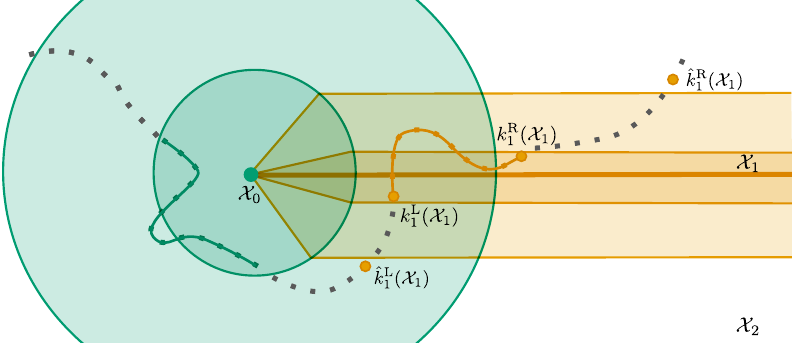}
    \caption{}
    \label{fig:ddd4}
  \end{subfigure}\hfill
  \begin{subfigure}{0.48\textwidth}
    \centering
    \includegraphics[width=\linewidth]{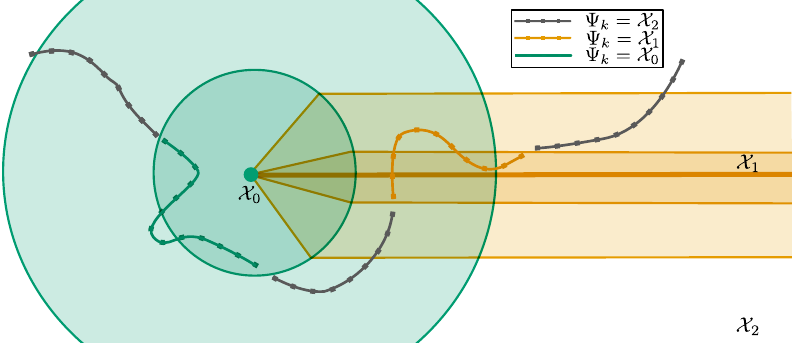}
    \caption{}
    \label{fig:ddd5}
  \end{subfigure}

  \caption{Step-by-step construction of the strata selection function. Figure~\ref{fig:ddd2} displays the full trajectory, traversed from left to right. Figure~\ref{fig:ddd3} shows the outcome of the first step of the main loop in Algorithm~\ref{algo:main}, together with the corresponding switching times and outer-neighbourhood exit times. Figure~\ref{fig:ddd4} displays the second step of the same loop. Finally, Figure~\ref{fig:ddd5} presents the resulting strata selection function; the gaps between consecutive sub-trajectories correspond to strata switchings.}
  \label{fig:four_plots}
\end{figure}

\paragraph{An example of execution of Algorithm~\ref{algo:main}}
Before delving into the proof of Theorem~\ref{thm:algo_produce_good}, postponed to Section~\ref{sec:proof_for_algo}, we first present a simple example illustrating the internal logic of Algorithm~\ref{algo:main}. In the main text, we focus on a setting without corner cases; an example covering such cases is deferred to Appendix~\ref{app:add_details}.

We consider the configuration shown in Figure~\ref{fig:four_plots}. The stratification is given by $\bbX = \{\cX_0, \cX_1, \cX_2\}$, where $\cX_0 = \{(0, 0)\}$, $\cX_1 = \{(x, 0) \,:\, x > 0\}$, and $\cX_2 = \setminus (\cX_0 \cup \cX_1)$. The specific form of the underlying function is irrelevant; only the realized trajectory matters. Figure~\ref{fig:ddd2} displays the three strata together with the corresponding $\smallC, \bigC$, as well as a trajectory naturally evolving from left to right. In particular, Figure~\ref{fig:ddd2} contains all the inputs required by Algorithm~\ref{algo:main}.

The execution of Algorithm~\ref{algo:main} is summarized by three subplots in Figures~\ref{fig:ddd3}–\ref{fig:ddd5}. Each subplot (read from left to right, top to bottom) shows the outcome of one pass of the main inner loop of Algorithm~\ref{algo:main}, corresponding to $j = 0, 1$, as well as the final strata assignment on Line~\ref{algo:for_last_assignment}, where the full-dimensional stratum is assigned. We describe each subplot below.
\begin{enumerate}
    \item Figure~\ref{fig:ddd3} corresponds to the iteration of the main loop with $j = 0$. The algorithm first calls Algorithm~\ref{algo:check_left}, which performs no action since every entrance into $\smallC(\cX_0)$ is preceded by an exit from $\bigC(\cX_0)$. It then calls Algorithm~\ref{algo:build_inside} with $j = 0$. During this step, the algorithm identifies two indices: $k^{\mathrm L}_1(\cX_0)$--the first index $k$ such that $x_k \in \smallC(\cX_0)$--and $k^{\mathrm R}_1(\cX_0)$--the last index $k$ such that $x_k \in \smallC(\cX_0)$, without leaving $\bigC(\cX_0)$ in between. The corresponding sub-trajectory is then assigned to $\cX_0$. The figure also highlights $\hat{k}^{\mathrm L}_1(\cX_0)$ and $\hat{k}^{\mathrm R}_1(\cX_0)$, which denote the last exit from $\bigC(\cX_0)$ before $k^{\mathrm L}_1(\cX_0)$ and the first exit after $k^{\mathrm R}_1(\cX_0)$, respectively, and play a key role in Definition~\ref{def:good_switch}.
    \item Figure~\ref{fig:ddd4} corresponds to the iteration with $j = 1$. The same procedure is applied independently on the intervals $\sqbracket{1}{\hat{k}^{\mathrm L}_1(\cX_0)-1}$ and $\sqbracket{\hat{k}^{\mathrm R}_1(\cX_0)+1}{K}$. On the first interval, no assignment is made, since $\smallC(\cX_1)$ is never entered. On the second interval, a portion of the trajectory is assigned to $\cX_1$, analogously to the previous step. The indices $\hat{k}^{\mathrm L}_1(\cX_1)$ and $\hat{k}^{\mathrm R}_1(\cX_1)$ are also displayed.
    \item Finally, Figure~\ref{fig:ddd5} corresponds to Line~\ref{algo:for_last_assignment} of Algorithm~\ref{algo:main}, where all remaining unassigned indices $k$ are assigned to $\cX_2$.
\end{enumerate}
Note that, when passing from $j$ to $j+1$, the algorithm automatically guarantees that for any index $k$ not yet assigned, we have $x_k \notin \smallC_{\leq j}$, since otherwise it would have already been assigned at a previous step. The existence of $\hat{k}^{\mathrm{L} \text{ or } \mathrm{R}}_s(\cX)$ is likewise ensured by the construction of the algorithm.

Although somewhat cumbersome, the proof of Theorem~\ref{thm:algo_produce_good} essentially consists in systematically listing the properties that follow directly from the definition of Algorithm~\ref{algo:main}.

\subsubsection{Proof of Theorem~\ref{thm:algo_produce_good}}
\label{sec:proof_for_algo}

To prove Theorem~\ref{thm:algo_produce_good} we first record some simple observations.

\begin{lemma}\phantom{=}\label{lm:algo_simple_observ}
    Algorithm~\ref{algo:main} terminates and produces a strata selection function. Moreover, if Algorithm~\ref{algo:check_left} or Algorithm~\ref{algo:build_inside}, with input an interval $\sqbracket{l}{r}$ and a dimension $j\leq d$, \emph{i)} assigns an interval $\sqbracket{k_1}{k_2}$ and \emph{ii)} outputs an interval $\sqbracket{k_2 +1}{r}$, then the following holds.
    \begin{enumerate}
        \item We have $x_{\sqbracket{l}{k_1-1}} \cap \smallC_{=j} = \emptyset$.
    \item We have $x_{\sqbracket{k_1}{k_2}} \subset \bigC(\Psi_{k_1})$.
        \item If $\sqbracket{k_1}{k_2}$ was assigned through Algorithm~\ref{algo:check_left}, then $k_1 = l$. Otherwise, $x_{k_1} \in \smallC(\Psi_{k_1})$.
        \item Either $k_2 = r$, or $x_{k_2} \in \smallC(\Psi_{k_2})$.
    \end{enumerate}
\end{lemma}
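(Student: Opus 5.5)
The plan is to check the claims by direct inspection of the subroutines, treating Algorithm~\ref{algo:check_left} and Algorithm~\ref{algo:build_inside} separately and reading each item off the branch that made the assignment.

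\textbf{Termination and well-definedness.} The outer loop of Algorithm~\ref{algo:main} runs over the finitely many dimensions $j$, and each inner loop over the components $\sqbracket{\ell_i^j}{r_i^j}$ is finite. For the while loop, every call to Algorithm~\ref{algo:build_inside} does one of two things. It may return $\emptyset$. Otherwise it returns $\sqbracket{k^{\mathrm R}+1}{r}$, where $k^{\mathrm R} = k^{\mathrm R}(\cX^*,\sqbracket{k^{\mathrm L}}{r}) \geq k^{\mathrm L} \geq \ell$; indeed $k^{\mathrm R}\ge k^{\mathrm L}$ because $x_{k^{\mathrm L}}\in\smallC(\cX^*)\subset\bigC(\cX^*)$, so $k^{\mathrm L}$ itself is admissible in \eqref{eq:right_time}. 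Hence the left endpoint strictly increases, the interval strictly shrinks, and the loop terminates after at most $K$ rounds. The subroutines only assign indices inside the input interval, and that interval consists of unassigned indices. So no index is assigned twice, and Line~\ref{algo:for_last_assignment} assigns all remaining indices. This makes $\bPsi$ a genuine map into $\bbX^K$.

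\textbf{Items 1--4 for Algorithm~\ref{algo:check_left}.} Here $k_1=\ell$, so item 1 holds vacuously and item 3 is immediate. In the corner branch, $x_{\sqbracket{\ell}{r}}\subset\bigC(\cX^*)$ and $k_2=r$, which gives items 2 and 4. In the other branch, $k_2=k^{\mathrm R}=k^{\mathrm R}(\cX^*,\sqbracket{\ell}{r})$. By \eqref{eq:right_time}, this index satisfies $x_{\sqbracket{\ell}{k_2}}\subset\bigC(\cX^*)$ and $x_{k_2}\in\smallC(\cX^*)$, which gives items 2 and 4.

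\textbf{Items 1--4 for Algorithm~\ref{algo:build_inside}.} Here $k_1=k^{\mathrm L}=\min_{\cX\in\bbX_j}k^{\mathrm L}(\cX,\sqbracket{\ell}{r})$. By \eqref{eq:left_time}, no $x_k$ with $\ell\le k<k_1$ lies in any $\smallC(\cX)$ with $\cX\in\bbX_j$, that is, in $\smallC_{=j}$; this is item 1. Since $x_{k_1}\in\smallC(\cX^*)$, item 3 holds. The corner branch gives $x_{\sqbracket{k_1}{r}}\subset\bigC(\cX^*)$ with $k_2=r$. The other branch sets $k_2=k^{\mathrm R}(\cX^*,\sqbracket{k_1}{r})$, whose defining properties again give $x_{\sqbracket{k_1}{k_2}}\subset\bigC(\cX^*)$ and $x_{k_2}\in\smallC(\cX^*)$. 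In every case $\Psi_{k_1}=\Psi_{k_2}=\cX^*$, so this yields items 2 and 4.

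\textbf{Main obstacle.} The only delicate point is bookkeeping. One must check that $k^{\mathrm R}\neq -1$ in the non-corner branches, so that the returned interval makes sense. In Algorithm~\ref{algo:check_left} this follows from the \texttt{if} test at Line~\ref{algline:check_left_kR}; in Algorithm~\ref{algo:build_inside} it follows from $k^{\mathrm L}$ being admissible. One must also read the conventions $\min\emptyset=+\infty$ and $\max\emptyset=-1$ consistently.
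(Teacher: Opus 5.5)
Your proposal is correct and follows the paper's argument: termination comes from the strictly increasing left endpoint in the while loop, and items 1--4 are then read off branch by branch for each subroutine using the definitions \eqref{eq:left_time} and \eqref{eq:right_time}. You are slightly more careful than the paper on two points it leaves implicit: you justify $k^{\mathrm R}\geq k^{\mathrm L}$ via the admissibility of $k^{\mathrm L}$ in \eqref{eq:right_time}, and you check that no index is assigned twice.
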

\begin{proof}
    To prove termination, it suffices to show that the while loop in Line~\ref{algo:while_loop_entry} cannot run indefinitely. Observe that each call to Algorithm~\ref{algo:build_inside} returns an interval of the form $\sqbracket{k}{r}$ with $k > l$. Hence, the left endpoint strictly increases, and the length of the remaining interval decreases. It follows that the while loop terminates after at most $r - l$ iterations. Finally, the last line of Algorithm~\ref{algo:build_inside} assigns all remaining unassigned indices to a $d$-dimensional stratum, thereby producing a complete strata selection function.

    To prove the remaining claims, we describe the possible assignments produced by Algorithm~\ref{algo:check_left} and Algorithm~\ref{algo:build_inside}.

    \paragraph{If assignment happened in Algorithm~\ref{algo:check_left}.}
    The assignment returned by Algorithm~\ref{algo:check_left} can take two forms.

    First, the assignment may occur at Line~\ref{algline:rcorner_l1_2}. In this case, we assign $\Psi_{\sqbracket{l}{r}} = \cX^*$, with $x_{\sqbracket{l}{r}} \subset \bigC(\cX^*)$ (see Line~\ref{algline:rcorner_l1}). All four required properties are then satisfied immediately.

    Otherwise, the assignment occurs at Line~\ref{algo:check_left_rightswitch_1}. In this case, we assign $\Psi_{\sqbracket{l}{k^{\mathrm{R}}}} = \cX^*$, where
    \[
    k^{\mathrm{R}} = \max_{\cX \in \bbX_j} k^{\mathrm{R}}(\cX, \sqbracket{l}{r}),
    \qquad
    \cX^* = \argmax_{\cX \in \bbX_j} k^{\mathrm{R}}(\cX, \sqbracket{l}{r}).
    \]
    By the definition of $k^{\mathrm{R}}(\cX, \sqbracket{l}{r})$ in \eqref{eq:right_time}, the required properties are satisfied in this case as well.

    \paragraph{If assignment happened in Algorithm~\ref{algo:build_inside}.}
    Similarly, the assignment produced by Algorithm~\ref{algo:build_inside} can take two forms.

    First, the assignment may occur at Line~\ref{algline:rcorner_l4_3}, in which case it is of the form $\Psi_{\sqbracket{k^{\mathrm{L}}}{r}} = \cX^*$, where
    \[
    k^{\mathrm{L}} = \min_{\cX \in \bbX_j} k^{\mathrm{L}}(\cX, \sqbracket{l}{r}),
    \qquad
    \cX^* = \argmin_{\cX \in \bbX_j} k^{\mathrm{L}}(\cX, \sqbracket{l}{r}).
    \]
    In this case, $x_{\sqbracket{k^{\mathrm{L}}}{r}} \subset \bigC(\cX^*)$ (see Line~\ref{algline:rcorner_l3}), and all required properties follow directly from the definition of $k^{\mathrm{L}}(\cX, \sqbracket{l}{r})$ in \eqref{eq:left_time}.

    Otherwise, the assignment occurs at Line~\ref{algline:right_corner_assign} and is of the form $\Psi_{\sqbracket{k^{\mathrm{L}}}{k^{\mathrm{R}}}} = \cX^*$, where $k^{\mathrm{L}}$ is as above and $k^{\mathrm{R}} = k^{\mathrm{R}}(\cX^*, \sqbracket{k^{\mathrm{L}}}{r})$. Compared to the previous case, the only difference concerns the fourth property: here, by definition of $k^{\mathrm{R}}$, we have $x_{k^{\mathrm{R}}} \in \smallC(\cX^*)$, as required.
\end{proof}

\begin{lemma}
    The selection function produced by Algorithm~\ref{algo:main} is valid.
\end{lemma}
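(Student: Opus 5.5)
To prove validity we must show that every index $k \in \sqbracket{1}{K}$ satisfies $x_k \in \bigC(\Psi_k) \setminus \smallC_{<\dim(\Psi_k)}$. Lemma~\ref{lm:algo_simple_observ} already shows that the algorithm terminates and assigns every index. So it suffices to check the two inclusions, $x_k \in \bigC(\Psi_k)$ and $x_k \notin \smallC_{<\dim(\Psi_k)}$, separately. We do this according to the place where $k$ was assigned: by \texttt{CheckLeft} or \texttt{BuildInside} during the pass for some $j \leq d-1$, or by the final Line~\ref{algo:for_last_assignment}.

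\textbf{Outer membership.} If $k$ is assigned at level $j<d$ inside an interval $\sqbracket{k_1}{k_2}$, then Lemma~\ref{lm:algo_simple_observ}(2) gives $x_{\sqbracket{k_1}{k_2}} \subset \bigC(\Psi_{k_1})$, and $\Psi_k = \Psi_{k_1}$ on that interval. If $k$ is assigned in the last line, then $\Psi_k \in \bbX_d$ and $\bigC(\Psi_k) = \Psi_k$. In that case we must show that $x_k$ actually lies on a $d$-dimensional stratum. For this I would use the exclusion property below: $x_k \notin \smallC_{\leq d-1}$, and every point of $\cK$ lying on a lower-dimensional stratum $\cX'$ belongs to $\smallC(\cX')$ because $\dist(x_k,\cX')=0$. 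Since $x_k\in\cK$ and $\bbX$ partitions $\cK$, $x_k$ must lie in some $\cX \in \bbX_d$. The $\argmin$ then picks that stratum, since the distance is $0$ and the strata are disjoint.

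\textbf{Exclusion from lower inner cones.} This is the key invariant. I would prove by induction on $j$ that at the start of the pass for dimension $j$, every still-unassigned index $k$ satisfies $x_k \notin \smallC_{<j}$. The base case $j=0$ is trivial since $\smallC_{<0}=\emptyset$. For the induction step, fix a maximal unassigned interval $\sqbracket{\ell_i^j}{r_i^j}$ at level $j$. We need to show that after \texttt{CheckLeft} and the while-loop of \texttt{BuildInside} have run, no index $k$ in it with $x_k \in \smallC_{=j}$ stays unassigned. The intervals returned at each step are of the form $\sqbracket{k_2+1}{r}$. Lemma~\ref{lm:algo_simple_observ}(1) says that the indices before the newly assigned block, within the current input interval, avoid $\smallC_{=j}$; the indices of the block itself are assigned. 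For \texttt{CheckLeft}, the skipped prefix is empty, since $k_1=l$ by item (3). The loop exits only when \texttt{BuildInside} returns $\emptyset$. This happens either because $\min_{\cX} k^{\mathrm L} = +\infty$, meaning no remaining index lies in $\smallC_{=j}$, or because everything up to $r$ has been assigned. Chaining these facts over the successive calls covers the whole interval, which gives the invariant at level $j+1$. The case where \texttt{CheckLeft} returns the interval unchanged is handled by the same argument applied to the subsequent \texttt{BuildInside} calls. Validity then follows: an index assigned at level $j$ was unassigned at the start of that pass, so $x_k \notin \smallC_{<j}=\smallC_{<\dim(\Psi_k)}$. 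An index assigned in the last line satisfies $x_k\notin\smallC_{<d}$.

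\textbf{Main obstacle.} The delicate step is the bookkeeping in the induction: the output intervals of the two subroutines must tile $\sqbracket{\ell_i^j}{r_i^j}$ without gaps, and no index in $\smallC_{=j}$ may be skipped between the end of one block and the start of the next. I would state this explicitly as a loop invariant: at each iteration of the while-loop, every index of $\sqbracket{\ell_i^j}{r_i^j}$ to the left of the current $\ell$ is either assigned or has $x_k\notin\smallC_{=j}$. I would then verify this invariant line by line using items (1)–(4) of Lemma~\ref{lm:algo_simple_observ}. A secondary point to check is the degenerate case $\bbX_j=\emptyset$. There all $k^{\mathrm R}=-1$ and all $k^{\mathrm L}=+\infty$, so nothing is assigned, and the invariant holds trivially because $\smallC_{=j}=\emptyset$.
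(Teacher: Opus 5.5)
Your proposal is correct and follows the paper's argument: the same induction on $j$, using item~(1) of Lemma~\ref{lm:algo_simple_observ} to keep unassigned indices out of $\smallC_{<j}$, and item~(2) for membership in the outer neighbourhood. (The paper's text cites the ``fourth point'' there, but item~(2) is the one actually needed.) You also treat the final full-dimensional assignment, which the paper leaves implicit. In that step, uniqueness of the $\argmin$ needs $d$-dimensional strata to be open in $\mathbb{R}^d$, not merely disjoint, so that no other $d$-dimensional stratum has $x_k$ in its closure.
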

\begin{proof}
    Thanks to the first point of Lemma~\ref{lm:algo_simple_observ}, a straightforward induction shows that the consecutive intervals $\sqbracket{l_i^j}{r_i^j}$ appearing on Line~\ref{algo:connected_components1} of Algorithm~\ref{algo:main} satisfy
    \[
    x_{\sqbracket{l_i^j}{r_i^j}} \cap \smallC_{<j} = \emptyset.
    \]
    Moreover, by the fourth point of Lemma~\ref{lm:algo_simple_observ}, every index $k \in \sqbracket{l_i^j}{r_i^j}$ that has already been assigned satisfies $x_k \in \bigC(\Psi_k)$. This concludes the proof.
\end{proof}

To show that the produced selection function is good we first state some observations on produced switches.

\begin{lemma}
    Let $\sqbracket{l}{r}$ be the interval from which started the inner loop of Algorithm~\ref{algo:main} at Line~\ref{algo:while_loop_entry} and $\sqbracket{k_1}{k_2}$ be an interval assigned in this inner loop through Algorithm~\ref{algo:check_left} or Algorithm~\ref{algo:build_inside}. Denote $\cX$ the stratum corresponding to this assignment. The following holds.

    \begin{enumerate}
        \item $k_1 \in \lswitch(\cX)$ if and only if $k_1 \neq l$.
        \item $k_2 \in \rswitch(\cX)$ if and only if $k_2 \neq r$.
        \item If $\sqbracket{k_1}{k_2}$ was assigned through Algorithm~\ref{algo:build_inside}, then there is $\hat{k} \subset \sqbracket{l}{k_1}$ such that $x_{\hat{k}} \not \in \bigC(\cX)$.
        \item If the assignment took place either at Lines~\ref{algo:check_left_rightswitch}--\ref{algo:check_left_rightswitch_2} of Algorithm~\ref{algo:check_left} or at Lines~\ref{algline:right_norm1}--\ref{algline:right_norm2} of Algorithm~\ref{algo:build_inside}. Then, there is $\hat{k} \in \sqbracket{k_2}{r}$ such that $x_{\hat{k}} \not \in \bigC(\cX)$.
    \end{enumerate}
\end{lemma}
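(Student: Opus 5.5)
The argument is a bookkeeping exercise on the structure of Algorithm~\ref{algo:main}. The main step is to show that, at the moment the inner loop on $\sqbracket{l}{r}$ (dimension $j$) starts, the neighbours $l-1$ and $r+1$ are either out of range (where we use the convention $\dim(\Psi_0)=\dim(\Psi_{K+1})=-1$) or already assigned to strata of dimension $<j$. Maximality of $\sqbracket{l}{r}$ among unassigned indices on Line~\ref{algo:connected_components1} means both neighbours were assigned at an earlier pass. Assignments at earlier passes are only to strata in $\bbX_{j'}$ with $j'<j$, since the final Line~\ref{algo:for_last_assignment} comes later. All later assignments inside $\sqbracket{l}{r}$ (at passes $j'>j$ or at the last line) are to strata of dimension $>j$. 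Also, within one inner loop the assigned intervals are disjoint, and each new one starts strictly after the previous one ends.

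\textbf{Items 1 and 2.} If $k_1=l$, then $\Psi_{l-1}$ has dimension $<j=\dim(\cX)$, so by Definition~\ref{def:individual_switches} $k_1\notin\lswitch(\cX)$. If $k_1>l$, then $k_1-1\in\sqbracket{l}{r}$. Index $k_1-1$ is either assigned in the same inner loop or left unassigned by it. If assigned in the same loop, it lies in a previous interval $\sqbracket{k_1'}{k_2'}$ with $k_2'=k_1-1$; a returned interval starts at $k_2'+1$, and BuildInside then picks its stratum at the earliest inner-cone entry, which may differ from or equal $\cX$. If unassigned, it is later assigned to a stratum of dimension $>j$. In both cases $\dim(\Psi_{k_1-1})\ge j$. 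For $\Psi_{k_1-1}\neq\Psi_{k_1}$ in the equal-dimension case, I argue that two consecutive assigned intervals in one loop are treated as a switch; formally one needs $\Psi_{k_1-1}\ne\cX$. If $\Psi_{k_1-1}=\cX$, the two blocks merge into one run and the definition simply records no switch there. I expect to handle this by viewing maximal runs: the statement should be read for the maximal run, and concatenation preserves the other properties. This is the delicate point, and I would first check whether a merge can actually occur. After a right switch at $k_2'$ we have $x_{k_2'+1}$ in $\smallC(\cX)$ or not. Maximality in the definition \eqref{eq:right_time} of $k^{\mathrm R}$ should force $x_{k_2'+1}\notin\bigC(\cX)$ or $x_{k_2'+1}\notin\smallC(\cX)$, and I would exploit this to rule out a merge or absorb it. Item 2 is symmetric, using $r+1$.

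\textbf{Item 3.} If BuildInside was called on $\sqbracket{l'}{r}$ with $l'\ge l$, then $k_1=k^{\mathrm L}$ is the first index of $\sqbracket{l'}{r}$ with $x_{k}\in\smallC_{=j}$. There are two cases.
\begin{itemize}
\item If $l'=l$, the preceding CheckLeft returned $\sqbracket{l}{r}$ unchanged, so $k^{\mathrm R}(\cX,\sqbracket{l}{r})=-1$. Since $x_{k_1}\in\smallC(\cX)$, this forces some index in $\sqbracket{l}{k_1}$ to lie outside $\bigC(\cX)$; otherwise $k_1$ itself would be admissible in \eqref{eq:right_time}.
\item If $l'>l$, then $l'=k_2'+1$ for a previous interval ending with a right switch, and item 4 applied to that interval supplies $\hat k$.
\end{itemize}
In the second case the $\hat k$ given by item 4 is outside $\bigC$ of the previous stratum, not necessarily of $\cX$. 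So I instead argue directly: if $x_{\sqbracket{k_2'}{k_1}}\subset\bigC(\cX)$, then maximality in the choice of $k^{\mathrm R}$ at the previous step contradicts the selection of $k_2'$ when $\cX$ competes; in BuildInside the earliest-entry rule combined with the promise on Line~\ref{algo:prom2} gives the same conclusion.

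\textbf{Item 4.} By definition \eqref{eq:right_time}, $k^{\mathrm R}$ is maximal with $x_{\sqbracket{\cdot}{k^{\mathrm R}}}\subset\bigC(\cX)$ and $x_{k^{\mathrm R}}\in\smallC(\cX)$. In these branches the test ``$x$ stays in $\bigC(\cX)$ up to $r$'' failed, so there is a first index $\hat k\le r$ with $x_{\hat k}\notin\bigC(\cX)$. Maximality gives $\hat k>k^{\mathrm R}=k_2$, hence $\hat k\in\sqbracket{k_2}{r}$.
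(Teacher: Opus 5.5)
\paragraph{Gap in Item 3, case $l'>l$.} There you replace the contradiction hypothesis by the weaker $x_{\sqbracket{k_2'}{k_1}}\subset\bigC(\cX)$. That amounts to trying to place $\hat k$ inside $\sqbracket{k_2'}{k_1}$, which is stronger than the statement and need not hold. For example, take $x_l\notin\bigC(\cX)$ and let the previous block $\sqbracket{k_1'}{k_2'}$ be assigned by \texttt{BuildInside} to some $\cX'\neq\cX$. It is then possible that $x_{\sqbracket{k_2'}{k_1}}\subset\bigC(\cX)$ as soon as $\smallC(\cX')$ meets $\bigC(\cX)$, and nothing in this purely combinatorial part forbids that. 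In this configuration the only admissible $\hat k$ lies before $k_1'$.

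Neither of your justifications works either. In \texttt{CheckLeft}, $k^{\mathrm R}$ is a maximum of $k^{\mathrm R}(\cX'',\sqbracket{l}{r})$. By \eqref{eq:right_time} this requires containment in $\bigC(\cX'')$ starting from $l$, not from $k_2'$, so $\cX$ does not ``compete'' under your hypothesis. In \texttt{BuildInside}, $k^{\mathrm R}$ is computed for the single stratum $\cX^*$, so there is no maximality over strata to exploit. The earliest-entry rule and the promise only concern $\smallC_{=j}$ and say nothing about $\bigC(\cX)$.

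\paragraph{The fix: no case split is needed.} Your first-case argument works in general, and it is the paper's proof.
\begin{itemize}
\item The negation of Item 3 is $x_{\sqbracket{l}{k_1}}\subset\bigC(\cX)$. Together with $x_{k_1}\in\smallC(\cX)$, this gives $k^{\mathrm R}(\cX,\sqbracket{l}{r})\geq k_1$.
\item \texttt{CheckLeft} always runs first on the full interval $\sqbracket{l}{r}$. It would therefore either assign all of $\sqbracket{l}{r}$, in which case no \texttt{BuildInside} call happens, or assign $\sqbracket{l}{k^{\mathrm R}}$ with $k^{\mathrm R}\geq k_1$.
\item In the second case every later \texttt{BuildInside} block starts after $k^{\mathrm R}$. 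Either way, $k_1$ cannot be the start of a \texttt{BuildInside} block, which is the contradiction.
\end{itemize}

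\paragraph{Items 1, 2 and 4.} These are fine. For Item 4, it is strictly the containment requirement in \eqref{eq:right_time}, not maximality, that puts the first exit after $k_2$.

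In Items 1--2 you are right to worry about $\Psi_{k_1-1}=\cX$, which the paper passes over, and your maximality idea settles it, so drop the ``absorb'' alternative. Suppose $k_1-1=k_2'$ ends an earlier block of the same loop, assigned to $\cX'$. That block cannot be a corner assignment, since corner assignments end the loop. Hence $x_{k_2'+1}\notin\smallC(\cX')$, because otherwise $k_2'+1$ would be admissible in \eqref{eq:right_time}. On the other hand $x_{k_1}\in\smallC(\cX)$, since $k_1$ starts a \texttt{BuildInside} block. So $\cX'\neq\cX$. The same argument applies at $k_2+1$.
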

\begin{proof}
    The first two points follow from the fact that $l-1$ and $r+1$ were assigned to a stratum of dimension lower than $\dim(\cX)$.

    For the third point, we note that since $x_{k_1} \in \smallC(\cX)$, if $x_{\sqbracket{l}{k_1}} \subset \bigC(\cX)$, then during the run of Algorithm~\ref{algo:check_left} with input $\sqbracket{l}{r}$, $k^{\mathrm{R}}$ of Line~\ref{algline:check_left_kR} would be necessarily larger than $k_1$. Thus, in this case, $\sqbracket{l}{k_1}$ would be assigned by Algorithm~\ref{algo:check_left} contradicting the assumption.

The fourth point follows simply for the fact that if $x_{\sqbracket{k_2}{r}} \subset \bigC(\cX)$, then either by Lines~\ref{algline:rcorner_l1}--\ref{algline:rcorner_l2} of Algorithm~\ref{algo:check_left} or by Lines~\ref{algline:rcorner_l3}--\ref{algline:rcorner_l4} of Algorithm~\ref{algo:build_inside}, the whole interval $\sqbracket{k_2}{r}$ would be assigned to $\cX$.

Finally, for the last point assume for the sake of contradiction that $\sqbracket{k_1}{k_2}$ and $\sqbracket{k_3}{k_4}$ were assigned through the same inner loop and $x_{\sqbracket{k_1}{k_4}} \subset \bigC(\cX)$. We note that in this case, $\sqbracket{k_1}{k_2}$ was assigned either on Lines~\ref{algo:check_left_rightswitch}--\ref{algo:check_left_rightswitch_1} of Algorithm~\ref{algo:check_left} or on Lines~\ref{algline:right_norm1}--\ref{algline:right_norm2} of Algorithm~\ref{algo:build_inside}. In both cases, $k^{\mathrm{R}}(\cX, \sqbracket{k_1}{r}) = k_2$, which implies that $x_{k_2 +1} \not \in \bigC$.
\end{proof}
\begin{lemma}
    The selection function is good.
\end{lemma}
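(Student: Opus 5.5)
We check the three requirements of Definition~\ref{def:good_switch} separately. The argument uses two facts. First, a stratum $\cX \in \bbX_j$ is assigned only during the pass $j = \dim(\cX)$ of the outer loop of Algorithm~\ref{algo:main}. Second, within that pass the intervals $\sqbracket{\ell_i^j}{r_i^j}$ are pairwise disjoint and ordered. Consequently, every assignment to $\cX$, and hence every element of $\lswitch(\cX)$ and $\rswitch(\cX)$, is one of the blocks $\sqbracket{k_1}{k_2}$ produced by \texttt{CheckLeft}/\texttt{BuildInside} inside a single pass. For $\cX \in \bbX_d$ nothing needs to be checked, since $\lswitch(\cX)=\rswitch(\cX)=\emptyset$ by the Remark following Definition~\ref{def:individual_switches}.

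\textbf{Condition 1.} Take a block $\sqbracket{k_1}{k_2}$ assigned to $\cX$ inside the inner loop started from $\sqbracket{\ell}{r}$. By the preceding lemma, $k_1 \in \lswitch(\cX)$ if and only if $k_1 \neq \ell$. By Lemma~\ref{lm:algo_simple_observ}(3), a block produced by \texttt{CheckLeft} starts at $\ell$, so a left switch can only come from \texttt{BuildInside}. For such a block, Lemma~\ref{lm:algo_simple_observ}(3) gives $x_{k_1}\in\smallC(\cX)$. Similarly, $k_2 \in \rswitch(\cX)$ forces $k_2\neq r$, and then Lemma~\ref{lm:algo_simple_observ}(4) gives $x_{k_2}\in\smallC(\cX)$.

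\textbf{Conditions 2 and 3.} Fix a left switch $k_s^{\mathrm L}(\cX)$. I will produce an exit from $\bigC(\cX)$ strictly between the end $k'$ of the most recent earlier block assigned to $\cX$ and $k_s^{\mathrm L}(\cX)$. Since $k' \geq k_{s-1}^{\mathrm L}(\cX)$, this exit lies in $\rrbracket k_{s-1}^{\mathrm L}(\cX), k_s^{\mathrm L}(\cX)\llbracket$.
\begin{itemize}
\item If there is no earlier block of $\cX$ in the current interval $\sqbracket{\ell}{r}$, use point 3 of the preceding lemma. It gives $\hat k \in \sqbracket{\ell}{k_1}$ with $x_{\hat k}\notin\bigC(\cX)$, and in fact $\hat k<k_1$ because $x_{k_1}\in\smallC(\cX)$. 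Every earlier block of $\cX$, if any, lies in an earlier, disjoint interval, so all its indices are $<\ell\le \hat k$.
\item If the earlier block $\sqbracket{k_1'}{k'}$ lies in the same interval, argue by contradiction and suppose $x_{\sqbracket{k'}{k_1}}\subset\bigC(\cX)$.
\begin{itemize}
\item If that block was built by \texttt{BuildInside}, it ends at $k^{\mathrm R}(\cX,\sqbracket{k_1'}{r})$, because $k' = r$ is impossible when a later block exists. Since $x_{k_1}\in\smallC(\cX)$, maximality in \eqref{eq:right_time} would force $k^{\mathrm R}\ge k_1>k'$, a contradiction.
\item If it was built by \texttt{CheckLeft}, then $k'$ is a maximum over $\bbX_j$ of quantities $k^{\mathrm R}(\cdot,\sqbracket{\ell}{r})$, which dominates $k^{\mathrm R}(\cX,\sqbracket{\ell}{r})\ge k_1$, again a contradiction.
\end{itemize}
\end{itemize}
Condition 3 is the mirror image. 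Take a right switch $k_s^{\mathrm R}(\cX)=k_2\neq r$. Point 4 of the preceding lemma gives an exit $\hat k\in\sqbracket{k_2}{r}$, in fact $\hat k>k_2$ since $x_{k_2}\in\smallC(\cX)$. Take $\hat k$ to be the first index after $k_2$ with $x_{\hat k}\notin\bigC(\cX)$. If some later block of $\cX$ started at $k_3\le\hat k$ inside $\sqbracket{\ell}{r}$, then $x_{k_3}\in\smallC(\cX)$ with $x_{\sqbracket{k_2}{k_3}}\subset\bigC(\cX)$, and the same maximality argument would have extended the block past $k_2$. Later blocks in other intervals start after $r\ge\hat k$. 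Hence $\hat k<k_{s+1}^{\mathrm R}(\cX)$.

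The main obstacle is the bookkeeping in the same-interval case of Condition 2 (and its mirror for Condition 3). There one must make sure that the maximality in \eqref{eq:right_time} is applied on the correct interval: $\sqbracket{k_1'}{r}$ for \texttt{BuildInside}, and $\sqbracket{\ell}{r}$ with the maximum over all of $\bbX_j$ for \texttt{CheckLeft}. One must also make sure that the intermediate blocks of other strata of dimension $j$ do not break the argument. They do not, because the contradiction only uses the positions of $x_k$ relative to $\smallC(\cX)$ and $\bigC(\cX)$, not the assignments in between.
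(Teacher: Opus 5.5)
Your proof is correct and follows essentially the same route as the paper: you split into the same-interval case, closed by the maximality of $k^{\mathrm R}$ in \eqref{eq:right_time}, and the different-interval case, closed by points 3 and 4 of the preceding lemma. Your explicit check of Condition 1, and your anchoring at the end of the most recent earlier block of $\cX$ with separate \texttt{CheckLeft}/\texttt{BuildInside} subcases, only spell out bookkeeping that the paper leaves implicit.
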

\begin{proof}
    Proofs for left and right switches are carried out separately and are very similar.

    First, suppose that $\sqbracket{l_s}{r_s} = \sqbracket{l_{s+1}}{r_{s+1}}$. We claim that there exists $\hat{k} \in \sqbracket{k_s^{\mathrm{L}}}{k_{s+1}^{\mathrm{L}}}$ such that $x_{\hat{k}} \notin \bigC(\cX)$. Indeed, if this were not the case, then
    \[
    x_{\sqbracket{k_s^{\mathrm{L}}}{k_{s+1}^{\mathrm{L}}}} \subset \bigC(\cX),
    \]
    and therefore
    \[
    k^{\mathrm{R}}(\cX,\sqbracket{k_s^{\mathrm{L}}}{r_s}) \geq k_{s+1}^{\mathrm{L}}.
    \]
    But then, at Line~\ref{algline:right_norm1}, the algorithm would have assigned at least the interval $\sqbracket{k_s^{\mathrm{L}}}{k_{s+1}^{\mathrm{L}}}$ to $\cX$, contradicting the fact that $k_s^{\mathrm{L}}$ and $k_{s+1}^{\mathrm{L}}$ are two distinct consecutive left switches.

    Now suppose that $\sqbracket{l_s}{r_s} \neq \sqbracket{l_{s+1}}{r_{s+1}}$. Then $k_{s+1}^{\mathrm{L}}$ is assigned during the inner loop corresponding to a different interval, namely $\sqbracket{l_{s+1}}{r_{s+1}}$. By the previous lemma, there exists $\hat{k} \in \sqbracket{l_{s+1}}{k_{s+1}^{\mathrm{L}}}$ such that $x_{\hat{k}} \notin \bigC(\cX)$. Since necessarily $k_s^{\mathrm{L}} < r_s < l_{s+1}$, this yields
    \[
    \hat{k} \in \sqbracket{k_s^{\mathrm{L}}}{k_{s+1}^{\mathrm{L}}},
    \]
    and proves the claim for left switches.

        \paragraph{Right switches.}

        Similarly, let $k_s^{\mathrm{R}}$ and $k_{s+1}^{\mathrm{R}}$ be two consecutive right switches of the stratum $\cX$. Let $\sqbracket{l_s}{r_s}$ and $\sqbracket{l_{s+1}}{r_{s+1}}$ denote the intervals from which the inner loop of Algorithm~\ref{algo:main}, started at Line~\ref{algo:while_loop_entry}, assigned $\Psi_{k_s^{\mathrm{R}}} = \cX$ and $\Psi_{k_{s+1}^{\mathrm{R}}} = \cX$, respectively. As before, we distinguish two cases.

        First, suppose that $\sqbracket{l_s}{r_s}=\sqbracket{l_{s+1}}{r_{s+1}}$. We claim that there exists $\hat{k}\in \sqbracket{k_s^{\mathrm{R}}}{k_{s+1}^{\mathrm{R}}}$ such that $x_{\hat{k}} \notin \bigC(\cX)$. Indeed, if
        \[
        x_{\sqbracket{k_s^{\mathrm{R}}}{k_{s+1}^{\mathrm{R}}}} \subset \bigC(\cX),
        \]
        then the value of $k^{\mathrm{R}}$ computed either at Line~\ref{algo:check_left_rightswitch} of Algorithm~\ref{algo:check_left} or at Line~\ref{algline:right_norm1} of Algorithm~\ref{algo:build_inside} would necessarily be at least $k_{s+1}^{\mathrm{R}}$, contradicting the fact that $k_s^{\mathrm{R}}$ and $k_{s+1}^{\mathrm{R}}$ are two distinct consecutive right switches. Hence such a $\hat{k}$ must exist.

        Now suppose that $\sqbracket{l_s}{r_s}\neq \sqbracket{l_{s+1}}{r_{s+1}}$. Then, by the previous lemma, there exists $\hat{k}\in \sqbracket{k_s^{\mathrm{R}}}{r_s}$ such that $x_{\hat{k}} \notin \bigC(\cX)$. Since $\sqbracket{k_s^{\mathrm{R}}}{r_s}\subset \sqbracket{k_s^{\mathrm{R}}}{k_{s+1}^{\mathrm{R}}}$, this proves the claim for right switches as well.
\end{proof}

\section{Extensions: varying step-sizes and sequential convergence}\label{sec:extensions}
This section presents two immediate extensions of the developed framework.

First, our analysis has so far focused on the constant step-size setting. In contrast, standard convergence results for subgradient methods applied to semialgebraic functions are typically established in the decreasing step-size regime~\cite{dav-dru-kak-lee-19}. In Section~\ref{sec:varying}, we show how to extend our results to the varying step-size setting.

Second, in Section~\ref{sec:KL_conv}, we establish sequential convergence of the iterates for step-sizes of the form $\gamma_k = 1/k$, thereby recovering, via an alternative argument, the recent result of Lai and Song~\cite{lai2025diameter}.

\subsection{Varying step-sizes}\label{sec:varying}

In this section, under the same assumptions, we study the recursion
\begin{align}\label{eqdef:subg_desc_decr}
x_{k+1} = x_k - \gamma_k v_k,,
\end{align}
where $v_k \in \partial f(x_k)$ and $(\gamma_k)_{k \geq 1}$ is a sequence of step-sizes.
{For the proofs of this section we do not track precise values of constants.}

We work under the following assumption.
\begin{assumption}\label{ass:decr_steps}
    The sequence $(\gamma_k)$ is decreasing and for all $k \geq 1$, $\gamma_k \leq \gamma_0$.
\end{assumption}
For this extension, unlike the main proofs, we will not give explicit expression $\gamma_0$, which needs to be at most a multiplicative constant factor smaller than the one used in Equation~\eqref{eq:final_gamma0_fix}.

The proof relies on a standard doubling trick applied to the step-size. Fix $K\in \bbN$, $k(1)>1$ and while $k(I)<K$, define the $k(i+1)$ as
\begin{align}\label{eqdef:ki}
k(i+1) = {\min\left\{\min\{k \geq k(i) \,:\, \gamma_{k} \leq \gamma_{k(i)} / 2\},\, K\right\}}\,.
\end{align}
Since $(\gamma_k)$ is decreasing, it follows that for all $k \in \sqbracket{k(i)}{k(i+1)-1}$,
\[\frac{\gamma_{k(i)}}{2} < \gamma_k \le \gamma_{k(i)}\,,\]
that is, the step-sizes within each interval $\sqbracket{k(i)}{k(i+1)-1}$ differ by at most a factor of two.

The proof proceeds as follows. On each interval $\sqbracket{k(i)}{k(i+1) - 1}$, we construct a strata selection function $\bPsi$ using the constant step-size argument, but with a mild change in the definitions of $\smallC$, $\bigC$. For $\cX \in \bbX_j$,
\begin{equation}\label{eqdef:big_small_C_var}
    \begin{split}
\bigC_{i}(\cX) &\eqdef \left\{x \in \bbR^d \,:\, \dist(x, \cX) \leq \gamma_{k(i+1)}^\alpha \min\left\{\gamma_{k(i+1)}^{\rank(\cX)\beta},\, \dist(x, \sX_{\dim(\cX)-1})\right\}\right\}\,,\\
\smallC_{i}(\cX) &\eqdef \left\{x \in \bbR^d \,:\, \dist(x, \cX) \leq \gamma_{k(i+1)}^\beta \min\left\{\gamma_{k(i+1)}^{\rank(\cX)\beta},\, \dist(x, \sX_{\dim(\cX)-1})\right\}\right\}\,.
    \end{split}
\end{equation}
Thus, for all $k \in \sqbracket{k(i)}{k(i+1) - 1}$, the {$\smallC$ and $\bigC$} are fixed and do not depend on $k$. This allows us to apply the results of the previous section, which were derived for the constant step-size case.
We say that $\bPsi$ is defined on $\sqbracket{k(i)}{k(i+1) - 1}$ if these {conical neighborhoods} are used in its construction.

\begin{definition}[Valid and {good} strata selection functions, varying stepsizes]\label{def:val_good_var}
    We say that a strata selection function $\bPsi$ is valid (respectively good) if Definition~\ref{def:valid_switch} (respectively \ref{def:good_switch}) holds for each $i <I$ and $\bPsi: \bbR^{d \times \sqbracket{k(i)}{k(i+1) - 1}}$, where $\smallC$ and $\bigC$ are defined through Equation~\eqref{eqdef:big_small_C_var}.
\end{definition}
\begin{lemma}
    \label{lem:payments_varying}
    Under Assumptions~\ref{ass:f_lip},\ref{ass:main} and \ref{ass:decr_steps}, let $\bPsi$ be a good strata selection function. There exists a constant $C > 0$ such that for any $k(1) > 1$, it holds that
    \begin{align*}
        \sum_{k = k(1)}^{K} \left(g_{\Psi_k}(x_{k}) - g_{\Psi_{k-1}}(x_k)\right)
        &\leq  G\sum_{k = k(1)}^{K}\|\pi_{\Psi_{k}}(x_k) - \pi_{\Psi_{k-1}}(x_k)\|\\
        &\leq C \left(G^2 |\bbX| \sum_{k = k(1)}^K \gamma_k^{1 + \beta - \alpha} + G\gamma_{k(1)}^{\beta}\right)\,.
    \end{align*}
\end{lemma}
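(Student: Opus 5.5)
The plan is to split the sum according to the doubling blocks $\sqbracket{k(i)}{k(i+1)-1}$. Within each block the neighbourhoods $\smallC_i,\bigC_i$ are frozen, so the constant step-size argument of Theorem~\ref{lem:rate_for_good} can be reused. At the block junctions we pay at most one extra switch per junction. The first inequality is immediate from the $G$-Lipschitz continuity of $f$, exactly as in Lemma~\ref{lem:distance_different_projection_same_point}. I therefore focus on bounding $\sum_k \|\pi_{\Psi_k}(x_k)-\pi_{\Psi_{k-1}}(x_k)\|$, and treat separately the interior indices $k\in\rrbracket k(i),k(i+1)-1\rrbracket$ and the junction indices $k=k(i)$.

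\textbf{Interior indices.} Inside block $i$, all step sizes satisfy $\gamma_{k(i+1)}\le\gamma_k\le\gamma_{k(i)}\le 2\gamma_{k(i+1)}$, up to the boundary effect in \eqref{eqdef:ki}, which I absorb into constants. So every $\|x_{k+1}-x_k\|\le 2G\gamma_{k(i+1)}$, and Lemma~\ref{lem:geom_all} holds with $G$ replaced by $2G$ and $\gamma=\gamma_{k(i+1)}$, provided $\gamma_0$ is shrunk by a constant factor. Then Lemma~\ref{lem:distance_different_projection_same_point} bounds each interior switch cost by $4[\dist(x_k,\Psi_k)+\dist(x_{k-1},\Psi_{k-1})]$ on the appropriate left/right switches. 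Items~\ref{:1} and~\ref{:4} of Lemma~\ref{lem:geom_all} bound these distances by $3\gamma_{k(i+1)}^{(r+1)\beta}$ for a stratum of rank $r$. The count of left/right switches of $\cX$ in block $i$ is obtained as in \eqref{eq:rate_2}: each switch is separated from the previous one by an exit from $\bigC_i(\cX)$. Item~\ref{:6} then forces displacement at least $\gamma_{k(i+1)}^{\alpha+r\beta}/4$, while the total displacement is at most $G\sum_{k=k(i)}^{k(i+1)-1}\gamma_k$. Hence the interior cost of block $i$ is at most a constant times $G|\bbX|\gamma_{k(i+1)}^{\beta-\alpha}\sum_{k\in\text{block}}\gamma_k$. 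Using $\gamma_{k(i+1)}\le\gamma_k$ for $k$ in the block gives $\gamma_{k(i+1)}^{\beta-\alpha}\gamma_k\le\gamma_k^{1+\beta-\alpha}$. Summing over blocks yields the term $CG|\bbX|\sum_k\gamma_k^{1+\beta-\alpha}$, which after multiplication by $G$ gives the first term of the claim.

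\textbf{Junction indices.} At $k=k(i)$, the selections $\Psi_{k-1}$ and $\Psi_k$ were built with different neighbourhoods, namely $\bigC_{i-1}$ and $\bigC_i$. The goodness structure therefore does not control this switch, and I bound it crudely by $\dist(x_k,\Psi_k)+\dist(x_k,\Psi_{k-1})$. By validity and item~\ref{:2} (and item~\ref{:5} for $x_k$ relative to $\Psi_{k-1}$), this is at most a constant times $\gamma_{k(i)}^{\alpha}$, and in fact at most a constant times $\gamma_{k(i)}^{\alpha+\rank\cdot\beta}$. Here I expect the main obstacle: this must be summable to $C\gamma_{k(1)}^\beta$. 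Since $\gamma_{k(i)}\le 2^{-(i-1)}\gamma_{k(1)}$, the sum over $i$ is geometric and bounded by a constant times the first term. I would try to get exponent $\beta$ rather than $\alpha$, using that $\beta<\alpha$ is false but $\alpha<\beta$. Concretely, I would use $\gamma^{\alpha+r\beta}\le\gamma^{\beta}$ whenever $r\ge1$, and for $r=0$ I would use Lemma~\ref{lem:distance_different_projection_same_point} to charge the lower-dimensional stratum. If that fails, I would settle for $\gamma_{k(1)}^{\alpha}$, or adjust which neighbourhood is used at the first index of each block, so that a junction switch into a new block happens only from $\smallC$, which gives distance $\gamma^{\beta}$ via item~\ref{:1}. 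Finally, the geometric series $\sum_i 2^{-(i-1)\beta}\gamma_{k(1)}^\beta\le C\gamma_{k(1)}^\beta$ gives the second term.
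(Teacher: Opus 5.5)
Your decomposition is the paper's: the interior indices of each doubling block are handled by rerunning the counting argument of Theorem~\ref{lem:rate_for_good}, and the junctions are bounded crudely and summed as a geometric series. Two steps, however, fail as written.

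\textbf{Block scale.} The chain $\gamma_{k(i+1)}\le\gamma_k\le\gamma_{k(i)}\le 2\gamma_{k(i+1)}$ is wrong, and not because of a boundary effect. By \eqref{eqdef:ki}, $\gamma_{k(i+1)}\le\gamma_{k(i)}/2$, and nothing bounds $\gamma_{k(i)}/\gamma_{k(i+1)}$ from above, since a decreasing sequence may drop abruptly at $k(i+1)$. What holds on block $i$ is $\gamma_{k(i)}/2<\gamma_k\le\gamma_{k(i)}$. So one-step displacements are of order $G\gamma_{k(i)}$ and may be arbitrarily large compared with $\gamma_{k(i+1)}$. Consequently, items~\ref{:4}--\ref{:5} of Lemma~\ref{lem:geom_all}, and with them Lemma~\ref{lem:distance_different_projection_same_point}, cannot be used at scale $\gamma_{k(i+1)}$. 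This is not a matter of constants. For $f(x)=|x|$ and a block of constant steps $\gamma\gg\gamma_{k(i+1)}^{\alpha}$, validity alone forces a switch out of $\{0\}$ at every other iteration, each costing about $G\gamma$. The interior cost is then of order $G\sum_k\gamma_k$, not $G\sum_k\gamma_k^{1+\beta-\alpha}$. The fix is to build the block-$i$ neighbourhoods at scale $\gamma_{k(i)}$, i.e.\ read \eqref{eqdef:big_small_C_var} with $\gamma_{k(i)}$; this scale is within a factor $2$ of every step in the block. Then items~\ref{:1}, \ref{:4}, \ref{:6} apply verbatim, and your count gives an interior cost $\lesssim G|\bbX|\gamma_{k(i)}^{\beta-\alpha}\sum_k\gamma_k\le 2^{\beta-\alpha}G|\bbX|\sum_k\gamma_k^{1+\beta-\alpha}$. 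This $2^{\beta-\alpha}$ comparison is the one invoked at the end of the paper's proof.

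\textbf{Junctions.} Your junction bound does not reach $\gamma_{k(1)}^{\beta}$, and neither of your proposed rescues works. A rank-$0$ stratum is never the higher-dimensional member of the pair $\Psi_{k-1},\Psi_k$. So the bound of Lemma~\ref{lem:distance_different_projection_same_point} always contains its distance, which validity controls only by a multiple of $\gamma^{\alpha}$. Nor can you re-choose the neighbourhoods at the start of a block, because $\bPsi$ is an arbitrary good selection function. In fact no argument gives $\beta$. Take $f(x)=|x|$ and halve the step at every iteration, so that every block is a single index. Set $\Psi_{k(1)}=\{0\}$ with $x_{k(1)}>0$ at distance of order $\gamma_{k(1)}^{\alpha}$ from $0$, which is allowed since it lies in $\bigC$. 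The next iterate lies outside the inner neighbourhood of $\{0\}$ and may validly be assigned to $(0,+\infty)$. This single junction costs $\asymp G\gamma_{k(1)}^{\alpha}$, whereas $\sum_k\gamma_k^{1+\beta-\alpha}=O(\gamma_{k(1)}^{1+\beta-\alpha})$, so letting $\gamma_{k(1)}\to0$ defeats any fixed $C$. The paper's proof obtains $\gamma^{\beta}$ here by citing item~\ref{:2}, but $\gamma^{\alpha+\rank(\cX)\beta}\le\gamma^{\beta}$ only when $\rank(\cX)\ge1$. Your fallback, a total junction cost of $CG\gamma_{k(1)}^{\alpha}$ from the geometric series, is the correct conclusion. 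It suffices for every later use: it is absorbed into the additive $CG$ of Corollary~\ref{cor:var_steps}, and it still gives polynomial decay of $a_k$ in Section~\ref{sec:KL_conv}.
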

\begin{proof}
    Let $\Delta_{k-1} \eqdef g_{\Psi_k}(x_{k}) - g_{\Psi_{k-1}}(x_k)$. One can write
    \begin{align*}
        \sum_{k = k(1)}^{K} \Delta_{k-1} \leq \sum_{i = 1}^{I - 1} \left(\sum_{k = k(i)}^{k(i+1) - 2} \Delta_{k} + \Delta_{k(i+1) - 1}\right) = \sum_{i = 1}^{I-1}\sum_{k = k(i)}^{k(i+1) - 2} \Delta_{k} + \sum_{i = 1}^{I-1}\Delta_{k(i+1) - 1}\,.
    \end{align*}
    Fix some $i \in \sqbracket{1}{I}$, then similarly to Lemma~\ref{lem:distance_different_projection_same_point} and thanks to the assumption that $\Psi$ is good (in particular it is valid globally, which is the only requirement for this result), for any $k \in \sqbracket{k(i)}{k(i+1) - 1}$, we have
    \begin{equation}
        \label{eq:varying1}
    \begin{aligned}
        \Delta_{k} \leq G\|\pi_{\Psi_{k+1}}(x_{k+1}) - \pi_{\Psi_k}(x_{k+1})\| \leq 4G\bigg[&\dist(x_{k+1}, \Psi_{k+1})\ind{\Psi_{k+1} \neq \Psi_{k},\,\dim(\Psi_{k+1}) \leq \dim(\Psi_k)} \\
        &+ \dist(x_{k+1}, \Psi_{k})\ind{\Psi_{k+1} \neq \Psi_{k},\,\dim(\Psi_{k+1}) \geq \dim(\Psi_k)}\bigg]\,.
    \end{aligned}
    \end{equation}
   Moreover, as $\Psi$ is valid, we deduce from {Lemma~\ref{lem:geom_all}(\ref{:2})} that for some constant $C>0$, that can vary from one inequality to another,
    \begin{align*}
        \sum_{i = 1}^{I - 1}\Delta_{k(i+1) - 1} \leq C G \sum_{i = 1}^{I-1}\left(\gamma^{\beta}_{k(i+1)} + \gamma_{k(i+1)}^\beta\right) \leq CG\gamma_{k(1)}^\beta \sum_{i  = 1}^{\infty} 2^{-i\beta} \leq CG\gamma_{k(1)}^\beta\,.
    \end{align*}
    Thus, it remains to bound
    \begin{align*}
        \mathtt{T}_i \eqdef \bigg[&\sum_{k = k(i) + 1}^{k(i+1) - {1}}\dist(x_k, {\Psi_k})\ind{\Psi_k \neq \Psi_{k-1},\,\dim(\Psi_{k}) \leq \dim(\Psi_{k-1})}\\
        &+ \sum_{k = k(i) + 1}^{k(i+1) - {1}}\dist(x_k, {\Psi_{k-1}})\ind{\Psi_k \neq \Psi_{k-1},\,\dim(\Psi_k) \geq \dim(\Psi_{k-1})}\bigg]\,.
    \end{align*}
    By the assumption of goodness of $\Psi$ on each $\sqbracket{k(i)}{k(i+1) - 1}$, similar proof as in Theorem~\ref{lem:rate_for_good}, gives for some absolute constant $C > 0$ that
    \begin{align*}
        \mathtt{T}_i \leq CG|\bbX|\sum_{k = k(i)}^{k(i+1)-1}\gamma_{k}^{1 + \beta - \alpha}\,.
    \end{align*}
    To get the above result one follows exactly the same steps as in the proof of Theorem~\ref{lem:rate_for_good}. That is, establishing Equations \eqref{eq:rate_0}, \eqref{eq:rate_1} and \eqref{eq:rate_2}, additionally using the fact that for any $k \in \sqbracket{k(i)}{k(i+1) - 1}$ and any $\beta - \alpha > 0$ we have $\gamma_{k(i+1)}^{\beta - \alpha} \leq 2^{\beta - \alpha} \gamma_k^{\beta - \alpha}$.
\end{proof}

\begin{corollary}\label{cor:var_steps}
    Under the conditions of Theorem~\ref{thm:main_announced} and with step-sizes $(\gamma_k)_{k \in \sqbracket{1}{K}}$ satisfying
        Assumption~\ref{ass:decr_steps}
    There exists a strata selection function $\bPsi$ such that
    \begin{align*}
        \dist(x_k, \Psi_k) \leq C \gamma_k^{\alpha + \rank(\Psi_k)\beta}\,
    \end{align*}
    and
    \begin{align*}
        \mathtt{A}_1\sum_{k = 1}^{K} \gamma_k \norm{\nabla g_{\Psi_k}(x_k)}^2 \leq (g_{\Psi_1}(x_1) - g_{\Psi_K}(x_{K + 1})) + C\sum_{k = 1}^{K}\left(G^2 |\bbX|\gamma_{k}^{1 + \beta - \alpha} + \mathtt{A_2}\gamma_k^{1 + 2\alpha}\right) + CG\,,
    \end{align*}
    for some absolute contant $C > 0$.
\end{corollary}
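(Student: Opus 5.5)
We would build $\bPsi$ block by block on the dyadic intervals $\sqbracket{k(i)}{k(i+1)-1}$ from \eqref{eqdef:ki}. On block $i$ we run Algorithm~\ref{algo:main} on the trajectory restricted to that block, using the frozen neighbourhoods $\smallC_i,\bigC_i$ of \eqref{eqdef:big_small_C_var}. The indices $k<k(1)$ are covered by taking $k(1)=1$; any finite initial segment would only add a constant absorbed in $CG$. Because the neighbourhoods are fixed within a block, the combinatorial argument of Theorem~\ref{thm:algo_produce_good} applies verbatim, so $\bPsi$ is good in the sense of Definition~\ref{def:val_good_var}.

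\textbf{Distance bound.} Validity gives $x_k\in\bigC_i(\Psi_k)$. By Lemma~\ref{lem:geom_all}(\ref{:2}) with $\gamma=\gamma_{k(i+1)}\le\gamma_k$,
\begin{equation*}
\dist(x_k,\Psi_k)\le\gamma_{k(i+1)}^{\alpha+\rank(\Psi_k)\beta}\le\gamma_k^{\alpha+\rank(\Psi_k)\beta}.
\end{equation*}
This holds with $C=1$.

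\textbf{Local descent.} We reuse Lemma~\ref{lem:valid_descent} inside each block, with step $\gamma_k$ and neighbourhoods built from $\gamma_{k(i+1)}$. The geometric facts of Lemma~\ref{lem:geom_all}, items \ref{:4}--\ref{:5}, need $\|x_{k+1}-x_k\|\le G\gamma_{k(i+1)}$. We only know $\|x_{k+1}-x_k\|\le G\gamma_k\le 2G\gamma_{k(i+1)}$, since $\gamma_k\le\gamma_{k(i)}<2\gamma_{k(i+1)}$ except possibly at the final index $k(i+1)-1$, where $\gamma_k$ may reach $\gamma_{k(i)}$. We absorb this factor of $2$, and the factor of $4$ at the block boundary, by shrinking $\gamma_0$ by a constant. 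This is the role of ``$\gamma_0$ a multiplicative constant smaller'' in the paper.

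Corollary~\ref{cor:distance_to_boundary_is_large} then gives
\begin{equation*}
\dist(x_k,\sX_{\dim\Psi_k-1})\gtrsim\gamma_{k(i+1)}^{1-\beta}\gtrsim\gamma_k,
\end{equation*}
so Lemma~\ref{lem:descent_deterministic} applies with step $\gamma_k$. Its error term satisfies $(\dist/\truncdist)^2\le\gamma_{k(i+1)}^{2\alpha}\le\gamma_k^{2\alpha}$. Summing gives
\begin{equation*}
\mathtt A_1\gamma_k\|\nabla g_{\Psi_k}(x_k)\|^2\le g_{\Psi_k}(x_k)-g_{\Psi_k}(x_{k+1})+\mathtt A_2\gamma_k^{1+2\alpha}.
\end{equation*}

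\textbf{Telescoping over all $k\le K$.} Summing over every $k$ and re-indexing as in Lemma~\ref{lem:valid_descent} produces $g_{\Psi_1}(x_1)-g_{\Psi_K}(x_{K+1})$ plus the switching sum $\sum_k\big(g_{\Psi_k}(x_k)-g_{\Psi_{k-1}}(x_k)\big)$. This sum includes the switches across block boundaries. Lemma~\ref{lem:payments_varying} bounds it by
\begin{equation*}
C\Big(G^2|\bbX|\sum_k\gamma_k^{1+\beta-\alpha}+G\gamma_{k(1)}^{\beta}\Big),
\end{equation*}
and $\gamma_{k(1)}^\beta\le1$ yields the $CG$ term.

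\textbf{Main obstacle.} The delicate step is well-definedness at block boundaries. For $k=k(i+1)-1$, we need $x_{k+1}$ to lie in $\cC^*(\Psi_k)$, which is required both to evaluate $g_{\Psi_k}(x_{k+1})$ in the descent and to bound the switch at the boundary. Here $\Psi_k$ was chosen with block-$i$ neighbourhoods while the step is $\gamma_k$. We would handle this with Lemma~\ref{lem:geom_all}(\ref{:5}) under the enlarged displacement $2G\gamma_{k(i+1)}$, which gives $\dist(x_{k+1},\Psi_k)\le 5\gamma_{k(i+1)}^\alpha\min\{\cdots\}$. This is inside $\cC^*$ as long as $5\gamma_0^\alpha\le\mathtt A_3$, which again holds after a constant reduction of $\gamma_0$.

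The remaining check is the validity of Lemma~\ref{lem:distance_different_projection_same_point} across blocks, where the neighbourhoods change from $\gamma_{k(i)}$ to $\gamma_{k(i+1)}$. The paper already charges these boundary terms crudely, by $G\gamma^\beta_{k(i+1)}$ each, in the geometric sum of Lemma~\ref{lem:payments_varying}, so no goodness across blocks is needed.
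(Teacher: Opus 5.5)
Your architecture is the paper's: you run Algorithm~\ref{algo:main} block by block with frozen neighbourhoods, apply the argument of Lemma~\ref{lem:valid_descent} inside each block, and let Lemma~\ref{lem:payments_varying} absorb all switching costs, including the block-boundary ones. The distance bound is also fine.

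The step that fails is your comparison of the steps with the frozen scale. You write $\gamma_k\le\gamma_{k(i)}<2\gamma_{k(i+1)}$, but Equation~\eqref{eqdef:ki} gives the reverse. It yields $\gamma_{k(i+1)}\le\gamma_{k(i)}/2<\gamma_k$ for \emph{every} $k\in\sqbracket{k(i)}{k(i+1)-1}$. Also, since $(\gamma_k)$ is decreasing, the largest step of the block is at $k(i)$, not at $k(i+1)-1$.

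Under Assumption~\ref{ass:decr_steps} alone, the ratio $\gamma_k/\gamma_{k(i+1)}$ is unbounded. If $\gamma_{k+1}=\gamma_k/M$, the block containing $k$ ends at $k$ and its neighbourhoods have scale $\gamma_k/M$. The step from $x_k$, however, has length up to $MG\gamma_{k(i+1)}$. Consequences:
\begin{itemize}
\item Lemma~\ref{lem:geom_all}(\ref{:4})--(\ref{:5}) no longer yield $[x_k,x_{k+1}]\subset\cC^*(\Psi_k)$.
\item Corollary~\ref{cor:distance_to_boundary_is_large} only gives $\dist(x_k,\sX_{\dim(\Psi_k)-1})\gtrsim\gamma_{k(i+1)}^{1-\beta}$. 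This need not dominate $\gamma_k$, as Lemma~\ref{lem:descent_deterministic} requires.
\item No constant-factor shrinking of $\gamma_0$ repairs this.
\item Your treatment of the boundary index $k=k(i+1)-1$ (your \emph{main obstacle}) rests on the same false bound.
\end{itemize}

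The repair is to freeze block $i$ at a scale comparable to all of its steps, e.g.\ $s_i=\gamma_{k(i)}/2$ (or $\gamma_{k(i+1)-1}$, the smallest step of the block). Then $s_i<\gamma_k\le 2s_i$ on the block, and the argument goes through:
\begin{itemize}
\item Your factor-$2$ absorption into $\gamma_0$ becomes legitimate, including at the boundary index.
\item The bounds $\dist(x_k,\Psi_k)\le s_i^{\alpha+\rank(\Psi_k)\beta}\le\gamma_k^{\alpha+\rank(\Psi_k)\beta}$ and $s_i^{2\alpha}\le\gamma_k^{2\alpha}$ for the error term persist.
\item $s_{i+1}\le s_i/2$ keeps the block-boundary charges geometrically summable into $CG$.
\item The per-block switch count from the proof of Theorem~\ref{lem:rate_for_good} gives $\lesssim G^2|\bbX|\sum_k\gamma_k^{1+\beta-\alpha}$, since $s_i^{\beta-\alpha}\le\gamma_k^{\beta-\alpha}$.
\end{itemize}
Alternatively, keep the scale $\gamma_{k(i+1)}$ but add the regularity assumption $\gamma_{k+1}\ge c\,\gamma_k$. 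This gives $\gamma_k<(2/c)\gamma_{k(i+1)}$ and covers the regime $\gamma_k\asymp 1/k$ used in Section~\ref{sec:KL_conv}. Note that the paper's one-line proof also freezes at $\gamma_{k(i+1)}$ and does not spell out this comparison, so you have to supply it yourself rather than cite it.
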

\begin{proof}
    Existence of a good strata selection function follows from Theorem~\ref{thm:algo_produce_good}, applied to each sub-interval $\sqbracket{k(i)}{k(i+1) - 1}$. Then, the proof follows the one of Lemma~\ref{lem:valid_descent} using the argument on each sub-interval and using Lemma~\ref{lem:payments_varying}.
\end{proof}
The interpretation of the rate carries over directly. Specifically, the first term captures ``pure descent,'' the two terms in the second correspond respectively to the stratum selection mechanism and the proximity error to the stratum, and the final additive constant arises from our use of the doubling trick.

\subsection{Convergence of iterates}\label{sec:KL_conv}
We now turn to the question of sequential convergence. It is well known that,
even in the smooth setting, a descent lemma alone is insufficient to guarantee
convergence (see \cite{absil2005convergence} for a counterexample), and that a
Kurdyka--{\L}ojasiewicz (KL) inequality is typically required \cite{attouch2013convergence}. In the nonsmooth
setting, \cite{rios2022examples} further shows that even the KL inequality
alone may not suffice to ensure convergence.

Recently, \cite{lai2025diameter} established convergence of the subgradient
method with step-sizes decreasing as $1/k$. In this section, we show that this
result can be recovered using the framework developed in the present work.

\begin{theorem}\label{thm:KL_conv}
    Assume that there is $c_1, c_2>0$ such that $c_1 k^{-1}\leq \gamma_k \leq c_2 k^{-1}$, the function $f: \bbR^d \rightarrow \bbR$ is semialgebraic and that the iterates $(x_k)$, satisfying Equation~\eqref{eqdef:subg_desc_decr} remain in a compact set. Then, the iterates converge.
\end{theorem}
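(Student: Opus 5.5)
The plan is to deduce convergence from a \emph{finite-length} property. We prove that the projected points $y_k \eqdef \pi_{\Psi_k}(x_k)$ have summable increments. We then transfer this to $(x_k)$ using $\|x_k - y_k\| = \dist(x_k,\Psi_k) \le C\gamma_k^{\alpha+\rank(\Psi_k)\beta} \to 0$, which is the first bound in Corollary~\ref{cor:var_steps}.

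\textbf{Setup.} Let $\cK$ be a compact semialgebraic set, say a large closed ball, containing all iterates. Theorem~\ref{thm:good_assumption} gives a semialgebraic stratification $\bbX$ of $\cK$ satisfying Assumption~\ref{ass:main}, and $f$ is $G$-Lipschitz on $\cK$. Since $\gamma_k \le c_2/k$, Assumption~\ref{ass:decr_steps} holds after discarding finitely many iterates (for the upper bound; if $\gamma_k$ is not exactly monotone, a routine adjustment of the doubling trick handles it). By the known asymptotic theory \cite{dav-dru-kak-lee-19,bolte2021conservative}, $f(x_k)$ converges to some $f^*$ and the limit set $\Omega$ of $(x_k)$ is a compact connected set of $D(\cdot;\bbX)$-critical points on which $f \equiv f^*$. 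Fix $\alpha<\beta<1/(R+1)$ and let $\bPsi$ be the good selection function of Corollary~\ref{cor:var_steps}, built from some index $k(1)$ onward.

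The key observation is that with $\gamma_k \asymp 1/k$ every error term in Lemma~\ref{lem:payments_varying} and Corollary~\ref{cor:var_steps} is summable: $\sum_k \gamma_k^{1+\beta-\alpha}<\infty$ and $\sum_k \gamma_k^{1+2\alpha}<\infty$. Moreover the doubling-trick constant $CG\gamma_{k(1)}^\beta$ can be made as small as we like by taking $k(1)$ large.

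\textbf{Step 1 (a uniform stratified Łojasiewicz inequality).} Because $f$ and the strata are semialgebraic, I would use the Łojasiewicz inequality for the restrictions $f_{|\cX}$, in the form used for conservative fields (Bolte–Daniilidis–Lewis–Shiota, Bolte–Pauwels). It should give $\theta\in[0,1)$, $c>0$ and a neighbourhood $U$ of $\Omega$ such that
\[
\|\nabla_\cX f(y)\| \ge c\,|f(y)-f^*|^{\theta}\quad\text{for all } \cX\in\bbX \text{ and } y\in\cX\cap U .
\]
Since $\Omega$ lies in one level set, the uniformization over a neighbourhood of $\Omega$ follows from compactness and finiteness of $\bbX$. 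Combining this with the Jacobian lower bound of Lemma~\ref{lem:projection} yields $\|\nabla g_{\Psi_k}(x_k)\| \ge c'|g_{\Psi_k}(x_k)-f^*|^\theta$ for $k$ large.

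\textbf{Step 2 (perturbed Łojasiewicz length argument).} Consider the Lyapunov sequence $a_k \eqdef g_{\Psi_k}(x_k)-f^* + \sum_{i\ge k} e_i$. Here $e_i$ collects the per-step error $C\gamma_i^{1+2\alpha}$ from Lemma~\ref{lem:descent_deterministic} and the switching cost $g_{\Psi_{i+1}}(x_{i+1})-g_{\Psi_i}(x_{i+1})$. By Lemma~\ref{lem:payments_varying} the switching costs sum to something finite with arbitrarily small tail, so $\sum_i e_i<\infty$. The stratified descent lemma then gives
\[
a_{k+1}\le a_k - \mathtt{A}_1\gamma_k\|\nabla g_{\Psi_k}(x_k)\|^2 .
\]
With $\varphi(s)=s^{1-\theta}$, concavity gives
\[
\varphi(a_k)-\varphi(a_{k+1}) \gtrsim \frac{\gamma_k\|\nabla g_{\Psi_k}(x_k)\|^2}{a_k^{\theta}} .
\]
I would then split into two cases. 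If $g_{\Psi_k}(x_k)-f^*$ dominates the error tail $\sum_{i\ge k}e_i$, Step 1 bounds $a_k^\theta \lesssim \|\nabla g_{\Psi_k}(x_k)\|$, so $\gamma_k\|\nabla g_{\Psi_k}(x_k)\| \lesssim \varphi(a_k)-\varphi(a_{k+1})$. Otherwise, the product $\gamma_k\|\nabla g_{\Psi_k}(x_k)\|$ is controlled via Cauchy–Schwarz by the summable quantities $\gamma_k\|\nabla g_{\Psi_k}(x_k)\|^2$ and $\gamma_k\,(\text{tail})^{\theta}$, the latter being summable since the tails are $O(k^{-(\beta-\alpha)})$. (If $\theta$ is too large for this, I would instead choose $\alpha,\beta$ adapted to $\theta$, or use a weighted variant.)

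\textbf{Step 3 (transfer to the projected points).} As in \eqref{eq:descent0}, we have $\|\pi_{\Psi_k}(x_{k+1})-\pi_{\Psi_k}(x_k)\|\le\blambda\gamma_k\|P v_k\|$, and $\|Pv_k\|$ is comparable to $\|\nabla_{\Psi_k} f(y_k)\|$ up to $O(\dist/\truncdist)$ errors. These errors are again summable after multiplication by $\gamma_k$. Switching jumps $\|\pi_{\Psi_{k+1}}(x_{k+1})-\pi_{\Psi_k}(x_{k+1})\|$ are summable by Lemma~\ref{lem:payments_varying}. Hence $\sum_k\|y_{k+1}-y_k\|<\infty$, so $(y_k)$ is Cauchy, and therefore $(x_k)$ converges.

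\textbf{Main obstacle.} I expect the main difficulty to be Step 2: running the Łojasiewicz argument with a Lyapunov function that changes with $k$ and with additive errors. The first thing I would try is the "tail-augmented" sequence $a_k$ above, which is exactly where summability of $\gamma_k^{1+\beta-\alpha}$ for $\gamma_k\asymp 1/k$ is used. A secondary point is ensuring the iterates stay in the neighbourhood $U$ where Step 1 holds. I would handle it with the standard trapping argument: start at a time $k$ when $x_k$ is close to some $\bar x\in\Omega$ and the error tails are small, and use the length bound to show the iterates cannot leave $U$.
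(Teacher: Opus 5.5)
Your proposal follows essentially the paper's route. The ingredients match: finite length of the projected points as in Lemma~\ref{lm:KL_quanti}, the tail-augmented Lyapunov sequence (your $a_k$ is the paper's $h_k$ from Lemma~\ref{lem:kl_ass}), a stratified {\L}ojasiewicz inequality transferred to $g_{\cX}$ through the Jacobian bound of Lemma~\ref{lem:projection} (Corollary~\ref{cor:KL_g}), and summability of all error tails because $\gamma_k\asymp 1/k$.

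The one step to repair is your second case, where the tail dominates. There, AM--GM must pair $\gamma_k\|\nabla g_{\Psi_k}(x_k)\|^2a_k^{-\theta}$ with $\gamma_k|\mathrm{tail}_k|^{\theta}$. The first term is controlled by $\varphi(a_k)-\varphi(a_{k+1})$, since in that case $a_k\le 2|\mathrm{tail}_k|$. Pairing with the unweighted $\gamma_k\|\nabla g_{\Psi_k}(x_k)\|^2$, as you wrote, does not close the argument. The paper merges both cases at once via subadditivity of $s\mapsto s^{\theta}$ (its $1/\varphi'$, in your exponent convention). Since $|\mathrm{tail}_k|\lesssim k^{-t}$ for some $t>0$, you get $\sum_k\gamma_k|\mathrm{tail}_k|^{\theta}\lesssim\sum_k k^{-1-t\theta}<\infty$ for every $\theta\in(0,1)$. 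So no tuning of $\alpha,\beta$ is needed.

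No trapping argument is needed either. The paper applies Kurdyka's inequality uniformly on the bounded strata for all values near $f^*$, and in any case a bounded sequence eventually lies in every neighbourhood of its limit set.
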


To prove Theorem~\ref{thm:KL_conv} we first notice some standard facts. First, we know the functional convergence of the subgradient method.

\begin{proposition}[{\cite[Corollary 5.9]{dav-dru-kak-lee-19}}]
    Under the assumptions of Theorem~\ref{thm:KL_conv}, $(f(x_k))$ converges.
\end{proposition}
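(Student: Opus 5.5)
The plan is to deduce the claim from the stochastic-approximation (ODE) method, following the route of \cite{dav-dru-kak-lee-19}, rather than from the stratified descent machinery of Section~\ref{sec:proof_main}. Since the iterates stay in a compact set on which $\partial f$ is bounded by $G$, and $c_1 k^{-1}\le\gamma_k\le c_2k^{-1}$ gives $\sum_k\gamma_k=\infty$ and $\gamma_k\to 0$, the recursion $x_{k+1}=x_k-\gamma_k v_k$ with $v_k\in\partial f(x_k)$ is a (noise-free) perturbed solution of the differential inclusion $\dot{\sx}_t\in-\partial f(\sx_t)$. I would then show that the affine interpolation of $(x_k)$ on the time scale $t_k=\sum_{i<k}\gamma_i$ is an asymptotic pseudotrajectory of this inclusion. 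The ingredients are that $\partial f$ is upper semicontinuous with nonempty compact convex values and that $\gamma_k\to0$; no martingale term appears because the method is deterministic. By the Benaïm--Hofbauer--Sorin theory, the limit set $L$ of $(x_k)$ is then compact and internally chain transitive for the inclusion.

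The second step is to exhibit a Lyapunov function. By the projection formula (Proposition~\ref{prop:bolte_projform_new}) and the chain rule for conservative fields, every absolutely continuous solution satisfies $\tfrac{d}{dt}f(\sx_t)=-\|\dot{\sx}_t\|^2=-\dist(0,\partial f(\sx_t))^2$ for almost every $t$. Hence $f$ strictly decreases along any trajectory that starts outside the Clarke-critical set $\mathrm{crit} f=\{x: 0\in\partial f(x)\}$. The Lyapunov theorem for internally chain transitive sets (Benaïm--Hofbauer--Sorin, Prop.~3.27) then states the following: if $f(\mathrm{crit} f)$ has empty interior, then $L\subset\mathrm{crit} f$ and $f$ is constant on $L$.

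The remaining ingredient, and the one I expect to need the most care, is the empty-interior condition. Here I invoke semialgebraicity. The nonsmooth Morse--Sard theorem of \cite{bolte2007clarke} states that $f(\mathrm{crit} f)$ is finite for a semialgebraic locally Lipschitz $f$. Its proof uses the projection formula: on each stratum $\cX$, a critical point satisfies $\nabla_{\cX}f=0$, so the classical Sard theorem applied to $f_{|\cX}$ gives a semialgebraic set of critical values of measure zero, hence a finite set. Granting this, $f$ is constant, say equal to $f^*$, on $L$. Since $f$ is continuous and $\dist(x_k,L)\to0$ by compactness, any subsequence of $f(x_k)$ has a further subsequence converging to $f^*$, so $f(x_k)\to f^*$. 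The delicate point in writing this out is to check carefully the hypotheses of the asymptotic-pseudotrajectory theorem in the deterministic, variable-step setting; I would verify them by a direct Gronwall-free compactness argument on time-shifted interpolated curves.
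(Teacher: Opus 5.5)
Your proposal is correct and follows the same route as the paper. The paper does not reprove this statement; it imports it from \cite[Corollary 5.9]{dav-dru-kak-lee-19}, whose proof is exactly your argument: the interpolated iterates form an asymptotic pseudotrajectory of $\dot{x}(t) \in -\partial f(x(t))$, the chain rule makes $f$ a Lyapunov function for the Clarke-critical set, and the semialgebraic Morse--Sard theorem of \cite{bolte2007clarke} supplies the empty-interior condition required by the Bena\"{\i}m--Hofbauer--Sorin Lyapunov theorem. The technicality you flag, global linear growth of $\partial f$ for a merely locally Lipschitz $f$, is harmless: modify the field outside a neighbourhood of the compact set containing the iterates, which changes neither the recursion nor the Lyapunov property of $f$ near the limit set.
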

Without loss of generality, we assume in the rest of this section that $f(x_k) \rightarrow 0$.

By assumptions there is $r>0$ such that $\norm{x_k} \leq r$. Fix $\cK$ be the closed ball of radius $r$. Since $\cK$ is semialgebraic, by Theorem~\ref{thm:good_assumption}, there is a definable stratification $\bbX$ of $\cK$, for which Assumption~\ref{ass:main} holds. For the rest of this section let $\bbX$ be this stratification.
In the classical smooth setting, convergence of the iterates follows from the convergence of $\sum_{k} \gamma_k \|v_k\|$. Indeed, convergence of this series implies that the trajectory has bounded length. In our setting, however, an additional term arises due to the strata selection mechanism. The next lemma identifies the relevant quantity whose convergence guarantees convergence of the iterates. In the next lemma we say that $\bPsi:\bbR^d \times \bbN \rightarrow \bbX$ is valid if it satisfies Definition~\ref{def:val_good_var} restricted to any subinterval $\sqbracket{1}{K}$.
\begin{lemma}\label{lm:KL_quanti}
    Let $\Psi_k$ be a valid selection function. Assume that
    \begin{equation*}
      \sum_{k=1}^{+\infty}\norm{\pi_{\Psi_{k+1}}(x_{k+1})- \pi_{\Psi_k}(x_{k+1})}+  \sum_{k=1}^{+\infty} \gamma_k \norm{\nabla g_{\Psi_{k}}(x_k)} < + \infty \, .
    \end{equation*}
    Then $(x_k)$ converges.
\end{lemma}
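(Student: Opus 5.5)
The plan is to show that the projected sequence $y_k \eqdef \pi_{\Psi_k}(x_k)$ has finite length, and then that $\norm{x_k - y_k} = \dist(x_k,\Psi_k) \to 0$. Together these give that $(x_k)$ is Cauchy.

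\textbf{Finite length of $(y_k)$.} Telescope through the intermediate point $\pi_{\Psi_k}(x_{k+1})$:
\begin{equation*}
\norm{y_{k+1}-y_k} \leq \norm{\pi_{\Psi_{k+1}}(x_{k+1}) - \pi_{\Psi_k}(x_{k+1})} + \norm{\pi_{\Psi_k}(x_{k+1}) - \pi_{\Psi_k}(x_k)}\, .
\end{equation*}
The first term is summable by the first series in the hypothesis. For the second term, validity places $x_k$ in $\bigC_i(\Psi_k)\setminus \smallC_{i,<\dim(\Psi_k)}$. By Lemma~\ref{lem:geom_all}(\ref{:5}) (applied with the step-size $\gamma_{k(i+1)}$, which is comparable to $\gamma_k$), the whole segment $[x_k,x_{k+1}]$ lies in $\cC^*(\Psi_k)$. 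Hence Part~I of the proof of Lemma~\ref{lem:descent_deterministic}, i.e.\ \eqref{eq:descent0}, gives
\begin{equation*}
\norm{\pi_{\Psi_k}(x_{k+1}) - \pi_{\Psi_k}(x_k)} \leq \blambda \gamma_k \int_0^1 \norm{P_{\tc_t} v_k}\dif t\, .
\end{equation*}

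We then compare $\norm{P_{\tc_t}v_k}$ with $\norm{\nabla g_{\Psi_k}(x_k)}$, exactly as in Part~III of that proof:
\begin{equation*}
\norm{P_{\tc_t}v_k} \leq \norm{\nabla_{\Psi_k} f(y_k)} + 2L_2\frac{\dist(x_k,\Psi_k)}{\truncdist(x_k,\sX_{j-1})} + 2L_1G\blambda\gamma_k\frac{\int_0^1\norm{P_{\tc_s}v_k}\dif s}{\truncdist(x_k,\sX_{j-1})}\, .
\end{equation*}
The coefficient in the last term is at most $1/2$ by the choice of $\gamma_0$, so it can be absorbed into the left-hand side. Lemma~\ref{lem:projection} then gives $\norm{\nabla_{\Psi_k} f(y_k)} \leq \ulambda^{-1}\norm{\nabla g_{\Psi_k}(x_k)}$, because $\nabla g_{\Psi_k}(x_k) = \Jac\pi_{\Psi_k}(x_k)^\top \nabla_{\Psi_k}f(y_k)$ and the Jacobian is bounded below on the tangent space. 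Consequently,
\begin{equation*}
\norm{y_{k+1}-y_k} \lesssim \norm{\pi_{\Psi_{k+1}}(x_{k+1}) - \pi_{\Psi_k}(x_{k+1})} + \gamma_k\norm{\nabla g_{\Psi_k}(x_k)} + \gamma_k \frac{\dist(x_k,\Psi_k)}{\truncdist(x_k,\sX_{j-1})}\, .
\end{equation*}

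\textbf{The main obstacle} is the last error term. Its ratio is at most $\gamma_{k(i+1)}^{\alpha}\lesssim \gamma_k^{\alpha}$ by the definition of $\bigC_i$. With $\gamma_k \asymp 1/k$ this leaves a term of order $k^{-1-\alpha}$, which is summable. This is the first point where the assumption $\gamma_k \asymp 1/k$ is used; if it were not summable one would have to be more careful. Summing the three contributions shows that $(y_k)$ has finite length and therefore converges to some $y^*$.

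\textbf{Convergence of $(x_k)$.} Finally, $\dist(x_k,\Psi_k)\leq C\gamma_k^{\alpha+\rank(\Psi_k)\beta} \leq C\gamma_k^\alpha \to 0$ by Lemma~\ref{lem:geom_all}(\ref{:2}) in the varying step-size form (Corollary~\ref{cor:var_steps}). Thus $\norm{x_k - y_k}\to 0$, and $x_k \to y^*$.
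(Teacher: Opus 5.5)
Your proposal is correct and follows the same route as the paper. Both arguments bound $\|\pi_{\Psi_k}(x_{k+1})-\pi_{\Psi_k}(x_k)\|$ by $C\gamma_k\|\nabla g_{\Psi_k}(x_k)\|+C\gamma_k^{1+\alpha}$ via \eqref{eq:descent0} and the Part III comparison, telescope to show that $(\pi_{\Psi_k}(x_k))$ is Cauchy, and conclude from $\dist(x_k,\Psi_k)\le \gamma_k^{\alpha+\rank(\Psi_k)\beta}\to 0$. You additionally spell out the absorption step that the paper leaves implicit, and you make explicit the use of $\gamma_k\asymp 1/k$ (for summability of $\gamma_k^{1+\alpha}$ and for $\gamma_k\to 0$), on which the paper's proof tacitly relies as well.
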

\begin{proof}
Similarly to the proof of Lemma~\ref{lem:descent_deterministic} (see Equation~\eqref{eq:descent0}), and using the fact that $x_k \in \bigC(\Psi_k)\backslash \smallC_{< \Psi_k}$, there is $C>0$, such that
    \begin{equation*}
        \norm{\pi_{\Psi_k}(x_{k+1}) - \pi_{\Psi_{k}}(x_k)} \leq C \gamma_k \norm{\nabla g_{\Psi_k}(x_k)} + C \gamma_k^{1+\alpha}\, .
    \end{equation*}
    Therefore,
    \begin{equation*}
        \begin{split}
      \sum_{k=1}^{+\infty} \norm{\pi_{\Psi_{k+1}}(x_{k+1}) - \pi_{\Psi_{k}}(x_k)}
      &\leq    \sum_{k=1}^{+\infty}\norm{\pi_{\Psi_{k+1}}(x_{k+1}) - \pi_{\Psi_{k}}(x_{k+1})} +    \sum_{k=1}^{+\infty}  \norm{\pi_{\Psi_k}(x_{k+1}) - \pi_{\Psi_{k}}(x_k)} < + \infty \, .
        \end{split}
    \end{equation*}
    This implies that $(\pi_{\Psi_k}(x_k))$ is a Cauchy sequence and thus converges. Since $x_k \in \bigC(\Psi_k)$, $\norm{x_k - \pi_{\Psi_k}(x_k)} \leq\gamma_k^{\alpha + \rank(\Psi_k) \beta}$, which implies convergence of $x_k$.
\end{proof}

For a good selection function $\Psi$, Lemma~\ref{lem:payments_varying}, immediately yields convergence of the first term. The second term is handled via a standard KL-inequality argument, with mild adjustments as in~\cite{lai2025diameter}. Notably, this is the only point where the KL inequality is invoked. This is a consistent with the smooth case, where first a descent lemma is established and only then a KL inequality is required to prove convergence \cite{attouch2013convergence}. In the remainder of this section, we develop all the elements needed to control the second term in Lemma~\ref{lem:payments_varying}.

First, since $f$ and any $\cX \in \bbX$ are definable we know by \cite{lojasiewicz1982trajectoires,kurdyka1998gradients} that $f_{|\cX}$ satisfies a {\L}ojasiewicz inequality.
\begin{proposition}[\cite{lojasiewicz1982trajectoires,kurdyka1998gradients}]
    There is $\delta, C>0$ and $\theta \in (0,1)$, such that if  $x \in \cX$ and $0 < |f(x)| < \delta$, then
    \begin{equation*}
        \varphi'(f(x)) \norm{\nabla_{\cX}f(x)} \geq C\, ,
    \end{equation*}
    with $\varphi :z \mapsto \sign(z)|z|^{\theta}$.
\end{proposition}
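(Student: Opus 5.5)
The plan is to prove the inequality separately for each stratum $\cX \in \bbX$, restricted to the bounded set $\cX \cap \cK$, and then take the minimum of the constants and of $\delta$ over the finitely many strata. Since the statement only involves $\nabla_{\cX} f$ and the values of $f$, I treat $f>0$ and $f<0$ separately; the case $f<0$ follows by applying the same argument to $-f$. I will use Kurdyka's curve argument, together with polynomial boundedness of the structure to obtain a power-type desingularizer $\varphi$.

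\textbf{Step 1: the level-wise infimum.} Fix $\cX$ and define
\begin{equation*}
\psi(t) \eqdef \inf\{\norm{\nabla_{\cX} f(x)} \,:\, x \in \cX \cap \cK,\ f(x) = t\}, \qquad t>0.
\end{equation*}
The function $\psi$ is definable, because $\cX$, $\cK$ and $f_{|\cX}$ are definable and $\nabla_{\cX} f$ is a definable map. By the monotonicity theorem, $\psi$ is continuous and monotone on some interval $(0,\varepsilon)$. If the set $\{f = t\}\cap\cX\cap\cK$ is empty for all small $t$, the inequality holds vacuously. Otherwise, definable choice and monotonicity give a $C^1$ definable curve $t\mapsto x(t)\in\cX\cap\cK$ on $(0,\varepsilon)$ with
\begin{equation*}
f(x(t)) = t \quad\text{and}\quad \norm{\nabla_{\cX} f(x(t))} \leq 2\psi(t)+t.
\end{equation*}
The additive $t$ guards against the case $\psi=0$.

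\textbf{Step 2: the length argument.} Differentiate $f(x(t))=t$ along the curve. Since $\dot x(t)\in\cT_{x(t)}\cX$, this gives $1=\scalarp{\nabla_{\cX} f(x(t))}{\dot x(t)}$, and therefore
\begin{equation*}
\norm{\dot x(t)} \geq \frac{1}{2\psi(t)+t}.
\end{equation*}
A bounded definable $C^1$ curve on $(0,\varepsilon)$ has finite length, because each coordinate is piecewise monotone and bounded. Since $\cK$ is compact, this yields
\begin{equation*}
\int_0^{\varepsilon} \frac{\dif t}{2\psi(t)+t} < \infty.
\end{equation*}
In particular $\psi$ cannot vanish on any interval $(0,\varepsilon')$, because then the integrand would be $1/t$, which is not integrable at $0$. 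Hence $\psi>0$ near $0$.

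\textbf{Step 3: the exponent.} Polynomial boundedness gives an asymptotic expansion $\psi(t)\sim c\,t^{a}$ as $t\to 0^+$, with $c>0$ and $a\in\bbR$. Integrability of $1/(2\psi(t)+t)$ at $0$ then forces $a<1$. Set $\theta = 1-a$ when $a>0$; when $a\leq 0$, any $\theta\in(0,1)$ works, since then $\psi$ is bounded below near $0$. For $\varphi(z)=\sign(z)|z|^\theta$ we have $\varphi'(t)=\theta t^{\theta-1}$, so
\begin{equation*}
\varphi'(t)\,\psi(t) \geq C>0 \quad\text{for } t\in(0,\delta).
\end{equation*}
This is the claimed inequality, because $\norm{\nabla_{\cX} f(x)}\geq\psi(f(x))$ by the definition of $\psi$. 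Finally, decreasing $\theta$ only weakens the requirement near $0$ once $\delta\leq 1$. So I take a common $\theta$, the minimum over all strata and both signs, together with the smallest $\delta$ and the smallest $C$.

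\textbf{Main obstacle.} The step I expect to be delicate is Step 1: obtaining a genuinely $C^1$ selection curve whose gradient norm is comparable to the infimum, and arranging this curve to stay in $\cX\cap\cK$ on a whole interval $(0,\varepsilon)$. I would handle it by definable choice applied to the definable set $\{(t,x): x \in\cX\cap\cK,\ f(x)=t,\ \norm{\nabla_{\cX}f(x)}\leq 2\psi(t)+t\}$, followed by $C^1$ cell decomposition in $t$.
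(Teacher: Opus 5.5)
Your proof is correct. It is essentially the argument behind the cited result of Kurdyka: take the level-wise infimum $\psi$ of $\norm{\nabla_{\cX} f}$, select a definable curve along the levels, use the finite length of bounded definable curves, and let polynomial boundedness (Puiseux expansions in the semialgebraic setting of Theorem~\ref{thm:KL_conv}) turn the desingularizer into a power, whereas the paper gives no argument beyond citing \cite{lojasiewicz1982trajectoires,kurdyka1998gradients}. Running it directly on each stratum with the Riemannian gradient, and using the $+t$ slack to exclude $\psi\equiv 0$ near $0$, also spares you from transferring Kurdyka's open-set statement to submanifolds or invoking finiteness of asymptotic critical values.
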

Similar inequality can be established for $f \circ \pi_{\cX}$.
\begin{corollary}\label{cor:KL_g}
    There is $C>0$ such that if $x \in \bigC(\cX)$ and $0 <|g_{\cX}(x)| < \delta$, then
  \begin{equation*}
    \varphi'(g_{\cX}(x)) \norm{\nabla g_{\cX}(x)} \geq C \, .
  \end{equation*}
\end{corollary}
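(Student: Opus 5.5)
We transfer the {\L}ojasiewicz inequality from $f_{|\cX}$ to $g_{\cX}=f\circ\pi_{\cX}$. The plan uses two facts: $g_{\cX}$ and $f_{|\cX}$ take the same value at $x$ and at $y=\pi_{\cX}(x)$, and the gradients of the two functions are comparable through the Jacobian bounds of Lemma~\ref{lem:projection}.

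First, fix $x\in\bigC(\cX)$ and set $y=\pi_{\cX}(x)$. This projection is well-defined because $\bigC(\cX)\subset\cC^*(\cX)$ once $\gamma^\alpha\le \mathtt{A}_3$. By definition, $g_{\cX}(x)=f(y)$. Hence the hypothesis $0<|g_{\cX}(x)|<\delta$ reads $0<|f(y)|<\delta$ with $y\in\cX$, and the {\L}ojasiewicz proposition applies at $y$. It gives
\[
\varphi'(f(y))\,\norm{\nabla_{\cX}f(y)}\ge C.
\]
Since $\varphi'(g_{\cX}(x))=\varphi'(f(y))$, it remains to compare $\norm{\nabla g_{\cX}(x)}$ with $\norm{\nabla_{\cX}f(y)}$.

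Second, we use the chain rule. It gives $\nabla g_{\cX}(x)=(\Jac\pi_{\cX}(x))^\top\nabla_{\cX}f(y)$, where $\nabla_{\cX}f(y)\in\cT_y\cX$. Lemma~\ref{lem:projection} bounds $\lambda_{\min}$, the smallest eigenvalue of $\Jac\pi_{\cX}(x)$ restricted to $\cT_y\cX$, below by $\ulambda$; this is the inequality $\norm{\nabla_{\cX}f(y)}\le C\norm{\nabla g_{\cX}(x)}$ recorded in Section~\ref{sec:discuss}. We must make sure it controls the transpose acting on a tangent vector. The range of $\Jac\pi_{\cX}(x)$ lies in $\cT_y\cX$, since $\pi_{\cX}$ maps into $\cX$. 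So for $u\in\cT_y\cX$ we have
\[
\norm{\Jac\pi_{\cX}(x)^\top u}\,\norm{u}\ge\scalarp{\Jac\pi_{\cX}(x)^\top u}{u}=\scalarp{u}{\Jac\pi_{\cX}(x)u}.
\]
This last quantity is at least $\ulambda\norm{u}^2$ if the restricted Jacobian is (near-)symmetric positive definite, which is how we read the eigenvalue statement of Lemma~\ref{lem:projection}. If that reading is too optimistic, we argue with the estimate $\norm{\Jac\pi_{\cX}(x)-P_y}\le L_0\norm{x-y}/\truncdist(x,\sX_{j-1})$ instead. On $\bigC(\cX)$ the right-hand side is at most $L_0\gamma^\alpha\le 1/2$ for $\gamma_0$ small, so $\norm{\Jac\pi_{\cX}(x)^\top u}\ge\norm{P_yu}-\tfrac12\norm{u}=\tfrac12\norm{u}$. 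Either way we obtain $\norm{\nabla g_{\cX}(x)}\ge c\norm{\nabla_{\cX}f(y)}$ with $c=\min(\ulambda,1/2)$.

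Combining the two steps yields
\[
\varphi'(g_{\cX}(x))\norm{\nabla g_{\cX}(x)}\ge cC,
\]
which is the claim with constant $cC$. The main obstacle is the second step: justifying a lower bound on the transpose of the Jacobian applied to tangent vectors. We expect the perturbation argument around $P_y$ to be the robust route, since it only needs the ratio $\dist(x,\cX)/\truncdist(x,\sX_{j-1})\le\gamma^\alpha$, which holds by the definition of $\bigC(\cX)$. The constants $\delta$ and $C$ are taken uniform over the finitely many strata.
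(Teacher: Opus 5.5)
Your proof is correct and is essentially the paper's argument: $g_{\cX}(x)=f(y)$ with $y=\pi_{\cX}(x)$, the chain rule $\nabla g_{\cX}(x)=(\Jac\pi_{\cX}(x))^\top\nabla_{\cX}f(y)$, the lower bound $\ulambda$ from Lemma~\ref{lem:projection}, and the {\L}ojasiewicz inequality for $f_{|\cX}$ at $y$. You can drop your hesitation about the transpose: by Proposition~\ref{prop:jacobian_expression}, $\Jac\pi_{\cX}(x)=(\Id_{|\cT_y\cX}-\norm{x-y}S_{w,y})^{-1}P_y$ with $S_{w,y}$ self-adjoint, so the Jacobian is symmetric and your first reading is the right one (your perturbation fallback around $P_y$ is also valid, at the cost of the extra requirement $L_0\gamma_0^{\alpha}\le 1/2$).
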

\begin{proof}
    For $x \in \bigC(\cX)$, denote $y = \pi_{\cX}(x)$. We have
    $\nabla_{\cX} g(x) = (\Jac \pi_{\cX}(x))^{\top}\nabla_{\cX}f(y)$ and $g_{\cX}(x) = f(y)$. Therefore, by Lemma~\ref{lem:projection}
    \begin{equation*}
        \varphi'(g_{\cX}(x)) \norm{\nabla g_{\cX}(x)} = \varphi'(f(y)) \norm{\nabla g_{\cX}(x)} \geq \ulambda \varphi'(f(y))  \norm{\nabla_{\cX}f(y)} \geq C \ulambda \, .
    \end{equation*}
\end{proof}
We also restate the decrease given in Section~\ref{sec:varying}.
\begin{lemma}\label{lem:kl_ass}
    Let $\bPsi$ be a good selection function $\bPsi$. There is $\ttB_1 >0$ and $C>0$ such that for all $k$,
\begin{equation*}
    (g_{\Psi_{k+1}}(x_{k+1}) + a_{k+1}) - (g_{\Psi_{k}}(x_k) + a_k) \leq - C \gamma_k \norm{\nabla g_{\Psi_{k}}(x_k)}^{2}\, ,
\end{equation*}
where
\begin{equation*}
    a_{k} = \ttB_1\sum_{k'=k+1}^{+\infty}\gamma_{k'}^{1 +2 \alpha} + \sum_{k'=k+1}^{+\infty} (g_{\Psi_{k'+1}}(x_{k'+1}) - g_{\Psi_{k'}}(x_{k'+1}))\,,
\end{equation*}
and where
\begin{equation*}
   \sum_{k'=k+1}^{+\infty} (g_{\Psi_{k'+1}}(x_{k'+1}) - g_{\Psi_{k'}}(x_{k'+1})) \leq C\sum_{k'=k+1}^{+\infty}\gamma_{k'}^{1+ \beta - \alpha}\,.
\end{equation*}
\end{lemma}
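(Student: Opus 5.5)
The plan is to restate, index by index, the summed descent obtained in Section~\ref{sec:varying} as a one-step Lyapunov inequality for the perturbed quantity $g_{\Psi_k}(x_k)+a_k$. First I check that $a_k$ is well-defined and finite. The switching tail $\sum_{k'>k}(g_{\Psi_{k'+1}}(x_{k'+1})-g_{\Psi_{k'}}(x_{k'+1}))$ has to converge absolutely. This follows from the first inequality of Lemma~\ref{lem:payments_varying}: it bounds each term by $G\|\pi_{\Psi_{k'+1}}(x_{k'+1})-\pi_{\Psi_{k'}}(x_{k'+1})\|$ and bounds the total by $C(G^2|\bbX|\sum_{k'}\gamma_{k'}^{1+\beta-\alpha}+G\gamma_{k(1)}^{\beta})$. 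With $\gamma_k\asymp 1/k$ and $\beta>\alpha$, the series $\sum\gamma_k^{1+\beta-\alpha}$ converges, and so does $\sum\gamma_k^{1+2\alpha}$. Hence both pieces of $a_k$ are finite.

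For the tail bound in the last display, I would rerun Lemma~\ref{lem:payments_varying} with starting index $k(1)=k+1$. The doubling boundary contribution $G\gamma_{k(1)}^{\beta}$ is then the only part not already of the form $\sum\gamma_{k'}^{1+\beta-\alpha}$. I would absorb it in one of two ways. One option is to note that, for $\gamma_k\asymp 1/k$, $\sum_{k'>k}\gamma_{k'}^{1+\beta-\alpha}\asymp k^{-(\beta-\alpha)}$ dominates a suitable power. The other, more robust option is to pick the doubling grid once for all $k$ and bound the boundary terms $\Delta_{k(i+1)-1}$ directly by $\gamma_{k(i+1)}^{\beta}$, summing only over grid points beyond $k$. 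I expect this absorption to be the delicate point, and if it fails I would simply allow an extra $C\gamma_{k+1}^{\beta}$ in the stated tail bound, since that is all the subsequent KL argument needs.

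The one-step inequality is then an identity plus Lemma~\ref{lem:descent_deterministic}. I would write
\[(g_{\Psi_{k+1}}(x_{k+1})+a_{k+1})-(g_{\Psi_k}(x_k)+a_k)=[g_{\Psi_k}(x_{k+1})-g_{\Psi_k}(x_k)]+[g_{\Psi_{k+1}}(x_{k+1})-g_{\Psi_k}(x_{k+1})]+a_{k+1}-a_k,\]
where $a_{k+1}-a_k=-\ttB_1\gamma_{k+1}^{1+2\alpha}-(g_{\Psi_{k+2}}(x_{k+2})-g_{\Psi_{k+1}}(x_{k+2}))$. The index shift in the definition of $a_k$ must be matched carefully, and if needed I would redefine the sums to start at $k'=k$ so that the switching term cancels exactly. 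Validity of $\bPsi$ together with Lemma~\ref{lem:geom_all} and Corollary~\ref{cor:distance_to_boundary_is_large}, applied on the current doubling block with $\gamma_k\le 2\gamma_{k(i+1)}$, gives the hypotheses of Lemma~\ref{lem:descent_deterministic}. That lemma then yields $g_{\Psi_k}(x_{k+1})-g_{\Psi_k}(x_k)\le -\mathtt{A}_1\gamma_k\|\nabla g_{\Psi_k}(x_k)\|^2+\mathtt{A}_2 C\gamma_k^{1+2\alpha}$. Finally I choose $\ttB_1\ge \mathtt{A}_2C\sup_k(\gamma_k/\gamma_{k+1})^{1+2\alpha}$, which is finite because $c_1/k\le\gamma_k\le c_2/k$, so that the $\ttB_1$ term cancels the $\gamma_k^{1+2\alpha}$ error. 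This gives the claim with $C=\mathtt{A}_1$.
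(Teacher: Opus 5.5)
Your proposal is correct and follows the paper's proof. Both split the one-step change into the within-stratum descent of Lemma~\ref{lem:descent_deterministic} plus the switching term $g_{\Psi_{k+1}}(x_{k+1})-g_{\Psi_k}(x_{k+1})$, control the switching tail by Lemma~\ref{lem:payments_varying}, and absorb the leftover $C\gamma_k^{\beta}$ using $c_1/k\le\gamma_k\le c_2/k$; the paper writes $C\gamma_k^{\beta}\le C\sum_{k'\ge k}\gamma_{k'}^{1+\beta}\le C\sum_{k'\ge k}\gamma_{k'}^{1+\beta-\alpha}$, which is essentially your first option.

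Your concern about the index shift is justified: with sums starting at $k'=k+1$ the switching terms do not telescope, and the paper's proof in fact sums the tail from $k'=k$. One small correction: on a doubling block you need $\gamma_k\le C\gamma_{k(i+1)}$ with $C$ depending on $c_2/c_1$. The doubling rule alone only gives $\gamma_{k(i+1)}<\gamma_k$, not $\gamma_k\le 2\gamma_{k(i+1)}$.
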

\begin{proof}
    Since $\bPsi$ is a good selection function, proceeding as in Lemma~\ref{lem:descent_deterministic}, and using the fact that $x_k \in \bigC(\cX) \backslash \smallC_{<\dim(\cX)}$,
    \begin{equation*}
        \begin{split}
        g_{\Psi_{k+1}}(x_{k+1}) - g_{\Psi_{k}}(x_k) &=  g_{\Psi_{k+1}}(x_{k+1}) - g_{\Psi_{k}}(x_{k+1}) + g_{\Psi_{k}}(x_{k+1})- g_{\Psi_{k}}(x_k)\\
        &\leq - \gamma_k \mathtt{A}_1 \norm{\nabla g_{\Psi_k}(x_k)}^2 + \mathtt{A}_2\gamma_k^{1+2 \alpha}  + g_{\Psi_{k+1}}(x_{k+1}) - g_{\Psi_{k}}(x_{k+1})\, .
        \end{split}
    \end{equation*}
    Moreover, applying Lemma~\ref{lem:payments_varying},
    \begin{equation*}
        \sum_{k'=k}^{+\infty}(g_{\Psi_{k'+1}}(x_{k'+1}) - g_{\Psi_{k'}}(x_{k'+1}))\leq C \gamma_{k}^{\beta} + C \sum_{k'=k}^{+\infty} \gamma_{k'}^{1+ \beta - \alpha} \leq C \sum_{k'=k}^{+\infty} \gamma_{k'}^{1+\beta} +  C \sum_{k'=k}^{+\infty} \gamma_{k'}^{1+ \beta - \alpha}\, ,
    \end{equation*}
    where for the second inequality we have used $\gamma_k =O(1/k)$. Since $\gamma_k^{1+ \beta}\leq \gamma_k^{1+\beta- \alpha}$, the result follows.
\end{proof}
We are now in position to present the result that controls the second term in Lemma~\ref{lem:payments_varying}.
\begin{lemma}
    Let $\bPsi$ be a good selection function. Denoting $h_k :=g_{\Psi_k}(x_k) + a_k$, there is $C>0$ such that for any $K>0$,
    \begin{equation*}
        \sum_{k=1}^K \gamma_k \norm{\nabla g_{\Psi_{k}}(x_k)} \leq C \left(\varphi(h_1) - \varphi(h_{K+1}) + \sum_{k=1}^{K}\frac{\gamma_k}{\varphi'(a_k)}\right)
    \end{equation*}
\end{lemma}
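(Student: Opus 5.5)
The plan is to run the classical Kurdyka--{\L}ojasiewicz telescoping argument on the perturbed Lyapunov sequence $h_k = g_{\Psi_k}(x_k) + a_k$. By Lemma~\ref{lem:kl_ass}, $(h_k)$ is nonincreasing and
\begin{equation*}
h_k - h_{k+1} \geq C\gamma_k \norm{\nabla g_{\Psi_k}(x_k)}^2 .
\end{equation*}
Since $f(x_k)\to 0$, $\norm{x_k-\pi_{\Psi_k}(x_k)}\to 0$ and $a_k\to 0$, we get $h_k \to 0$; hence $h_k \geq 0$. Discarding finitely many initial indices, we may assume $0\le h_k<\delta$. The trivial case $h_k=0$ gives zero gradient terms from then on, so we drop it. Concavity of $\varphi$ on $(0,\infty)$ gives
\begin{equation*}
\varphi(h_k)-\varphi(h_{k+1}) \geq \varphi'(h_k)\,(h_k-h_{k+1}) \geq C\,\varphi'(h_k)\,\gamma_k\norm{\nabla g_{\Psi_k}(x_k)}^2 .
\end{equation*}

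The key step is a lower bound $\varphi'(h_k)\norm{\nabla g_{\Psi_k}(x_k)}\gtrsim 1$, up to the additive perturbation $a_k$. Since $\varphi'(z)=\theta|z|^{\theta-1}$ is decreasing in $|z|$, I split on which of $|g_{\Psi_k}(x_k)|$ and $a_k$ dominates $h_k$. Note that $a_k\ge 0$ may be assumed, replacing $a_k$ by its upper bound $\ttB_1\sum\gamma^{1+2\alpha}+C\sum\gamma^{1+\beta-\alpha}$ if needed. That bound is still summable and still yields the decrease of Lemma~\ref{lem:kl_ass}.

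\emph{Case 1:} $|g_{\Psi_k}(x_k)|\ge a_k$. Then $h_k\le 2|g_{\Psi_k}(x_k)|$, so $\varphi'(h_k)\gtrsim \varphi'(g_{\Psi_k}(x_k))$. Corollary~\ref{cor:KL_g} (valid since $x_k\in\bigC(\Psi_k)$ by validity) then gives $\varphi'(h_k)\norm{\nabla g_{\Psi_k}(x_k)}\ge C$. If $g_{\Psi_k}(x_k)=0$, then $h_k=a_k$ and we are in Case 2. Also, $g_{\Psi_k}(x_k)$ is negative only when $|g_{\Psi_k}(x_k)|\le a_k$, since $h_k\ge0$.

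\emph{Case 2:} $|g_{\Psi_k}(x_k)|< a_k$. Then $h_k\le 2a_k$ and $\varphi'(h_k)\gtrsim\varphi'(a_k)$. Using $ab\le a^2/(2\varepsilon)+\varepsilon b^2/2$, I bound
\begin{equation*}
\gamma_k\norm{\nabla g_{\Psi_k}(x_k)} \le \tfrac12\varphi'(a_k)\gamma_k\norm{\nabla g_{\Psi_k}(x_k)}^2 + \tfrac12\frac{\gamma_k}{\varphi'(a_k)} .
\end{equation*}
The first term is at most $C(\varphi(h_k)-\varphi(h_{k+1}))$ by the concavity display. The second term is the remainder appearing in the statement.

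In Case 1, with $\varphi'(h_k)\norm{\nabla g_{\Psi_k}(x_k)}\ge C$, I get
\begin{equation*}
\gamma_k\norm{\nabla g_{\Psi_k}(x_k)}\le C^{-1}\varphi'(h_k)\gamma_k\norm{\nabla g_{\Psi_k}(x_k)}^2\le C'\,(\varphi(h_k)-\varphi(h_{k+1})) .
\end{equation*}
Summing both cases over $k\le K$ telescopes $\varphi(h_k)$ and yields the claim.

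The main obstacle I expect is the sign and domain bookkeeping. One must make sure that $h_k\ge0$ and $a_k\geq 0$ (or use upper majorants), and that $|g_{\Psi_k}(x_k)|<\delta$ so Corollary~\ref{cor:KL_g} applies. One must also handle the sign of $\varphi$, so that concavity is used only on $[0,\infty)$. The comparisons $\varphi'(2z)\asymp\varphi'(z)$ absorb all constants.
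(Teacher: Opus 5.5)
Your argument is correct and is essentially the paper's proof. Both use concavity of $\varphi$ on the decrease of $h_k$ from Lemma~\ref{lem:kl_ass}, the KL bound of Corollary~\ref{cor:KL_g} for $g_{\Psi_k}$, and an AM--GM step that produces the remainder $\gamma_k/\varphi'(a_k)$. The only difference is bookkeeping. The paper splits $1/\varphi'(h_k)\le 1/\varphi'(g_{\Psi_k}(x_k))+1/\varphi'(a_k)$ using subadditivity of $z\mapsto\theta^{-1}|z|^{1-\theta}$, and then absorbs an $\varepsilon C\gamma_k\norm{\nabla g_{\Psi_k}(x_k)}$ term. Your case split on which of $|g_{\Psi_k}(x_k)|$ and $a_k$ dominates avoids that absorption.

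One side remark of yours is false, although you never use it. Replacing $a_k$ by the majorant $\ttB_1\sum\gamma_{k'}^{1+2\alpha}+C\sum\gamma_{k'}^{1+\beta-\alpha}$ does not preserve the per-step decrease of Lemma~\ref{lem:kl_ass}. Switching costs are controlled only in aggregate; per step you only have a bound of order $\gamma_k^{(\rank(\Psi_k)+1)\beta}\gg\gamma_k^{1+\beta-\alpha}$. It is exactly the signed tail in $a_k$ that telescopes these costs away, so the majorant cannot replace it. Simply drop the remark: your case split works for any sign of $a_k$, since $a_k\le 0$ puts you in Case~1 with $0<h_k\le g_{\Psi_k}(x_k)=|g_{\Psi_k}(x_k)|$.
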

\begin{proof}
    Here, we closely follow the arguments of \cite[Proof of Lemma 4.1]{lai2025diameter}.

    First, note that by the previous lemma, the sequence $h_k$ is decreasing. Since $f(x_k),a_k \rightarrow 0$, we have that $g_{\Psi_k}(x_k) \rightarrow 0$ and $h_k \rightarrow 0$. Thus, if there is some $k_0$ such that $h_{k_0} = 0$, then $h_k = \norm{\nabla g_{\Psi_k}(x_k)} = 0$ for all $k \geq k_0$.  It is therefore enough to prove the statement for $K>0$ such that $h(x_{K})>0$.

    Since $\varphi$ restricted to positive reals is concave, by
    Lemma~\ref{lem:kl_ass}, for $k \leq K$,
\begin{equation*}
    \begin{split}
    \varphi(h_{k+1}) - \varphi(h_k) &\leq \varphi'(h_k)(h_{k+1} - h_k) \leq - \gamma_k \varphi'(h_k) \norm{\nabla g_{\Psi_k}(x_k)}^2 = - \gamma_k \frac{\norm{\nabla g_{\Psi_k}(x_k)}^2}{1/\varphi'(h_k)}\,,
    \end{split}
\end{equation*}
Moreover, the function $x\mapsto 1/\varphi'(x) = \theta^{-1}|x|^{1-\theta}$ is sub-additive.
Thus,
\begin{equation*}
    \gamma_k \frac{\norm{\nabla g_{\Psi_k}(x_k)}^2}{1/\varphi'(g_{\Psi_{k}}(x_k)) + 1/\varphi'(a_k)}\leq  \varphi(h_{k}) - \varphi(h_{k+1})
\end{equation*}
Using Corollary~\ref{cor:KL_g}, we obtain for some $C > 0$
\begin{equation*}
    \gamma_k \frac{\norm{\nabla g_{\Psi_k}(x_k)}^2}{C \norm{\nabla g_{\Psi_k}(x_k)} + 1/\varphi'(a_k)}\leq  \varphi(h_{k}) - \varphi(h_{k+1})\,.
\end{equation*}
Thus, we obtain after re-arranging and using $2ab \leq a^2/\varepsilon + \varepsilon b^2$ for any $\varepsilon > 0$
\begin{equation*}
    \begin{split}
      \norm{\nabla g_{\Psi_k}(x_k)} &\leq \sqrt{ \frac{1}{\gamma_k} (C \norm{\nabla g_{\Psi_k}(x_k)} + 1/\varphi'(a_k))(\varphi(h_{k}) - \varphi(h_{k+1}) )}\\
&\leq \frac{1}{2}\left(\varepsilon C\norm{\nabla g_{\Psi_k}(x_k)} + \frac{1}{\varphi'(a_k)}\right) + \frac{1}{2\varepsilon\gamma_k}(\varphi(h_{k}) - \varphi(h_{k+1}) )\,.
    \end{split}
\end{equation*}
Taking sufficiently small $\varepsilon$, above implies that for some $C, C' > 0$
\begin{equation*}
    \gamma_k \norm{\nabla g_{\Psi_k}(x_k)} \leq C \frac{\gamma_k}{\varphi'(a_k)} + C' (\varphi(h_k) - \varphi(h_{k+1}))\, .
\end{equation*}
Summing this inequality from $1$ to $K$ completes the proof.
\end{proof}

Finally, let $\bPsi$ be a good strata selection function  given by Theorem~\ref{thm:algo_produce_good}, iteratively applied to the sequences $(x_k)_{k(i) \leq k \leq k(i+1)-1}$. By Lemma~\ref{lem:payments_varying} it holds that $\sum_{k=1}^{\infty} \norm{\pi_{\Psi_{k+1}}(x_{k+1}) - \pi_{\Psi_k}(x_{k+1})} <  +\infty$. Noting that since  $\gamma_k \leq C/k$, there is $t>0$ such that $a_k \leq k^{-t}$, we obtain by the preceding lemma
     \begin{equation*}
          \sum_{k=1}^K \gamma_k \norm{\nabla g_{\Psi_{k}}(x_k)}  \leq C + \sum_{k=1}^{+\infty}k^{-(1+ t(1- \theta))}< +\infty\, .
    \end{equation*}
By Lemma~\ref{lm:KL_quanti} this completes the proof of Theorem~\ref{thm:KL_conv}.

\section{Remaining proofs}\label{sec:rem_proofs}
In this section we provide proofs for Lemma~\ref{lem:projection} and Theorem~\ref{thm:good_assumption}.
\subsection{Proof of Lemma~\ref{lem:projection}}

For the purpose of the proof, we first need to introduce some concepts related to the properties of the projection operator on a manifold.

In the following, we fix $\cX\subset \bbR^d$ a $C^p$ manifold. As previously, for $y \in \cX$, we will denote $P_{y}$ (respectively $P_{y}^{\perp}$) the projection onto $\cT_{y} \cX$ (respectively $\cN_{y} \cX$).
\paragraph{Projection domain and frontier function.}
Given a $C^p$ submanifold $\cX\subset \bbR^d$, denote $\cE(\cX)\subset \bbR^d$ the maximal open domain such that $\pi_{\cX}$ is $C^{p-1}$ on it. It is standard (see e.g. \cite{federer1959curvature,dudek1994nonlinear}) that such domain exists.

Following \cite{leobacher2021existence}, for $y \in \cX$ and $w \in \cN_{y} \cX$, with $\norm{w} =1$, define the radius of curvature of $\cX$ as
\begin{equation*}
    \varrho_y(w) = \inf \{r\in(0,+\infty) : \forall \tau \in (0,+\infty)\, , B(y, \tau) \cap \cX \cap B(y + r w, r) \neq \emptyset \}\,,
\end{equation*}
where $B(y, r) = \{x \,:\, \|x - y\| < r\}$, and
\begin{equation*}
    \varrho_{y} = \inf\{ \varrho_{y}(w) : w \in \cN_{y} \cX\, , \norm{w} = 1\}\, .
\end{equation*}

\begin{lemma}\label{lm:proj_open_cone}
    For any $x \in \bbR^d$ such that there is $y \in \cX$ satisfying $x-y \in \cN_{y} \cX$ and $\norm{x-y} < \varrho_y$, we have $x \in\cE(\cX)$. In particular,
       \begin{equation*}
\left\{ y+w : y \in \cX\, , w \in \cN_{y}\cX\,, \textrm{ with $\norm{w}< \varrho_y$}\right\} \subset \cE(\cX)\, .
    \end{equation*}
\end{lemma}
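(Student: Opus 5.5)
The plan is to follow \cite{leobacher2021existence} and reduce the claim to an inverse function theorem argument for the normal bundle map
\[
F:\ \cN\cX := \{(y,w): y\in\cX,\ w\in\cN_y\cX\}\to\bbR^d,\qquad F(y,w)=y+w .
\]
Since $\cX$ is $C^p$, its normal bundle $\cN\cX$ is a $C^{p-1}$ manifold of dimension $d$, and $F$ is $C^{p-1}$. Fix $x=y_0+w_0$ with $w_0\in\cN_{y_0}\cX$ and $\norm{w_0}<\varrho_{y_0}$; if $w_0=0$, simply take $w_0/\norm{w_0}$ to be any unit normal below. The goal is to show that $F$ is a local $C^{p-1}$-diffeomorphism near $(y_0,w_0)$, and that its local inverse, composed with $(y,w)\mapsto y$, coincides with $\pi_{\cX}$ on a neighbourhood of $x$. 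This would give $x\in\cE(\cX)$ by maximality of $\cE(\cX)$.

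\textbf{Step 1: invertibility of $\dif F$.} In adapted coordinates, $\dif F(y_0,w_0)$ is block triangular: it is the identity on normal directions, and on $\cT_{y_0}\cX$ it equals $I-\mathrm{S}_{w_0}$, where $\mathrm{S}_{w_0}$ is the shape operator (second fundamental form) of $\cX$ at $y_0$ in the direction $w_0$. So $\dif F(y_0,w_0)$ is invertible as soon as $\norm{w_0}\,\kappa<1$, where $\kappa$ is the largest principal curvature of $\cX$ at $y_0$ in the unit direction $w_0/\norm{w_0}$. To get this bound I will compare $\cX$ locally, written as a $C^p$ graph over $\cT_{y_0}\cX$, with the sphere $\partial B(y_0+rw,r)$. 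By the definition of $\varrho_{y_0}(w)$, for every $r<\varrho_{y_0}(w)$ there is $\tau>0$ with $B(y_0,\tau)\cap\cX\cap B(y_0+rw,r)=\emptyset$. Since $B(y_0+rw,r)$ is tangent to $\cT_{y_0}\cX$ at $y_0$, comparing second-order Taylor expansions of the graph and of the sphere gives $\scalarp{\mathrm{S}_w h}{h}\le \norm{h}^2/r$ for all $h\in\cT_{y_0}\cX$. Letting $r\uparrow\varrho_{y_0}(w)$ yields $\kappa\le 1/\varrho_{y_0}\le 1/\varrho_{y_0}(w)$, hence $\norm{w_0}\kappa<1$. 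A subtlety is that only the positive part of the curvature is controlled, but this is exactly what is needed, because negative eigenvalues of $\mathrm{S}_{w_0}$ only help invertibility of $I-\mathrm{S}_{w_0}$.

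\textbf{Step 2: identification with the projection.} The inverse function theorem gives a neighbourhood $U$ of $x$ and a $C^{p-1}$ inverse $F^{-1}$ defined on $U$; set $\Pi = \mathrm{pr}_1\circ F^{-1}$. It then remains to check that $\Pi$ coincides with $\pi_{\cX}$ on $U$, that is, that for $x'\in U$ the nearest point of $\cX$ is well defined and equals $\Pi(x')$. Local uniqueness comes from the ball condition: the open ball around $x$ of radius $\norm{w_0}$ meets $\cX$ near $y_0$ only away from $y_0$, and the strict inequality $\norm{w_0}<\varrho_{y_0}$ leaves room to perturb both $x$ and the radius slightly. Any critical point of $z\mapsto\norm{x'-z}^2$ on $\cX$ near $y_0$ is a preimage of $x'$ under $F$ close to $(y_0,w_0)$, hence unique.

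\textbf{Main obstacle.} The main obstacle is the global part of Step 2: ruling out a nearest point far from $y_0$, which the local curvature quantity $\varrho_{y_0}$ alone does not control. I would first follow the convention of \cite{leobacher2021existence}, where $\cE(\cX)$ is understood as the domain on which the (local) metric projection is a well-defined $C^{p-1}$ map, so that the local statement above suffices. If a genuinely global uniqueness statement is needed, I would defer it to the subsequent use of the lemma: there it is combined with Assumption~\ref{ass:main}\ref{ass:metric}, which makes the inner and outer metrics equivalent and so excludes distant nearly-closest points.
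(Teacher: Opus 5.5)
\textbf{The gap: the global part of the statement is not proved.} Your Step~1 and the local part of Step~2 are sound, but the proposal does not establish the lemma as stated. Here $\cE(\cX)$ is the domain of the \emph{global} nearest-point map $\pi_{\cX}$. You therefore need two facts: $y_0$ is a global nearest point of $x$, and points near $x$ have their nearest points near $y_0$.

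Neither of your fallbacks supplies these. Redefining $\cE(\cX)$ through a local projection changes the statement, and it would not support the lemma's later use. The proof of Lemma~\ref{lm:local_reach_main} invokes this lemma precisely to get $\norm{y-(y+rw)}\le\norm{y'-(y+rw)}$ for \emph{every} $y'\in\cX$. Appealing to Assumption~\ref{ass:main}\ref{ass:metric} is also unavailable, since the lemma concerns an arbitrary $C^p$ manifold.

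\textbf{Your diagnosis is correct.} With $\varrho_y(w)$ defined through $B(y,\tau)$ for all $\tau>0$, it is a purely local (curvature) quantity, and the statement is then false as written. Take a smooth closed stadium-shaped curve whose two long parallel flat sides are at distance $h$, and let $y$ be the middle of one side. Then $\varrho_y=+\infty$. Yet $y+\tfrac h2 w$, with $w$ the inner unit normal, is equidistant from both sides, so it is not in $\cE(\cX)$. This curve even has equivalent inner and outer metrics. The paper's two-line proof has the same hole: it passes from ``$B(y,\tau)\cap\cX\cap B(x,r)=\emptyset$ for some $\tau$'' directly to ``$\dist(x,\cX)=r$''.

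\textbf{The missing ingredient is the global ball condition.} Read $r<\varrho_y(w)$ as $B(y+rw,r)\cap\cX=\emptyset$. This is what Lemma~\ref{lm:local_reach_main} (an infimum over all $y'\in\cX$, i.e.\ Federer's formula) and its proof actually use. Proposition~\ref{prop:reach_to_eigen_shape} then survives as the inequality $\varrho_y(w)\le 1/\max(0,\lambda_{\max}(S_{w,y}))$, which is all that is needed downstream.

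Under this reading the paper's route is immediate. Pick $r'\in(\norm{x-y},\varrho_y)$ and set $u=(x-y)/\norm{x-y}$; then $y$ is a nearest point of $x'=y+r'u$. The result of \cite{dudek1994nonlinear} (Theorem~3.13: the half-open segment from a point to a nearest point lies in $\cE(\cX)$) gives $x\in[y,x')\subset\cE(\cX)$.

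Your IFT argument also closes under this reading, giving a self-contained replacement for that citation:
\begin{itemize}
\item $\overline{B}(x,\norm{w_0})\setminus\{y_0\}\subset B(x',r')$, which is open and misses $\cX$, hence misses $\overline{\cX}$. So $\overline{B}(x,\norm{w_0})\cap\overline{\cX}=\{y_0\}$.
\item By compactness, every nearest point in $\overline{\cX}$ of a point $x''$ near $x$ lies near $y_0$. It is therefore in $\cX$, since embedded submanifolds are locally closed.
\item Such a point is a critical point of the distance, and hence equals $\mathrm{pr}_1\circ F^{-1}(x'')$.
\end{itemize}
Step~1 still goes through, because global emptiness of the ball implies the local emptiness your Taylor comparison uses, and $\norm{w_0}<\varrho_{y_0}\le\varrho_{y_0}(u)$.

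A minor slip in Step~1: the chain should read $\kappa\le 1/\varrho_{y_0}(w)\le 1/\varrho_{y_0}$, not the reverse.
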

\begin{proof}
    Consider $y \in \cX$ and $x \in \bbR^d$ such that $x-y \in \cN_{y} \cX$ and $r = \norm{x-y} < \varrho_{y}$. This means that there is $\tau \in (0, +\infty)$ such that $B(y, \tau) \cap \cX \cap B(x, r) = \emptyset$. Thus, $\dist(x, \cX) = r = \norm{y-x}$ and by \cite[Theorem 3.13]{dudek1994nonlinear} this implies that the segment $[y,x) \in \cE(\cX)$. Since $r< \varrho_{y}$ was arbitrary this completes the proof.
\end{proof}

The following result is a local version of \cite[Lemma 4.17]{federer1959curvature}. For completeness, we provide a proof in Appendix~\ref{sec:technical_manifolds}.

\begin{lemma}[{\cite[Lemma 4.17]{federer1959curvature}}]\label{lm:local_reach_main}
    For any $y \in \cX$,
    \begin{equation}\label{eq:reach_property_main}
        \varrho_y=\inf_{\substack{y' \in \cX \\ y' \neq y}} \frac{\norm{y'-y}^2}{2 \dist(y' - y, \cT_{y}\cX)}\, .
    \end{equation}
\end{lemma}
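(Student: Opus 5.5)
We prove the two inequalities $\varrho_y \le \rho$ and $\varrho_y \ge \rho$ separately, where $\rho$ denotes the right-hand side of \eqref{eq:reach_property_main}. Both rest on one elementary fact about balls tangent to $\cX$ at $y$. Fix $y\in\cX$, a unit normal $w\in\cN_y\cX$ and $r>0$. For any $y'\in\bbR^d$,
\begin{equation*}
\norm{y'-(y+rw)}^2<r^2 \iff \norm{y'-y}^2<2r\scalarp{y'-y}{w}.
\end{equation*}
Hence a point $y'\in\cX$ lies in $B(y+rw,r)$ exactly when $\norm{y'-y}^2/(2\scalarp{y'-y}{w})<r$ with $\scalarp{y'-y}{w}>0$. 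We also have $\dist(y'-y,\cT_y\cX)=\norm{P_y^{\perp}(y'-y)}=\max_{w}\scalarp{y'-y}{w}$, the maximum taken over unit $w\in\cN_y\cX$. So the quantity in \eqref{eq:reach_property_main} is the infimum over $y'$ and $w$ of $\norm{y'-y}^2/(2\scalarp{y'-y}{w})$, and it remains to compare this infimum with the radii $r$ for which the balls $B(y+rw,r)$ meet $\cX$ near $y$.

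\textbf{Inequality $\varrho_y\le\rho$.} Given $y'\ne y$ with $\dist(y'-y,\cT_y\cX)>0$, choose $w=P_y^\perp(y'-y)/\norm{P_y^\perp(y'-y)}$. Then every $r>\norm{y'-y}^2/(2\scalarp{y'-y}{w})$ satisfies $y'\in B(y+rw,r)$. The definition of $\varrho_y(w)$, however, requires points of $\cX\cap B(y+rw,r)$ inside $B(y,\tau)$ for \emph{every} $\tau$, i.e. arbitrarily close to $y$, and $y'$ may be far away. I will therefore read the definition as written: the quantifier is ``for all $\tau$'' inside the set, so $\varrho_y(w)$ is the infimum of $r$ such that $B(y+rw,r)$ meets $\cX$ in every neighbourhood of $y$. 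With this reading, a far-away point $y'$ does not immediately give $\varrho_y(w)\le r$. I would handle this as in Federer's proof, using that balls through $y$ are nested as $r$ grows, together with a connectedness/curve argument: take a $C^p$ curve in $\cX$ from $y$ to $y'$ (here $\cX$ is path connected) and look at the first time it enters the closed ball $\bar B(y+rw,r)$. Alternatively the definition should be read so that $\varrho_y$ coincides with the local reach, in which case the global formula needs the infimum to be attained near $y$. This reconciliation between the local notion in the definition and the global infimum in \eqref{eq:reach_property_main} is where I expect the main difficulty to lie. Concretely, I will check whether the appendix proof restricts $y'$ to a neighbourhood of $y$, and otherwise argue by monotonicity in $r$ of the family of balls.

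\textbf{Inequality $\varrho_y\ge\rho$.} Suppose $r<\rho$ and let $w$ be a unit normal. For every $y'\in\cX$ we have $2r\scalarp{y'-y}{w}\le 2r\,\dist(y'-y,\cT_y\cX)<\norm{y'-y}^2$, whenever the distance is positive. By the equivalence above, $y'\notin B(y+rw,r)$. When the distance is zero, $\scalarp{y'-y}{w}=0$ and $y'\notin B(y+rw,r)$ trivially. Thus $\cX\cap B(y+rw,r)=\emptyset$, so $r$ is not admissible in the definition, which gives $\varrho_y(w)\ge r$. Since $r<\rho$ and $w$ were arbitrary, taking the infimum over $w$ and letting $r\uparrow\rho$ gives $\varrho_y\ge\rho$.
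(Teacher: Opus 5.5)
Your half $\varrho_y\ge\rho$ is correct. For $r<\rho$ you show $\cX\cap B(y+rw,r)=\emptyset$ for every unit normal $w$, so no such $r$ is admissible. This is the contrapositive of the paper's ``Conversely'' paragraph, which takes an admissible $r\in(\varrho_y,\varrho_y+\varepsilon)$, extracts $y'\in\cX\cap B(y+rw,r)$ and reads off the same tangent-ball inequality.

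The other half, $\varrho_y\le\rho$, is not proved. The repair you sketch (first entry of a curve from $y$ to $y'$, monotonicity of the balls in $r$) cannot succeed, because with the definition as written this inequality is false: ``for all $\tau$'' means only points of $\cX$ arbitrarily close to $y$ count. Take the ellipse $\cX=\{(s,t): s^2/a^2+t^2/b^2=1\}$ with $a>b$ and $y=(0,b)$.
\begin{itemize}
\item Near $y$ the curve meets $B(y+rw,r)$ for $w=(0,-1)$ for every $r>a^2/b$ and for no $r<a^2/b$. It never does so for $w=(0,1)$. Hence $\varrho_y=a^2/b$, as Proposition~\ref{prop:reach_to_eigen_shape} predicts.
\item For $y'=(s,t)\in\cX\setminus\{y\}$, the ratio in \eqref{eq:reach_property_main} equals $\tfrac12\big(\tfrac{a^2}{b}+b+t(\tfrac{a^2}{b^2}-1)\big)$, which is strictly increasing in $t$.
\item So $\rho=b<a^2/b=\varrho_y$. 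The infimum is realized only by the far point $(0,-b)$, so no argument that localizes near $y$ can recover it.
\end{itemize}

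The paper obtains $\varrho_y\le\rho$ from Lemma~\ref{lm:proj_open_cone}. Its proof passes from ``$B(y,\tau)\cap\cX\cap B(x,r)=\emptyset$ for some $\tau$'' to ``$\dist(x,\cX)=r$''. That is exactly the local-to-global step you flagged, and the same ellipse breaks it: the centre $x=(0,0)=y+b\,(0,-1)$ has $b<\varrho_y$ but two nearest points, so $x\notin\cE(\cX)$. So the paper does not supply the missing idea either.

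The identity holds once the localization is removed, i.e.\ with $\varrho_y(w):=\inf\{r>0:\cX\cap B(y+rw,r)\neq\emptyset\}$. Under that reading your tangent-ball equivalence proves both halves. For $\le$: given $y'$ with $P_y^{\perp}(y'-y)\neq 0$, set $w=P_y^{\perp}(y'-y)/\norm{P_y^{\perp}(y'-y)}$. Every $r$ above the ratio of $y'$ puts $y'$ itself in $B(y+rw,r)$, so $\varrho_y\le\varrho_y(w)\le$ that ratio.

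This global reading is also all the later argument uses. Lemma~\ref{lm:proj_open_cone} then goes through, with $\cX\cap B(x,r)=\emptyset$ in place of the localized emptiness. In the proof of Lemma~\ref{lem:projection}, Proposition~\ref{prop:reach_to_eigen_shape} is only invoked through the inequality ``global radius $\le$ local radius $=1/\max(0,\lambda_{\max})$''.
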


\paragraph{Proof of Lemma~\ref{lem:projection}. First part: $\pi_{\cX}$ is $C^{p-1}$ on $\cC^*(\cX)$.}
Consider $x \in \cC^{*}(\cX)$, with $\cC^*(\cX)$ defined as in Equation~\eqref{eqdef:verybig_cone}.
There is $y \in \cX$ such that $\norm{y - x} = \dist(x, \cX)$. For such $y$, we can write
    \begin{equation}
        \label{eq:reach0}
    \truncdist(y, \sX_{j-1}) \geq \truncdist(x, \sX_{j-1}) - \norm{x-y} = \truncdist(x, \sX_{j-1}) - \dist(x, \cX) \geq \truncdist(x, \sX_{j-1}) - \mathtt{A}_3 \truncdist(x, \sX_{j-1})\, ,
\end{equation}
where for the last inequality one we used the fact that $x \in \cC^*(\cX)$. From now on assume that $\mathtt{A}_3 < 1$.

Since $y$ minimizes $y \mapsto \norm{x-y}$ on $\cX$, we also have $x-y \in \cN_{y} \cX$ and $x$ lies in the set
\begin{equation}
    \label{eq:reach1}
  \left\{ y+w : y \in \cX\, , w \in \cN_{y}\cX\,, \textrm{ with $\norm{w}< \mathtt{A}_3 (1-\mathtt{A}_3)^{-1} \truncdist(y, \sX_{j-1})$}\right\} \, .
\end{equation}
Indeed, \[\|x - y\| = \dist(x, \cX) \leq \mathtt{A}_3 \truncdist(x, \sX_{j-1}) \leq \mathtt{A}_3(1 - \mathtt{A}_3)^{-1}\truncdist(y, \sX_{j-1})\,,\] where we used Equation~\eqref{eq:reach0}.
By Lemma~\ref{lm:proj_open_cone} the set in Equation~\eqref{eq:reach1} is contained in $\cE(\cX)$ as long as $\mathtt{A}_3(1-\mathtt{A}_3)^{-1}\truncdist(y, \sX_{j-1})\leq \varrho_{y}$, for all $y \in \cX$.

Thus, in view of Lemma \ref{lm:local_reach_main} it suffices to show that there is $C>0$ such that for all $y \in \cX$, $y' \in \cX$, with $y' \neq y$
\begin{equation}\label{eq:reach_assumption}
   C \truncdist(y, \sX_{j-1}) <  \frac{\norm{y'-y}^2}{2 \dist(y' - y, \cT_{y}\cX)} \, .
\end{equation}

To prove Equation~\eqref{eq:reach_assumption} consider $y, y' \in \cX$. By Assumption~\ref{ass:main}-\ref{ass:metric} there is $\scc : [0,1] \rightarrow \cX$ a $C^p$ curve such that $\scc_0 = y$, $\scc_1 = y'$ and $\int_{0}^1 \norm{\dot{\scc}_t} \dif t \leq C_0 \norm{y'-y}$. It holds that
    \begin{equation*}
        y' - y = \int_0^1 \dot{\scc}_t \dif t = \int_{0}^1 \frac{\dot{\scc}_t}{ \norm{\dot{\scc}_t}} \norm{\dot{\scc}_t} \dif t\, .
    \end{equation*}
 Furthermore, since the distance to a vector space is convex and 1-homogeneous, we obtain by Jensen's inequality
        \begin{align}
            \label{eq:reach3}
            \dist(y' - y, \cT_y \cX) &\leq \int_{0}^1  \dist\left(\frac{\dot{\scc}_t}{ \norm{\dot{\scc}_t}}, \cT_y \cX\right)\norm{\dot{\scc}_t} \dif t\,.
        \end{align}
        We focus on bounding the integrand.
    Since $\frac{\dot{\scc}_t}{ \norm{\dot{\scc}_t}}  \in \cT_{\scc(t)} \cX$ is a unitary vector, denoting $\sv_t = P_y \big(\tfrac{\dot{\scc}_t}{ \norm{\dot{\scc}_t}}\big) \in \cT_y \cX$, by Assumption~\ref{ass:main}-\ref{ass:main2}:
    \begin{equation}
        \label{eq:reach4}
      \dist\left(\frac{\dot{\scc}_t}{ \norm{\dot{\scc}_t}}, \cT_y \cX\right)\leq \norm{\frac{\dot{\scc}_t}{ \norm{\dot{\scc}_t}} - \sv_t} \leq \|P_{\scc_t} - P_{y}\|\leq L_1 \frac{\norm{\scc_t - y}}{\truncdist(y, \sX_{j-1})} \leq L_1 C_0\frac{\norm{y' - y}}{\truncdist(y,  \sX_{j-1})}\, ,
    \end{equation}
   where the last inequality comes from:
\begin{align*}
        \|\scc_t - y\| \leq \int_{0}^t \norm{\dot{\scc}_\tau} \dif \tau \leq \int_{0}^1 \| \dot{\scc}_\tau\| \dif \tau \leq C_0 \|y' - y\|\, .
    \end{align*} Equations~\eqref{eq:reach3} and~\eqref{eq:reach4}, in combination with the defining property of $\scc_t$, yield
    \begin{equation*}
        \dist(y' - y, \cT_y \cX)
        \leq L_1C_0 \frac{\norm{y' - y}}{\truncdist(y, \sX_{j-1})} \int_{0}^1 \norm{\dot{\scc}_t} \dif t
        \leq L_1C_0^2 \frac{\norm{y' - y}^2}{\truncdist(y, \sX_{j-1})}\, ,
    \end{equation*}
    which is the desired claim in~\eqref{eq:reach_assumption}. This completes the proof of the first part of Lemma~\ref{lem:projection}.

Before moving to the second part of Lemma~\ref{lem:projection}, let us mention that the relations between the behaviour of projectors and the radius of curvature is absent in the analysis of~\cite{lai2025diameter}, who instead rely on \Loja \,inequalities.

\paragraph{Proof of Lemma~\ref{lem:projection}. Second part: eigenvalues of the Jacobian.}
To prove the claims on the Jacobian we first need to introduce the concept of the shape operator.

For $y \in \cX$, the \emph{second fundamental form} $\sff : \cT_y \cX\times \cT_y \cX \rightarrow \cN_y \cX$, is defined as
\begin{equation*}
    \sff(u,w) = P_{y}^{\perp}(\dif \su(y)[w]) \, , \footnote{With the Riemannian geometry vocabulary: $(u,w)\mapsto \nabla_{w} u := \dif \su(y)[w]$ is the standard Euclidean connection and $(u,w) \mapsto \bar{\nabla}_{w}u := P_x(\nabla_{w} u )$ is the induced connection on $\cX$. Thus, we have $\nabla_{w} u = \bar{\nabla}_{w}u + \sff(u,w)$.}
\end{equation*}
where $\su : \bbR^d \rightarrow \bbR^d$ is any smooth extension of $u$ around $y$, tangent to $\cX$, (that is $\su(x) = u$ and for all $y' \in \cX$ close to $y$, $\su(y') \in \cT_y \cX$). We note (see e.g. \cite[Proposition 4]{leobacher2021existence}) that it is a symmetric bilinear operator.

Given $v \in \cN_x \cX$, the \emph{shape operator} $S_{w,y} : \cT_y \cX \rightarrow \cT_y \cX $ is a self-adjoint linear operator, such that for all $u, v \in \cT_y \cX$,
\begin{equation*}
    \scalarp{\sff(u,w)}{p} = \scalarp{S_{w,y} u}{v}\, .
\end{equation*}

As the following propositions show, the projection, curvature, and the shape operator are intimately related.
\begin{proposition}[{ \cite[Theorem C]{leobacher2021existence} (see also \cite{ambrosio1996level,ambrosio1998curvature})}]\label{prop:jacobian_expression}
    For any $x \in \cE(\cX)$,
    \begin{equation*}
        \Jac \pi_{\cX}(x)  = (\Id_{|\cT_y \cX} - rS_{w,y})^{-1}P_{y}\, ,
    \end{equation*}
    where $y = \pi_{\cX}(x)$, $r = \norm{x-y}$ and $w = (x-y)/r$.
\end{proposition}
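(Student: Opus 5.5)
The plan is to differentiate the first-order optimality condition that characterizes $y=\pi_{\cX}(x)$. Fix $x\in\cE(\cX)$, $y=\pi_{\cX}(x)$, $r=\norm{x-y}$, $w=(x-y)/r$ (if $r=0$ the claim is simply $\Jac\pi_{\cX}(x)=P_y$, and the argument below covers it). Since $\cE(\cX)$ is open and $\pi_{\cX}$ is $C^{p-1}$ there with $p\geq 2$, for any $h\in\bbR^d$ the curve $y(t)=\pi_{\cX}(x+th)$ is $C^1$, lies in $\cX$, and satisfies $\dot y(0)=\Jac\pi_{\cX}(x)h\in\cT_y\cX$. In particular the range of $\Jac\pi_{\cX}(x)$ lies in $\cT_y\cX$, so it suffices to compute its action and then compose with $P_y$.

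The characterization I will differentiate is $x+th-y(t)\in\cN_{y(t)}\cX$. This holds because $y(t)$ minimizes $\norm{x+th-\cdot}$ over $\cX$. Take any smooth extension $\su$ of a tangent vector $u\in\cT_y\cX$ that stays tangent to $\cX$ near $y$, as in the definition of $\sff$. Then $\scalarp{x+th-y(t)}{\su(y(t))}=0$ for small $t$. Differentiating at $t=0$ gives
\begin{equation*}
\scalarp{h-\dot y(0)}{u}+\scalarp{x-y}{\dif\su(y)[\dot y(0)]}=0 .
\end{equation*}
Since $x-y=rw$ is normal, the second term equals $r\scalarp{w}{\sff(u,\dot y(0))}$. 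By symmetry of $\sff$ and the definition of the shape operator, this is $r\scalarp{S_{w,y}\dot y(0)}{u}$. Because $\dot y(0)\in\cT_y\cX$ and $u\in\cT_y\cX$ is arbitrary, we get
\begin{equation*}
(\Id_{|\cT_y\cX}-rS_{w,y})\,\dot y(0)=P_yh .
\end{equation*}
Inverting this operator gives the formula.

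The main obstacle is the invertibility of $\Id-rS_{w,y}$ on $\cT_y\cX$. I would argue by second-order optimality. The map $z\mapsto\tfrac12\norm{x'-z}^2$ restricted to $\cX$ has Riemannian Hessian $\Id-r'S_{w,y}$ at $y$ whenever $x'=y+r'w$; this is obtained by differentiating twice along curves in $\cX$ and using $\sff$ again. Since $\cE(\cX)$ is open, the point $x'=y+r'w$ lies in $\cE(\cX)$ for some $r'>r$. I still need $\pi_{\cX}(x')=y$; I expect this to hold for points along the normal segment inside $\cE(\cX)$, using continuity of $\pi_{\cX}$ on the open segment together with uniqueness of the projection, or alternatively via \cite[Theorem 3.13]{dudek1994nonlinear}. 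Granting that, $y$ minimizes $\tfrac12\norm{x'-\cdot}^2$ on $\cX$, so $\Id-r'S_{w,y}\succeq 0$. Hence every eigenvalue of the self-adjoint operator $S_{w,y}$ is at most $1/r'<1/r$, so $\Id-rS_{w,y}\succ0$ is invertible. This completes the identity $\Jac\pi_{\cX}(x)=(\Id_{|\cT_y\cX}-rS_{w,y})^{-1}P_y$.
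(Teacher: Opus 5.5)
Your first-order computation is correct. Differentiating $\scalarp{x+th-y(t)}{\su(y(t))}=0$ and using the definitions of the second fundamental form and of $S_{w,y}$ gives $(\Id_{|\cT_y\cX}-rS_{w,y})\,\Jac\pi_{\cX}(x)h=P_yh$ for every $h\in\bbR^d$. The paper does not prove this proposition; it imports it from \cite[Theorem C]{leobacher2021existence}. Your implicit-differentiation argument would therefore make it self-contained.

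The gap is in your invertibility step. Nothing you invoke establishes the claim that $\pi_{\cX}(y+r'w)=y$ for some $r'>r$. Continuity and uniqueness of $\pi_{\cX}$ do not by themselves exclude that the foot point $\pi_{\cX}(y+sw)$ drifts continuously away from $y$ for $s>r$. Likewise, \cite[Theorem 3.13]{dudek1994nonlinear}, as it is used in Lemma~\ref{lm:proj_open_cone}, only gives $[y,x)\subset\cE(\cX)$, which is information about the normal segment \emph{before} $x$, not beyond it. Note also that the second-order condition at $x$ itself already gives $\Id-rS_{w,y}\succeq 0$, since $y$ globally minimizes $\tfrac12\norm{x-\cdot}^2$ on $\cX$. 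The drift scenario is exactly the focal case where $1/r$ is an eigenvalue of $S_{w,y}$. So your persistence claim is essentially equivalent to the nonsingularity you are trying to prove, and as written the step is unsupported.

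You do not need this detour. Your identity holds for all $h$, that is,
\begin{equation*}
(\Id_{|\cT_y\cX}-rS_{w,y})\circ\Jac\pi_{\cX}(x)=P_y\,,
\end{equation*}
where $\Jac\pi_{\cX}(x)$ maps $\bbR^d$ into $\cT_y\cX$ and $P_y$ maps $\bbR^d$ onto $\cT_y\cX$. Hence $\Id_{|\cT_y\cX}-rS_{w,y}$ maps $\cT_y\cX$ onto itself, so it is invertible by finite dimensionality. Composing with its inverse gives the formula.

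With this one-line replacement your proof is complete. Combined with the second-order condition at $x$, it even yields $\Id-rS_{w,y}\succ 0$, i.e.\ $r<\varrho_y(w)$ by Proposition~\ref{prop:reach_to_eigen_shape}. This is the kind of spectral control the paper later obtains quantitatively, via $r<\varrho_y/2$, in the proof of Lemma~\ref{lem:projection}.
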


\begin{proposition}[{\cite[Proposition 4 and Lemma 5]{leobacher2021existence}}]
    \label{prop:reach_to_eigen_shape}
    Consider $y \in \cX$ and $w \in \cN_{y} \cX$, with $\norm{w} = 1$. Denote $\lambda_1, \ldots, \lambda_j$ eigenvalues of $S_{w,y}$, where $j  = \dim \cX$. Then,
    \begin{equation*}
      \varrho_y(w) = \frac{1}{\max(0,\max_{i}\lambda_i )}\, .
    \end{equation*}
\end{proposition}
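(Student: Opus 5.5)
The plan is to compute $\varrho_y(w)$ directly from its definition, using a second-order local description of $\cX$ around $y$ (a graph over the tangent plane). Only points of $\cX$ arbitrarily close to $y$ matter in the definition, because of the quantifier "$\forall \tau$".

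\textbf{Step 1: reformulation.} Note that $y' \in B(y+rw, r)$ is equivalent to $\|y'-y\|^2 < 2r\scalarp{y'-y}{w}$. Hence $r$ belongs to the set defining $\varrho_y(w)$ if and only if, for every $\tau>0$, some $y' \in \cX \cap B(y,\tau)$, $y' \neq y$, satisfies $\scalarp{y'-y}{w}>0$ and
\begin{equation*}
\frac{\|y'-y\|^2}{2\scalarp{y'-y}{w}} < r .
\end{equation*}
Consequently $\varrho_y(w) = \liminf_{y' \to y,\, y' \in \cX} \|y'-y\|^2/(2\scalarp{y'-y}{w}_+)$, with the convention $1/0=+\infty$. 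This is the directional and local analogue of Lemma~\ref{lm:local_reach_main}.

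\textbf{Step 2: local graph expansion.} Since $\cX$ is $C^p$ with $p\ge 2$, near $y$ we write $\cX = \{y + u + h(u) : u \in \cT_y\cX \text{ small}\}$, where $h:\cT_y\cX \to \cN_y\cX$ is $C^2$ with $h(0)=0$ and $\dif h(0)=0$. Next I identify the second-order term: differentiating a tangent extension of $u$ along the curve $t\mapsto y+tu+h(tu)$ gives $\dif^2 h(0)[u,u] = \sff(u,u)$. Therefore $h(u) = \tfrac12 \sff(u,u) + o(\|u\|^2)$.

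It follows that
\begin{equation*}
\scalarp{y'-y}{w} = \tfrac12 \scalarp{S_{w,y}u}{u} + o(\|u\|^2), \qquad \|y'-y\|^2 = \|u\|^2 + o(\|u\|^2).
\end{equation*}

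\textbf{Step 3: evaluate the liminf.} Let $\lambda_{\max} = \max_i \lambda_i$.

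If $\lambda_{\max} \le 0$, then $\scalarp{y'-y}{w} \le o(\|u\|^2)$. Every ratio from Step 1 is then either undefined or tends to $+\infty$, so $\varrho_y(w)=+\infty$, which matches $1/\max(0,\lambda_{\max})$.

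If $\lambda_{\max}>0$, there are two inequalities to check.
\begin{itemize}
\item \emph{Lower bound.} For any $y'$ with positive inner product, the ratio is at least $(\|u\|^2+o)/(\lambda_{\max}\|u\|^2+o)$, which tends to $1/\lambda_{\max}$. Hence $\varrho_y(w) \ge 1/\lambda_{\max}$.
\item \emph{Upper bound.} Take $u = t e$, where $e$ is a unit top eigenvector of $S_{w,y}$, and let $t\to 0$. The ratio then tends to $1/\lambda_{\max}$. So for every $r>1/\lambda_{\max}$ and every $\tau$, the intersection in the definition is nonempty, and $\varrho_y(w) \le 1/\lambda_{\max}$.
\end{itemize}

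\textbf{Main obstacle.} I expect the delicate step to be Step 2: cleanly identifying $\dif^2h(0)$ with the second fundamental form as defined via a tangent extension. This means checking that $P_y^\perp$ of the derivative of the tangent field $\dif\Phi(u)[e]$, with $\Phi(u) = y+u+h(u)$, equals $\dif^2h(0)[e,\cdot]$. It also requires verifying independence of the choice of extension, which holds because $P^\perp_{y'}$ annihilates tangent fields along $\cX$.

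The borderline case $r = 1/\lambda_{\max}$ needs no separate treatment, since the infimum over $r$ absorbs it. The $o(\cdot)$ terms are uniform because $h$ is $C^2$.
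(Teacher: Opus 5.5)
Your argument is correct. The paper does not prove this statement itself---it imports it from \cite[Proposition 4 and Lemma 5]{leobacher2021existence}---so your proposal is a self-contained replacement rather than a variant of an in-paper proof.

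Your Step 1 uses the same squaring identity $\norm{y'-(y+rw)}^2<r^2 \iff \norm{y'-y}^2<2r\scalarp{y'-y}{w}$ that the paper uses to prove Lemma~\ref{lm:local_reach_main}. You apply it to a single direction $w$ and only to $y'\to y$, which turns $\varrho_y(w)$ into a $\liminf$. Steps 2--3 then reduce this $\liminf$ to the quadratic form $u\mapsto\scalarp{S_{w,y}u}{u}$ through a $C^2$ graph chart. The uniform Taylor remainder gives the lower bound, and a top eigenvector gives the upper bound.

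Citing buys brevity. Your route buys transparency about the sign conventions the statement depends on, which matters because the paper's displayed definitions of $\sff$ and $S_{w,y}$ contain typos. With $\sff(u,v)=P_y^\perp(\dif\su(y)[v])=\dif^2h(0)[u,v]$ and $\scalarp{\sff(u,v)}{w}=\scalarp{S_{w,y}u}{v}$, the formula indeed involves $\max_i\lambda_i$.

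Your computation also makes plain that $w\mapsto S_{w,y}$ is linear, so $S_{-w,y}=-S_{w,y}$. The proof of Lemma~\ref{lem:projection} writes $S_{-w,y}=S_{w,y}$, but it is $S_{-w,y}=-S_{w,y}$ that is needed there, when the proposition is applied to $\pm w$ to control the whole spectrum of $S_{w,y}$.
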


To conclude the proof of the second part of Lemma~\ref{lem:projection}, note that in the proof of the first statement we could have chosen $\mathtt{A}_3$ such that if $x \in \cC^{*}(\cX)$ and $y= \pi_{\cX}(x)$, then $\norm{y-x} < \frac{1}{2} \varrho_y$ (in the first part we have only required $< \rho_y$).
For the rest, fix some $x \in \cC^*(\cX)$ and let $y = \pi_{\cX}(x)$, $r = \|x-y\|$, $w = \|x - y\| / r$.

Therefore, with our assumption that $r < \rho_y / 2$, we obtain for $\lambda$, the maximal eigenvalue of $S_{w,y}$,
    \begin{equation*}
        r \lambda \leq \frac{\lambda \varrho_{y}(w)}{2} \leq \frac{1}{2}\,,
    \end{equation*}
    where the last inequality follows from Proposition~\ref{prop:reach_to_eigen_shape}.
Noting that $S_{-w,y} = S_{w,y}$, and repeating the same argument gives a bound on the whole spectrum of $S_{w,y}$.

Finally, using the relation between the Jacobian of projector and shape operator, described in Proposition~\ref{prop:jacobian_expression}, there is indeed $\ulambda, \blambda>0$ such that Equation~\eqref{eqdef:min_max_jac} holds on $\cC^{*}(\cX)$, which completes the proof of the second part of Lemma~\ref{lem:projection}.
\subsection{Lipschitz stratifications and proof of Theorem~\ref{thm:good_assumption}}
Given a function $f:\bbR^d \rightarrow \bbR$ the structure of its subgradient can be deduced from the geometric properties of its graph. For instance, if $f$ is smooth, its gradient at $x$ is the unique vector such that $(\nabla f(x), -1)$ is normal to its graph. Similar characterizations, linking various subgradients (proximal, Frechet, Clarke) to the corresponding normal cones, exist (see e.g. \cite{roc-wets-livre98}).

The main idea behind the various projection formulae established in the literature is to notice that if the graph of $f$ is stratified, then suitable regularity properties of this stratification yield corresponding conditions on the Clarke subgradients of $f$ (see \cite{bolte2007clarke,bianchi2024stochastic,davis2025active,lai2025diameter}).

In the next section we present the concept of a Lipschitz stratification. In Section~\ref{subsec:end_proof_goodass}, using the fact that the graph of a semialgebraic function is Lipschitz stratifiable, we prove Theorem~\ref{thm:good_assumption}.

\subsubsection{Lipschitz stratifications}\label{subsec:lips_strat}
The concept of a Lipschitz stratification was initially defined by Mostowski in \cite{mostowski1985lipschitz} through the concept of chain. Here, we will stick to an equivalent definition due to Parusi\'nski (\cite{parusinski1988lipschitz,parusinski1994lipschitz}), which requires well-behaved extension of vector fields.
Let us start with a definition.

\begin{definition}[$\bbX$-compatible vector fields]
    Consider $\bbX = (\cX_i)$ a $C^p$ stratification of $A \subset \bbR^d$. For $W \subset A$, $\sv : W \rightarrow \bbR^d$ is said to be a \emph{$\bbX$-compatible $C^p$ vector field} if $\sv$ is $C^p$\footnote{We recall that  $\sv : W \rightarrow \bbR^d$ is $C^p$ if for any $x \in W$, there is $U \subset \bbR^d$ a neighborhood of $x$ and $\tilde{\sv}: U \rightarrow \bbR^d$ a $C^p$ function such that $\sv_{|U \cap W} = \tilde{\sv}_{|U \cap W}$. } for any $x \in W \cap \cX$, with $\cX \in \bbX$,
    \begin{equation*}
        \sv(x) \in \cT_{x} \cX\, .
    \end{equation*}
\end{definition}

The definition of a Lipschitz stratification is slightly technical. Recall the definition of $\sX_j$ in Equation~\eqref{eqdef:closed_skeleton}. In the following definition we let $j_{\min}$ denote the minimal index $j$ such that $\sX_{j} \neq \emptyset$.
\begin{definition}[{Lipschitz stratification \cite[Proposition 1.5]{parusinski1988lipschitz}}]
    Given $A \subset \bbR^d$, a stratification $\bbX = (\cX_i)$ of $A$ is said to be \emph{Lipschitz} if there is $C_1>0$ such that the following holds.\\
    For any $j \leq d-1$, any set $W$, satisfying
    \begin{equation*}
    \sX_{j} \subset W \subset \sX_{j+1}\, ,
    \end{equation*} and any $\bbX$-compatible $C^p$ vector field $\sv: W \rightarrow \bbR^d$ such that
    \begin{equation*}
    \forall x,x' \in W\quad \norm{\sv(x) - \sv(x')} \leq L \norm{x-x'}
    \end{equation*}
    and
    \begin{equation*}
    \sup_{x \in \sX_{j_{\min}} \cap W} \norm{\sv(x)} \le C_2\, ,
    \end{equation*} there is an extension of $\sv$ to a $\bbX$-compatible $C^p$ vector field on $A$ with a Lipschitz constant $C_1 (L + C_2)$.
\end{definition}

As we state in the following proposition, any compact set, definable in a polynomially bounded o-minimal structure admits a Lipschitz stratification. Such result was initially proven by Mostowski \cite{mostowski1985lipschitz} for the class of complex analytic sets. Subsequently, it was extended by Parusi\'nski to semianalytic and subanalytic sets in \cite{parusinski1988lipschitz,parusinski1994lipschitz}. Finally, Nguyen and Valette \cite{nguyen2016lipschitz} has extended the construction to arbitrary, polynomially bounded, o-minimal structures (see also \cite{halupczok2018lipschitz})\footnote{Unfortunately, every non polynomially bounded o-minimal structure contains a set that does not admit such stratification (a counter-example due to Parusi\'nski can be found in \cite[Example 2.9]{nguyen2016lipschitz}).}.

Finally, in the already mentioned work of Lai and Song \cite{lai2025diameter} the authors have noticed that in the construction of Nguyen and Valette the strata can be taken such that their inner metric is equivalent to the outer one.

\begin{proposition}[{\cite[Theorem 2.6]{nguyen2016lipschitz} and \cite[Theorem 2.5]{lai2025diameter}}]\label{prop:lipstrat_exist}
    Consider a polynomially bounded o-minimal structure $\cO$. Let $\cK \subset \bbR^d$ be a compact that is definable in $\cO$. There is $\bbX$ a definable in $\cO$, $C^p$ Lipschitz stratification of $\cK$ such that for every $\cX \in \bbX$ its inner and outer metric are equivalent. Moreover, this stratification can be taken compatible\footnote{See Definition~\ref{def:compatibility_strat}.} with any finite number of definable sets $A_1, \ldots, A_k \subset \cK$ and any $\cX \in \bbX$ has its inner metric equivalent to the outer one.
\end{proposition}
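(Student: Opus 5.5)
The statement is an existence result assembled from the literature, so I would prove it by combining the two cited theorems and checking that the extra requirements can be imposed together. \emph{Step 1 (existence and compatibility).} Apply \cite[Theorem 2.6]{nguyen2016lipschitz} to the compact definable set $\cK$ together with the definable sets $A_1,\ldots,A_k$. This gives a definable $C^p$ stratification $\bbX'$ of $\cK$ that satisfies the Lipschitz condition of Parusi\'nski's definition (equivalent to Mostowski's chain definition by \cite{parusinski1988lipschitz}) and is compatible with the $A_i$. Polynomial boundedness is used only here, to invoke their construction. Parusi\'nski's counterexample shows it cannot be dropped.

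\emph{Step 2 (path-connectedness and the boundary condition).} Definition~\ref{def:stratif_main_new} asks for path-connected strata and the frontier condition. I would replace each stratum by its finitely many connected components. Definable connected components are path-connected, so this gives path-connected strata. The skeletons $\sX_j$ do not change under this operation, and the class of $\bbX$-compatible vector fields does not change either, since tangency is a pointwise condition. Hence the Lipschitz extension property is preserved verbatim. The frontier condition for components follows from the Whitney-type regularity that Lipschitz stratifications satisfy, or can be imposed by a further compatible refinement inside the Nguyen--Valette induction.

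\emph{Step 3 (inner versus outer metric).} This is the main obstacle. One cannot simply refine $\bbX'$ into $L$-regular pieces (the Kurdyka--Parusi\'nski decomposition), because an arbitrary refinement may destroy the Lipschitz extension property. Conversely, making $\bbX'$ compatible with a given $L$-regular decomposition does not help: a subset of an $L$-regular cell need not have equivalent inner and outer metrics. Following \cite[Theorem 2.5]{lai2025diameter}, I would instead open the Nguyen--Valette construction. It proceeds by descending induction on dimension. At each step, after a generic linear change of coordinates (a regular projection), the new strata are obtained as graphs or bands of definable functions over cells with bounded derivatives. I would choose these cells to be $L$-regular, which is possible by Kurdyka's theorem on $L$-regular decompositions in polynomially bounded structures. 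Graphs of Lipschitz functions over $L$-regular cells are again $L$-regular, since path length is distorted by at most a constant factor. Therefore every stratum produced has its inner metric equivalent to its outer metric, with one global constant $C$ because $\bbX$ is finite.

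The delicate point I would check carefully is that choosing $L$-regular cells at each inductive step stays compatible with the genericity conditions used by Nguyen--Valette to verify the Lipschitz extension property. I expect this to hold because their argument only requires the cell decomposition to be adapted to finitely many definable sets, and $L$-regular decompositions can be taken adapted to any finite family. Finally, the repeated clause ``any $\cX\in\bbX$ has its inner metric equivalent to the outer one'' follows from the same construction.
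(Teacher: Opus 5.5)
Your proposal takes the same route as the paper, which gives no argument beyond invoking \cite[Theorem 2.6]{nguyen2016lipschitz} for existence and compatibility, and \cite[Theorem 2.5]{lai2025diameter} for the fact that the Nguyen--Valette construction can be run so that every stratum has equivalent inner and outer metrics. Your additional steps go further than the paper spells out and are consistent with that citation: splitting strata into their definable, hence path-connected, components, and choosing the $L$-regular cells at each inductive level compatible with the lower-dimensional set added to the skeleton, so that the cells are not slit.
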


In the following, given $\bbX$ a stratification and $x \in \cX \in\bbX$, we denote $P_{x}$ (respectively $P_{x}^{\perp}$) the orthogonal projection onto $\cT_{x} \cX$ (respectively $\cN_{x}\cX$).  For $j\leq d$, denote $\rsX_{j}:=\sX_{j} \backslash \sX_{j-1}$. We will need two consequences of Lipschitz stratifications.

\begin{proposition}[{\cite[Corollary 1.6 and Equation (6)]{parusinski1988lipschitz}}]\label{prop:lip_verdier}
    Let $\bbX$ be a Lipschitz stratification. There is $C>0$ such that the following holds.
    \begin{enumerate}[label=(\roman*)]
    \item For any $x \in \rsX_j$ and $x' \in\rsX_{j'}$, with $j' > j$, it holds that
    \begin{equation*}
        \norm{P_{x'}^{\perp} P_{x}} \leq C \frac{\norm{x - x'}}{\truncdist(x, \sX_{j-1})}\, ,
    \end{equation*}
    \item For any $x,x' \in \rsX_j$,
    \begin{equation}\label{eq:lipsch_tangents}
    \norm{P_{x} - P_{x'}} \leq C \frac{|x - x'|}{\truncdist(x, \sX_{j -1})}\, .
  \end{equation}
    \end{enumerate}
\end{proposition}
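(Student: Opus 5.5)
The plan is to derive both items from the extension property in the definition of a Lipschitz stratification. We apply it to a very simple $\bbX$-compatible vector field: zero on the lower skeleton and equal to a prescribed unit tangent vector at one point. The proof has three steps: build this field, extend it and compare its values at $x$ and $x'$, then pass from the bound on $\norm{P_{x'}^{\perp}P_x}$ to the bound on $\norm{P_x - P_{x'}}$ for item~(ii).

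\emph{Construction of the vector field.} Fix $x \in \rsX_j$ and a unit vector $u \in \cT_x \cX$, where $\cX$ is the stratum containing $x$. Set $W = \sX_{j-1} \cup \{x\}$, so that $\sX_{j-1} \subset W \subset \sX_j$; this is the admissible range in the definition with index $j-1$. Define $\sv = 0$ on $\sX_{j-1}$ and $\sv(x) = u$.
\begin{itemize}
\item Compatibility: $0$ is tangent to every stratum, and $u \in \cT_x\cX$.
\item Smoothness: $\sv$ is $C^p$ on $W$, since it vanishes on $\sX_{j-1}$ and $x$ is isolated in $W$ because $\dist(x,\sX_{j-1})>0$.
\item Lipschitz constant: for $y \in \sX_{j-1}$ we have $\norm{\sv(x)-\sv(y)}/\norm{x-y} \le 1/\dist(x,\sX_{j-1})$, and $\sv$ is constant on $\sX_{j-1}$. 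Hence $L \le 1/\dist(x,\sX_{j-1})$.
\item Bound on the bottom skeleton: $\sup_{\sX_{j_{\min}}\cap W}\norm{\sv} \le 1 =: C_2$. This takes the value $1$ only when $j = j_{\min}$; then $\sX_{j-1}=\emptyset$ and $\truncdist = 1$ by the paper's convention.
\end{itemize}
Consequently $L + C_2 \le 2/\truncdist(x,\sX_{j-1})$.

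\emph{Extension and the Verdier estimate.} The Lipschitz property gives an $\bbX$-compatible extension $\tilde\sv$ of $\sv$ to all of $A$ with Lipschitz constant at most $2C_1/\truncdist(x,\sX_{j-1})$. Now let $x' \in \rsX_{j'}$ with $j' \ge j$. Compatibility gives $\tilde\sv(x') \in \cT_{x'}\cX'$, so $P_{x'}^{\perp}\tilde\sv(x') = 0$. Therefore
\begin{equation*}
\norm{P_{x'}^{\perp} u} = \norm{P_{x'}^{\perp}(\tilde\sv(x) - \tilde\sv(x'))} \le \frac{2C_1\norm{x-x'}}{\truncdist(x,\sX_{j-1})}\,.
\end{equation*}
Taking the supremum over unit $u \in \cT_x\cX$ bounds $\norm{P_{x'}^{\perp}P_x}$, which is item~(i).

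\emph{From the Verdier estimate to item~(ii).} For $x, x' \in \rsX_j$ the same computation with $j'=j$ gives the same bound on $\norm{P_{x'}^{\perp}P_x}$. The tangent spaces $\cT_x$ and $\cT_{x'}$ both have dimension $j$, and for two subspaces of equal dimension the standard identity $\norm{P_x - P_{x'}} = \norm{P_{x'}^{\perp}P_x}$ holds. This yields \eqref{eq:lipsch_tangents}.

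I expect the main obstacle to be technical rather than conceptual: checking that $W = \sX_{j-1}\cup\{x\}$ genuinely meets the hypotheses of the extension property. If that definition is read as requiring $W$ to contain the whole skeleton of the relevant level, or requiring $C^p$ regularity in a neighbourhood sense, I would first extend the zero field from $\sX_{j-1}$ trivially and insert the point condition only at the final level. A fallback is to invoke Parusi\'nski's chain/Mostowski characterization directly, as in the cited Corollary~1.6.
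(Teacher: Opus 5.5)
Your proposal is correct. It differs from the paper only in that the paper gives no argument: it imports both items from Parusiński (Corollary~1.6 and his Equation~(6)).

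You instead derive both estimates directly from the extension property that the paper takes as its definition of a Lipschitz stratification. You use the one-point field equal to $0$ on $\sX_{j-1}$ and to a unit $u\in\cT_x\cX$ at $x$. Its Lipschitz constant is $1/\dist(x,\sX_{j-1})$, and this one-point construction is the standard way Verdier-type bounds are extracted from the extension property. You then obtain (ii) from the case $j'=j$ of (i) through the equal-dimension identity $\norm{P_x-P_{x'}}=\norm{P_{x'}^{\perp}P_x}$, so the tangent-plane estimate needs no separate input.

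What your route buys is a self-contained proof relative to the paper's stated definition, with an explicit constant $C=2C_1$. It also makes visible that (ii) is a formal consequence of the same estimate. The citation is shorter, but it requires matching the paper's formulation of the definition to the one used in the source.

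The obstacle you anticipate does not arise. $W=\sX_{j-1}\cup\{x\}$ is admissible at index $j-1$, including $j=j_{\min}$, where $\sX_{j-1}=\emptyset$. Your field is $C^p$ in the paper's local-extension sense because $x$ is isolated in $W$, so the fallback is unnecessary. The one input you use without comment is $\dist(x,\sX_{j-1})>0$, i.e.\ that $\sX_{j-1}$ is closed in the stratified set. This holds for the definable stratifications considered here, because frontiers of strata have strictly smaller dimension.
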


\subsubsection{Proof of Theorem~\ref{thm:good_assumption}}\label{subsec:end_proof_goodass}

We have now all the tools to prove Theorem~\ref{thm:good_assumption}. Fix a polynomially bounded o-minimal structure $\cO$. In this section definable will always mean definable in $\cO$.

We will actually prove a slightly more general statement given by the following proposition. We refer to Appendix~\ref{app:o-minimal} for relevant definition of definable and compatible stratifications.
\begin{proposition}
    Let $f: \bbR^d \rightarrow \bbR$ be definable locally Lipschitz, $D: \bbR^d \rightrightarrows \bbR^d$ a definable conservative set-valued field of $f$, and $\cK$ a definable compact. Then, there is $\bbX$ a definable stratification of $\cK$ such that Assumption~\ref{ass:main} holds and where in~\eqref{eq:projformula_bolte_new}, $v\in \partial f(x)$ is replaced by $v\in D(x)$. Furthermore, given any finite collection of definable sets $\bbA = \{A_1, \ldots, A_n\}$, the stratification can be taken compatible with $\bbA$.
\end{proposition}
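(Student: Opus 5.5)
We will stratify the graph of $f$ rather than the domain. Let $\Gamma = \{(x,f(x)) : x \in \cK\} \subset \bbR^{d+1}$, which is a definable compact. We also consider the definable sets $\{(x,v) : v \in D(x)\}$ and the $A_i$. First we take a definable $C^p$ stratification $\bbX^0$ of $\cK$ that is compatible with $\bbA$, such that $f$ restricted to each stratum is $C^p$ and the projection formula holds for $D$. This exists by the definable version of Proposition~\ref{prop:bolte_projform_new} (cf.\ \cite{bolte2021conservative}). We lift $\bbX^0$ to $\Gamma$ through the graph map. Applying Proposition~\ref{prop:lipstrat_exist} to $\Gamma$, compatibly with the lifted strata and with the graphs over the $A_i$, gives a Lipschitz stratification $\bbY$ of $\Gamma$ in which every stratum has equivalent inner and outer metrics. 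We then set $\bbX = \{\mathrm{pr}(\cY) : \cY \in \bbY\}$, where $\mathrm{pr}$ is the projection to $\bbR^d$. Since $\mathrm{pr}_{|\Gamma}$ is a bi-Lipschitz homeomorphism onto $\cK$ (because $f$ is $G$-Lipschitz), $\bbX$ is a stratification of $\cK$. Each $\cX \in \bbX$ is a $C^p$ manifold and $f_{|\cX}$ is $C^p$, because the strata refine $\bbX^0$. Path-connectedness, the frontier condition and the equivalence of inner and outer metrics all pass through the bi-Lipschitz map. This gives Items~\ref{ass:main3} and~\ref{ass:metric}. Moreover, $\mathrm{pr}$ preserves dimensions, so it maps skeletons to skeletons, and distances to skeletons are comparable up to the factor $\sqrt{1+G^2}$.

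For Item~\ref{ass:main2}, we apply Proposition~\ref{prop:lip_verdier}(ii) on $\Gamma$. For $\cY$ the graph of $f_{|\cX}$, the tangent space $\cT_{(y,f(y))}\cY$ is the image of $\cT_y\cX$ under the linear map $u \mapsto (u, \scalarp{\nabla_\cX f(y)}{u})$. The tangent spaces of $\cX$ are obtained from those of $\cY$ by applying the linear map $\mathrm{pr}$, which is uniformly bi-Lipschitz on these subspaces. The map from a subspace to its image under a fixed uniformly bi-Lipschitz linear map is Lipschitz in the metric $\norm{P_E - P_{E'}}$. Hence the estimate $\norm{P_{(y,f(y))} - P_{(y',f(y'))}} \lesssim \norm{y-y'}/\truncdist(y,\sX_{j-1})$ on $\Gamma$ transfers to $\norm{P_y - P_{y'}}$, with a constant depending only on $G$ and $C$. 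We also need the same inequality with $\truncdist(x,\cdot)$ at the base point $x$. It follows by symmetry, or by the triangle inequality combined with the trivial bound $\norm{P_y-P_{y'}} \le 2$ when $\norm{y-y'}$ is comparable to the distance to the skeleton.

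The main step is Item~\ref{ass:main4}. Fix $\cX$ of dimension $j$, a point $y \in \cX$, a point $x \in \cK$, and $v \in D(x)$. Assume $\norm{x-y} \le c\,\truncdist(y,\sX_{j-1})$; otherwise the bound is trivial, since both sides of the difference are bounded by $2G$. Then $x$ lies in some stratum $\cX'$ with $\dim \cX' = j' \ge j$. If $j'=j$, then the frontier condition and the distance condition force $\cX'=\cX$. By the projection formula, $P_x v = \nabla_{\cX} f(x)$, and we conclude using Item~\ref{ass:main2} together with the Lipschitz bound on the Riemannian gradients derived from the graph tangents. If $j' > j$, the key observation is the following. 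The projection formula at $x$ states that $(v,-1)$ is orthogonal to $\cT_{(x,f(x))}\cY'$, up to a tangent correction, so that $(P_{x} v, -1)$ is normal to $\cY'$ modulo components along $\cT\cY'$. More precisely, $w := (v,-1) - (\text{tangent part})$ is a normal vector to $\cY'$ of norm at most $1+G$. Proposition~\ref{prop:lip_verdier}(i) on the graph gives $\norm{P^{\perp}_{(x,f(x))}P_{(y,f(y))}} \le C\norm{x-y}/\truncdist(y,\sX_{j-1})$. Pairing with tangent vectors of $\cY$ of the form $(u,\scalarp{\nabla_\cX f(y)}{u})$, we obtain
\begin{equation*}
|\scalarp{P_x v - \dots}{u}| \lesssim \frac{\norm{x-y}}{\truncdist(y,\sX_{j-1})}\norm{u},
\end{equation*}
which after unwinding reads $|\scalarp{v}{u} - \scalarp{\nabla_\cX f(y)}{u}| \lesssim \norm{x-y}\norm{u}/\truncdist(y,\sX_{j-1})$ for all $u \in \cT_y\cX$. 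This is exactly $\norm{P_y v - \nabla_\cX f(y)} \le L_2\norm{x-y}/\truncdist(y,\sX_{j-1})$. The delicate point, which I expect to be the main obstacle, is to justify this cleanly for $v$ in the convex, non-smooth set $D(x)$. The projection formula only controls $P_x v$, while the component of $v$ normal to $\cX'$ is arbitrary. We must therefore use that $\scalarp{v}{u}$ depends only on $P_x v$, up to the Verdier error $\norm{P_x^\perp P_y}$ times $G$. Combining this with the conversion between tangent spaces of the graph and of the domain, the constants stay uniform.
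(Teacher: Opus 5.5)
Your overall route is the paper's. You take a projection-formula stratification for $D$ compatible with $\bbA$, take a Lipschitz stratification of the graph compatible with the lifted strata, and project back to $\bbR^d$. For Item~\ref{ass:main2} you transfer Proposition~\ref{prop:lip_verdier}(ii) by a Wedin-type argument. For Item~\ref{ass:main4} you use Proposition~\ref{prop:lip_verdier}(i) on the graph, paired against tangent vectors $(u,\scalarp{\nabla_{\cX}f(y)}{u})$. Your pairing computation for $j'>j$ is correct and more explicit than the paper's ``simple computations''.

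The ``delicate point'' you anticipate does not arise. The projection formula $P_x v=\nabla_{\cX'}f(x)$ says exactly that $(v,-1)$ is orthogonal to $\cT_{(x,f(x))}\cY'=\{(h,\scalarp{\nabla_{\cX'}f(x)}{h}) : h\in\cT_x\cX'\}$ for every $v\in D(x)$. No tangent correction is needed, whatever the normal component of $v$. Only $\norm{v}$ enters, and it should be bounded by the local bound of $D$ on $\cK$, not by $G$. Elements of a conservative field can be longer than the Lipschitz constant of $f$. This only changes constants.

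There is one false step. When $\dim\cX'=j$, you claim the frontier condition together with $\norm{x-y}\le c\,\truncdist(y,\sX_{j-1})$ forces $\cX'=\cX$. In fact, distinct strata of the same dimension can be arbitrarily close to each other while far from $\sX_{j-1}$. For example, stratify $\cK=[0,1]\times\{0,\varepsilon\}$ by the four endpoints and the two open segments. Then $y=(1/2,0)$ and $x=(1/2,\varepsilon)$ lie in different $1$-dimensional strata, with $\norm{x-y}=\varepsilon$ and $\dist(y,\sX_0)\ge 1/2$. Nothing in your construction rules out such configurations.

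The repair is immediate. In Proposition~\ref{prop:lip_verdier}(ii), $\rsX_j$ is the union of \emph{all} $j$-dimensional strata. Applied to the graph stratification, it therefore bounds $\norm{P_{(x,f(x))}-P_{(y,f(y))}}$ by a constant times $\norm{x-y}/\truncdist(y,\sX_{j-1})$ even when $x$ and $y$ lie in different strata. Since $P_a^{\perp}P_b=(P_b-P_a)P_b$, this gives the same bound on $\norm{P^{\perp}_{(x,f(x))}P_{(y,f(y))}}$ as in the case $j'>j$. Your pairing argument then applies verbatim to every $x\in\cK\setminus\sX_{j-1}$, including $x\in\cX$, so the separate case $\cX'=\cX$ is unnecessary. (The paper itself only writes out the case $j'>j$.)
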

Taking $\cO$ the o-minimal structure of semialgebraic sets and $D = \partial f$, the proposition implies Theorem~\ref{thm:good_assumption}.
\begin{proof}

In the following proof we will denote $\Pi: \bbR^{d+1} \rightarrow \bbR^d$ the projection onto the first $d$ coordinates.

By \cite[Theorem 2.2]{lewis2021structure} (see also \cite[Lemma 8 and Corollary 9]{bolte2007clarke}), there is $\bbX'$, a stratification of $\cK$, such that for any $\cX' \in \bbX'$, $f_{|\cX'}$ is $C^p$ and for all $x \in \cX$ and $v \in D(x)$, $P_x v = \nabla_{\cX}f(x)$. Such stratification can be taken compatible with $\bbA$.

Let $\bbG = (\cG_i)$ be a Lipschitz stratification of $\Graph f$ compatible with $\{\Graph f_{|\cX'}: \cX' \in \bbX'\}$. By Corollary~\ref{cor:strat_dom_graph} it holds that
\begin{enumerate}
    \item[i)]  for any $\cG \in \bbG$, $\Pi(\cG) = \cX$ is a manifold with $\dim (\cX) = \dim (\cG)$;
    \item[ii)] $f_{|\cX}$ is $C^p$;
\end{enumerate}
We claim that Assumption~\ref{ass:main} holds for $\bbX = (\Pi(\cG))_{\cG \in \bbG}$.

Indeed, Assumption~\ref{ass:main}-\ref{ass:main3} holds by construction. Assumption~\ref{ass:main}-\ref{ass:metric} holds since the metrics are equivalent for $\cG = \Graph f_{|\cX}$ and $f$ is Lipschitz.

To prove Assumption~\ref{ass:main}-\ref{ass:main2} consider $0 \leq j \leq d$ and $x,x' \in \cX \in \bbX_{j}$. Denote $\cG = \Graph f_{|\cX}$. By \eqref{eq:lipsch_tangents} and the fact that $f$ is $G$-Lipschitz on $\cK$, it holds that
\begin{equation}\label{eq:curvature_graph}
        \begin{split}
    \norm{P_{(x,f(x))} - P_{(x', f(x'))}} &\leq \frac{C}{\truncdist((x, f(x)), \sG_{j-1})}\norm{(x,f(x)) - (x', f(x'))}\\
        &\leq \frac{C\sqrt{1 + G^2}}{\truncdist((x, f(x)), \sG_{j-1})}\norm{x - x'}\\
        &\leq \frac{C\sqrt{1 + G^2}}{\truncdist(x, \sX_{j-1})} \norm{x-x'}\, ,
        \end{split}
    \end{equation}
    where for the last inequality we used $\dist(x, \sX_{j-1}) \leq \dist((x, f(x)), \sG_{j-1})$.

Note that $\cT_x$ (respectively $\cT_{x'}$) is the image of $\Pi \circ P_{(x, f(x))}$ (respectively $\Pi \circ P_{(x',f(x'))}$) viewed as a linear operator. Moreover, by Lemma~\ref{lm:graph_tplane}, for any $(h_x, h_f) \in \cT_{(x,f(x))} \cG$, it holds that $h_f = \scalarp{\nabla_{\cX}f(x)}{h_x}$ and
\begin{equation*}
\norm{\Pi (h_x, h_f)} = \norm{h_x} \geq \frac{1}{\sqrt{1 + G^2}} \norm{(h_x, h_f)}\, ,
\end{equation*}
where $G>0$ is the Lipschitz constant of $f$ on $\cK$. Thus, the minimal non-zero singular value of $\Pi \circ P_{(x,f(x))}$ is greater than $1/\sqrt{1 + G^2}$. By Wedin's theorem this implies
\begin{equation*}
    \norm{P_{x} - P_{x'}} \leq \sqrt{1 + G^2} \norm{P_{(x,f(x))} - P_{(x', f(x'))}} \, ,
\end{equation*}
which combined with Equation~\eqref{eq:curvature_graph} establishes Assumption~\ref{ass:main}-\ref{ass:main2}.

Finally, to prove Assumption~\ref{ass:main}-\ref{ass:main4} consider $(y, f(y)) \in \mathring{\sG}_j$, with $y \in \rsX_{j}$ and any $x \in \rsX_{j'}$ with $j' > j$. By Proposition~\ref{prop:lip_verdier}, and repeating similar argument, it holds that
    \begin{equation}\label{eqpf:lip_proj_tanj}
        \norm{P_{(x,f(x))}^{\perp} P_{(y,f(y))}} \leq \frac{C}{\truncdist((y,f(y)),\sG_{j-1} )}\norm{(y,f(y)) - (x,f(x))} \leq  \frac{C \sqrt{1 + G^2} \norm{x-y}}{\truncdist(y,\sX_{j-1} )} \, .
    \end{equation}
Note that (see Lemma~\ref{lm:graph_tplane}),
\begin{equation*}
\cT_{(y,f(y))} = \{(h, \scalarp{\nabla_{\rsX_{j}} f(y)}{h}): h \in \cT_{y} \rsX_{j}\}\,,
\end{equation*}
and since $D(x) \subset \nabla_{\rsX_{j}} f(x) + \cN_{x} \rsX_{j}$,
\begin{equation*}
    \cT_{(x,f(x))} = \{(h, \scalarp{v}{h}): h \in \cT_{x}\rsX_{j'}\}\,,
\end{equation*}
where $v \in D(x)$ is arbitrary. Since $D$ is a conservative field, there is $C>0$ such that for any $v \in D(x)$ and $x \in\cK$, $\norm{x} \leq C$.

Using this fact,
 by simple computations (see e.g. \cite[Proof of Theorem 1]{bianchi2024stochastic}) from Equation~\eqref{eqpf:lip_proj_tanj} we deduce
\begin{equation*}
    \norm{P_y v - \nabla_{\cX}f(y)}\leq  \frac{C(1 + G^2)^{3/2} \norm{x-y}}{\truncdist(y,\sX_{j-1})}\, ,
\end{equation*}
which completes the proof of Assumption~\ref{ass:main}-\ref{ass:main4}.
\end{proof}

\newpage
\appendix

\section{Mathematical Preliminaries}\label{app:prelim}
In this section we gather some useful definitions and properties related to differential geometry, conservative set-valued fields, o-minimal structures and stratifications.

In the following, we fix $p \geq 2$.

\subsection{Submanifolds}\label{app:prel_subm}
We refer to \cite{lee2022manifolds,lafontaine2015introduction,boumal2023intromanifolds} for general references on differential geometry

Given an open set $\cU \subset \bbR^d$, we say that a $C^p$ function $G: \cU \rightarrow \bbR^{d-k}$ is a submersion if its Jacobian (or equivalently its differential) is surjective at every point.

A set $\cX \subset \bbR^d$ is said to be a $C^p$-submanifold of dimension $k$ if for each $x \in \cX$, there is a neighborhood $\cU$ of $x$ and a $C^p$ submersion $G: \cU \rightarrow \bbR^{d -k}$ such that $\cU \cap \cX = G^{-1}(0)$. For $x \in \cM$, the tangent plane of $\cM$ at $x$ is $\cT_x \cX := \ker \Jac G(y)$. The normal plane is $\cN_x \cX  = (\cT_x \cX)^{\perp}$. We note that $\dim \cT_x \cX = k$ and $\dim \cN_x \cX = d-k$.

We say that a function $f: \cX \rightarrow \bbR$ is $C^p$, if $\cX$ is a $C^p$ submanifold and if for every $x \in \cX$, there is a neighborhood $U\subset\bbR^d$ of $x$ and a $C^p$ function $\tilde{f}: U \rightarrow \bbR$ that agrees with $f$ on $\cX \cap U$. Such $\tilde{f}$ is called a smooth extension of $f$ around $x$.

If $f: \cX \rightarrow \bbR$ is $C^p$, we define for every $x\in \cM$,
$$
\nabla_{\cX} f(x) := P_{x} \nabla \tilde{f}(x)\,,
$$
where $P_{x}$ is the orthogonal projection onto $\cT_x\cX$,
and where $\tilde{f}$ is any smooth extension of $f$. We note that $  \nabla_{\cX} f(x) $ does not depend on the choice of this extension
(see e.g. \cite[Section 3.8]{boumal2023intromanifolds}). We refer to $ \nabla_{\cX} f(y)$
as the (Riemannian) gradient of $f$ at $\cX$.

\paragraph{Immersions, submersions and diffeomorphisms.}
Consider two $C^p$ submanifolds $\cX \subset \bbR^d$ and $\cY \subset \bbR^m$ and a smooth function $g: \cX \rightarrow \cY$.
The differential of $g$ at $x \in \cX$, $\dif g(x) : \cT_{x} \cX \rightarrow \cT_{g(x)} \cY$, is the unique linear operator such that for any smooth curve $\c : (-\varepsilon, \varepsilon)$, with $\c(0) = x$,
\begin{equation*}
    \frac{\dif}{\dif t} (g \circ \c)(0) = \dif g(x)[\dot{\c}_0]\, .
\end{equation*}
Such $g$ is said to be in an immersion (respectively submersion) if for every $x \in \cX$, $\dif g(x)$ is injective (respectively surjective). The function $g$ is said to be a diffeomorphism if it admits a smooth inverse $g^{-1} : \cY \rightarrow \cX$. If $g$ is a submersion, then for any $y \in \cY$, $\cM:= g^{-1}(\{y\}) \subset \cX \subset \bbR^d$ is a submanifold of dimension $\dim(\cX) - \dim(\cY)$. Moreover, for any $x \in \cM$, $\cT_{x} \cM = \ker \dif g(x)$.

\subsection{Clarke subgradients and conservative fields}\label{sec:cons_fields}
By Rademacher's theorem a locally Lipschitz function is differentiable almost everywhere. For such functions the Clarke subgradient of $f$ is defined in the following way.

\begin{definition}[Clarke subgradient {\cite{cla-led-ste-wol-livre98}}]
  Let $f: \bbR^d \rightarrow \bbR$ be a locally Lipschitz function. The Clarke subgradient of $f$ at $x$ is defined as
  \begin{equation*}
    \partial f(x) := \conv \{ v \in \bbR^d: \textrm{ there is $x_n \rightarrow x$, with $f$ differentiable at $x_n$ and $\nabla f(x_n) \rightarrow v$}\}\, .
  \end{equation*}
\end{definition}

Another concept describing first-order properties of a function is the one of a conservative set-valued field. It was introduced by Bolte and Pauwels in \cite{bolte2021conservative} as an elegant description of the automatic differentiation operatorautomatic differentiation provided by numerical libraries such as TensorFlow and PyTorch (\cite{tensorflow2015-whitepaper,paszke2017automatic}). They constitute an important tool for establishing the convergence of first-order methods in nonsmooth optimization (\cite{dav-dru-kak-lee-19, bolte2021conservative,bolte2023one,bolte2024differentiating,xiao2023adam, le2023nonsmooth}).

For the following definition, we say that a set-valued map $D: \bbR^d \rightrightarrows \bbR^d$ is graph-closed if $\Graph D$ is closed and is locally bounded if for any compact $K \subset \bbR^d$, there is $C>0$ such that for any $x \in K$ and $v \in D(x)$, $\norm{v} \leq C$

\begin{definition}[\cite{bolte2021conservative}]\label{def:cons_f}
  A graph-closed, locally bounded set-valued map
  $D: \bbR^d \rightrightarrows \bbR^d$ with nonempty values is a \emph{conservative field} for the \emph{potential} $f: \bbR^d \rightarrow \bbR$, if for any absolutely continuous curve $\sx: [0, 1] \rightarrow \bbR^d$, for almost every $t \in [0,1]$,
  \begin{equation*}
    \frac{\dif}{\dif t} f(\sx(t)) = \scalarp{v}{\dot{\sx}(t)} \quad \textrm{ for all $v \in D(\sx(t))$} \, .
  \end{equation*}
  Functions that are potentials of some conservative field are called \emph{path differentiable}.
\end{definition}

For semialgebraic functions the Clarke subgradient is a conservative set-valued field. Moreover, it is in some sense minimal. If $D : \bbR^d \rightrightarrows \bbR^d$ is another conservative set-valued field, then for all $x \in \bbR^d$,
\begin{equation*}
  \partial f(x) \subset \conv D(x)\, .
\end{equation*}

As we will see in the next section the concept of conservative fields is intimately related to the one of stratification.

 \subsection{o-minimal structures}
 \label{app:o-minimal}

 The definition of an o-minimal structure is inspired by properties that are satisfied by semialgebraic sets.
For more details on o-minimal structure we refer to the monographs \cite{cos02,van1998tame,van96}. A nice review of their importance in optimization is \cite{iof08}.

The definition of an o-minimal structure is inspired by properties that are satisfied by semialgebraic sets.
\begin{definition}
  We say that $\cO:=(\cO_n)$, where for each $n \in \bbN$, $\cO_n$ is a collection of sets in $\bbR^n$, is an o-minimal structure if the following holds.
  \begin{enumerate}[label=\roman*)]
    \item If $Q: \bbR^n \rightarrow \bbR$ is a polynomial, then $\{x \in \bbR^n : Q(x) = 0 \} \in \cO_n$.
    \item For each $n \in \bbN$, $\cO_n$ is a boolean algebra: if $A, B \in \cO_n$, then $A \cup B, A \cap B$ and $A^c$ are in $\cO_n$.
    \item If $A \in \cO_n$ and $B \in \cO_m$, then $A \times B \in \cO_{n+m}$.
    \item If $A \in \cO_{n+1}$, then the projection of $A$ onto its first $n$ coordinates is in $\cO_n$.
    \item Every element of $\cO_1$ is exactly a finite union of intervals and points of $\bbR$.
  \end{enumerate}
\end{definition}

Sets contained in $\cO$ are called \emph{definable}. We call a map
$f : \bbR^d \rightarrow \bbR^m$ definable if its graph is definable. Similarly, $D: \bbR^d \rightrightarrows \bbR^d$ is definable if $\Graph D = \{(w,v): v \in D(w) \}$ is definable.
Definable sets and maps have remarkable stability
properties. For instance, if $f$ and $A$ are definable, then $f(A)$
and $f^{-1}(A)$ are definable and definability is stable by most of the common operators such as  $\{+, -, \times, \circ, \circ^{-1}\}$. Let us look at some examples of o-minimal structures.

\textbf{Semialgebraic sets -- $\bbR_{\mathrm{alg}}$.} Semialgebraic sets form an o-minimal structure. This follows from the celebrated result of Tarski \cite{tarski1951decision}. In fact, every o-minimal contains the semialgebraic one.

\textbf{Globally subanalytic sets -- $\bbR_{\mathrm{an}}$.} There is an o-minimal structure that contains, for every $n \in \bbN$, sets of the form $\{ (x,t) : t = f(x)\}$, where $f : [-1, 1]^n \rightarrow \bbR$ is an analytic function. This comes from the fact that subanalytic sets are stable by projection, which was established by Gabrielov \cite{gabrielov1968projections, gabrielov1996complements}. The sets belonging to this structure are called globally subanalytic (see \cite{bier_semi_sub} for more details).

Since definable functions are stable by composition we see that most of neural network architectures, restricted to compact semialgebraic sets, are definable in $\bbR_{\mathrm{an}}$ (see the end of this section).
\begin{definition}
    An o-minimal structure is said to be polynomially bounded if for every definable function $f: \bbR \rightarrow \bbR$ there is $N \in \bbN$ such that $f(t) = O(t^N)$, as $t \rightarrow + \infty$.

    Equivalently, if $f(0) = 0$, then there is a rational number $q$ such that $f(t) = O(|t|^{q})$, as $t \rightarrow 0$.
\end{definition}

The previously introduced structures $\bbR_{\mathrm{alg}}$ and $\bbR_{\mathrm{an}}$ are both polynomially bounded.

One of the most striking properties of definable sets is that they can be stratified. Moreover, the stratification can be taken compatible with any finite family of definable sets.

\begin{definition}[Compatible stratifications]
  \label{def:compatibility_strat}
  Given a finite collection of sets of $\bbR^d$, $\bbA = \{ A_1, \ldots, A_k\}$, we say that a stratification $\bbX$ is compatible with $\bbA$ if for every $\cX \in \bbX$ and $A \in \bbA$ we have either $\cX \cap \cA = \emptyset$, or $\cX \subset A$.
\end{definition}

Given an o-minimal structure, we say that a stratification $\bbX$ is definable if every $\cX \in \bbX$ is definable.
\begin{proposition}[\cite{van1998tame}]
    Consider an o-minimal structure, a finite family  $A_1, \ldots, A_k$ of definable sets and $p\geq 2$. There is a definable $C^p$ stratification of $\bbR^d$, compatible with $A_1, \ldots, A_k$.
\end{proposition}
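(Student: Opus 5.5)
The plan is to build the stratification from the $C^p$ cell decomposition theorem for o-minimal structures, and then repair the boundary condition of Definition~\ref{def:stratif_main_new} by a descending induction on dimension. First, I would invoke the $C^p$ cell decomposition theorem (valid in any o-minimal structure for finite $p$). It gives a finite partition of $\bbR^d$ into definable $C^p$ cells, each a connected $C^p$ submanifold, such that every $A_i$ is a finite union of cells. This partition is already compatible with $A_1,\ldots,A_k$ in the sense of Definition~\ref{def:compatibility_strat}, and each cell is path-connected, since definable connected sets are definably path-connected. The only missing property is the boundary (frontier) condition $\cX\cap\overline{\cX'}\neq\emptyset \Rightarrow \cX\subset\partial\cX'$.

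To enforce the frontier condition, I would run a descending induction on the dimension $m$ of the pieces being fixed. The inductive hypothesis is a finite definable partition $\bbX$ into connected $C^p$ submanifolds, compatible with the $A_i$, such that the frontier condition holds whenever $\dim\cX\geq m$. Given a stratum $\cX'$, its frontier $\overline{\cX'}\setminus\cX'$ is definable of dimension $<\dim\cX'$; this is the standard o-minimal dimension inequality for frontiers. For each stratum $\cX$ with $\dim\cX=m-1$, consider the definable sets $\cX\cap\overline{\cX'}$ for all $\cX'$ with $\dim\cX'>\dim\cX$. I would re-decompose the union of the strata of dimension $\leq m-1$ by a new $C^p$ cell decomposition compatible with all these sets, with the old strata, and with the $A_i$. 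The strata of dimension $\geq m$ are kept unchanged. After this step, each new $(m-1)$-dimensional cell $\cX$ either lies in $\overline{\cX'}$ or is disjoint from it, which is exactly the frontier condition at level $m-1$. The condition at higher levels is unaffected, because refining the lower-dimensional strata only shrinks the sets $\cX$ that appear on the left of the implication.

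The main obstacle is to check that this refinement does not destroy what has already been achieved, and that it terminates. Cells produced at level $m-1$ may have dimension $<m-1$. These get processed at later steps, so the new pieces must be absorbed into the induction rather than breaking it. I would handle this by always re-decomposing the whole set $\sX_{m-1}$ of points of dimension $\leq m-1$, and recording only that the top-dimensional part of the new decomposition satisfies the frontier condition. The induction then proceeds downward over the finitely many levels $m=d,d-1,\ldots,0$, so it terminates after $d+1$ steps. At the end, every stratum is a connected definable $C^p$ submanifold, the partition is compatible with the $A_i$, and the frontier condition holds. This is the desired definable $C^p$ stratification.
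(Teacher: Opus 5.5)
\textbf{There is a genuine gap in your inductive step.} At level $m-1$ the frontier condition concerns every pair $(\mathcal{X},\mathcal{X}')$ with $\dim\mathcal{X}=m-1$, not only the pairs with $\dim\mathcal{X}'>m-1$. When $\mathcal{X}'\neq\mathcal{X}$ and $\dim\mathcal{X}'\le m-1$, we have $\dim\partial\mathcal{X}'<m-1$. So the condition forces $\mathcal{X}\cap\overline{\mathcal{X}'}=\emptyset$.

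Your re-decomposition only imposes compatibility with the traces $\mathcal{X}\cap\overline{\mathcal{X}'}$ of higher strata. Nothing stops a new $(m-1)$-cell from meeting the closure of another new cell of dimension $\le m-1$. Cells of a cell decomposition do this routinely, for example a graph cell whose endpoint lies inside a band cell. Your invariant does not rule this out either. Here is a counterexample in $\mathbb{R}^3$:
\begin{itemize}
\item Let $L=\{(x,0,0):x<0\}$, $Z=\{(0,0,z):z\in\mathbb{R}\}$ and $U=\mathbb{R}^3\setminus(L\cup Z)$.
\item The partition $\{U,L,Z\}$ satisfies your hypothesis for $m=2$: $U$ is the only stratum of dimension $\ge 2$, and $U\cap\overline{L}=U\cap\overline{Z}=\emptyset$.
\item At level $1$, the sets you require compatibility with are $L\cap\overline{U}=L$ and $Z\cap\overline{U}=Z$.
\item There is a cylindrical decomposition of $\mathbb{R}^3$ in which $L$ (a graph over the planar cell $\{(x,0):x<0\}$) and $Z$ (a band over the planar $0$-cell $(0,0)$) are single cells.
\item So your step may return $\{U,L,Z\}$, yet $0\in Z\cap\overline{L}$ while $Z\not\subset\partial L=\{0\}$.
\end{itemize}
Whether or not this configuration arises along your particular run, your invariant does not exclude it. Hence the sentence ``which is exactly the frontier condition at level $m-1$'' is false as stated. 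A smaller point: in your remark about higher levels, it is the sets on the right of the implication that shrink, not those on the left. That part still holds by a dimension count.

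\textbf{The missing idea} is to choose the $(m-1)$-strata so that their frontiers fall into the lower skeleton automatically, instead of taking arbitrary cells. The standard route (the paper itself only cites van den Dries) uses closed skeleta.
\begin{itemize}
\item Start from $\Sigma_d=\mathbb{R}^d$. Construct closed definable sets $\Sigma_{j-1}\subset\Sigma_j$ with $\dim\Sigma_{j-1}<j$.
\item Put into $\Sigma_{j-1}$ the points where $\Sigma_j$ fails to be a $j$-dimensional $C^p$ manifold.
\item Also put in the closures of the relative boundaries, in $\Sigma_j$, of $A_i\cap\Sigma_j$ and of $\overline{S}\cap\Sigma_j$ for every higher stratum $S$ already built.
\item All these sets have dimension $<j$. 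This uses $\dim(\overline{A}\setminus A)<\dim A$ and the fact that a definable subset of a $j$-manifold with empty interior has dimension $<j$.
\item Take the $j$-strata to be the finitely many definably connected components of $\Sigma_j\setminus\Sigma_{j-1}$.
\end{itemize}
A component $S$ is closed in $\Sigma_j\setminus\Sigma_{j-1}$ and $\Sigma_j$ is closed, so $\overline{S}\setminus S\subset\Sigma_{j-1}$. This is exactly what rules out same-level contacts, and contacts of lower strata' closures with $S$. Compatibility with each $A_i$ and with each higher $\overline{S'}$ follows because these sets are relatively clopen in the manifold $\Sigma_j\setminus\Sigma_{j-1}$.
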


The domain of a definable function can always be stratified in such way that $f$ restricted to each stratum is $C^p$ (for any $p$). Furthermore, as shown in the seminal work of \cite{bolte2007clarke}, Clarke subgradients of a definable function admit a transparent geometric description: projected along the corresponding stratum they are simply the Riemannian gradient of $f$. The result actually holds for an arbitrary, definable, conservative set-valued field of $f$.

\begin{proposition}{{\cite[Theorem 2.2]{lewis2021structure}}}
  Consider $f: \bbR^d \rightarrow \bbR$, locally Lipschitz and definable. Let $D: \bbR^d \rightrightarrows\bbR^d$ be a definable conservative set-valued field of $f$. Then, there is $\bbX$ a stratification of $\bbR^d$ such that for any $\cX \in\bbX$, $f_{|\cX}$ is $C^p$ and, moreover, for any $x \in \cX$ and any $v \in \partial f(x)$,
    \begin{equation}\label{eq:projformula_consfield}
        P_{x} v = \nabla_{\cX} f(x)\, ,
    \end{equation}
    where $P_{x}$ denotes the orthogonal projection onto $\cT_{x} \cX$.
\end{proposition}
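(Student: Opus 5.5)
The proof goes by induction on dimension. Start from an arbitrary definable $C^p$ stratification, locate the definable bad set where the projection formula fails, show that this set has dimension strictly smaller than its stratum, and refine. The first step is to take a definable $C^p$ stratification $\bbX^0$ of $\bbR^d$ such that $f_{|\cX}$ is $C^p$ on every stratum. This exists by the stratification theorem compatible with finitely many definable sets, applied to the graph of $f$ and then projected. Also note that, although the display writes $v\in\partial f(x)$, the statement should be proved for every $v\in D(x)$; since $\partial f\subset \conv D$ and $P_x$ is linear, the Clarke version follows from it.

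For a stratum $\cX$, define the bad set
\begin{equation*}
B(\cX)=\{x\in\cX:\ \exists v\in D(x),\ P_xv\neq\nabla_{\cX}f(x)\}.
\end{equation*}
It is definable because $\Graph D$, $x\mapsto P_x$ and $\nabla_{\cX}f$ are all definable. The key claim is that $\dim B(\cX)<\dim\cX$.

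Suppose instead that $B(\cX)$ contains a nonempty relatively open subset $U$ of $\cX$. By definable choice there is a definable selection $x\mapsto v(x)\in D(x)$ on $U$ with $w(x):=P_xv(x)-\nabla_{\cX}f(x)\neq0$. Since definable maps are $C^1$ off a lower-dimensional set, we may shrink $U$ so that $w$ is $C^1$ there. Pick $x_0\in U$ and let $\c$ be a short $C^1$ curve in $U$ solving $\dot\c_t=w(\c_t)$ from $x_0$. Because $\c$ is absolutely continuous, conservativity of $D$ gives, for a.e.\ $t$,
\begin{equation*}
\tfrac{\dif}{\dif t}f(\c_t)=\scalarp{v(\c_t)}{\dot\c_t}=\scalarp{P_{\c_t}v(\c_t)}{\dot\c_t}.
\end{equation*}
On the other hand, $\c$ lies in $\cX$ and $f_{|\cX}$ is $C^p$, so the chain rule gives $\tfrac{\dif}{\dif t}f(\c_t)=\scalarp{\nabla_{\cX}f(\c_t)}{\dot\c_t}$ for every $t$. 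Subtracting the two expressions yields $\|w(\c_t)\|^2=0$ for a.e.\ $t$, which contradicts $w\neq0$ on $U$. Hence $B(\cX)$ has empty interior in $\cX$, and by o-minimal dimension theory $\dim B(\cX)<\dim\cX$.

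Next comes the refinement loop. Take a new definable $C^p$ stratification compatible with all the strata of $\bbX^0$ and all the sets $B(\cX)$, again with $f$ being $C^p$ on strata, and repeat. A new stratum contained in $\cX\setminus B(\cX)$ and of the same dimension as $\cX$ is open in $\cX$, so it has the same tangent spaces and the same Riemannian gradient there, and the projection formula persists. New strata of lower dimension have to be re-examined, which is why we iterate.

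The main obstacle is bounding the number of iterations. I would organise the argument by top dimension: after round $m$, every stratum of dimension at least $d-m$ satisfies the formula, since the bad sets produced at each round have strictly smaller dimension and strata are never subdivided back into higher dimension. Consequently at most $d+1$ rounds are needed, after which every stratum is good. Throughout, the boundary (frontier) condition and definability are preserved by invoking the compatible-stratification theorem at each round.
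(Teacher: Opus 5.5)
The paper does not prove this proposition; it imports it from Lewis and Tian, so your argument has to stand on its own. Its core is sound. Three pieces are correct:
\begin{itemize}
\item the reduction from $\partial f$ to $D$, via $\partial f(x)\subset\conv D(x)$ and linearity of $P_x$;
\item the definability of $B(\cX)$;
\item the integral-curve argument. Conservativity holds for a.e.\ $t$ simultaneously for all $v\in D(\c_t)$, so your definable selection is admissible, and comparing with the chain rule for $f_{|\cX}$ forces $w(\c_t)=0$ a.e.
\end{itemize}
Together these give $\dim B(\cX)<\dim\cX$ for every stratum of every definable $C^p$ stratification on which $f$ is $C^p$.

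The gap is in the termination of the refinement loop. You support your invariant only by ``bad sets have smaller dimension'' and ``strata are never subdivided back into higher dimension.'' Neither says anything about a stratum $\cX$ that is already good. A new stratification that is merely compatible with the old strata and the bad sets may cut such an $\cX$ into pieces of \emph{lower} dimension lying in $\cX\setminus B(\cX)$. You only showed that same-dimensional pieces inherit the formula, and you say explicitly that lower-dimensional ones must be re-examined. If those pieces could be bad, the invariant would fail and nothing would stop each round from creating new bad strata. So neither your round count nor termination follows as written.

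The invariant is also off by one. Already the top-dimensional strata of $\bbX^0$ can be bad, since $D$ may differ from $\nabla f$ on a lower-dimensional subset of an open stratum. The correct form is ``dimension $\geq d-m+1$.''

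The missing observation is that the formula is hereditary. Write $P^{\cX}_x$ for the projection onto $\cT_x\cX$. Suppose $\cX'\subset\cX$ is a $C^p$ submanifold, $x\in\cX'$, and $P^{\cX}_xv=\nabla_{\cX}f(x)$. Since $\cT_x\cX'\subset\cT_x\cX$, and every local $C^p$ extension of $f_{|\cX}$ also extends $f_{|\cX'}$, we get
\[
P^{\cX'}_xv=P^{\cX'}_xP^{\cX}_xv=P^{\cX'}_x\nabla_{\cX}f(x)=\nabla_{\cX'}f(x).
\]
Now let $\Sigma_m$ denote the union of the bad sets of the strata of $\bbX^m$. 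By heredity and compatibility, every stratum of $\bbX^{m+1}$ with a nonempty bad set lies inside some $B(\cX)\subset\Sigma_m$. Your key claim then yields $\dim\Sigma_{m+1}<\dim\Sigma_m$. Since $\dim\Sigma_0\leq d-1$, we get $\Sigma_m=\emptyset$ after at most $d$ refinements. With this added, the proof goes through.
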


\begin{remark}\label{rmk:cons_fields_varstrat}
  Conversely, if $D: \bbR^d \rightrightarrows \bbR^d$ is a closed-graph, locally bounded set-valued mapping, for which there is a stratification such that~\eqref{eq:projformula_consfield} holds, then $D$ is a conservative set-valued field of $f$.
\end{remark}

\subsection{Deep learning architectures}\label{app:deep_L}

Given parameters $w \in \mathbb{R}^d$, the $L$-layer neural
network defines a function $f_w : \mathbb{R}^{n_x} \to \mathbb{R}$, which is
constructed recursively as
\begin{align*}
    h^0 &= x , \\
    h^\ell &= \mathcal{T}_\ell(h^{\ell-1}; w_\ell), \qquad \ell = 1,\dots,L ,
\end{align*}
with output $f_w(x) = h^L$. The operators \(\mathcal{T}_\ell\) act on finite-dimensional Euclidean spaces and are parametrized by $w_\ell$, with
$w = (w_1,\dots,w_L)$.

Given data $\{(x_i,y_i)\}_{i=1}^N$, define the quadratic loss

\begin{equation}\label{eqdef:NN_quadratic_loss}
        \mathcal{L}(w) = \frac{1}{N} \sum_{i=1}^N \bigl(f_w(x_i) - y_i\bigr)^2.
\end{equation}

As we recall in the following proposition, if the parameter vector $w \in \bbR^d$ is restricted to a compact set $K \subset \bbR^d$, then for usual operators $\cT_{\ell}$, $\cL$ is definable in a polynomially bounded o-minimal structure.

\begin{proposition}[Definability of neural network losses]
Let \(f_w : \mathbb{R}^{n_x} \to \mathbb{R}\) be a neural network defined as above,
where each layer operator \(\mathcal{T}_\ell\) is chosen from the following
classes:
\begin{enumerate}
    \item \emph{Affine or convolutional operators:}
    \[
        \mathcal{T}(h) = A h + b,
    \]
    where \(A\) and \(b\) are learnable parameters of compatible dimensions
    (including the case where \(A\) represents a discrete convolution).

    \item \emph{Elementwise activation operators:}
    \[
        \mathcal{T}(h) = \sigma(h),
    \]
    where \(\sigma\) is applied coordinatewise and is either semialgebraic
    (e.g., ReLU, leaky ReLU) or real-analytic (e.g., sigmoid, \(\tanh\), softplus,
    GELU).

    \item \emph{Pooling operators:}
    \[
        \mathcal{T}(h)_j = \max_{k \in \mathcal{I}_j} h_k
        \quad \text{or} \quad
        \mathcal{T}(h)_j = \frac{1}{|\mathcal{I}_j|} \sum_{k \in \mathcal{I}_j} h_k,
    \]
    where \(\{\mathcal{I}_j\}\) is a fixed collection of index sets.

    \item \emph{Normalization operators:}
    \[
        \mathcal{T}(h) = \gamma \odot \frac{h - \mu(h)}{\sqrt{\sigma^2(h) + \varepsilon}} + \beta,
    \]
    where \(\mu(h)\) and \(\sigma^2(h)\) denote empirical mean and variance computed
    over fixed coordinate groups, and \(\gamma,\beta\) are learnable parameters.

    \item \emph{Residual operators:}
    \[
        \mathcal{T}(h) = h + \mathcal{S}(h),
    \]
    where \(\mathcal{S}\) is itself a finite composition of operators of the
    preceding types.
\end{enumerate}
Then, given any semialgebraic compact set $K \subset \bbR^d$, the function $\cL_{|K}$ is definable in the polynomially bounded o-minimal structure $\bbR_{\mathrm{an}}$.
\end{proposition}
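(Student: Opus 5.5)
The plan is to prove definability layer by layer, by induction on $\ell$. It rests on three standard facts about $\bbR_{\mathrm{an}}$. First, it contains every semialgebraic set. Second, definable maps are stable under composition, products and sums. Third, for any real-analytic function $\sigma$ defined on an open neighbourhood of a compact box $[-M,M]^n$, the restriction $\sigma_{|[-M,M]^n}$ is definable in $\bbR_{\mathrm{an}}$: after the affine change of variables $t\mapsto t/M$ it becomes a restricted analytic function on $[-1,1]^n$. That $\bbR_{\mathrm{an}}$ is polynomially bounded is a known theorem of van den Dries, which we invoke rather than reprove. So the whole content is showing that $w\mapsto \cL(w)$ on $K$ is definable.

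\textbf{Step 1: boundedness of the intermediate quantities.} Fix a data point $x_i$. We show by induction on $\ell$ that $w\mapsto h^\ell(x_i;w)$ is continuous on $K$. Each operator in classes 1--5 is continuous in $(h,w_\ell)$; for the normalization operator this uses $\varepsilon>0$, so the square root is evaluated away from $0$. Since $K$ is compact, the image $\{h^{\ell}(x_i;w): w\in K\}$ is therefore contained in some box $[-M_\ell,M_\ell]^{n_\ell}$. We take one $M$ large enough to serve all layers, all $N$ data points, and all parameter blocks $w_\ell$ with $w\in K$.

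\textbf{Step 2: each layer is definable on the relevant bounded domain.} Affine and convolutional maps are polynomial in $(h,A,b)$, hence semialgebraic. Max-pooling and average pooling are semialgebraic. The normalization operator is built from polynomials, a division, and $\sqrt{\cdot}$ on $[\varepsilon,\infty)$, all of which are semialgebraic. Semialgebraic activations (ReLU, leaky ReLU) are immediate. Sigmoid, $\tanh$, softplus and GELU (the latter via $\mathrm{erf}$) are real-analytic on all of $\bbR$, so their restrictions to $[-M,M]$ are definable by the third fact above. Finally, a residual block is a finite composition and sum of the preceding operators, and Step 1 also bounds its intermediate values, so it is definable as well.

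\textbf{Step 3: assembling the loss.} With Steps 1--2 in hand, $w\mapsto h^\ell(x_i;w)$ on $K$ is the composition of definable maps, each evaluated inside its domain of definability. Since $K$ is semialgebraic, $w\mapsto f_w(x_i)$ restricted to $K$ is definable. The loss $\cL$ is a polynomial in the finitely many values $f_w(x_i)$, with the $y_i$ fixed, so $\cL_{|K}$ is definable in $\bbR_{\mathrm{an}}$.

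The main obstacle, modest as it is, lies in Steps 1--2 together. An analytic activation such as $\tanh$ is \emph{not} globally definable in $\bbR_{\mathrm{an}}$ (the exponential is not). The argument therefore has to check that every analytic activation is only ever evaluated on a compact set. That is precisely why compactness of $K$ and continuity of the layers in Step 1 are needed, and why the analytic function must be analytic on a neighbourhood of the box, so that its restriction is a genuine restricted analytic function.
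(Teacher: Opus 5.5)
Your proof is correct and takes the same route the paper indicates. The paper gives no formal proof: it only remarks that restricted analytic functions are definable in $\mathbb{R}_{\mathrm{an}}$ and that definability is stable under composition, so the loss restricted to a compact semialgebraic set is definable. Your Step~1 makes explicit the point the paper leaves implicit: continuity of each layer together with compactness of $K$ ensures that every analytic activation is only evaluated on a bounded box.
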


\section{Technical lemmas}\label{sec:technical_manifolds}

Let us record some lemmas on graphs of smooth functions.

\begin{lemma}\label{lm:graph_tplane}
    Let $\cX \subset \bbR^d$ be a $C^p$ submanifold and $f: \cX \rightarrow \bbR$ be $C^p$. Then
    \begin{equation*}
        \Graph f_{|\cX} := \{(x,f(x)) :x \in \cX\}  \subset \bbR^{d+1}
    \end{equation*}
    is a $C^p$ submanifold of dimension equal to $\dim \cX$. Moreover, for any $x \in \cX$,
    \begin{equation*}
        \cT_{(x, f(x))} \, \Graph f_{|\cX} = \{(h, \scalarp{\nabla_{\cX}f(x)}{h}): h \in \cT_{x}\cX\}\, .
    \end{equation*}
\end{lemma}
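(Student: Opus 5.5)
The plan is to realize the graph as the image of an explicit $C^p$ embedding and read off both claims from it. Define $\Phi:\cX\to\bbR^{d+1}$ by $\Phi(x)=(x,f(x))$. Since $f$ is $C^p$ on $\cX$, around every $x_0\in\cX$ there is a neighborhood $U\subset\bbR^d$ and a $C^p$ extension $\tilde f:U\to\bbR$ agreeing with $f$ on $\cX\cap U$. Thus $\Phi$ is locally the restriction of the $C^p$ map $\tilde\Phi(x)=(x,\tilde f(x))$ on $U$.

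First, the submanifold claim. Take a local $C^p$ submersion $G:U\to\bbR^{d-k}$ with $U\cap\cX=G^{-1}(0)$, where $k=\dim\cX$. Define $H:U\times\bbR\to\bbR^{d-k}\times\bbR$ by
\begin{equation*}
H(x,t)=(G(x),\,t-\tilde f(x)).
\end{equation*}
Its Jacobian is block lower-triangular, with blocks $\Jac G(x)$ (surjective) and $1$, so $H$ is a $C^p$ submersion onto $\bbR^{d-k+1}$. Moreover, $H^{-1}(0)=\{(x,f(x)):x\in U\cap\cX\}$, which is exactly $\Graph f_{|\cX}\cap(U\times\bbR)$. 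Hence the graph is a $C^p$ submanifold of dimension $(d+1)-(d-k+1)=k$.

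Second, the tangent plane. By the definition in Appendix~\ref{app:prel_subm}, $\cT_{(x,f(x))}\Graph f_{|\cX}=\ker\Jac H(x,f(x))$. This kernel consists of the pairs $(h,h_f)$ with $\Jac G(x)h=0$, that is $h\in\cT_x\cX$, and with $h_f=\scalarp{\nabla\tilde f(x)}{h}$. For $h\in\cT_x\cX$ we have $\scalarp{\nabla\tilde f(x)}{h}=\scalarp{\nabla\tilde f(x)}{P_xh}=\scalarp{P_x\nabla\tilde f(x)}{h}=\scalarp{\nabla_{\cX}f(x)}{h}$, using the definition $\nabla_{\cX}f(x)=P_x\nabla\tilde f(x)$. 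This yields the stated formula and shows it is independent of the chosen extension.

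There is no real obstacle here. The only care needed is that $H$ is defined on $U\times\bbR$, so that every point of the graph has a neighborhood of this product form, and that the same extension $\tilde f$ is used consistently on $U$.
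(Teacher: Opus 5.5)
Your proof is correct and is essentially the paper's argument. In both, the graph is cut out by the submersion $t - f(x)$, and its tangent plane is read off as the kernel of the differential, using $\scalarp{\nabla\tilde f(x)}{h} = \scalarp{\nabla_{\cX} f(x)}{h}$ for $h\in\cT_x\cX$. The only difference is where the regular-value argument takes place. The paper applies it intrinsically to $g(x,y)=y-f(x)$ on the manifold $\cX\times\bbR$. You instead stack the graph equation with a local defining map $G$ (on a common domain $U$ for $G$ and $\tilde f$) to get an ambient submersion $H=(G,\,t-\tilde f)$ on $U\times\bbR$. This matches the appendix's definitions of submanifolds and tangent spaces directly, and avoids needing that a submanifold of $\cX\times\bbR$ is a submanifold of $\bbR^{d+1}$.
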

\begin{proof}
    The function $g:(x,y) \mapsto y-f(x)$ is a submersion from $\cX \times \bbR$ to $\bbR$. Since $\dim(\cX \times \bbR) = \dim(\cX) +1$, $\Graph f_{|\cX} = g^{-1}(0)$ is a manifold of dimension $\dim(\cX)$. Furthermore, for any $x \in \cX$,
    \begin{equation*}
        \cT_{(x,f(x))}\, \Graph f_{|\cX} = \ker \dif g((x,f(x))) =  \{(h_x, h_f) : h_f = \dif f(x)[h_x],\, h_x \in \cT_x \cX\}\, .
    \end{equation*}
    Since, by definition, $\dif f(x)[h_x]= \scalarp{\nabla_{\cX}f(x)}{h_x}$, this completes the proof.
\end{proof}

The following statement is the claim in \cite[page 6]{bolte2007clarke}.

\begin{lemma}
    \label{lem:transversal_proj}
    Let $f: \bbR^d \rightarrow \bbR$ be a function. Assume that $\cG \subset \Graph f$ is such that $\cG \subset \bbR^{d+1}$ is a $C^p$ submanifold and for any $ z \in \cG$, $(0_{\bbR^{d}}, 1)^{\top} \not \in \cT_{z}\cG$. Then, $\cX = \{x: (x, f(x)) \in \cG\}$ is a $C^p$ submanifold such that $f_{|\cX}$ is $C^p$.
\end{lemma}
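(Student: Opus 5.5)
The plan is to use the implicit function theorem in a graph chart, transversality giving the nondegeneracy. Fix $z_0=(x_0,f(x_0))\in\cG$ with $k=\dim\cG$.

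\textbf{Step 1: local graph over the first coordinates.} Since $(0_{\bbR^d},1)^\top\notin\cT_{z_0}\cG$, the linear map $\Pi|_{\cT_{z_0}\cG}$ is injective. Indeed, its kernel is $\cT_{z_0}\cG\cap\mathrm{span}\{(0,1)\}$, which is trivial. Let $\phi:U\subset\bbR^k\to\cG$ be a local $C^p$ parametrization with $\phi(0)=z_0$. Then $\Pi\circ\phi$ is a $C^p$ immersion at $0$, hence on a neighborhood. By the local immersion theorem, there is a neighbourhood $V\ni z_0$ in $\cG$ such that $\Pi(V)$ is a $k$-dimensional $C^p$ submanifold of $\bbR^d$ and $\Pi|_V:V\to\Pi(V)$ is a $C^p$ diffeomorphism.

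\textbf{Step 2: relate the pieces to $\cX$.} Because $\cG\subset\Graph f$, the map $\Pi|_\cG$ is injective, with inverse $x\mapsto(x,f(x))$. So $\Pi(\cG)=\cX$, and $f$ on $\Pi(V)$ equals the last coordinate of $(\Pi|_V)^{-1}$, which is $C^p$.

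\textbf{Step 3: check that $\Pi(V)$ is a relatively open neighborhood of $x_0$ in $\cX$.} This is the main obstacle. Injectivity of $\Pi|_\cG$ alone does not make $\Pi|_\cG$ a homeomorphism onto $\cX$. A sequence $x_n\in\cX$ with $x_n\to x_0$ could have lifts $(x_n,f(x_n))$ far from $V$ in $\cG$, for example if $\cG$ accumulates on itself.

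I would first try to rule this out using that $f$ is single-valued on points of $\cX$, together with continuity of $f$. If $f$ is continuous (locally Lipschitz, as everywhere in the paper), then $(x_n,f(x_n))\to z_0$. Since $V$ is an open neighborhood of $z_0$ in $\cG$ with the subspace topology, $(x_n,f(x_n))\in V$ eventually. Hence $x_n\in\Pi(V)$ eventually, and $\Pi(V)$ is relatively open in $\cX$.

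\textbf{Step 4: conclude.} Thus $\cX$ is locally a $C^p$ submanifold near every point and $f_{|\cX}$ is $C^p$. This is exactly the claim of \cite[page 6]{bolte2007clarke}. If $f$ is allowed to be arbitrary, I would add continuity of $f$ as a standing hypothesis; in the paper's applications $f$ is always locally Lipschitz.
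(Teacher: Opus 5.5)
Your proof is correct and has the same skeleton as the paper's. Transversality makes $\Pi_{|\cG}$ an injective immersion, the local immersion theorem gives pieces $V$ on which $\Pi_{|V}$ is a $C^p$ diffeomorphism onto a submanifold, and $f$ is recovered as the last coordinate of $(\Pi_{|\cG})^{-1}$.

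The real difference is Step 3, and there you are right where the paper is not. The paper asserts that the local immersion theorem already makes $\Pi_{|\cG}(V_z)$ open in $\cX$, and hence that $\Pi_{|\cG}$ is a homeomorphism. But that theorem says nothing about other parts of $\cG$ projecting close to $\Pi(V_z)$. Since $(\Pi_{|\cG})^{-1}(x)=(x,f(x))$, continuity of the inverse is exactly continuity of $f_{|\cX}$, and this does not follow from the stated hypotheses.

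Here is a counterexample with $d=2$. Let $\cG=\{(\sin 2t,\sin t,t): t\in(-\pi,\pi)\}$.
\begin{itemize}
\item It is an embedded curve, being a graph over the last coordinate.
\item Its tangent $(2\cos 2t,\cos t,1)$ is never parallel to $(0,0,1)$.
\item The map $t\mapsto(\sin 2t,\sin t)$ is injective on $(-\pi,\pi)$, so $\cG$ lies in the graph of the function $f$ defined by $f(\sin 2t,\sin t)=t$ and $f=0$ elsewhere.
\end{itemize}
Yet $\cX$ is a figure eight, which is not a manifold at the origin.

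So the continuity hypothesis you add is necessary, not merely convenient. It costs nothing in the paper, because Lemma~\ref{lem:transversal_proj} is only invoked, through Corollary~\ref{cor:strat_dom_graph}, for locally Lipschitz $f$. Once $f$ is continuous, your sequential argument replaces the paper's open-map step; so does simply noting that $x\mapsto(x,f(x))$ is continuous.
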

\begin{proof}

    In this proof we denote $\Pi: \bbR^{d+1} \rightarrow \bbR^d$ the projection onto the first $d$ coordinates. We have $\cX = \Pi(\cG)$. Note that for any $(x,a) \in \bbR^{d}\times \bbR$ and $h \in \bbR^{d+1}$,  $\dif \Pi(x,a)h = \Pi(h)$. Since $(0_{\bbR^{d}},1)^{\top}$ is not a tangent vector of $\cG$, we have that $\Pi_{|\cG}$ is an immersion. Also observe that $\Pi_{|\cG} : \cG \mapsto \cX$ is bijective, since $\cG \subset \Graph f$ and continuous.

    We will show that the inverse of $\Pi_{|\cG}$ is continuous, or, equivalently, it is an open map. To this end, by local immersion theorem for every $z \in \cG$ there is open $V_z \subset \cG$ such that $\Pi_{|\cG}(V_z)$ is open in $\cX$, {is a $C^p$ submanifold}, and that $\Pi_{|V_z} : V_z \mapsto \Pi(V_z)$ is a $C^p$ diffeomorphism. Now let $W \subset \cG$ be open in $\cG$ and observe that $W = \cup_{z \in W} (V_z \cap W)$. As $\Pi_{|V_z}$ is a diffeomorphism, $\Pi(V_z \cap W)$ is open in $\cX$.
        Finally, since
        \begin{align*}
            \Pi_{|\cG}(W) = \bigcup_{z \in W} \Pi_{|\cG}(V_z \cap W)\,,
        \end{align*}
        then $\Pi_{\cG}(W)$ is open in $\cX$. Thus, $\Pi_{|\cG}$ is a homeomorphism. By \cite[Theorem 1.21]{lafontaine2015introduction}, $\cX = \Pi_{|\cG}(\cG)$ is a $C^p$ submanifold.

    Let $y : \bbR^{d+1} \mapsto \bbR$ be the projection onto the last coordinate. Thus, for $x \in \cX$,  $f(x) = (y \circ (\Pi_{|\cG})^{-1})(x)$. Thus, $f_{|\cX} = y_{|\cG} \circ (\Pi_{|\cG})^{-1}$, implying that $f_{|\cX}$ is a $C^p$ function as a composition of $C^p$ functions.
\end{proof}
\begin{corollary}\label{cor:strat_dom_graph}
   Let $f: \bbR^d \rightarrow \bbR$ be a function. Let $\cX' \subset \bbR^d$ be a $C^p$ manifolds such that $f : \cX' \rightarrow \bbR$ is $C^p$. Let $\cG \subset \Graph f_{|\cX'}$ be a $C^p$ submanifold. Then, $\cX = \{x: (x, f(x)) \in \cG\}$ is a $C^p$ submanifold such that $f_{|\cX}$ is $C^p$.
\end{corollary}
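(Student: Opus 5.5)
The plan is to reduce the statement to Lemma~\ref{lem:transversal_proj}. Set $\cG' = \Graph f_{|\cX'}$. By Lemma~\ref{lm:graph_tplane}, $\cG'$ is a $C^p$ submanifold of $\bbR^{d+1}$ of dimension $\dim \cX'$, with tangent spaces
\[
\cT_{(x,f(x))}\cG' = \{(h, \scalarp{\nabla_{\cX'}f(x)}{h}) : h \in \cT_x\cX'\}.
\]

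The key observation is that the vertical vector $(0_{\bbR^d},1)$ never lies in these tangent spaces. Indeed, if $(h,\scalarp{\nabla_{\cX'}f(x)}{h}) = (0,1)$, then $h=0$, which forces the second component to be $0$, a contradiction.

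Since $\cG \subset \cG'$ is a $C^p$ submanifold, we have $\cT_z\cG \subset \cT_z\cG'$ for every $z \in \cG$. To justify this inclusion, take any $C^1$ curve in $\cG$ through $z$. It is also a curve in $\cG'$, so its velocity lies in $\cT_z\cG'$. Consequently $(0_{\bbR^d},1) \notin \cT_z\cG$ for all $z \in \cG$.

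Moreover, $\cG \subset \Graph f$ trivially, because $\Graph f_{|\cX'} \subset \Graph f$. The hypotheses of Lemma~\ref{lem:transversal_proj} are therefore satisfied, and it yields that $\cX = \{x : (x,f(x)) \in \cG\}$ is a $C^p$ submanifold on which $f_{|\cX}$ is $C^p$.

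The only step requiring any care is the tangent-space inclusion $\cT_z\cG \subset \cT_z\cG'$ for a submanifold contained in another submanifold. I expect this to be the main (mild) obstacle, but the curve-velocity characterization of tangent vectors given in Appendix~\ref{app:prel_subm} handles it directly.
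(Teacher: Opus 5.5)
Your proposal is correct and is essentially the paper's proof: the inclusion $\cT_z\cG\subset\cT_z\Graph f_{|\cX'}$ together with Lemma~\ref{lm:graph_tplane} rules out the vertical vector $(0_{\bbR^d},1)$, and Lemma~\ref{lem:transversal_proj} finishes, with your curve-velocity argument supplying the justification of the tangent inclusion that the paper leaves implicit. One point neither you nor the paper makes explicit is that Lemma~\ref{lem:transversal_proj} tacitly needs $f_{|\cX}$ to be continuous (so that $\Pi_{|\cG}$ is a homeomorphism onto $\cX$); this holds here because $\cX\subset\cX'$ and $f_{|\cX'}$ is $C^p$.
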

\begin{proof}
    Indeed, if $(x, f(x)) \in \cG$, then
    \begin{equation*}
        \cT_{(x,f(x))} \cG \subset \cT_{(x,f(x))} \Graph f_{|\cX'} = \{(h, \scalarp{\nabla_{\cX'} f(x)}{h}): h \in \cT_{x} \cX'\}\, .
    \end{equation*}
    Thus, $(0_{\bbR^d},1)^{\top} \not \in \cT_{(x,f(x))} \cG$ and the proof is concluded by Lemma~\ref{lem:transversal_proj}.
\end{proof}

    \paragraph{Proof of Lemma~\ref{lm:local_reach_main}}
Consider $r < \varrho_y $, $y' \in \cX$, $y' \neq y$, and
   \begin{equation}\label{eq:reach_interm_w}
   w = \frac{P_{y}^{\perp}(y' -y)}{\norm{P_{y}^{\perp}(y' -y)}}\,.
   \end{equation}
Note that since $r < \rho_y$, then by Lemma~\ref{lm:proj_open_cone}, for any $y' \in \cX$
   \begin{align*}
        \|y - (y+rw)\| \leq \|y' - (y+rw)\|\,.
   \end{align*}
  Squaring both sides, we obtain
  \begin{equation}\label{eq:intm1_main}
    \norm{y' - y}^2 + r^2 - 2 r\scalarp{y' -y}{w} \geq r^2 \, .
  \end{equation}
  Observing that
  \begin{equation*}
    \dist(y'-y, \cT_y \cX) = \norm{P_y^{\perp}(y' - y)} = \scalarp{y'-y}{w}\, ,
  \end{equation*}
  and re-arranging the penultimate inequality, yield
    \begin{equation}\label{eq:intm2_main}
    r \leq \frac{\norm{y' -y}^2}{2 \scalarp{y' -y}{w}} = \frac{\norm{y-y'}^2}{2 \dist(y'-y, \cT_y \cX)}\, .
  \end{equation}
  As $r < \varrho_y$ and $y' \in \cX$ were arbitrary, we get
   \begin{equation*}
    \varrho_{y}\leq \inf_{y' \in \cX, y' \neq y}\frac{\norm{y-y'}^2}{2 \dist(y'-y, \cT_y \cX)}\, .
   \end{equation*}

   Conversely, for any $\varepsilon >0$ and $r \in (\varrho_y, \varrho_y + \varepsilon)$, there is $y' \in \cX$
   and a unitary vector $w\in \cN_{y} \cX$ such that $\norm{y' - (y+rw)} < r$. Therefore, as in the previous case
   \begin{equation*}
   \norm{y'-y}^2 < 2 r \scalarp{y'-y}{w} < 2 r \dist(y'-y, \cT_{y} \cX)\, .
   \end{equation*}
   Which implies
   \begin{equation*}
    \frac{\norm{y-y'}^2}{2 \dist(y'-y, \cT_y \cX)} < r < \varrho_{y} + \varepsilon\,
   \end{equation*}
   and completes the proof.

   \section{Proof of Lemma~\ref{lem:geom_all}}
   \label{sec:proof_for_cones}
   This section is entirely devoted to the proof of each item in Lemma~\ref{lem:geom_all}.
    Here, we rely heavily on Definition~\ref{def:rank_stratum}, which
    introduces the rank of strata and of the stratification. Let $R$ denote the rank of $\bbX$ and $(j_r)_{r=0}^{R}$ the corresponding increasing sequence of dimensions present in $\bbX$. We emphasize a basic property of the sequence $(j_r)_{r=0}^R$. For any
    $\cX \in \bbX$ and any $x \in \bbR^d$, one has
    \begin{align*}
        \boxed{
            \dist(x, \sX_{\dim(\cX)-1})
        = \dist(x, \sX_{j_{\rank(\cX)-1}})
        }\, .
    \end{align*}
    It is also convenient to introduce for any $a, b > 0$ the mapping $T_{b}(a) = \min\{a, b\}$ and observe that for any $b > 0$, $a \mapsto T_{b}(a)$ is sub-additive on non-negative real line.

    \subsection*{Proof of Items~\ref{:1} and~\ref{:2}.} It follows immediately from Definitions in Equations~\eqref{eqdef:large_cone}--\eqref{eqdef:thin_cone}.

    \subsection*{Proof of Item~\ref{:3}.}
    The proof goes by induction. For $r = 1$, since $x \notin \smallC_{<j_1}$, we indeed have
    \begin{align}
        \dist(x, \sX_{j_0}) = \min_{\substack{\cX \in \bbX \\ \dim(\cX) = j_0}} \dist(x, \cX) \geq \gamma^\beta \trunc_{1}(\dist(x, \sX_{-1})) = \gamma^{\beta}\,.
    \end{align}

    Assuming that the statement holds up to $r-1$. If $x \notin \smallC_{<j_{r}}$, then
    \begin{align*}
        \dist(x, \sX_{j_{r-1}}) = \min_{\substack{\cX \in \bbX \\ \dim(\cX) = j_{r-1}}} \dist(x, \cX) \geq \gamma^{\beta} \trunc_{\gamma^{(r-1)\beta}}(\dist(x, \sX_{j_{r-2}})) \geq \gamma^{\beta} \trunc_{\gamma^{(r-1)\beta}}(\gamma^{(r-1)\beta}) = \gamma^{r\beta}\,,
    \end{align*}
    which completes the induction.

\subsection*{Proof of Items~\ref{:4} and~\ref{:5}}
    Both claims are proven separately, but similarly. In what follows, $\rank(\cX) = r$.

    \paragraph{Item~\ref{:4}.}
    Since $x \in \smallC(\cX)$, by $1$-lipschitzness of $x\mapsto \dist(x, \sX_{j-1})$ and $a\mapsto \trunc_{\gamma^{r\beta}}(a, \gamma^{r \beta})$, we have
    \begin{align*}
        \dist(x', \cX) \leq \|x - x'\| + \gamma^{\beta}\trunc_{\gamma^{r\beta}}(\dist(x, \sX_{j_{r-1}})) \leq (1 + \gamma^{\beta})\|x - x'\| + \gamma^{\beta}\trunc_{\gamma^{r\beta}}(\dist(x', \sX_{j_{r-1}}))\,.
    \end{align*}
    Since $x \notin \smallC_{<j}$, {denoting $M = \gamma_{0}^{(R+1) \beta - 1}$,} Corollary~\ref{cor:distance_to_boundary_is_large} implies
    \begin{align}\label{eq:interm_reldistance}
        \|x - x'\| \leq G\gamma \leq \frac{G}{M}\gamma^{\beta}\trunc_{\gamma^{r\beta}}(\dist(x, \sX_{j_{r-1}})) \leq \frac{G}{M}\gamma^{\beta}\trunc_{\gamma^{r\beta}}(\dist(x', \sX_{j_{r-1}})) + \frac{G}{M}\gamma^{\beta}\|x - x'\|\,.
    \end{align}
    Hence, as long as ${\tfrac{G}{M}\gamma^{\beta} \leq 1/2}$ and ${G \leq M}$, we have
    \begin{align*}
         \|x - x'\| \leq \gamma^{\beta}\trunc_{\gamma^{r\beta}}(\dist(x', \sX_{j_{r-1}}))\,.
    \end{align*}
    The last three displays in combination with ${\gamma \leq 1}$, yield the first claim.

    \paragraph{Item~\ref{:5}.}
    Since $x \in \bigC(\cX)$, we have
    \begin{align*}
        \dist(x', \cX) \leq \|x - x'\| + \gamma^{\alpha}\trunc_{\gamma^{r\beta}}(\dist(x, \sX_{j_{r-1}})) \leq (1 + \gamma^{\alpha})\|x - x'\| + \gamma^{\alpha}\trunc_{\gamma^{r\beta}}(\dist(x', \sX_{j_{r-1}}))\,.
    \end{align*}
  Since $x \notin \smallC_{<j}$, we have Equation~\eqref{eq:interm_reldistance}, and as previously, as long as ${\tfrac{G}{M}\gamma^{\beta} \leq 1/2}$ and ${G \leq M}$, we have
    \begin{align*}
         \|x - x'\| \leq \gamma^{\beta}\trunc_{\gamma^{r\beta}}(\dist(x', \sX_{j_{r-1}})) \leq \gamma^{\alpha}\trunc_{\gamma^{r\beta}}(\dist(x', \sX_{j_{r-1}}))\,.
    \end{align*}
    The last two displays in combination with $\gamma \leq 1$, yield the second claim.

\subsection*{Proof of Item~\ref{:6}}
    Let $\rank(\cX) = r$.
    By Item~\ref{:3}, if $x \notin \smallC_{ < j_r}$, then
    \begin{align*}
        \dist(x, \sX_{j_{r-1}}) \geq \gamma^{r\beta}\,.
    \end{align*}
    Also, according to Item~\ref{:1}, the condition $x \in \smallC(\cX)$ implies
    \begin{align*}
        \dist(x, \cX) \leq \gamma^{(r + 1) \beta}\,.
    \end{align*}
    We consider two case.
    \paragraph{Case 1.} $\dist(x', \sX_{j_{r-1}}) \geq \gamma^{r\beta} / 2$, then, since $x' \notin \bigC(\cX)$, we must have
    \begin{align*}
        \dist(x', \cX) > \gamma^{\alpha}\trunc_{\gamma^{r\beta}}(\dist(x', \sX_{j_{r-1}})) \geq \gamma^{\alpha}\gamma^{r\beta} / 2\,.
    \end{align*}
    And, as long as ${\gamma^{\alpha} / 4 \geq \gamma^{\beta}}$, we can write
    \begin{align*}
        \|x - x'\| \geq \dist(x', \cX) - \dist(x, \cX) \geq \gamma^{\alpha + r\beta} / 4\,.
    \end{align*}

    \paragraph{Case 2.} If $\dist(x', \sX_{j_{r-1}}) < \gamma^{r\beta} / 2$, then
    \begin{align*}
       \|x - x'\| \geq \dist(x, \sX_{j_{r-1}}) - \dist(x', \sX_{j_{r-1}}) \geq \gamma^{r\beta} / 2 \geq \gamma^{\alpha + r\beta} / 4\,.
    \end{align*}

\section{Avoiding zero-dimensional spurious points}
\label{sec:zaplatki}

The standard convergence result~\cite{dav-dru-kak-lee-19} for subgradient descent applied to semialgebraic functions states that, with decreasing step-sizes, if the iterates $(x_k)_{k\geq1}$ remain bounded, then $\dist(x_k,\cZ) \to 0$, where $\cZ := \{x \in \bbR^d : 0 \in \partial f(x)\}$ is the Clarke critical set.

In contrast, the guarantees both of Theorem~\ref{thm:main_announced} and Lemma~\ref{lem:payments_varying} are expressed in terms of the (pre)-conservative set-valued field $D$ associated with the stratification $\bbX$. The corresponding notion of criticality may differ from Clarke criticality, and in particular, every point belonging to a zero-dimensional stratum $\cX \in \bbX_0$ is a critical point of $D$. Thus, from our analysis it is not clear that the method cannot converge to such spurious points, even when they are not Clarke-critical.

In this section, we show that this phenomenon can be ruled out by a slight modification of our construction. Note that the only two results needed on $g_{\cX} = f \circ \pi_{\cX}$ are Lemmas~\ref{lem:descent_deterministic} and~\ref{lem:distance_different_projection_same_point}. We now show how to modify $g_{\cX}$ in the case where $\cX =\{ x\}$ and $0 \not \in \partial f(x)$.

Let $\cX = \{x\} \in \bbX_0$ be such that $0 \notin \partial f(x)$. By outer semicontinuity of the Clarke subdifferential, there exists $\delta > 0$ such that
\[
0 \notin \partial f_\delta(x) := \conv\{ v \in \bbR^d : v \in \partial f(x'), \ \norm{x'-x} \leq \delta \}\,.
\]
Let $v_{\cX}$ denote the element of minimal norm in the convex set $\partial f_\delta(x)$, so that $\norm{v_{\cX}} > 0$.

Recall that for $\cX = \{x\}$, the sets $\bigC(\cX)$ and $\smallC(\cX)$ are given by
\[
\bigC(\cX) = \{x' \in \bbR^d : \dist(x', \cX) \leq \gamma^\alpha\},
\quad\text{and}\quad
\smallC(\cX) = \{x' \in \bbR^d : \dist(x', \cX) \leq \gamma^\beta\}\,.
\]
Assume that $\gamma_0$ is chosen such that $\gamma_0^\alpha \leq \delta$. We define a modified Lyapunov function $g_{\cX} : \bigC(\cX) \to \bbR$ by
\[
g_{\cX}(x') = f(x) + \langle v_{\cX}, x' - x \rangle.
\]
This function replaces the constant function used in the original construction for zero-dimensional strata and enforces descent away from $x$.

\medskip

We now verify that $g_{\cX}$ satisfies the requirements of the analysis.

\smallskip

\begin{enumerate}
    \item \textbf{Descent property.} For any $x' \in \bigC(\cX)$, we have $\nabla g_{\cX}(x') = v_{\cX}$ and thus $\norm{\nabla g_{\cX}(x')} = \norm{v_{\cX}}$. If for some $k$ we have $[x_k, x_{k+1}] \subset \bigC(\cX)$, then
\[
g_{\cX}(x_{k+1}) - g_{\cX}(x_k) = -\gamma \langle v_{\cX}, v_k \rangle.\,
\]
Since $v_k \in \partial f(x_k)$ and $\norm{x_k - x} \leq \gamma^\alpha \leq \delta$, it follows that $v_k \in \partial f_\delta(x)$. By minimality of $v_{\cX}$ in the convex set $\partial f_\delta(x)$, we obtain
\[
\langle v_{\cX}, v_k \rangle \geq \norm{v_{\cX}}^2\,,
\]
and therefore
\[
g_{\cX}(x_{k+1}) - g_{\cX}(x_k) \leq -\gamma \norm{v_{\cX}}^2 = -\gamma \norm{\nabla g_{\cX}(x_k)}^2\,.
\]
Thus, the descent condition of Lemma~\ref{lem:descent_deterministic} holds.

\item \textbf{Compatibility across strata.}  It remains to verify Lemma~\ref{lem:distance_different_projection_same_point}. Let $x' \in \smallC(\cX) \cap \bigC(\cX')$ for some $\cX' \in \bbX$. Then
\[
\begin{aligned}
|g_{\cX'}(x') - g_{\cX}(x')|
&= \left| f(\pi_{\cX'}(x')) - f(x) - \langle v_{\cX}, x' - x \rangle \right| \\
&\leq C \norm{x' - x} + |f(\pi_{\cX'}(x')) - f(x')| + |f(x') - f(x)| \\
&\leq C \dist(x', \cX) + C \dist(x', \cX')\,,
\end{aligned}
\]
for some constant $C > 0$. Hence, the required compatibility condition holds.
\end{enumerate}

\section{Additional details on Algorithm}\label{app:add_details}

Figure~\ref{fig:four_plots2} depicts a trajectory illustrating the left corner case. The argument follows the same structure as the example presented in the main body, with the exception of Figure~\ref{fig:ddd9}. For this trajectory and the corresponding sub-interval $\sqbracket{\ell}{r}$, there is no guarantee that the trajectory exits the outer neighbourhood of $\cX_1$ before entering the inner one. To handle this, the stratum assignment begins immediately on $\cX_1$ and continues until an exit from the inner neighbourhood is detected. At this point, a right corner case is checked; it is void here, as the sub-trajectory exits the outer neighbourhood.

   \begin{figure}[t]
  \centering

  \begin{subfigure}{0.48\textwidth}
    \centering
    \includegraphics[width=\linewidth]{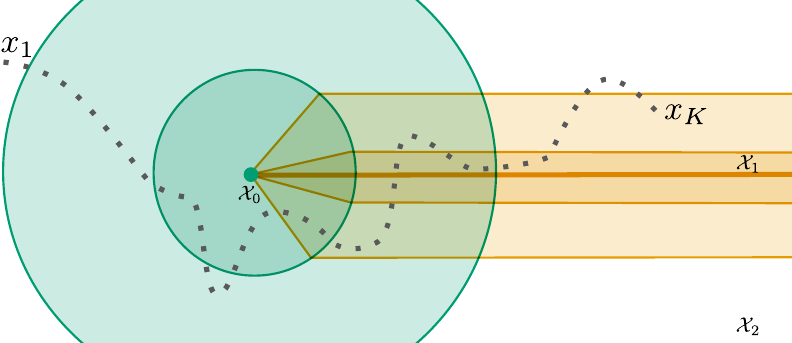}
    \caption{Full gradient descent trajectory}
    \label{fig:ddd7}
  \end{subfigure}\hfill
  \begin{subfigure}{0.48\textwidth}
    \centering
    \includegraphics[width=\linewidth]{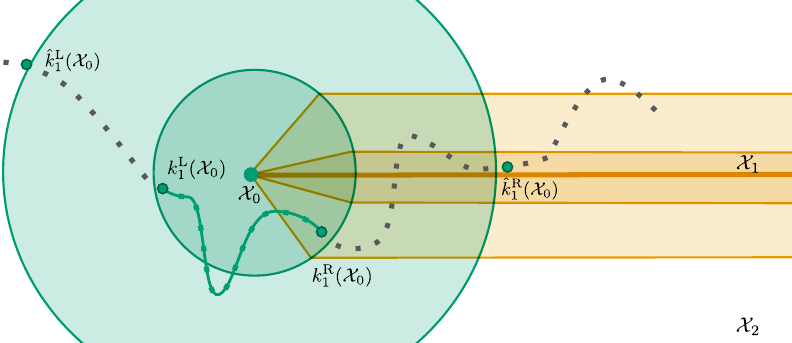}
    \caption{First pass of Algorithm~\ref{algo:main}, green assigned to $\cX_0$.}
    \label{fig:ddd8}
  \end{subfigure}

  \vspace{0.5em}

  \begin{subfigure}{0.48\textwidth}
    \centering
    \includegraphics[width=\linewidth]{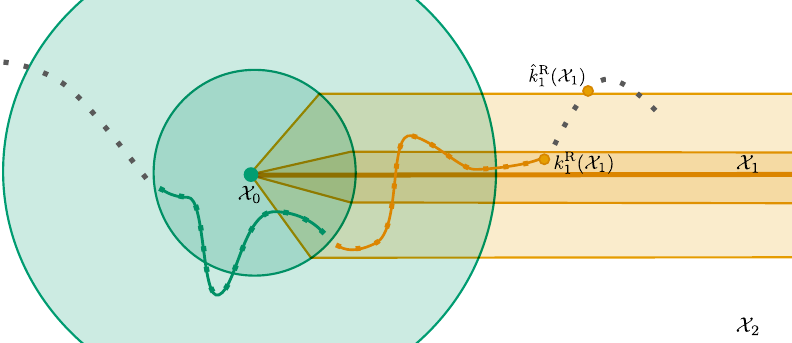}
    \caption{Second pass of Algorithm~\ref{algo:main}, orange assigned to $\cX_1$.}
    \label{fig:ddd9}
  \end{subfigure}\hfill
  \begin{subfigure}{0.48\textwidth}
    \centering
    \includegraphics[width=\linewidth]{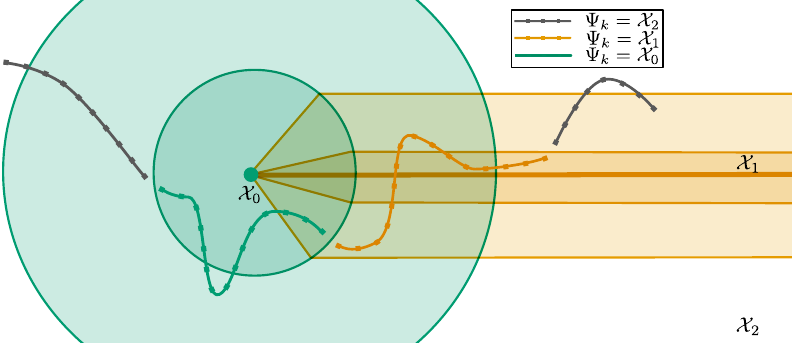}
    \caption{Final pass of Algorithm~\ref{algo:main}, black assigned to $\cX_2$.}
    \label{fig:ddd10}
  \end{subfigure}

  \caption{Step-by-step construction of the strata selection function, which requires \texttt{CheckLeft} sub-routing. Figure~\ref{fig:ddd7} displays the full trajectory, traversed from left to right. Figure~\ref{fig:ddd8} shows the outcome of the first step of the main loop in Algorithm~\ref{algo:main}, together with the corresponding switching times and outer-neighbourhood exit times. Figure~\ref{fig:ddd9} displays the second step of the same loop. Finally, Figure~\ref{fig:ddd10} presents the resulting strata selection function; the gaps between consecutive sub-trajectories correspond to strata switchings.}
  \label{fig:four_plots2}
\end{figure}

\section{Function from Figure~\ref{fig:switching_traj}}
\label{app:switching_traj}
The function is:
\begin{align*}\begin{split}
    f(x, y) &= |y| + \frac{\lambda}{2} (1+\sin(\pi \cdot y)) |x| + \frac{\lambda}{2}(1 -\sin(\pi \cdot y)) |x-c| + \mu x^{2}\\
    &\phantom{=} + B_1 \exp\left(-\frac{(x-c)^2}{\sigma_x^2}- \frac{(y-4.2)^2}{\sigma_y^2} \right) + B_2 \exp\left(-\frac{x^2}{\sigma_x^2}- \frac{(y-3.2)^2}{\sigma_y^2} \right)\, .
\end{split}
\end{align*}
Where  $B_1 = B_2 = 1$, $\sigma_x = 0.02$, $\sigma_y = 0.35$, $\mu = 0.1$, $c=0.5$, $\lambda = 1$.
The subgradient descent is run starting at $(x_1, y_1) = (0.4, 5.5)$ and with a constant step-size $\gamma = 0.01$.
We remark that the above function, restricted to any compact set, is definable in $\bbR_{\mathrm{an}}$, which is a polynomially bounded o-minimal structure. Thus, our analysis is applicable.

\bibliographystyle{apalike}
\bibliography{math}

\end{document}